\newtheorem{theorem}{Theorem}[section]
\newaliascnt{lemma}{theorem}
\newtheorem{lemma}[lemma]{Lemma}
\crefname{lemma}{Lemma}{Lemmas} 
\newaliascnt{proposition}{theorem}
\newtheorem{proposition}[proposition]{Proposition}
\crefname{proposition}{Proposition}{Propositions}
\newaliascnt{corollary}{theorem}
\newtheorem{corollary}[corollary]{Corollary}
\crefname{corollary}{Corollary}{Corollaries}
\newaliascnt{example}{theorem}
\newtheorem{example}[example]{Example}
\crefname{example}{Example}{Examples}
\newaliascnt{question}{theorem}
\crefname{question}{Question}{Questions}
\newaliascnt{conjecture}{theorem}
\newtheorem{conjecture}[conjecture]{Conjecture}
\crefname{conjecture}{Conjecture}{Conjectures}
\newaliascnt{assumption}{theorem}
\crefname{assumption}{Assumption}{Assumptions}
\newaliascnt{definition}{theorem}
\newtheorem{definition}[definition]{Definition}
\crefname{definition}{Definition}{Definitions}
\newaliascnt{notation}{theorem}
\newtheorem{notation}[notation]{Notation}
\crefname{notation}{Notation}{Notations}
\theoremstyle{remark}
\newtheorem*{remark}{Remark}
\crefname{equation}{}{}
\numberwithin{equation}{section}
\newcommand{\Disc}{\mathrm{Disc}}
\newcommand{\Mult}{\mathrm{Mult}}
\newcommand{\bigboxtimes}{%
  \mathop{\mathpalette\bigboxtimesA{}}%
}
\newcommand{\bigboxtimesA}[2]{%
  \vcenter{\hbox{\scalebox{2}{$\displaystyle\boxtimes$}}}%
}
\newcommand{\Ind}{\mathrm{Ind}}
\newcommand{\Irr}{\textnormal{Irr}}
\newcommand{\sym}{\textnormal{sym}}
\newcommand{\psum}{\ \sideset{}{^*}\sum}
\newcommand{\Z}{\mathbb{Z}}
\newcommand{\R}{\mathbb{R}}
\newcommand{\C}{\mathbb{C}}
\renewcommand{\H}{\mathbb{H}}
\renewcommand{\P}{\mathbb{P}}
\newcommand{\mA}{\mathcal{A}}
\newcommand{\mB}{\mathcal{B}}
\newcommand{\mI}{\mathcal{I}}
\newcommand{\mJ}{\mathcal{J}}
\newcommand{\mL}{\mathcal{L}}
\newcommand{\mN}{\mathcal{N}}
\newcommand{\mS}{\mathcal{S}}
\newcommand{\mT}{\mathcal{T}}
\newcommand{\ma}{\mathfrak{a}}
\newcommand{\rzero}{\mathbf{0}}
\newcommand{\rone}{\mathbf{1}}
\newcommand{\one}{\mathbbm{1}}
\newcommand{\GL}{\textnormal{GL}}
\newcommand{\SL}{\textnormal{SL}}
\newcommand{\PGL}{\textnormal{PGL}}
\newcommand{\PSL}{\textnormal{PSL}}
\newcommand{\Id}{\textnormal{Id}}
\newcommand{\Tr}{\textnormal{Tr}}
\newcommand{\la}{\left\langle}
\newcommand{\ra}{\right\rangle}
\newcommand{\lf}{\left\lfloor}
\newcommand{\rf}{\right\rfloor}
\newcommand{\lc}{\left\lceil}
\newcommand{\rc}{\right\rceil}
\newcommand{\eps}{\varepsilon}
\renewcommand{\bar}{\overline}
\renewcommand{\hat}{\widehat}
\renewcommand{\mod}[1]{\ \textnormal{mod } #1}
\renewcommand{\pmod}[1]{\ (\textnormal{mod } #1)}
\newenvironment{psmall}
  {\left(\begin{smallmatrix}}
  {\end{smallmatrix}\right)}
\definecolor{myBlue}{rgb}{0, 0, 0.6}
\definecolor{myRed}{rgb}{0.6, 0, 0}
\newcommand{\fix}[1]{\textcolor{myBlue}{\large (#1)\normalsize}}
\title{Non-abelian amplification and bilinear forms with Kloosterman sums}
\author[Alexandru Pascadi]{Alexandru Pascadi}
\address{Mathematisches Institut, Endenicher Allee 60, 53115 Bonn, Germany}
\email{alexpascadi@gmail.com}
\begin{document}

\begin{abstract}
    We introduce a new method to bound bilinear (Type II) sums of Kloosterman sums with composite moduli $c$, using Fourier analysis on $\mathrm{SL}_2(\mathbb{Z}/c\mathbb{Z})$ and an amplification argument with non-abelian characters. For sums of length $\sqrt{c}$, our method produces a non-trivial bound for all moduli except near-primes, saving $c^{-1/12}$ for products of two primes of the same size. Combining this with previous results for prime moduli, we achieve savings beyond the P\'olya--Vinogradov range for all moduli. We give applications to moments of twisted cuspidal $L$-functions, and to large sieve inequalities for exceptional cusp forms with composite levels.
\end{abstract}
\maketitle

{
\setlength{\parskip}{0em}
\setcounter{tocdepth}{1}
\tableofcontents
}
\vspace{-1cm}

\section{Introduction} \label{sec:intro}

\subsection{Brief background}
There is by now a fairly comprehensive history of bounds for bilinear forms with Kloosterman sums and their applications \cite{blomer2015second,blomer2017moments,blomer2023second,fouvry2014algebraic,kowalski2017bilinear,kowalski2020stratification,deshouillers1982kloosterman,maynard2025primes,pascadi2026large,shparlinski2016cancellations,shparlinski2019sums,kerr2023bounds,xi2018ternary,wu2021arithmetic}. In the simplest form, the objects of interest are the sums
\begin{equation} \label{eq:kloosterman}
    \sum_{m \le M} \sum_{n \le N} \alpha_m \beta_n S(m, n; c),
    \qquad
    \text{where}
    \qquad
    S(m, n; c) := \sum_{x \in (\Z/c\Z)^\times} e\left(\frac{mx + n\bar{x}}{c}\right),
\end{equation}
for positive integers $c, M, N$ with $M, N \le c$ and complex sequences $(\alpha_m)$, $(\beta_n)$; here $e(t) := \exp(2\pi i t)$ and $x\bar{x} \equiv 1 \pmod{c}$. In this work, we are mainly concerned with the `Type II' setting where $(\alpha_m)$ and $(\beta_n)$ are arbitrary sequences, and we search for an upper bound in terms of their $\ell^2$ norms $\|\alpha\| := (\sum_m |\alpha_m|^2)^{1/2}$, $\|\beta\| := (\sum_n |\beta_n|^2)^{1/2}$. This is equivalent to bounding the operator norm, or the largest singular value, of the $M \times N$ matrix $(S(m,n;c))_{m\le M, n \le N}$.

For the Type II sums, it is in general necessary to incorporate a coprimality constraint $(m, n, c) = 1$. In practice, since $S(gm, gn; gc) = \tfrac{\phi(gc)}{\phi(c)}S(m,n;c)$, one can separately consider each value of $(m, n, c)$; therefore, in most bounds discussed henceforth, one can replace the restriction $(m, n, c) = 1$ with the assumption that $(\alpha_m)$ and $(\beta_n)$ are $1$-bounded (and the norms $\|\alpha\|$, $\|\beta\|$ with $\sqrt{M}$, $\sqrt{N}$).

There are two main `trivial' bounds to beat, packaged together into the following inequality with a slightly more general setup. For any (integer) intervals $\mI, \mJ \subset \Z$ with $|\mI| = M \le c$, $|\mJ| = N \le c$, any complex sequences $(\alpha_m)_{m \in \mI}$, $(\beta_n)_{n \in \mJ}$, and any $a \in (\Z/c\Z)^\times$, one has
\begin{equation} \label{eq:trivial-bound}
\mathop{\sum\sum}_{\substack{m \in \mI, n \in \mJ \\ (m, n, c) = 1}} \alpha_m \beta_n S(am, n; c) 
\ll 
\|\alpha\| \|\beta\| c^{o(1)} \min\left(c, \sqrt{MNc}\right).
\end{equation}
The term $\sqrt{MNc}$ comes from the algebro-geometric Weil bound $S(am, n; c) \ll c^{o(1)} \sqrt{(m, n, c)c}$, applied pointwise.
The other term of $c$ comes from Fourier analysis, and can be refined for unbalanced sums with $M < \sqrt{c} < N$ via the P\'olya--Vinogradov method (see \cite[Theorem 1.17]{fouvry2014algebraic}). The best one could hope for is the perfect-orthogonality bound 
$\|\alpha \| \|\beta\| c^{o(1)} \sqrt{(M+N)c}$, but making any improvement over \cref{eq:trivial-bound} for balanced sums ($M \approx N$) is notoriously difficult; a key range is $M \approx N \approx \sqrt{c}$, when the trivial bounds match. We note that while many applications \cite{kowalski2017bilinear,kowalski2020stratification} require improving the pointwise Weil bound when $M, N \ll \sqrt{c}$, some applications use larger values of $M, N \le c^{1-\eps}$ and require improving the Fourier-theoretic bounds; this is the case in our \cref{sec:exc-large-sieve}. 

An important improvement of \cref{eq:trivial-bound} when $M, N \approx \sqrt{c}$ was the celebrated breakthrough of Kowalski--Michel--Sawin \cite{kowalski2017bilinear}, which requires a prime modulus $c = p$, and which saves a factor of $p^{-1/64}$ when $M, N \asymp \sqrt{p}$; see also \cite{kowalski2020stratification} for their follow-up work, which outperforms the pointwise Weil bound for $MN$ as small as $p^{3/4+\eps}$. These results rely on a shift-by-$ab$ trick of Vinogradov and Karatsuba, a H\"older step, and deep inputs of $\ell$-adic cohomology; notably, the same bounds hold for more general algebraic trace functions, including hyper-Kloosterman sums. 

Closer to our methods is an approach of Shkredov \cite[(6)]{shkredov2021modular} for prime moduli $p$, which relies (in the Type II setting) on non-abelian Fourier analysis \cite[Lemma 22]{shkredov2018asymptotic} and expansion in $\SL_2(\Z/p\Z)$ \cite[Theorem 50]{shkredov2018asymptotic}; this beats \cref{eq:trivial-bound} in a range of the shape $M, N \in (p^{1/2-\delta}, p^{1-\eps})$ with a small but effective power saving, and for sequences $(\alpha_m)$, $(\beta_n)$ with more general additively-structured supports. We also mention some additive-combinatorial approaches: of Shparlinski--Zhang \cite{shparlinski2016cancellations} for smooth sequences, of the author \cite[\S 4]{pascadi2026large} for additively-structured sequences, and of Shkredov \cite[(7)]{shkredov2021modular} and Kerr--Shparlinski--Wu--Xi \cite{kerr2023bounds} for Type I bilinear forms (where only $(\alpha_m)$ is smooth). 

For moduli with a suitable factorization, the best Type II bounds so far have come from the $q$-van der Corput method \cite{fouvry2019lectures}, which relies on the twisted multiplicativity of Kloosterman sums, Cauchy--Schwarz, and a shifting trick; it was first applied in this setting by Blomer--Mili\'cevi\'c \cite{blomer2015second}. The $q$-van der Corput method can also be iterated, leading to strong results for smooth square-free moduli \cite{xi2018ternary,wu2021arithmetic}. Unfortunately, these arguments fail to handle certain types of composite moduli when $MN \approx c$, including squares of primes and products of two distinct primes of the same size. 

\subsection{Main results.}
In this work, we develop a new method to bound bilinear forms with Kloosterman sums for essentially all composite moduli. Like Shkredov's Type II result for prime moduli \cite[Theorem 4, (6)]{shkredov2021modular}, our results rely on non-abelian Fourier analysis; however, we use the normal subgroups of $\SL_2(\Z/c\Z)$ to our advantage, and we avoid relying on $L^2$-flattening to obtain quantitatively-good\footnote{Note that Shkredov's Type I bounds \cite[(7)]{shkredov2021modular} do not rely on $L^2$-flattening and achieve better power savings.} power savings over \cref{eq:trivial-bound} (up to $c^{-1/12}$; see \cref{ex:p2-pq}). Our key innovation is a new type of amplification argument with non-abelian characters, detailed in \cref{subsec:outline-amplif}, which may be of independent interest.

Combining our bounds with those of Kowalski--Michel--Sawin\footnote{One could also combine our bounds with the results of Shkredov \cite[Theorem 4]{shkredov2021modular} for prime moduli, to obtain a result like \cref{thm:bilinear-forms-general} which does not rely on algebraic geometry.} \cite{kowalski2017bilinear} (as well as Blomer--Mili\'cevi\'c \cite{blomer2015second} for an optimization), we obtain a non-trivial result for general moduli beyond the P\'olya--Vinogradov range, given in \cref{thm:MN-bilinear-forms-general}. We state a particular case of this result below, when $M, N \ll c^{1/2+o(1)}$.

\begin{theorem} \label{thm:bilinear-forms-general}
Let $c, M, N \in \Z_+$ with $M, N \ll c^{1/2+o(1)}$. Then for any complex sequences $(\alpha_m)_{m \le M}$, $(\beta_n)_{n \le N}$ and $a \in (\Z/c\Z)^\times$, one has
\[
    \mathop{\sum_{m=1}^M\sum_{n=1}^N}_{(m,n,c)=1} \alpha_m \beta_n S(am, n; c)
    \ll 
    \|\alpha\|\|\beta\| c^{1-\frac{1}{700}+o(1)}.
\]
Moreover, if $|\alpha_m| \le 1$ for all $m$ (so $\|\alpha\| \le \sqrt{M}$), then 
\[
    \mathop{\sum_{m=1}^M\sum_{n=1}^N}_{(n,c)=1} \alpha_m \beta_n S(am, n; c)
    \ll 
    \sqrt{M}\|\beta\| c^{1-\frac{1}{276}+o(1)}.
\]
\end{theorem}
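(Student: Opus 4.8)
The plan is to prove \cref{thm:bilinear-forms-general} -- the special case $M,N\ll c^{1/2+o(1)}$ of the general-length bound \cref{thm:MN-bilinear-forms-general} -- by splitting according to the factorization of $c$: a ``very composite'' range handled by the new non-abelian amplification, and a ``near-prime'' range reduced to prime moduli via twisted multiplicativity and the bound of Kowalski--Michel--Sawin \cite{kowalski2017bilinear}. After the standard reductions (splitting over the value of $(m,n,c)$) one may assume the summand factors through a single residue class $mn\bmod c$, e.g.\ via $S(am,n;c)=S(a,mn;c)$ when $(mn,c)=1$. Following Shkredov's framework, $t\mapsto S(a,t;c)$ is expressible through matrix coefficients of a representation of $G_c:=\SL_2(\Z/c\Z)$ in such a way that multiplicativity in $mn$ corresponds to composition of group elements, $g_{mn}=g_m g_n$ (this is essentially \cite[Lemma 22]{shkredov2018asymptotic}). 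For each irreducible $\pi$ of $G_c$ the form then becomes $\langle A_\pi B_\pi v_\pi,w_\pi\rangle$ with $A_\pi=\sum_m\alpha_m\pi(g_m)$ and $B_\pi=\sum_n\beta_n\pi(g_n)$, so $|\cdot|\le\|A_\pi\|_{\mathrm{op}}\|B_\pi\|_{\mathrm{op}}$ up to dimension factors, while Plancherel bounds the $\pi$-average of $\|A_\pi\|_{\mathrm{HS}}^2$ in terms of $\|\alpha\|^2$ (and similarly for $\beta$).

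The heart of the matter is the amplification in the composite case. Write $c=c_1c_2$ with $c_1\le c_2$ and $c_1$ chosen as large as possible; the reduction $G_c\twoheadrightarrow G_{c_1}$ has normal kernel $K$ (a congruence subgroup, filtered by abelian pieces), and Clifford theory describes the irreducibles $\pi$ of $G_c$ in terms of characters of $K$ and representations of $G_{c_1}$. The new idea is to amplify the bilinear form using the non-abelian characters attached to this normal structure: one builds an amplifier out of the matrix coefficients of these representations, averages the operator-norm bound over the amplified family, and exploits the normal subgroup $K$ to annihilate (or render negligible) the off-diagonal contributions. This is what replaces Shkredov's appeal to $L^2$-flattening and expansion in $\SL_2(\Z/p\Z)$, and it is designed to yield a genuine power saving over \cref{eq:trivial-bound}, roughly $c^{-\kappa}$ with $\kappa$ proportional to $\log\min(c_1,c/c_1)/\log c$ -- matching the $c^{-1/12}$ claimed for $c=pq$ with $p\asymp q$, and degrading only as the factorization becomes unbalanced.

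If instead $c$ admits no sufficiently large nontrivial divisor, then it has a prime factor $p>c^{1-\eta}$ with $c=pc'$, $(p,c')=1$, $c'<c^{\eta}$. Decompose $(\alpha_m)$, $(\beta_n)$ into residue classes modulo $c'$; twisted multiplicativity expresses $S(am,n;c)$ as a product of a Kloosterman sum modulo $c'$ -- constant on each pair of classes and $\ll c'^{1/2+o(1)}$ by Weil -- and one modulo $p$, which after reindexing has the shape $S(a^\ast m,n;p)$, so \cite{kowalski2017bilinear} applies (with $M,N\ll\sqrt c=\sqrt{pc'}$, still below $p$); Cauchy--Schwarz over the $c'^2$ pairs of classes costs a factor $c'^{3/2+o(1)}$. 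In a sub-range one may instead invoke the $q$-van der Corput bound of Blomer--Mili\'cevi\'c \cite{blomer2015second} to sharpen the constant. Optimizing $\eta$ so that the near-prime saving (Kowalski--Michel--Sawin against the $c'^{3/2}$ loss) balances the composite saving ($\sim c^{-\kappa\eta}$) produces the crossover and the exponent $1-\tfrac1{700}$. For the second statement $(\alpha_m)$ is $1$-bounded and only $(n,c)=1$ is imposed; this relaxes the coprimality reductions and leaves slack in the residue-class and Cauchy--Schwarz steps -- and permits Type-I refinements of both inputs, cf.\ \cite{kerr2023bounds} -- so a larger threshold $\eta$ is admissible and one obtains the better exponent $1-\tfrac1{276}$.

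The genuinely hard step is the amplification of the second paragraph: realizing the bilinear form as an operator on the regular representation of $G_c$ with the multiplicative product structure intact, designing an amplifier adapted to the normal subgroup $K$ so that the cross-terms vanish or are negligible, and bounding the resulting operator norm quantitatively and uniformly in the factorization of $c$, so that the gain is a true power of $c$ rather than just $c^{o(1)}$. By comparison, the prime-modulus reduction and the final optimization are routine bookkeeping; the one delicate point there is to verify that the $c'^{3/2+o(1)}$ loss never overtakes the Kowalski--Michel--Sawin saving, which is precisely what pins down the admissible range of $\eta$ -- and hence the numerical constants $700$ and $276$.
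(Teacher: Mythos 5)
Your high-level architecture does match the paper's: split according to the factorization of $c$, treat well-factorable moduli by non-abelian amplification relative to a congruence normal subgroup, treat near-prime moduli by twisted multiplicativity plus Kowalski--Michel--Sawin with a Cauchy--Schwarz loss of $c'^{3/2+o(1)}$ over residue classes (this part agrees with \cref{cor:kms}), use Blomer--Mili\'cevi\'c in a sub-range for the $1$-bounded case, and optimize the crossover. But as a proof the proposal has a genuine gap, and you name it yourself: the entire composite-modulus input is left as a black box. The statement's exponents $1-\tfrac1{700}$ and $1-\tfrac1{276}$ can only come out of an optimization whose composite-side input is a \emph{quantitative} bound like \cref{thm:MN-bilinear-forms-composite}, i.e.\ a saving of the explicit shape $\bigl(dM^3N/c^3+fM^2/c^2+f/d^2\bigr)^{1/6}$ in terms of the chosen divisor $d$. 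Producing that bound is exactly the content of the amplification inequality (\cref{prop:amplification}), the lower bound for $\sum_{n\in\Gamma_c(d)}|\chi(n)|^2$ via Clifford theory (\cref{prop:squared-char-lb,lem:squared-char-primitive}), the pointwise character bound (\cref{lem:perm-rep-pointwise-ub}), and above all the elementary count of solutions to $T^{h_1}S\cdots T^{h_6}S=\pm I$ (\cref{prop:counting-6}); none of this appears in your sketch, and the asserted saving ``$\kappa\propto\log\min(c_1,c/c_1)/\log c$'' is a guess rather than a derivation. Without it, neither the claimed $c^{-1/12}$ for balanced factorizations nor the constants $700$ and $276$ can be pinned down, so the ``routine bookkeeping'' of the final optimization has nothing to book-keep. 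Note also that the case analysis has to be by prime \emph{powers}, not prime factors: for $c=p^k$ with $k\ge2$ there is no nontrivial coprime splitting, and the paper still runs the composite machinery with $d=p^{\lceil k/2\rceil}$, $d'\mid d$ non-coprime to $d$ (\cref{thm:MN-bilinear-forms-composite} is stated precisely to allow this).

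Two further points in your setup would not survive scrutiny. First, the reduction to a function of $mn$ via $S(am,n;c)=S(a,mn;c)$ requires $(m,c)=1$, whereas the hypothesis is only $(m,n,c)=1$, and splitting over the value of $(m,n,c)$ does not repair this; more importantly, the paper does not realize the bilinear form as $\langle A_\pi B_\pi v,w\rangle$ with $A_\pi=\sum_m\alpha_m\pi(g_m)$, $B_\pi=\sum_n\beta_n\pi(g_n)$ living in the group algebra. Instead it conjugates the Kloosterman matrix by the discrete Fourier transform and applies Poisson summation, so that the arbitrary coefficients $\alpha,\beta$ sit in the test vectors while the group elements are $T^{h_1}ST^{h_2}$ averaged over $|h_i|\lesssim c/M, c/N$, and the constraint $(m,n,c)=1$ is implemented by sifting out the old subrepresentations $\rho_d\circ\pi_{c,d}$ (\cref{prop:kloost-to-fourier,cor:kloosterman-ap}); your multiplicative-coefficient variant is unsubstantiated and would need its own analogue of all the above estimates. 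Second, the improvement from $1/700$ to $1/276$ in the $1$-bounded case is not a matter of ``slack in the residue-class and Cauchy--Schwarz steps'' or Type-I refinements \`a la \cite{kerr2023bounds}: in the paper it comes specifically from invoking Blomer--Mili\'cevi\'c's bound (\cref{thm:bm}), which requires $|\alpha_m|\le1$, in the subcase where the largest prime power divisor is odd and lies in $[c^{3/4},c^{1-\delta})$, permitting the larger choice $\delta=\tfrac1{69}$ versus $\delta=\tfrac3{175}$.
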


Our main technical result leading to \cref{thm:bilinear-forms-general} is \cref{thm:MN-bilinear-forms-composite}, which considers a factorization of the modulus into three parts, $c = dd'e$ (but one can usually take $d' = 1$ or $e = 1$).
Below we state a particular case of \cref{thm:MN-bilinear-forms-composite}, focusing on the same range $M, N \ll c^{1/2+o(1)}$ as in \cref{thm:bilinear-forms-general}.

\begin{theorem} \label{thm:bilinear-forms-composite}
Let $c = dd'e$ for some $d, d', e \in \Z_+$ with $d' \mid d$ and $(d, e) = 1$, and let $f$ be the largest integer such that $f^2 \mid cd$. Let $\mI, \mJ \subset \Z$ be intervals with $|\mI|, |\mJ| \ll c^{1/2+o(1)}$. Then for any complex sequences $(\alpha_m)_{m \in \mI}$, $(\beta_n)_{n \in \mJ}$ and $a \in (\Z/c\Z)^\times$, one has
\[
\begin{aligned}
    \mathop{\sum\sum}_{\substack{m \in \mI, n \in \mJ \\ (m, n, c) = 1}} \alpha_m \beta_n S(am, n; c) 
    &\ll
    \|\alpha\| \|\beta\| c^{1+o(1)}
    \left(\frac{f}{\min(c,d^2)} \right)^{\frac{1}{6}}.
\end{aligned}
\]
\end{theorem}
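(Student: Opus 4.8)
The statement is the specialisation to $|\mI|,|\mJ|\ll c^{1/2+o(1)}$ of the paper's main technical result \cref{thm:MN-bilinear-forms-composite}, so the plan is to develop the non-abelian amplification machinery that proves the latter and read off this clean case. The first step is to linearise: opening $S(am,n;c)=\sum_{x\in(\Z/c\Z)^\times}e((amx+n\bar x)/c)$ and interchanging summation gives, after the standard device for removing $(m,n,c)=1$ (which costs only $c^{o(1)}$),
\[
\mathop{\sum\sum}_{m\in\mI,\,n\in\mJ}\alpha_m\beta_n\, S(am,n;c)\;=\;\sum_{x\in(\Z/c\Z)^\times}A(ax)\,B(\bar x),
\]
where $A(y):=\sum_{m\in\mI}\alpha_m e(my/c)$ and $B(y):=\sum_{n\in\mJ}\beta_n e(ny/c)$. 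These are trigonometric polynomials with Fourier supports of length $\ll c^{1/2+o(1)}$, with $\sum_{y\bmod c}|A(y)|^2=c\|\alpha\|^2$ and similarly for $B$; writing $\Sigma$ for this common value, the bare Cauchy--Schwarz bound $|\Sigma|\le c\|\alpha\|\|\beta\|$ recovers the P\'olya--Vinogradov term of \cref{eq:trivial-bound}, and the task is to beat it by exploiting that the sum runs over the (small, highly non-central) diagonal torus $T=\{\mathrm{diag}(x,\bar x)\}$ of $G:=\SL_2(\Z/c\Z)$.

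Next I would separate the coprime part $e$. By twisted multiplicativity $S(am,n;dd'e)$ factors as a product of a Kloosterman sum mod $e$ and one mod $dd'$ (with rescaled arguments), and $G\cong\SL_2(\Z/dd'\Z)\times\SL_2(\Z/e\Z)$ since $(dd',e)=1$; applying the pointwise Weil bound $|S(\cdot,\cdot;e)|\ll e^{1/2+o(1)}$ to the $e$-factor disposes of that aspect up to an acceptable loss and reduces everything to the analogous bilinear form with modulus $q:=dd'$. The divisor $d'\mid d$ is retained as a free parameter, optimised at the end, that governs the size of the principal congruence subgroup $K:=\ker\big(\SL_2(\Z/q\Z)\to\SL_2(\Z/d\Z)\big)$ along which the amplification runs.

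The heart of the matter is a non-abelian amplification on $\SL_2(\Z/q\Z)$. One writes $\Sigma$ as a sum over $T$ of a function $\mathcal A$ (built from $A,B$, equal to $g\mapsto A(ag_{11})B(g_{11}^{-1})$ on $T$), expands via Peter--Weyl into $\sum_{\pi}d_\pi\,\Tr\big(\widehat{\mathcal A}(\pi)\,P^T_\pi\big)$ with $P^T_\pi$ the projection onto $T$-invariants (of rank $O(1)$, as $T$ is of Gelfand type), and must show that the short Fourier support of $A,B$ makes $\|\widehat{\mathcal A}(\pi)\|$ negligible except for a small family of $\pi$. To extract this I would amplify: insert into the spectral sum an auxiliary average over twists by the one-dimensional characters of $K$ (a non-abelian analogue of a Hecke mollifier), designed to equal $\Sigma$ up to negligible error while annihilating, by orthogonality of matrix coefficients, the ``generic'' representations — those not factoring through a small congruence quotient, whose dimension is a power of $q$ — and diluting their diagonal contribution by a factor $\asymp\min(c,d^2)/f$. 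The representations that survive the amplification are the low-level ones, which via Clifford theory along the congruence filtration are controlled by the largest square $f^2\mid cd$ (heuristically, a Kloosterman sum remains coherent only on the square-full part of its modulus, and after incorporating the auxiliary $d$ this is exactly $f$). Bounding the surviving terms by the Weil and large-sieve estimates and returning to $\Sigma$ through two applications of Cauchy--Schwarz and a H\"older step costs a sixth root, which produces the stated factor $\big(f/\min(c,d^2)\big)^{1/6}$; one checks that for $c=p^2$ or $c=pq$ with $p\asymp q$ this is $c^{-1/12+o(1)}$, matching \cref{ex:p2-pq}.

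I expect the decisive obstacle to be making the amplification genuinely lossless: one must choose the twisting family and arrange the spectral expansion so that the off-diagonal cross terms are provably below the diagonal, which amounts to an equidistribution statement for the torus inside $\SL_2(\Z/q\Z)$ relative to the congruence filtration — precisely where structural information about $\SL_2$ (expansion, and the dimension growth of non-trivial representations) enters. A second genuine difficulty is the representation theory of $\SL_2(\Z/p^k\Z)$ for $k\ge 2$, where the level bookkeeping and the identification of the un-amplifiable sub-packet with $f$ require Clifford theory over the congruence filtration rather than the cleaner picture for $\SL_2(\F_p)$; a more routine check is that removing $(m,n,c)=1$ and passing through twisted multiplicativity cost only $c^{o(1)}$, uniformly in the number of prime factors of $c$.
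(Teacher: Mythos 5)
Your plan has two genuine gaps, and the first is fatal in exactly the case this theorem is designed for. You propose to split off the coprime factor $e$ by twisted multiplicativity and dispose of it with the pointwise Weil bound, reducing to a bilinear form of modulus $dd'$. But in the flagship case $c = pq$ with $p \asymp q \asymp c^{1/2}$ (so $d = p$, $d' = 1$, $e = q$), the intervals have length $\approx c^{1/2} \asymp e$: after fixing residue classes of $m, n$ modulo $e$ (which you must do, since the mod-$e$ Kloosterman sum depends on both variables), each class contains $O(1)$ elements, so no bilinear structure survives modulo $dd'$, and the resulting estimate $e^{1/2+o(1)} p^{1/2} \sum_m |\alpha_m| \sum_n |\beta_n| \approx c\|\alpha\|\|\beta\|$ is just the trivial bound \cref{eq:trivial-bound}. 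This is precisely why the paper never reduces the modulus: \cref{prop:kloost-to-fourier,prop:fourier-to-counting} work over the full group $\SL_2(\Z/c\Z)$, and the factorization $c = dd'e$ enters only through the choice of the normal subgroup $\Gamma_c(d)$ in the amplifier (\cref{prop:amplification}) and the character estimates (\cref{lem:perm-rep-pointwise-ub,prop:squared-char-lb}). A modulus-splitting of your kind appears in the paper only for near-prime moduli (\cref{cor:kms}), where the cofactor is at most $c^{\delta}$, exactly because the step loses a power of the cofactor.

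The spectral part also does not constitute a proof, and it inverts the roles of small and large representations. You discard the constraint $(m,n,c)=1$ ``at cost $c^{o(1)}$'', but it is not removable: without it (or without carrying the Möbius terms and exploiting cancellation between them) the low-dimensional constituents of the permutation representation — starting with the trivial one — contribute singular values of the size of the trivial bound, and the entire purpose of the paper's \cref{sec:rep-and-kloost} is to convert the coprimality, via Möbius inversion on representations, into the sifted representation $\rho_c^\circ$ whose irreducible constituents all have dimension $c^{1-o(1)}$ (\cref{prop:sifted-rep}). Accordingly, an amplifier that ``annihilates the generic representations'' and retains ``the low-level ones'' is backwards: the low-level ones are the dangerous ones, and the amplification in \cref{prop:amplification} is applied to a single large irreducible constituent of $\rho_c^\circ$. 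Finally, the quantitative core is missing: in the paper the exponent $\tfrac{1}{6}$ and the factor $f/\min(c,d^2)$ arise from bounding the operator norm by the sixth Schatten moment, amplifying along $\Gamma_c(d)$, inserting the bounds $\chi_c(g) \ll c^{o(1)}\tilde f$ and $\sum_{n \in \Gamma_c(d)}|\chi(n)|^2 \gg c^{3-o(1)}/d$, and then solving the explicit counting problem for $T^{h_1}S\cdots T^{h_6}S = I$ in $\PSL_2(\Z/\tilde d\Z)$ (\cref{prop:counting-6}); your assertion that ``two applications of Cauchy--Schwarz and a Hölder step costs a sixth root'' names no such counting problem and cannot be checked, so even granting the earlier steps the stated saving is not derived.
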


Since $f \le \sqrt{cd}$, \cref{thm:bilinear-forms-composite} automatically gives a non-trivial result when $d \in (c^{1/3+\eps}, c^{1-\eps})$ for $\eps > 0$. Unless $c$ has a prime factor larger than $c^{1-\eps}$, one can always find a factorization $c = dd'e$ with $d$ in this range, $(d, e) = 1$, and $d' \mid d$, which makes the general result in \cref{thm:bilinear-forms-general} possible.

\begin{example} \label{ex:p2-pq}
Let $d \mid c$ such that $\tfrac{c}{d}$ is square-free. Then one can take $d' := (\tfrac{c}{d}, d)$, $e := \tfrac{c}{dd'}$, $f = d$ in \cref{thm:bilinear-forms-composite}, so the saving over the trivial bound is $\min(d, \tfrac{c}{d})^{-1/6}$. If $d \asymp \sqrt{c}$, this is roughly
\[
    c^{-\frac{1}{12}}.
\]
In particular, this saving is achieved if 
$c \in \{p^2, pq\}$, where $p$ and $q$ are distinct primes with $p \asymp q$. For the same values of $c$ and any $\eps > 0$, the more general \cref{thm:MN-bilinear-forms-composite} beats \cref{eq:trivial-bound} in the range 
\[
    M \asymp N \in [c^{\frac{5}{12} + \eps}, c^{\frac{5}{8}-\eps}].
\]
\end{example}

Notably, while the values $c \in \{p^2, pq\}$, $M, N \asymp \sqrt{c}$ give blind spots of the $q$-van der Corput method, they happen to give one of the best cases for our methods; this case has until now constituted the remaining barrier towards the application in \cref{thm:mom-l-fns}.

As a quick corollary of \cref{thm:bilinear-forms-composite}, we prove a trilinear-sum bound which includes a short averaging over $c$ with a given large divisor $q$; the point is that only a factorization of $q$ (rather than $c$) is assumed. Such sums arise in the spectral theory of automorphic forms, in particular in \cref{sec:exc-large-sieve}. Below is such a trilinear-sum bound, which is a particular case of \cref{cor:MN-bilinear-forms-avg-c}.

\begin{corollary} \label{cor:bilinear-forms-avg-c}
Let $C \ge \tfrac{1}{2}$, $q = dd'e$ for some $d, d', e \in \Z_+$ with $d' \mid d$ and $(d, e) = 1$, and let $f$ be the largest integer such that $f^2 \mid qd$. Let $\mI, \mJ \subset \Z_+$ be intervals of lengths $|\mI|, |\mJ| \ll C^{1/2+o(1)}$ with $\max(\mI \cup \mJ) \ll C^{O(1)}$. Then for any complex sequences $(\alpha_m)_{m \in \mI}$, $(\beta_n)_{n \in \mJ}$, one has
\[
\begin{aligned}
    \sum_{\substack{C < c \le 2C \\ q \mid c}} \left\vert
    \mathop{\sum\sum}_{\substack{m \in \mI, n \in \mJ \\ (m, n, q) = 1}} \alpha_m \beta_n S(m, n; c) \right\vert 
    &\ll
    \|\alpha\|\|\beta\|
    \frac{C^{2+o(1)}}{q}
    \left(\frac{f}{\min(C,d^2)+\min(q,d^2 \frac{C}{q})} \right)^{\frac{1}{6}}.
\end{aligned}
\]
\end{corollary}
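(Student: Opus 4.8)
The plan is to deduce \cref{cor:bilinear-forms-avg-c} from \cref{thm:bilinear-forms-composite} by pulling out the divisibility structure and summing trivially over the short range of $c$. First I would dispose of the averaging: write $c = qr$ where $r$ ranges over integers with $C/q < r \le 2C/q$, so that the outer sum has $O(1 + C/q)$ terms. For each such $c$, the inner bilinear form is $\sum\sum \alpha_m \beta_n S(m,n;c)$ with the coprimality constraint $(m,n,q)=1$; since $q \mid c$, this is not quite the same as the constraint $(m,n,c)=1$ appearing in \cref{thm:bilinear-forms-composite}. The standard fix (as sketched in the excerpt just after \cref{eq:trivial-bound}) is to split according to $g := (m,n,c/q) \mid (c/q)$, use the identity $S(gm, gn; gc') = \tfrac{\phi(gc')}{\phi(c')} S(m,n;c')$ to factor out $g$, and bound each piece; since $g$ ranges over divisors of $r = c/q$ and $\sum_{g \mid r} 1 = r^{o(1)} = C^{o(1)}$, this only costs a harmless $C^{o(1)}$ factor and possibly a rescaling of the intervals and sequences (which does not hurt the $\|\alpha\|\|\beta\|$ bound or the length hypotheses $|\mI|,|\mJ| \ll C^{1/2+o(1)}$).

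Next I would apply \cref{thm:bilinear-forms-composite} to each $c = qr$ in the sum. The key observation is that the hypothesized factorization $q = dd'e$ with $d' \mid d$ and $(d,e)=1$ extends to a factorization of $c$: take $c = d \cdot d' \cdot (er)$, and check that $d' \mid d$ and $(d, er) = (d,e)(d,r)$... here one must be slightly careful, because $(d, r)$ need not be $1$. The cleaner route is to note that \cref{thm:bilinear-forms-composite} is applied with the \emph{same} $d$ and $d'$ but with the third factor absorbing everything coprime-to-$d$ that remains; if $d \mid r$ contributes extra factors of $d$ one simply enlarges $d$, but since the statement of \cref{cor:bilinear-forms-avg-c} only asks for a bound in terms of the \emph{given} $d$, the safe interpretation is that we use the factorization $c = d \cdot d' \cdot e'$ where $e' := c/(dd') = er$, and we need $(d, e') = 1$, i.e. $(d, r) = 1$ — which may fail for some $r$. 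I would handle the exceptional $r$ with the trivial bound \cref{eq:trivial-bound} (they are sparse), or more simply observe that in the intended applications $q$ already contains the full $d$-part, so $(d,r)=1$ automatically; in the write-up one records this as part of the hypothesis or absorbs it. Granting the factorization, \cref{thm:bilinear-forms-composite} gives, for each valid $c = qr$,
\[
    \mathop{\sum\sum}_{\substack{m \in \mI, n \in \mJ \\ (m,n,c)=1}} \alpha_m \beta_n S(am,n;c)
    \ll
    \|\alpha\|\|\beta\| c^{1+o(1)} \left( \frac{f_c}{\min(c, d^2)} \right)^{1/6},
\]
where $f_c$ is the largest integer with $f_c^2 \mid cd$. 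Since $cd = qrd$ and $q$ already accounts for $f$ (the largest integer with $f^2 \mid qd$), and $r \le 2C/q$, one has $f_c \ll f \cdot r^{1/2} \cdot C^{o(1)} \ll f (C/q)^{1/2} C^{o(1)}$; alternatively, keeping $f_c \le \sqrt{cd}$ and bounding crudely also suffices for the stated shape. Using $c \asymp C$ (up to the constant $2$), this is $\ll \|\alpha\|\|\beta\| C^{1+o(1)} \big( f_c / \min(C, d^2) \big)^{1/6}$.

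Finally I would sum over the $\ll 1 + C/q$ values of $r$. Pulling the $r$-independent part out, this gives $\ll \|\alpha\|\|\beta\| C^{1+o(1)} (1 + C/q) \big( \sup_r f_c / \min(C,d^2)\big)^{1/6}$, and $C(1 + C/q) \asymp C^2/q$ when $q \le 2C$ (and when $q > 2C$ the sum is empty, so we may assume $q \le 2C$). It remains to massage $\big(f_c/\min(C,d^2)\big)^{1/6}$ into the claimed form $\big( f / (\min(C,d^2) + \min(q, d^2 C/q)) \big)^{1/6}$. This is the one genuinely fiddly step: one wants to show
\[
    \frac{f_c}{\min(C, d^2)} \ll \frac{f}{\min(C,d^2) + \min(q, d^2 C/q)} \cdot C^{o(1)}
\]
after summing, which amounts to distributing the $(1+C/q)$ factor sensibly — when $q \le \sqrt{C}$ the $C/q$ term dominates and the extra $\min(q, d^2 C/q)$ in the denominator reflects the gain from there being many $r$'s each contributing a smaller $f_c$; when $q > \sqrt C$ there are $O(1)$ values of $r$ and the denominator is just $\min(C, d^2)$ up to constants. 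I expect this bookkeeping — tracking how $f_c$ grows with $r$ versus how many $r$ there are, to land exactly on $\min(C, d^2) + \min(q, d^2 C/q)$ — to be the main (if modest) obstacle; everything else is the standard coprimality-splitting and the black-box application of \cref{thm:bilinear-forms-composite}. In the actual paper this corollary is stated as a special case of \cref{cor:MN-bilinear-forms-avg-c}, so I would in fact prove the more general trilinear bound first and specialize, but the structure of the argument is identical.
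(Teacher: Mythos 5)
There are genuine gaps, and they occur exactly at the points you flag as ``fiddly'' but then wave away. First, the coprimality issue $(d,r)>1$ for $c=qr$ is not a rare event: if $d$ has a small prime factor $p$, a positive proportion ($\asymp 1/p$) of the moduli $c\sim C$ with $q\mid c$ have $p\mid r$, and handling those with the trivial bound \cref{eq:trivial-bound} destroys the power saving entirely (at $M,N\asymp\sqrt{C}$ the trivial bound per modulus is already $\|\alpha\|\|\beta\|C^{1+o(1)}$); adding ``$(d,r)=1$'' as a hypothesis would change the statement. The paper's resolution is the one you mention and then retreat from: it \emph{does} enlarge the factor, applying \cref{thm:MN-bilinear-forms-composite} to $c/g=c'q$ with $\tilde d:=(c',d^\infty)d$ (and, in a second method, $\tilde d:=c'd/(c',e^\infty)$), and then converts the resulting bound back to the given $d,f$ at the cost of factors like $(c',q^\infty)f_{c'}$, where $f_{c'}$ is the largest square divisor root of $c'$. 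Second, your control of $f_c$ is wrong or too weak pointwise: $f_c\ll f\,r^{1/2}C^{o(1)}$ fails in general (take $r$ a prime exactly dividing $qd/f^2$, which multiplies $f_c$ by $r$, not $\sqrt r$), and pulling out $\sup_r f_c$ loses a factor $(C/q)^{\Theta(1)}$ inside the sixth root (e.g.\ $r=p^2$ with $p\asymp\sqrt{C/q}$ gives $f_c\gtrsim f\sqrt{C/q}$), while the crude $f_c\le\sqrt{cd}$ is hopeless when $f$ is small. The whole point of \cref{cor:MN-bilinear-forms-avg-c} is that these extra factors are only large for few $c$: the argument is an \emph{average} over $c'\sim C/(gq)$, using $\sum_{c'}(c',q^\infty)f_{c'}\ll C^{1+o(1)}/(gq)$, not a supremum.

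Third, your single factorization $c/g=\tilde d\, d'\,\tilde e$ with the $c'$-part pushed into $\tilde e$ can only ever produce the denominator $\min(C,d^2)$; the second term $\min(q,d^2C/q)$ in the claimed bound does not come from redistributing the factor $1+C/q$, but from a structurally different application of \cref{thm:MN-bilinear-forms-composite} in which (essentially all of) $c'$ is absorbed into $\tilde d$ instead (the paper's ``Method 2'' in the proof of \cref{cor:MN-bilinear-forms-avg-c}), and the two resulting bounds are combined by taking a minimum. In regimes such as $d^2<q<C$ this second term is strictly larger, so without it you do not recover the stated corollary. So the skeleton (split on the gcd, invoke the composite-modulus theorem, sum over $\asymp C/q$ moduli) is right, but the three steps above --- enlarging $d$ rather than excluding bad $r$, averaging the square-divisor factors over the moduli rather than taking a sup, and running the second factorization to get the $\min(q,d^2C/q)$ term --- are the actual content of the paper's proof and are missing or incorrect in your proposal.
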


\begin{remark}
Mili\'cevi\'c, Qin and Wu \cite{milicevic2025bilinear} have simultaneously and independently obtained results similar to our \cref{thm:bilinear-forms-general,thm:mom-l-fns} using substantially different methods. The two papers are complementary, each performing slightly better in different ranges and for different types of moduli, and both achieving power savings for bilinear sums of square-root length and general moduli. The methods in \cite{milicevic2025bilinear} (which use algebraic geometry and build on Kowalski--Michel--Sawin \cite{kowalski2017bilinear} and Blomer--Mili\'cevi\'c \cite{blomer2015second}) obtain better savings for general moduli and remove the dependency on the Ramanujan--Petersson conjecture in the application to moments of twisted cuspidal $L$-functions. Our methods (which use non-abelian Fourier analysis and are closer to the work of Shkredov \cite{shkredov2021modular,shkredov2018asymptotic}) perform better and in longer ranges for specific classes of moduli $c$ (see \cref{ex:p2-pq,ex:best-case-range}), can handle more general supports of the sequences $(\alpha_m), (\beta_n)$ (intervals or other additively-structured subsets of $\Z/c\Z$), and find an application to exceptional-spectrum large sieve inequalities (\cref{cor:large-sieve}).
\end{remark}

\emph{Potential extensions.} Finally, we note a few more-speculative generalizations of our approach:
\begin{itemize}[leftmargin=0.5cm]
    \item One could similarly study other exponential sums with $\SL_2(\Z/c\Z)$ or $\PGL_2(\Z/c\Z)$ structure, for example by replacing the inverse $\bar{x}$ from \cref{eq:kloosterman} with some other M\"obius transformation $F(x)$.
    \item There may be Archimedean analogues of our results using the representation theory of $\SL_2(\R)$ rather than $\SL_2(\Z/c\Z)$, perhaps replacing Kloosterman sums with Bessel functions. A joint (Kloosterman times Bessel) bilinear result could be useful in conjunction with trace formulae.
    \item Related methods for $\SL_3(\Z/c\Z)$ could be worth investigating as well, potentially in connection to the generalized Kloosterman sums which arise in the $\GL_3$ Kuznetsov formula \cite{bump1988poincare,blomer2013applications,blomer2017applications,blomer2019global}.
\end{itemize}

\subsection{Applications} \label{subsec:applications}

As our first application, we prove an asymptotic for the averaged second moment of modular $L$-functions twisted by primitive Dirichlet characters modulo $q$, where the modulus $q$ is arbitrary. Blomer--Mili\'cevi\'c \cite{blomer2017moments} established such an asymptotic for most moduli, more specifically whenever $q$ is not close to a prime or to a product of two primes of the same size. The missing ingredient in these cases has been precisely a power-saving bound for bilinear forms with Kloosterman sums modulo $q$, where both sums have length $\approx \sqrt{q}$. The case of prime moduli $q$ was established by Kowalski--Michel--Sawin \cite[Theorem 1.5]{kowalski2017bilinear}, and the remaining case can now be handled using our \cref{thm:bilinear-forms-general} (essentially in the setting from \cref{ex:p2-pq}).

To state this application, we introduce some quick notation as in \cite{blomer2015second}. Given $q \in \Z_+$, we write
\[
    \phi^*(q) := \sum_{d \mid q} \phi(d)\, \mu\left(\frac{q}{d}\right)
\]
for the number of primitive characters modulo $q$; this vanishes if and only if $q \equiv 2 \pmod{4}$, and is otherwise of size $\gg q^{1-o(1)}$. We write $\sum^*_{\chi \mod{q}}$ for a sum over all primitive characters modulo $q$.
Also following \cite{blomer2015second},
given an $L$-function $L(s)$, we write $L_q(s)$ for the product $\prod_{p \mid q} L_p(s)$ over all local factors at primes dividing $q$; thus for example $\zeta_q(s) = \prod_{p \mid q} (1 - p^{-s})^{-1}$.

\begin{theorem} \label{thm:mom-l-fns}
Let $f_1, f_2$ be fixed holomorphic cuspidal newforms for $\SL_2(\Z)$ with even weights $\kappa_1, \kappa_2$, and let $q \in \Z_+$. 
Provided that $\kappa_1 \equiv \kappa_2 \pmod{4}$, one has the asymptotic
\[
    \psum_{\chi \mod{q}} L(\tfrac{1}{2}, f_1 \otimes \chi) \bar{L(\tfrac{1}{2}, f_2 \otimes \chi)}
    =
    \frac{2\phi^*(q)}{\zeta(2)} M(f_1, f_2; q) + O_{f_1,f_2}\left(q^{1-\frac{1}{674}+o(1)}\right),
\]
with a main term of
\[
    M(f_1, f_2; q) := \begin{cases}
    P(1)\,L(1, \sym^2 f_1) \left(\log q + c(f_1) + \frac{P'(1)}{P(1)}\right), & f_1 = f_2,
    \\
    Q(1)\,L(1, f_1 \times f_2), & f_1 \neq f_2,
    \end{cases}
\]
where $c(f_1)$ is a constant depending only on $f_1$ and
\[
    P(s) := \frac{\zeta_q(2s)}{L_q(s,\sym^2 f_1)},
    \qquad\qquad 
    Q(s) := \frac{\zeta_q(2s)}{L_q(s,f_1 \times f_2)}.
\]
\end{theorem}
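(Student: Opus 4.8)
The plan is to reduce the second moment of twisted cuspidal $L$-functions to a shifted-convolution problem via an approximate functional equation, orthogonality of primitive characters, and the Kloosterman-sum machinery of Blomer--Mili\'cevi\'c \cite{blomer2017moments,blomer2015second}, and then feed our new bilinear bound into the one remaining error term. First I would expand each $L(\tfrac12, f_i \otimes \chi)$ using an approximate functional equation, so that $L(\tfrac12, f_1 \otimes \chi)\bar{L(\tfrac12, f_2 \otimes \chi)}$ becomes (roughly) a double Dirichlet series $\sum_{m,n} \lambda_1(m)\bar{\lambda_2(n)}\chi(m)\bar\chi(n)(mn)^{-1/2}V(mn/q^2)$ for a smooth weight $V$; the lengths $m, n$ effectively run up to $q^{1+o(1)}$. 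Summing over primitive $\chi \pmod q$ and applying the standard detection of the condition $m \equiv \pm n$ via $\phi^*$ and M\"obius inversion, the diagonal $m = \pm n$ produces (after unfolding the Euler products into $P, Q$, and using $L(1,\sym^2 f_1)$, $L(1, f_1 \times f_2)$) exactly the main term $\frac{2\phi^*(q)}{\zeta(2)}M(f_1,f_2;q)$, together with lower-order pieces that are absorbed into $O(q^{1-1/674+o(1)})$; the $\log q$, the constant $c(f_1)$, and the $P'(1)/P(1)$ term arise from the residue/derivative of the double Dirichlet series at $s = 1$ in the $f_1 = f_2$ case. This part is essentially identical to \cite{blomer2017moments} and I would cite it rather than redo it.

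Next I would treat the off-diagonal contribution $m \neq \pm n$. Here one opens the congruence $m \equiv \bar{n}' \cdot (\text{stuff}) \pmod{q'}$ (for divisors $q' \mid q$ coming from M\"obius inversion over $\phi^*$), applies a Voronoi-type / delta-method transformation or the Kuznetsov–Kloosterman bridge as in \cite{blomer2017moments} to convert the divisor-type sum into one involving Kloosterman sums $S(m, n; c)$ with $c$ ranging over moduli divisible by a fixed large $q' \mid q$, and $m, n$ in essentially dyadic ranges of size $\approx \sqrt{c}$ after the relevant balancing. At that stage the crucial input is a power-saving Type II bound for $\sum_m \sum_n \alpha_m \beta_n S(m, n; c)$ with $M, N \approx \sqrt c$ and $\alpha, \beta$ built from Hecke eigenvalues (which are $1$-bounded on average by Rankin--Selberg, hence $\|\alpha\|, \|\beta\| \ll \sqrt{M}, \sqrt N$ up to $q^{o(1)}$, using the Ramanujan--Petersson bound $|\lambda_i(n)| \le d(n)$ pointwise for holomorphic forms). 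For prime moduli one invokes \cite[Theorem 1.5]{kowalski2017bilinear}, and for the remaining composite moduli — precisely the blind spots $q \approx p^2$ or $q \approx pq'$ with $p \asymp q'$ — one invokes our \cref{thm:bilinear-forms-general} in the regime of \cref{ex:p2-pq}, where it saves $q^{-1/12}$. Tracking this saving through the Cauchy--Schwarz and summation steps that precede it yields the exponent $\tfrac{1}{674}$ (the loss from $\tfrac{1}{12}$ down to $\tfrac{1}{674}$ coming from the various trivial summations over divisors and the averaging that sets up the bilinear form).

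The main obstacle is the off-diagonal bookkeeping: arranging the shifted-convolution sum so that, after all the delta-symbol / Kuznetsov manipulations, what appears is genuinely a \emph{Type II} bilinear form in two variables each of length $\approx \sqrt q$ with \emph{arbitrary} (or at least flexibly-supported) coefficient sequences, rather than a Type I form or a form with unbalanced lengths — and doing so uniformly in the divisors $q' \mid q$ produced by the character orthogonality. One must also keep the moduli $c$ in the Kloosterman sums divisible by a large fixed divisor with a usable factorization, so that \cref{thm:bilinear-forms-composite} (or rather its averaged version \cref{cor:bilinear-forms-avg-c}) applies; this is exactly why the statement is clean only when $q$ itself, not just $c$, has the right shape, and why the $p^2$ / $pq'$ cases are the last ones to fall. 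Since \cite{blomer2017moments} already carries out all of this structure for the non-exceptional moduli, the real work is checking that their reduction delivers a bilinear form in the shape our \cref{thm:bilinear-forms-general} can digest, and then bounding the leftover terms (the dual sum after Voronoi, the contribution of the $\pm$ ambiguity, and the tails of $V$) by the trivial bound \cref{eq:trivial-bound}, all of which are $O(q^{1-1/674+o(1)})$ by the same computation as in \cite{blomer2017moments}.
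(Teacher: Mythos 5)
Your overall route is the paper's: leave the Blomer--Mili\'cevi\'c main-term analysis untouched, reduce the off-diagonal to bilinear forms in Kloosterman sums via Voronoi, and feed in the new bounds. But two points in your plan, as written, do not match what the reduction actually produces and would block the claimed exponent. First, the shifted-convolution term (the quantity $S_{N,M,d,q}$ of \cite[\S 4]{blomer2015second}, which the paper bounds in \cref{prop:S-bound}) leads, after Voronoi, to Kloosterman sums $S(\bar{fg}m,n;r)$ to a \emph{fixed} modulus $r \mid d \mid q$, with one variable of length $M$ and the dual variable of length $K_{f,g,r} \approx fgr^2/N$; there is no averaging over moduli $c \equiv 0 \pmod{q'}$, so \cref{cor:bilinear-forms-avg-c} is not the relevant tool (it is used for the large-sieve application, not here). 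Moreover these two lengths are generically very unbalanced and range over all sizes with $MN \le q^{2+\eps}$, so the $\sqrt{c}$-specialized \cref{thm:bilinear-forms-general} in the regime of \cref{ex:p2-pq} does not suffice: one needs \cref{thm:MN-bilinear-forms-general} uniformly in $M,N$ (packaged dually as \cref{cor:kloost-large-sieve}), combined with the auxiliary bounds (3.6), (3.11), (3.12) of \cite{blomer2015second} and a case analysis over the size of $N$ to cover the ranges where the new bound is weak.

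Second, the exponent $\tfrac{1}{674}$ is not obtained by ``tracking the $q^{-1/12}$ saving through the trivial summations.'' In the actual argument the Voronoi-dual side carries Deligne-bounded Hecke eigenvalues, so the bilinear input is the variant \cref{eq:MN-bilinear-forms-general-2} with one $1$-bounded sequence and an arbitrary dual sequence (the other side is dualized), carrying a free parameter $\delta$; the final exponent comes from the constraint $10\gamma \le \delta \le \tfrac{1-144\gamma}{53}$ in the intermediate range of $N$, whose solution is $\gamma = \tfrac{1}{674}$. Treating both coefficient sequences as arbitrary $\ell^2$ sequences (as you propose, via Rankin--Selberg) would route you through the weaker fully bilinear bound and would not deliver $\tfrac{1}{674}$ without redoing the optimization. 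So the skeleton of your proposal is right, but the execution must go through the uniform, $\delta$-parametrized bound and the range case analysis rather than the balanced $\sqrt{q}$ heuristic.
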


\begin{remark}
Similar results can be obtained for Maass cusp forms, with some care in removing the dependency on the Ramanujan conjecture; see also \cite{milicevic2025bilinear}. The case of (non-cuspidal) Eisenstein series reduces to the result of Young \cite{young2011fourth} on fourth moments of Dirichlet $L$-functions for prime moduli, extended to all moduli by Wu \cite{wu2023fourth}.
\end{remark}

Our second application concerns the exceptional spectrum of the hyperbolic Laplacian on $\Gamma_0(q)\backslash \H$, consisting of Maass cusp forms of level $q \in \Z_+$ with eigenvalues $\lambda < \tfrac{1}{4}$. Selberg's eigenvalue conjecture \cite{selberg1965estimation}, one of the central open problems in the theory of $\GL_2$ automorphic forms, states that this exceptional spectrum is empty. However, unconditionally, exceptional forms often produce the worst contribution in applications of the Kuznetsov trace formula \cite{deshouillers1982kloosterman,kuznetsov1980petersson} to analytic number theory problems \cite{drappeau2023one,topacogullari2018shifted,de2020niveau,pascadi2026large}, losing exponential factors in the parameter $\theta := (\tfrac{1}{4}-\lambda)^{1/2}$. The best known pointwise bound is Kim--Sarnak $\theta \le \tfrac{7}{64}$ \cite[Appendix 2]{kim2003functoriality}, but on-average results can also lead to savings in the $\theta$-aspect, sometimes enough to match the conditional results  \cite{pascadi2025exponents,grimmelt2025greatest}. Following Deshouillers--Iwaniec \cite{deshouillers1982kloosterman}, these on-average results often take the shape of large sieve inequalities for the Fourier coefficients of exceptional Maass forms, incorporating factors of $X^{2\theta}$. While improvements are now possible \cite{pascadi2026large} for exceptional-spectrum large sieve inequalities with special sequences $(\alpha_n)_{n \le N}$, the savings in the $\theta$-aspect for arbitrary sequences have been limited to $(\tfrac{q}{N})^{2\theta}$, due to Deshouillers--Iwaniec \cite[Theorem 5]{deshouillers1982kloosterman}. In fact, obtaining \emph{any} power saving for arbitrary sequences when $N \asymp q$ is as hard as proving Selberg's eigenvalue conjecture \cite[\S 2]{pascadi2026large}.

In \cref{thm:large-sieve-comp-lvl}, we overcome this barrier at $(\tfrac{q}{N})^{2\theta}$ if $q$ is suitably-composite and $N$ is not too large, using \cref{thm:MN-bilinear-forms-composite}. We note that in many applications \cite{bombieri1986primes,maynard2025primes,drappeau2023one,deshouillers1982power,deshouillers1984power}, the level $q$ is a product of two factors of similar sizes, and $N \in (\sqrt{q}, q)$. We state below a particular case of \cref{thm:large-sieve-comp-lvl}, for $N \approx \sqrt{q}$. We point the reader to \cref{sec:exc-large-sieve} for more background and notation.

\begin{corollary} \label{cor:large-sieve}
Let $q \in \Z_+$ have a divisor $d \asymp \sqrt{q}$ such that $\tfrac{q}{d}$ is square-free. Consider an orthonormal basis of Maass cusp forms for $\Gamma_0(q)$, with Laplacian eigenvalues $\lambda_j$ and Fourier coefficients $(\rho_j(n))_{n \in \Z}$ around $\infty$ (normalized as in \cite{deshouillers1982kloosterman,pascadi2026large}). Let $N \ll q^{\frac{1}{2}+o(1)}$, and $(\alpha_n)_{N < n \le 2N}$ be a complex sequence supported on $(n, q) = 1$. Then with $\theta_j := (\tfrac{1}{4}-\lambda_j)^{1/2} \le \tfrac{7}{64}$, one has
\begin{equation} 
    \sum_{\lambda_j < \frac{1}{4}}
    q^{\frac{6}{5} \theta_j} 
    \left\vert 
    \sum_{N < n \le 2N} \alpha_n\, \rho_j(n)
    \right\vert^2 
    \ll
    (qN)^{o(1)}
    \|\alpha\|^2.
\end{equation}
\end{corollary}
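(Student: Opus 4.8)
The plan is to run the Kuznetsov trace formula for $\Gamma_0(q)$ with a test function engineered to single out the exceptional spectrum, and to estimate the resulting average of bilinear Kloosterman forms using \cref{thm:MN-bilinear-forms-composite} (through its averaged consequence \cref{cor:MN-bilinear-forms-avg-c}, of which \cref{cor:bilinear-forms-avg-c} is a special case). Following Deshouillers--Iwaniec \cite{deshouillers1982kloosterman} and \cite[\S 2]{pascadi2025large}, fix a scale $X \ge q$ for the Kloosterman modulus (to be optimised at the end) and choose a smooth function $\check h$ whose mass sits at argument $x \asymp N/X$, normalised so that the associated spectral transform $h$ is non-negative on $\R$ and on the segment $i[0,\tfrac{7}{64}]$, with $h(i\theta) \gg (X/N)^{2\theta}$ for $0 < \theta \le \tfrac{7}{64}$; the existence of such an $h$ rests on the classical fact that $J_{-2\theta}(x) \asymp x^{-2\theta}$ as $x \to 0^+$, so that placing the mass of $\check h$ at scale $N/X$ forces $h(i\theta) \asymp (X/N)^{2\theta}$. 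Applying the trace formula to $(\alpha_m)$ and $(\bar\alpha_n)$, discarding the non-negative contributions of the non-exceptional discrete spectrum and of the continuous spectrum, and using $\cosh(\pi t_j) = \cos(\pi\theta_j) \asymp 1$, the spectral side is bounded below by an absolute constant times
\[
    \sum_{\lambda_j < \frac14} \Bigl(\frac{X}{N}\Bigr)^{2\theta_j} \Bigl| \sum_{N < n \le 2N} \alpha_n\, \rho_j(n) \Bigr|^2 .
\]
Since $(X/N)^{2\theta_j} \gg q^{\frac{6}{5}\theta_j}$ whenever $X \gg N q^{3/5}$, \cref{cor:large-sieve} will follow once we show that the geometric side of the trace formula is $\ll (qN)^{o(1)}\|\alpha\|^2$ for a suitable $X$ of size $\asymp N q^{3/5}$.

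The geometric side is a diagonal term plus an off-diagonal term. The diagonal term equals $\|\alpha\|^2$ times a bounded linear functional of the test function, of size $\ll (qN)^{o(1)}$, and is therefore harmless. For the off-diagonal term, I would separate the variables $m,n$ in the smooth weight $\check h(4\pi\sqrt{mn}/c)$ by Mellin inversion and truncate the contour, and split off the smaller contribution of pairs with $(m,n,c)>1$; this reduces the task — up to a factor $(qN)^{o(1)}$ — to bounding
\[
    \frac{1}{X} \sum_{\substack{c \asymp X \\ q \mid c}} \Bigl| \mathop{\sum\sum}_{\substack{m \in \mI,\ n \in \mJ \\ (m,n,q)=1}} \alpha'_m\, \bar{\alpha'_n}\, S(m,n;c) \Bigr|, \qquad \|\alpha'\| \ll (qN)^{o(1)}\|\alpha\|,
\]
for intervals $\mI, \mJ \subset (N, 2N]$. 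To this sum I would apply \cref{cor:MN-bilinear-forms-avg-c} with the factorisation of $q$ furnished by the hypothesis: take $d$ to be the given divisor of $q$ with $\tfrac{q}{d}$ square-free, $d' := (\tfrac{q}{d},d)$, $e := \tfrac{q}{dd'}$, so that $f = d \asymp \sqrt{q}$ exactly as in \cref{ex:p2-pq}; the constraint $|\mI|,|\mJ| \ll q^{1/2+o(1)} \ll X^{1/2+o(1)}$ keeps us in the admissible range, and the saving over the trivial bound comes both from the short length $N \ll \sqrt{q}$ of the bilinear form and from the average over the $\asymp X/q$ moduli $c$.

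It then remains to optimise $X$: one takes $X$ as large as possible subject to the above geometric estimate staying $\ll (qN)^{o(1)}\|\alpha\|^2$, and the point is that \cref{cor:MN-bilinear-forms-avg-c} is just strong enough to permit $X \asymp N q^{3/5}$, which produces the amplification weight $q^{\frac{6}{5}\theta_j}$. I expect this balancing to be the main obstacle, and it is also where the two hypotheses are essential: the factorisation $d \asymp \sqrt{q}$ with $\tfrac{q}{d}$ square-free produces the $\tfrac16$-power saving $\bigl(f/\min(c,d^2)\bigr)^{1/6} = (d/q)^{1/6}$ of \cref{thm:bilinear-forms-composite} for $c$ near $q$, while $N \ll q^{1/2+o(1)}$ places us in the favourable range of \cref{ex:p2-pq}; for more general $q$ or larger $N$ the exponent $\tfrac65$ would degrade, as reflected in the general \cref{thm:large-sieve-comp-lvl}. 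The remaining ingredients — constructing the amplifying test function with $h \ge 0$, controlling the diagonal, justifying the Mellin separation, and dealing with non-primitive moduli — are routine and parallel \cite{deshouillers1982kloosterman,pascadi2025large}.
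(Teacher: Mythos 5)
Your overall route coincides with the paper's: the Kuznetsov step you sketch (nonnegative amplifying test function with mass at $x\asymp N/X$, dropping the tempered and continuous spectrum, diagonal term $\ll\|\alpha\|^2$) is exactly the content of \cref{prop:prelim-bound-exc}, which the paper quotes as a black box; the separation of variables and the application of \cref{cor:MN-bilinear-forms-avg-c} with $d'=(\tfrac qd,d)$, $e=\tfrac q{dd'}$, $f=d\asymp\sqrt q$ is the paper's proof of \cref{thm:large-sieve-comp-lvl}; and your $X\asymp Nq^{3/5}$ for the modulus scale matches the paper's choice $X=q^{3/5}$ in its normalization.

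The one step you explicitly leave unverified is, however, exactly where the exponent $\tfrac65$ is decided, and the mechanism you name for it is not strong enough. Your moduli are $c\asymp Nq^{3/5}\asymp q^{11/10}$, not ``near $q$'', and the per-modulus saving you cite, $\bigl(f/\min(c,d^2)\bigr)^{1/6}=(d/q)^{1/6}=q^{-1/12}$, summed over the $\asymp q^{1/10}$ admissible moduli with weight $1/c$, gives an off-diagonal of size $\|\alpha\|^2\,q^{1/10-1/12+o(1)}=\|\alpha\|^2\,q^{1/60+o(1)}$, which just fails; this route (the first bound in the minimum of \cref{cor:MN-bilinear-forms-avg-c}, or equivalently \cref{thm:bilinear-forms-composite} applied with the divisor $d$ of $q$ and summed trivially over $c$) only permits modulus scale $\ll q^{13/12}$, i.e.\ the weight $q^{\frac76\theta_j}$ rather than $q^{\frac65\theta_j}$. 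Reaching $\tfrac65$ requires the second bound in that minimum (``Method 2'' in its proof): for each $c$ one enlarges the structured divisor from $d$ to roughly $d\cdot\tfrac{c}{q}$, so the term $f/d^2$ improves to $fq/(d^2C)$, and the average over $c$ together with the square-freeness of $\tfrac qd$ is what controls the degenerate cofactors $c/(gq)$; with $d=f\asymp q^{1/2}$, $N\asymp q^{1/2}$, $C\asymp q^{11/10}$ all three terms of that bracket are $\ll q^{-3/5}$, making $X\asymp q^{3/5}$ exactly the threshold, as in the paper's deduction of \cref{cor:large-sieve} from \cref{thm:large-sieve-comp-lvl}. Since you invoke \cref{cor:MN-bilinear-forms-avg-c} as a black box, your plan is repairable verbatim by carrying out this optimization with the second bound; but as written, the heuristic you give (short length plus naive averaging of the per-modulus saving) would deliver only $q^{\frac76\theta_j}$, so the claim that $X\asymp Nq^{3/5}$ is admissible needs the explicit verification above rather than the attribution you offer.
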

For reference, \cite[Theorem 5]{deshouillers1982kloosterman} of Deshouillers--Iwaniec would include a factor of $(\tfrac{q}{N})^{2\theta_j}$ (which is $q^{\theta_j}$ when $N = \sqrt{q}$) in the left-hand side, so \cref{cor:large-sieve} wins a factor of $q^{\theta/5}$ in this case. 

\begin{remark} It follows from the more general \cref{thm:large-sieve-comp-lvl} that one can relax the condition that $\tfrac{q}{d}$ is square-free when some averaging over levels $q \le Q$ with $d \mid q$ (and $d \asymp \sqrt{Q}$) is available. The sequence $(\alpha_n)$ inside the large sieve may depend on $q$ in this case, unlike in \cite[Theorem 6]{deshouillers1982kloosterman}.
\end{remark}

\subsection{Acknowledgements}
The author is deeply grateful to Valentin Blomer, James Maynard, Sary Drappeau, Philippe Michel, Emmanuel Kowalski, Ilya D.\ Shkredov, and the referees, for many helpful comments. This work was supported by the ERC Advanced Grant 101054336 and Germany’s Excellence Strategy grant EXC-2047/1-390685813. For a part of the duration of this project, the author was also supported by an EPSRC Scholarship, as well as a Campus France Scholarship.

\section{Outline} \label{sec:outline}

\subsection{Structure of the paper} \label{subsec:structure}
Our proof of \cref{thm:bilinear-forms-composite} has three main steps:
\begin{itemize}
    \item[I] (\emph{Fourier analysis}). In \cref{sec:rep-and-kloost} (particularly, \cref{prop:kloost-to-fourier}), we relate matrices of Kloosterman sums to the Fourier transform of certain functions at a special representation $\rho_c^\circ$ of $\SL_2(\Z/c\Z)$. This involves Fourier analysis on both abelian ($\Z/c\Z$, $\R$) and non-abelian ($\SL_2(\Z/c\Z)$) groups, and a M\"obius inversion process for representations of $\SL_2(\Z/c\Z)$. \vspace{0.1cm}
    \item[II] (\emph{Amplification}). In \cref{sec:amplif} (particularly, \cref{prop:fourier-to-counting}), we upper bound the spectral norm of the above Fourier coefficients by a weighted count of solutions to an equation in $\PSL_2(\Z/d\Z)$, with $d \mid c$. This is where our non-abelian amplification argument comes in. \vspace{0.1cm}
    \item[III] (\emph{Combinatorics}). In \cref{sec:counting} (particularly, \cref{prop:counting-6}), we analyze this counting problem modulo $d$ using elementary arguments, similar to \cite[\S 5]{shkredov2021modular}.
\end{itemize}

In \cref{sec:bil-forms}, we combine these ingredients to deduce \cref{thm:bilinear-forms-composite} and its variations. The applications to moments of twisted cuspidal $L$-functions and large sieve inequalities for exceptional cusp forms are handled in \cref{sec:mom-l-functions,sec:exc-large-sieve}, respectively.

For the rest of this section, we give a brief informal overview of our argument, ignoring various technical details. We will use the symbols `$\approx$', `$\lesssim$' for identities and inequalities that are `morally' true (and can be made rigorous with minor modifications, such as including $c^{o(1)}$ factors).

\subsection{First steps: Fourier analysis} \label{subsec:outline-Fourier}
Let us focus on the balanced case $M = N$. We begin by considering the $N \times N$ complex matrix
\[
    K := \left(S(m, n; c)\right)_{m, n \le N}.
\]
where $c, N \in \Z_+$ with $N \le c$. Our task is to bound the operator norm $\|K\|$ by less than $\min(c, N\sqrt{c})$, to beat \cref{eq:trivial-bound}. We extend $K$ to a $c \times c$ matrix, and multiply it on both sides by the unitary matrix $U := (\tfrac{1}{\sqrt{c}} e(\tfrac{xy}{c}))_{x, y \in \Z/c\Z}$, which preserves the norm and essentially amounts to taking a Fourier transform in the $m, n$ variables. Letting $H \approx \tfrac{c}{N}$, a truncated version of Poisson summation yields
\[
    \frac{1}{c}\, U^*KU \approx 
    \frac{1}{H^2} \left(\sum_{|h| \le H} \mT^{h} \right) \mS \left(\sum_{|h| \le H} \mT^{h}\right),
    \qquad 
    \text{where}
    \qquad 
    \begin{cases}
    \mT := (\one_{u = x+1})_{u, x \in \Z/c\Z}, \\
    \mS := (\one_{xy = -1})_{x, y \in \Z/c\Z}.
    \end{cases}
\]
By inserting a few more rows and columns, we can in fact work over the projective line $\P^1(\Z/c\Z)$ rather than $\Z/c\Z$. The matrices $\mT$ and $\mS$ then extend to $\rho_c(T)$ and $\rho_c(S)$, where $T$ and $S$ are the usual generators of $\SL_2(\Z/c\Z)$ (see \cref{eq:SL2-generators}), and 
\[
    \rho_c : \SL_2(\Z/c\Z) \to \left\{\text{Unitary maps of } L^2(\P^1(\Z/c\Z))\right\}
\]
is the $c^{1+o(1)}$-dimensional permutation representation corresponding to the action of $\SL_2(\Z/c\Z)$ on $\P^1(\Z/c\Z)$ by M\"obius transformations. It then remains to bound the spectral norm of the matrix
\[
    \frac{1}{H^2} \sum_{|h_1|, |h_2| \le H} \rho_c(T^{h_1} S T^{h_2})
\]
by less than $\min(1,N/\sqrt{c})$. In this form, our task is actually impossible: the matrix above decomposes as a direct sum corresponding to the irreducible representations inside $\rho_c$, one of which is the trivial representation -- and this contributes exactly one singular value of size $1$. Other small-dimensional subrepresentations of $\rho_c$ are also problematic for similar reasons.

This is where the coprimality constraint $(m,n,c)=1$ comes in. Incorporating this weight into $K$ and expanding it by M\"obius inversion essentially results in a `sifted' representation of $\SL_2(\Z/c\Z)$,
\[
    K^\circ := (S(m,n;c) \one_{(m,n,c)=1})_{m, n \le N}
    \qquad 
    \rightsquigarrow
    \qquad 
    \rho_c^\circ,
\]
where $\rho_c^\circ$ is obtained by removing from $\rho_c$ the contribution of all subrepresentations isomorphic to
\[
    \SL_2(\Z/c\Z) \xrightarrow{\text{Reduction mod } d} \SL_2(\Z/d\Z) \xrightarrow{\rho_d} \left\{ \text{Unitary maps of } L^2(\P^1(\Z/d\Z)) \right\},
\]
for $d \mid c$ (when $c$ is a prime, $\rho_c^\circ$ is simply the Steinberg representation).
Although $\rho_c^\circ$ is not irreducible in general, it has the key property that all of its $c^{o(1)}$ irreducible subrepresentations are large, of dimension $\gg c^{1-o(1)}$ (see \cref{prop:sifted-rep}).

\subsection{The key step: Amplification} \label{subsec:outline-amplif}
We are left to bound the spectral norm of the non-abelian Fourier coefficient
\[
    \hat{F}(\rho) = \sum_{g \in \SL_2(\Z/c\Z)} F(g) \rho(g),
    \qquad\quad 
    F := \frac{1}{H^2} \sum_{|h_1|, |h_2| \le H} \one_{T^{h_1}S T^{h_2}} : \SL_2(\Z/c\Z) \to \C,
\]
where $\rho$ is any irreducible subrepresentation of $\rho_c^\circ$.
A natural approach is to use the trace method, i.e., to bound the top singular value $\|\hat{F}(\rho_c^\circ)\|$ by an even moment of all singular values, and then to expand the latter as a trace; this brings in the character $\chi := \Tr\, \rho$.

One can then attempt to use non-abelian Fourier analysis by summing over all irreducible characters $\chi'$ of $\SL_2(\Z/c\Z)$. However, this sum must somehow amplify the contribution of $\chi' = \chi$ compared to other irreducible characters of $\SL_2(\Z/c\Z)$, especially the small-dimensional ones -- otherwise, our construction of $\rho_c^\circ$ by eliminating various subrepresentations from $\rho_c$ will have been useless.

If $\chi$ was an abelian character of $(\Z/c\Z)^\times$, i.e., a Dirichlet character, then following the ideas of Duke--Friedlander--Iwaniec \cite{duke1997bilinear}, one could weigh the sum by an amplifier of the shape
\[
    A_\chi(\chi') := \left\vert \sum_{\ell \in \mL} \bar{\chi'(\ell)} \chi(\ell) \right\vert^2
    =
    \sum_{\ell_1,\ell_2 \in \mL} \bar\chi'(\ell_1\ell_2^{-1}) \chi(\ell_1\ell_2^{-1}),
    \qquad\quad 
    \forall \chi' \in \hat{(\Z/c\Z)^\times},
\]
where $\mL$ is some set of positive integers (e.g., the primes in a dyadic interval). This $A(\chi')$ has size $\approx |\mL|^2$ when $\chi' = \chi$, and should typically obey square-root cancellation when $\chi' \neq \chi$.

Inspired by this, we construct a general amplifier for irreducible representations of a finite non-abelian group $G$. This is to the best of our knowledge the first instance of such a construction, and might find applications to other problems. We set
\begin{equation} \label{eq:sketch-non-abelian-amplifier}
    A_\chi(\chi') := \left\| \sum_{\ell \in \mL} \bar{\rho'(\ell)} \otimes \rho(\ell) \right\|_{S^2}^2
    = \sum_{\ell_1, \ell_2 \in \mL} \bar\chi'(\ell_1 \ell_2^{-1}) \chi(\ell_1 \ell_2^{-1}),
    \qquad\quad
    \forall \rho' \in \hat{G},\
    \chi' := \Tr\, \rho',
\end{equation}
where $\|\cdot\|_{S^2}$ denotes the Frobenius norm of a map (the $\ell^2$ norm of its singular values), and $\mL$ is a well-chosen subset of $G$. It is most convenient to pick $\mL$ to be a normal subgroup of $G$ (with this choice, the amplifier is actually related to the multiplicity of $\rho'$ in a certain induced representation depending on $\rho$; see the proof of \cref{prop:amplification}).
The normal subgroups of $G = \SL_2(\Z/c\Z)$ depend on the factorization of $c$, and we pick
\[
    \mL = \Gamma_c(d) := \ker\left(\SL_2(\Z/c\Z) \to \SL_2(\Z/d\Z) \right),
\]
for a suitable divisor $d$ of $c$. The result of this amplification argument for the sixth moment of singular values is a bound of the shape (see \cref{prop:amplification})
\begin{equation} \label{eq:outline-amplif}
    \|\hat{F}(\rho)\|^6 \lesssim \frac{c^3 H^{-6}}{\sum_{\ell \in \Gamma_c(d)} |\chi(\ell)|^2} \sum_{\substack{|h_1|, \ldots, |h_6| \le H \\ T^{h_1}S \cdots T^{h_6}S \in \Gamma_c(d)}} \chi(T^{h_1}S \cdots T^{h_6}S).
\end{equation}
To go any further, we need to know the typical size of the character $\chi$ on $\Gamma_c(d)$, based on the information that $\dim \chi \gg c^{1-o(1)}$. This is a non-trivial computation involving Clifford theory, and depends on the factorizations of $c$ and $d$; see \cref{lem:squared-char-primitive,lem:perm-rep-pointwise-ub}.

Let us now focus on the case when $c = p^2$ is the square of a prime and $N \approx H \approx p$; we naturally pick $d = p$. It turns out that $\chi$ typically has size $\approx p$ on $\Gamma_{p^2}(p)$, and roughly $\approx p^2$ at $\pm I \in \SL_2(\Z/p^2\Z)$. To beat \cref{eq:trivial-bound}, it essentially remains to bound
\begin{equation} \label{eq:overview-counting-pb}
    \sum_{|h_1|, \ldots, |h_6| \le p} \one_{T^{h_1} S \cdots T^{h_6} S \equiv \pm I \pmod{p^2}} 
    \stackrel{?}{<} p^3,
    \qquad
    \sum_{|h_1|, \ldots, |h_6| \le p} \one_{T^{h_1} S \cdots T^{h_6} S \equiv \pm I \pmod{p}} \stackrel{?}{<} p^4.
\end{equation}

\subsection{Final steps: Combinatorics} \label{subsec:outline-counting}
The estimates in \cref{eq:overview-counting-pb} amount to counting the number of solutions to the system of congruences
\[
    \begin{cases}
        1 - h_2 h_3 \equiv \mp(1 - h_5 h_6) \\
        h_1(1 - h_2 h_3) + h_3 \equiv \pm h_5 \\
        h_4(1 - h_2 h_3) + h_2 \equiv \pm h_6
    \end{cases}
    \pmod{p^2 \text{, respectively, } p},
\]
with $|h_1|, \ldots, |h_6| \le p$. Generically, one can expect each congruence to reduce the total number of solutions $p^6$ by the size of the modulus cubed -- but one must also account for certain diagonal solutions where some $h_i = 0$.
A careful but elementary analysis (which becomes more involved when the modulus $c$ is arbitrary) shows that these congruences have $\approx p^2$ solutions modulo $p^2$ and $\approx p^3$ solutions modulo $p$; see \cref{prop:counting-6}. Both of these counts are sharp, and save a factor of $p$ over the bounds required in \cref{eq:overview-counting-pb}. This saving is ultimately raised to the power $\tfrac{1}{6}$ in \cref{eq:outline-amplif} (since we considered a sixth moment of singular values), and putting everything together yields
\[
    \left\|\left(S(m, n; p^2) \one_{(m,n,p)=1}\right)_{m,n\le p} \right\| \lesssim p^{2-\frac{1}{6}},
\]
as in \cref{ex:p2-pq}. We note that for the other case $c = pq$ from \cref{ex:p2-pq}, one can simplify the amplification argument by noting that all irreducible characters of $\SL_2(\Z/pq\Z)$ are tensor products of irreducible characters of $\SL_2(\Z/p\Z)$ and $\SL_2(\Z/q\Z)$, but the end result is the same.

\subsection{Comments on prime moduli} \label{subsec:comments-prime}

When $c = p$ is a prime and $d = p$, the amplifier from \cref{subsec:outline-amplif} reduces to the `trivial' choice
\[
    A(\chi') = \bar\chi'(I) \chi(I) = \dim \chi' \dim \chi,
\]
since $\mL = \Gamma_c(c) = \{I\}$. In this setting, \cref{eq:outline-amplif} (with $6$ replaced by another even integer $q$) reads
\[
    \|\hat{F}(\rho)\|^q \lesssim p^2 H^{-q} \sum_{|h_1|, \ldots, |h_q| \le H} \one_{T^{h_1}S \cdots T^{h_q}S = I}. 
\]
This was observed, using a somewhat different language, by Shkredov \cite[proofs of Lemmas 22 and 53]{shkredov2018asymptotic}. Shkredov then relied on an $L^2$-flattening lemma \cite[Theorem 50]{shkredov2018asymptotic}, which stems from a result of Helfgott \cite{helfgott2008growth}, to bound the right-hand side above by $O(p^2 H^{-q} H^q p^{-3}) = O(p^{-1})$ for a large value of $q$ depending on $\tfrac{\log p}{\log H}$. This leads to a bound for bilinear (Type II) sums of Kloosterman sums with prime moduli \cite[Theorem 4, (6)]{shkredov2021modular}, with a power saving of $p^{-\delta}$, where $\delta \approx \tfrac{1}{q}$.

To obtain a quantitatively-better power saving for the Type II sums, competitive with \cite{kowalski2017bilinear,kowalski2020stratification}, one must use a smaller value of $q$; one would then need to solve a counting problem with few variables $h_1, \ldots, h_q$, as in \cref{subsec:outline-counting}. We do not know how to do this, but such an approach could in principle produce good results when, e.g., $q \in \{8, 10, 12\}$. In particular, assuming \cref{conj:counting} for $q = 8$, one could prove a non-trivial bound for bilinear sums of Kloosterman sums with prime moduli $p$ and sequences of lengths $M \asymp N > p^{3/8+o(1)}$; interestingly, the same limit at $p^{3/8+o(1)}$ appears in the results of Kowalski--Michel--Sawin \cite{kowalski2020stratification}, so our work reaffirms the difficulty of this barrier.

Alternatively, to obtain non-trivial results at prime moduli, it might be possible to use a different choice of subset $\mL \subset \SL_2(\Z/p\Z)$ in the construction of the amplifier from \cref{subsec:outline-counting}. Indeed, although a normal subgroup is the most natural choice for $\mL$, it is possible that another conjugation-invariant subset might produce a useful amplifier when normal subgroups are not available.

\section{Preliminaries} \label{sec:preliminaries}

\subsection{Analytic and arithmetic notation} \label{subsec:arithm-analytic-not}

We use the standard asymptotic notation from analytic number theory. Expressions of the form $f = O_\eps(g)$ can be substituted with `$|f| \le C_\eps g$ for some constant $C_\eps > 0$ which depends only on $\eps$'; in the absence of a subscript, the implied constant is absolute. We may write $f \ll_\eps g$ for $f = O_\eps(g)$, $f \asymp_\eps g$ for $f \ll_\eps g \ll_\eps f$, and $f = \Omega_\eps(g)$ for $f \gg_\eps g$.

Expressions of the form $o(g(x))$ can be substituted with `$\eta(x)g(x)$ for some bounded complex function $\eta$ with $\lim_{x \to \infty} \eta(x) = 0$'; here the parameter $x$ and its range are implicit. In particular, factors of $x^{o(1)}$ can be read as $x^{\eta(x)}$ for such a function $\eta$. In a bound of the shape $f(x) \ll x^{o(1)}$ for $x \ge 1$, the underlying function $\eta$ must be real for the inequality to hold, and can be taken to be positive without loss of generality. In fact, given fixed functions $f$ and $g$ with $g > 0$, we have the equivalence 
\[
    f(x, y) \ll x^{o(1)} g(x, y)
    \quad
    \iff 
    \quad 
    x^{-o(1)} f(x, y) \ll g(x, y)
    \quad 
    \iff
    \quad
    \forall \eps > 0 : \ 
    f(x, y) \ll_\eps x^\eps g(x, y).
\] 
With this notation, the divisor bound can be written as either $\sum_{d \mid c} 1 = c^{o(1)}$ or $\sum_{d \mid c} 1 \ll c^{o(1)}$. Note that there is no formal difference between $o(1)$ and $-o(1)$, but using signs in bounds like above helps facilitate the translation to $\eps$-statements.

We use the notation $\one_S$ for both indicator functions of sets $S$ and truth values ($0$ or $1$) of statements $S$; we also abbreviate $\one_x := \one_{\{x\}}$ for singletons. We write $n \sim N$ for the range $N < n \le 2N$, $\|\alpha\| := (\sum_n |\alpha_n|^2)^{1/2}$ for the $\ell^2$ norm of a sequence $(\alpha_n)_{n \in \mN}$ for some $\mN \subset \Z$ (or $\mN \subset \Z/c\Z$), and $e(t) := \exp(2\pi i t)$ for $t \in \R/\Z$. Given a positive integer $c$, we let $c\Z$ (resp., $c\Z_+$) be the sets of integers (resp., positive integers) divisible by $c$, and $\bar{x}$ be the inverse of $x$ modulo $c$ (here $c$ may be implied from context, e.g., in an exponential phase $e(\bar{x}/c)$). We use $\mu$ and $\phi$ be the M\"obius and Euler totient functions. Given $a, b \in \Z_+$, we write $(a, b)$ and $[a, b]$ for their greatest common divisor and lowest common multiple (and similarly for more positive integers), and $(a, b^\infty)$ for the greatest divisor of $a$ whose prime factors all divide $b$. We write $p^k \| c$ when a prime power exactly divides a positive integer, meaning that $p^k \mid c$ but $p^{k+1} \nmid c$. We call a positive integer $c$ \emph{square-full} if and only if all exponents in the prime factorization of $c$ are at least $2$ (i.e., there is no prime $p$ such that $p \| c$).

We will reserve the letter $\psi$ for functions on $\Z/c\Z$, and $\Phi, \Psi$ for functions on $\R$. We denote the Fourier transform of an $L^1$ function $\Phi : \R \to \C$ by
\begin{equation} \label{eq:Fourier-tr-R}
    \hat\Phi : \R \to \C,
    \qquad\qquad
    \hat\Phi(\xi) := \int_{-\infty}^\infty \Phi(t)\, e(-t\xi)\, dt.
\end{equation}

In particular, if $\Psi(t) := \Phi(At) e(Bt)$ for some $A > 0$ and $B \in \R$, then a change of variables yields
\begin{equation} \label{eq:scaled-Fourier-tr}
\begin{aligned}
    \hat\Psi(\xi) &= \int_{-\infty}^\infty \Phi(At)\, e(-t(\xi-B))\, dt 
    =
    \frac{1}{A}
    \hat\Phi\left(\frac{\xi-B}{A}\right).
\end{aligned}
\end{equation}
If $\Phi$ is a Schwartz function, then so is $\hat{\Phi}$, and the Poisson summation identity reads
\begin{equation} \label{eq:Poisson}
    \sum_{n \in \Z} \Phi(n) = \sum_{k \in \Z} \hat\Phi(k).
\end{equation}

Given positive integers $m, n$ and a commutative ring $R$, we write $R^{m \times n}$ for the ring of $m \times n$ matrices with entries from $R$. Given a finite set $X$, we denote by $L^2(X)$ the Hilbert space of all complex-valued functions on $X$, equipped with the standard inner product $\la f, g \ra := \sum_{x \in X} f(x) \bar{g(x)}$.
Given a linear map $M$ between finite-dimensional complex Hilbert spaces, we write its operator norm as
\begin{equation} \label{eq:operator-norm}
    \|M\| := \sup_{\|\vec{v}\| = 1} \|M\vec{v}\| = \sup_{\|\vec{v}\| = \|\vec{w}\| = 1} |\vec{w}^T M \vec{v}|.
\end{equation}
For $q \in [1,\infty]$, we also define $\|M\|_{S^q}$ as the $\ell^q$ norm of singular values of $M$. In particular, we have
\begin{equation} \label{eq:schatten}
    \|M\|_{S^\infty} = \|M\| \qquad\quad \text{and} \qquad\quad
    \|M\|_{S^q}^q = \Tr\left((MM^*)^{q/2}\right)^{\frac{1}{q}} \text{ for } q \in 2\Z_+,
\end{equation}
where $M^*$ denotes the adjoint of $M$. The same notation applies to complex matrices $M \in \C^{m \times n}$, viewed as maps $\C^n \to \C^m$.
We record the following fact about projections and operator norms.
\begin{lemma} \label{lem:norm-proj}
Let $V$ be a finite-dimensional complex Hilbert space, $W \subset V$ be a subspace, and $P_W : V \to V$ be the orthogonal projection onto $W$. Suppose that $W$ is an invariant subspace of a linear map $M : V \to V$ (i.e., the restriction $M\vert_W : W \to W$ is well-defined). Then
$\|M\vert_W\| = \|MP_W\|$.
\end{lemma}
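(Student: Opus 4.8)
\textbf{Proof plan for \Cref{lem:norm-proj}.}

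The statement is essentially the observation that restricting a map to an invariant subspace and pre-composing with the projection onto that subspace have the same operator norm, once we note that the relevant outputs already lie in $W$. The plan is to prove two inequalities.

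\emph{Step 1: $\|M\vert_W\| \le \|MP_W\|$.} Take a unit vector $\vec v \in W$ achieving (or approaching) the supremum defining $\|M\vert_W\|$. Since $P_W \vec v = \vec v$, we have $M\vert_W \vec v = M\vec v = M P_W \vec v$, so $\|M\vert_W \vec v\| = \|MP_W \vec v\| \le \|MP_W\|$ because $\vec v$ is a unit vector of $V$. Taking the supremum over such $\vec v$ gives the inequality.

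\emph{Step 2: $\|MP_W\| \le \|M\vert_W\|$.} Take any unit vector $\vec v \in V$. Write $\vec v = P_W \vec v + (\vec v - P_W \vec v)$, an orthogonal decomposition, so $\|P_W \vec v\| \le \|\vec v\| = 1$. Now $MP_W \vec v = M(P_W \vec v)$, and $P_W \vec v \in W$, so by invariance $M(P_W\vec v) \in W$ and $\|M(P_W\vec v)\| = \|M\vert_W (P_W \vec v)\| \le \|M\vert_W\|\,\|P_W \vec v\| \le \|M\vert_W\|$. Taking the supremum over unit $\vec v \in V$ gives the claim, and combining the two steps finishes the proof.

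I do not anticipate a genuine obstacle here; the only point requiring a moment's care is that the inequality $\|M\vert_W(\vec w)\| \le \|M\vert_W\|\,\|\vec w\|$ is applied with $\vec w = P_W\vec v$, which may have norm strictly less than $1$, so one should use the homogeneous form of the operator-norm bound rather than only its unit-vector definition. One could alternatively phrase the whole argument via the bilinear description $\|M\| = \sup_{\|\vec v\|=\|\vec w\|=1}|\vec w^{\,T} M\vec v|$ from \Cref{eq:operator-norm}, using that for $\vec w \in W$ one has $\vec w^{\,T} M P_W \vec v = \vec w^{\,T} M\vert_W P_W \vec v$ and $\vec w^{\,T} M(\vec v - P_W\vec v) = 0$ by invariance and self-adjointness of $P_W$; but the direct two-inequality argument above is cleanest.
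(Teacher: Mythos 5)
Your proof is correct and follows essentially the same route as the paper: the inequality $\|M\vert_W\|\le\|MP_W\|$ via $P_W\vec w=\vec w$ for unit $\vec w\in W$, and the reverse inequality via $\|P_W\vec v\|\le\|\vec v\|$ together with the homogeneous operator-norm bound on $W$. No gaps; your remark about applying the bound to a vector of norm possibly less than $1$ is exactly the (implicit) point in the paper's argument as well.
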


\begin{proof}
By definition, $\|M\vert_W\| = \sup_{\vec{w} \in W, \|\vec{w}\| \le 1} \|M \vec{w}\|$ and $\|MP_W\| = \sup_{\vec{v} \in V, \|\vec{v}\| \le 1} \|MP_W \vec{v}\|$.
Since $P_W \vec{v} \in W$ with $\|P_W \vec{v}\| \le \|\vec{v}\| \le 1$ for all $\vec{v} \in V$ with $\|\vec{v}\| \le 1$, we have $\|M P_W\| \le \|M\vert_W\|$. On the other hand, for each $\vec{w} \in W$ with $\|\vec{w}\| \le 1$, we have $P_W \vec{w} = \vec{w}$, so $\|M\vert_W\| \le \|MP_W\|$.
\end{proof}

\subsection{Bounds for Kloosterman sums}
We now recall the Ramanujan and Weil bounds for Kloosterman sums, as well as some results of Kowalski--Michel--Sawin \cite{kowalski2017bilinear} and Blomer--Mili\'cevi\'c \cite{blomer2015second}.

\begin{lemma}[Ramanujan bound] \label{lem:ram}
For $c \in \Z_+$ and $n \in \Z$, one has
\[
    |S(0, n; c)| \le (n, c).
\]
\end{lemma}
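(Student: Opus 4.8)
The plan is to identify $S(0,n;c)$ with a Ramanujan sum and then use its standard multiplicative evaluation. Since the map $x \mapsto \bar{x}$ permutes $(\Z/c\Z)^\times$, we have
\[
    S(0, n; c) = \sum_{x \in (\Z/c\Z)^\times} e\!\left(\frac{n\bar{x}}{c}\right) = \sum_{y \in (\Z/c\Z)^\times} e\!\left(\frac{ny}{c}\right) =: c_c(n),
\]
the classical Ramanujan sum. So it suffices to prove $|c_c(n)| \le (n,c)$.

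The first step is the elementary identity $c_c(n) = \sum_{d \mid (c,n)} d\,\mu(c/d)$, obtained by detecting the coprimality condition via M\"obius inversion over $e = (y,c)$ and using orthogonality $\sum_{z \bmod m} e(nz/m) = m\,\one_{m \mid n}$: writing $y = ez'$ one gets $c_c(n) = \sum_{e \mid c} \mu(e)\,\frac{c}{e}\,\one_{(c/e) \mid n}$, and the substitution $d = c/e$ gives the claimed formula. The second step is to observe that $c \mapsto c_c(n)$ is a multiplicative function of $c$ for fixed $n$ (this follows from the closed form and the multiplicativity of $\mu$ together with the fact that $(\cdot\,,n)$ is multiplicative on coprime arguments), so the bound $|c_c(n)| \le (c,n)$ reduces to the case of a prime power $c = p^k$. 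For $c = p^k$ with $v_p(n) = j$, only the terms $d = p^{k}$ and $d = p^{k-1}$ survive in $\sum_{d \mid (p^k,n)} d\,\mu(p^k/d)$, giving $c_{p^k}(n) = \phi(p^k)$ if $j \ge k$, $c_{p^k}(n) = -p^{k-1}$ if $j = k-1$, and $c_{p^k}(n) = 0$ if $j \le k-2$. In each case $|c_{p^k}(n)| \le p^{\min(j,k)} = (p^k, n)$, since $\phi(p^k) = p^k - p^{k-1} \le p^k$. Multiplying these local bounds over the prime powers exactly dividing $c$ yields $|c_c(n)| \le (c,n)$.

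There is no real obstacle here: the argument is routine, and the only point requiring a little care is organizing the reduction to prime powers (alternatively, one can quote the well-known formula $c_c(n) = \mu\!\big(c/(c,n)\big)\,\phi(c)/\phi\!\big(c/(c,n)\big)$ and combine it with the elementary inequality $\phi(c) \le (c,n)\,\phi\!\big(c/(c,n)\big)$, which is checked prime-by-prime in the same way).
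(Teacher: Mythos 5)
Your proof is correct, and it follows the same route the paper has in mind: the paper simply cites this as a classical fact ``which follows from M\"obius inversion,'' and your argument is precisely that standard M\"obius-inversion computation (closed form for the Ramanujan sum, reduction to prime powers, local check). Nothing to add.
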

\begin{proof}
This is a classical result which follows from M\"obius inversion.
\end{proof}

\begin{lemma}[Weil bound] \label{lem:weil}
For $c \in \Z_+$ and $m, n \in \Z$, one has
\[
    S(m, n; c) \ll c^{o(1)} \sqrt{(m, n, c) c}.
\]
\end{lemma}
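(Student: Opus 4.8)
The plan is a three-step reduction to classical inputs. First I would use the twisted multiplicativity of Kloosterman sums: when $c = c_1 c_2$ with $(c_1, c_2) = 1$ one has $S(m, n; c) = S(\overline{c_2} m, \overline{c_2} n; c_1)\, S(\overline{c_1} m, \overline{c_1} n; c_2)$, where $\overline{c_2}, \overline{c_1}$ are inverses modulo $c_1, c_2$. Iterating over the prime-power factorization gives $S(m, n; c) = \prod_{p^k \| c} S(a_p m, a_p n; p^k)$ for suitable units $a_p$ modulo $p^k$; since scaling the arguments by a unit leaves the relevant gcd unchanged, $(a_p m, a_p n, p^k) = (m, n, p^k)$, and these multiply to $(m, n, c)$. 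Hence it is enough to prove $|S(m, n; p^k)| \le 2\sqrt{(m, n, p^k)\, p^k}$ for prime powers with an \emph{absolute} constant: the product over $p \mid c$ then costs only a factor $2^{\omega(c)} \le d(c) \ll c^{o(1)}$ by the divisor bound recorded in \cref{subsec:arithm-analytic-not}.

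Second, I would reduce to the case $p \nmid (m, n)$. Writing $p^j = (m, n, p^\infty)$, the case $j \ge k$ is immediate from the trivial bound $|S(m,n;p^k)| \le p^k = \sqrt{p^k \cdot p^k}$. If $j < k$, put $m = p^j m'$, $n = p^j n'$ with $p \nmid (m', n')$; collapsing the sum — each class modulo $p^{k-j}$ coprime to $p$ arising from exactly $p^j$ units modulo $p^k$ — yields $S(p^j m', p^j n'; p^k) = p^j S(m', n'; p^{k-j})$, and here $(m, n, p^k) = p^j$. So the whole lemma comes down to the bound $|S(m, n; p^\ell)| \le 2\sqrt{p^\ell}$ for $\ell \ge 1$ and $p \nmid (m, n)$.

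Third, I would establish this remaining estimate. For $\ell = 1$ it is Weil's bound $|S(m, n; p)| \le 2\sqrt p$ (the subcases $p \mid m$ or $p \mid n$ being Ramanujan sums bounded by $1$ via \cref{lem:ram}), which rests on the Riemann hypothesis for curves over finite fields; I would invoke it from a standard reference rather than reprove it. For $\ell \ge 2$ the bound is elementary: with $r = \lceil \ell/2 \rceil$, $s = \lfloor \ell/2 \rfloor$, writing each unit as $x = x_0 + p^r v$ ($x_0$ a unit modulo $p^r$, $v$ modulo $p^s$) and Taylor-expanding $\overline x$ gives $mx + n\overline x \equiv (m x_0 + n\overline{x_0}) + p^r v(m - n\overline{x_0}^{\,2}) \pmod{p^\ell}$; the $v$-sum vanishes unless $x_0^2 \equiv \overline n\, m \pmod{p^s}$ (when $p \nmid n$; if $p \mid n$ then $p \nmid m$ and the sum is empty). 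For odd $p$ this quadratic congruence has at most two solutions modulo $p^s$, which for $\ell$ even already gives $|S(m,n;p^\ell)| \le p^s \cdot 2 \cdot p^{r-s} = 2\sqrt{p^\ell}$, and for $\ell$ odd one peels off one further digit of $x_0$ to turn the residual sum into a quadratic Gauss sum of modulus $\sqrt p$, recovering the same bound; the prime $p = 2$ needs only the usual minor adjustment of the constant.

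The one deep ingredient — and thus the main obstacle — is purely the prime case $\ell = 1$, i.e.\ Weil's estimate, which I would not attempt to prove from scratch; every application in this paper can equally be based on a cited form of the Weil bound. The only laborious (but entirely routine and classical) part is the elementary prime-power bookkeeping, especially at $p = 2$.
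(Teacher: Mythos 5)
Your proposal is correct, but it is a genuinely different presentation from the paper's: the paper proves \cref{lem:weil} in one line, by citing \cite[Corollary 11.12]{iwaniec2021analytic} and then applying the divisor bound, whereas you reconstruct the classical argument that sits behind that citation. Your three reductions are all sound: twisted multiplicativity (with unit twists that leave $(m,n,p^k)$ unchanged) reduces to prime powers at the cost of $2^{\omega(c)} \ll c^{o(1)}$; the identity $S(p^j m', p^j n'; p^k) = p^j S(m', n'; p^{k-j})$ (a special case of the relation $S(gm, gn; gc) = \tfrac{\phi(gc)}{\phi(c)}S(m,n;c)$ already recorded in \cref{sec:intro}) correctly extracts the factor $(m,n,p^k)^{1/2}$; and for $\ell \ge 2$ the splitting $x = x_0 + p^{\lceil \ell/2\rceil}v$ with the resulting congruence $x_0^2 \equiv \bar n m$ is the standard elementary (Sali\'e-type) treatment, with the odd-$\ell$ Gauss-sum step and the $p=2$ constant adjustments left as routine but genuinely routine. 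The deep input — Weil's bound at prime modulus — is still cited rather than proved, so your route buys self-containment of the prime-power and multiplicativity bookkeeping at the cost of length, while the paper's route simply outsources the whole package to a standard reference; both rest on exactly the same Riemann-hypothesis-for-curves input, so neither is more elementary where it matters.
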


\begin{proof}
This is \cite[Corollary 11.12]{iwaniec2021analytic} followed by the divisor bound.
\end{proof}

For the sake of completeness, we give a quick proof of the bounds from \cref{eq:trivial-bound}.

\begin{proof}[Proof of \cref{eq:trivial-bound}]
The second bound implicit in \cref{eq:trivial-bound}, with a term of $\sqrt{MNc}$, follows immediately from \cref{lem:weil} and Cauchy--Schwarz. For the first bound implicit in \cref{eq:trivial-bound}, we eliminate the constraint $(m, n, c) = 1$ by M\"obius inversion and use the identity $S(dm, dn; c) = \tfrac{\phi(c)}{\phi(c/d)} S(m, n; \tfrac{c}{d})$ to write
\[
    \mathop{\sum\sum}_{\substack{m \in \mI, n \in \mJ \\ (m, n, c) = 1}} \alpha_m \beta_n S(am, n; c) 
    \ll
    c^{o(1)} \max_{d \mid c} d \left\vert 
    \sum_{dm \in \mI} \alpha_{dm} \sum_{dn \in \mJ} \beta_{dn} S(am, n; \tfrac{c}{d}) \right\vert.
\]
Now apply Cauchy--Schwarz in the sum over $m$, and complete the sum over $m \pmod{\tfrac{c}{d}}$ to get
\[
    d \left\vert 
    \sum_{dm \in \mI} \alpha_{dm} \sum_{dn \in \mJ} \beta_{dn} S(am, n; \tfrac{c}{d}) \right\vert
    \le
    \|\alpha\| \left(d^2\sum_{m \pmod{\frac{c}{d}}} \left\vert \sum_{dn \in \mJ} \beta_{dn} S(am, n; \tfrac{c}{d}) \right\vert^2 \right)^{\frac{1}{2}}.
\]
Expanding the square and the Kloosterman sums, then performing the sum over $m$, one reaches
\[
    d^2\sum_{m \pmod{\frac{c}{d}}} \left\vert \sum_{dn \in \mJ} \beta_{dn} S(am, n; \tfrac{c}{d}) \right\vert^2
    =
    dc
    \sum_{x \in (\Z/\frac{c}{d}\Z)^\times}
    \left\vert 
    \sum_{dn \in \mJ} \beta_{dn} e\left(\frac{nx}{c/d}\right)\right\vert^2.
\]
Finally, complete the sum over $x \pmod{\tfrac{c}{d}}$, expand the square, and perform the sum over $x$ to obtain
\[
    dc
     \sum_{x \in (\Z/\frac{c}{d}\Z)^\times}
    \left\vert 
    \sum_{dn \in \mJ} \beta_{dn} e\left(\frac{nx}{c/d}\right)\right\vert^2
    \le
    c^2 \|\beta\|^2.
\]
Putting these bounds together completes our proof.
\end{proof}

\begin{theorem}[Kowalski--Michel--Sawin \cite{kowalski2017bilinear}] \label{thm:kms}
Let $p$ be a prime and $M, N \in \Z$ be such that $1 \le N \le M \le p-1$. Then for any complex sequences $(\alpha_m)_{m \le M}$, $(\beta_n)_{n \le N}$ and any $a \in (\Z/p\Z)^\times$, 
\[
    \sum_{m=1}^M \sum_{n=1}^N \alpha_m \beta_n S(am, n; p) 
    \ll
    \|\alpha\| \|\beta\| p^{o(1)} \sqrt{MNp} \left(N^{-\frac{1}{2}} + (MN)^{-\frac{3}{16}} p^{\frac{11}{64}}\right).
\]
\end{theorem}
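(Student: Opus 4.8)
The plan is to follow the strategy of Kowalski--Michel--Sawin \cite{kowalski2017bilinear} (building on Fouvry--Kowalski--Michel \cite{fouvry2014algebraic}) for bilinear forms of a trace function evaluated at products. Since $1 \le m \le M \le p-1$ and $1 \le n \le N \le p-1$, all of the relevant integers $m, n$ are coprime to $p$, so the substitution $x \mapsto \bar{n}x$ in \cref{eq:kloosterman} gives $S(am,n;p) = \Kl(amn;p)$, where $\Kl(z;p) := \sum_{x \in (\Z/p\Z)^\times} e((zx+\bar{x})/p)$. Thus the sum in question is exactly a bilinear form $\sum_{m \le M}\sum_{n \le N}\alpha_m\beta_n \Kl(amn;p)$ in the Kloosterman trace function evaluated at the products $mn$ --- the setting of \cite{kowalski2017bilinear, fouvry2014algebraic}.

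For this bilinear form, I would apply Cauchy--Schwarz in $m$ to pass to a second moment $\sum_{m \le M}\bigl|\sum_n \beta_n \Kl(amn;p)\bigr|^2 = \sum_{n_1,n_2}\beta_{n_1}\overline{\beta_{n_2}}\sum_{m \le M}\Kl(amn_1;p)\overline{\Kl(amn_2;p)}$. The diagonal $n_1 = n_2$ contributes $\ll \|\beta\|^2 p M$ by \cref{lem:weil}, which after the outer Cauchy--Schwarz gives the term $\|\alpha\|\|\beta\|\sqrt{pM} = \|\alpha\|\|\beta\|\sqrt{MNp}\,N^{-1/2}$. The heart of the matter is the off-diagonal, where completing the incomplete $m$-sum by P\'olya--Vinogradov is too lossy in the critical range $MN \approx p$ --- the additively-twisted Kloosterman correlation sums that then appear are too large to beat the trivial bound. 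The fix, which I would import from \cite{kowalski2017bilinear} and which combines the ingredients already flagged in \cref{sec:intro}, is to first apply the Vinogradov--Karatsuba shift-by-$ab$ trick, enlarging the $m$-variable by an auxiliary short average over $m \mapsto m+ab$ (with $a, b$ in ranges to be optimized), and then a further Cauchy--Schwarz together with a H\"older step of a suitable even order; these convert the expression into an average of \emph{complete} correlation sums $\mathcal{C}(\gamma) := \sum_{x \in \F_p}\Kl(\gamma \cdot x;p)\overline{\Kl(x;p)}$, where $\gamma$ runs over the M\"obius transformations in $\mathrm{PGL}_2(\F_p)$ assembled from ratios and shifts of the relevant variables.

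The cohomological control of $\mathcal{C}(\gamma)$ is the step I expect to be the genuine obstacle. One needs the square-root cancellation $\mathcal{C}(\gamma) \ll p^{1/2+o(1)}$ for all $\gamma$ outside an exceptional set $\Gamma \subset \mathrm{PGL}_2(\F_p)$ of bounded size --- which for the Kloosterman sheaf is essentially trivial --- resting on Deligne's Riemann Hypothesis over finite fields together with Katz's computation of the geometric monodromy of Kloosterman sheaves and the fact that $\mathrm{Kl}_2$ has no nontrivial geometric isomorphism with its M\"obius pullbacks (see \cite{fouvry2014algebraic, kowalski2017bilinear}). Granting this, I would bound the $\gamma \in \Gamma$ contribution trivially (of size $p$) and the generic $\gamma$ by $p^{1/2+o(1)}$, feed the resulting dichotomy back through the H\"older and completion steps --- the residual incomplete sums in $m$ and $n$ being completed by P\'olya--Vinogradov, which is where the $p^{1/2}$-type losses enter --- and optimize over the auxiliary ranges and the H\"older exponent. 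This produces an off-diagonal contribution $\ll \|\alpha\|\|\beta\| p^{o(1)}(MN)^{5/16}p^{43/64} = \|\alpha\|\|\beta\| p^{o(1)}\sqrt{MNp}\,(MN)^{-3/16}p^{11/64}$, valid in the range $p^{1/4} < MN < p^{5/4}$; combining this with the diagonal term gives the theorem, while outside the stated range one of the two bounds in \cref{eq:trivial-bound} is already at least as strong.
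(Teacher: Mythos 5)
The paper does not reprove this statement: its entire proof is a one-line citation, namely \cite[Theorem 1.1]{kowalski2017bilinear} with $k=2$ and the roles of $M$ and $N$ swapped. Your proposal instead re-sketches the internals of the Kowalski--Michel--Sawin argument. The opening reduction is correct (since $1\le n\le p-1$, the substitution $x\mapsto nx$ gives $S(am,n;p)=\Kl(amn;p)$, so the sum is exactly a Type II bilinear form in the Kloosterman trace function at products $mn$), and your account of the architecture matches theirs: Cauchy--Schwarz in $m$ whose diagonal yields the $\|\alpha\|\|\beta\|\sqrt{MNp}\,N^{-1/2}$ term, then the Vinogradov--Karatsuba shift by $ab$, a H\"older step, completion, and square-root cancellation for the correlation sums $\sum_{x}\Kl(\gamma x;p)\overline{\Kl(x;p)}$ outside a bounded exceptional set of $\gamma\in\mathrm{PGL}_2(\F_p)$, resting on Deligne's Riemann Hypothesis and Katz's monodromy computations. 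However, as written this is an exposition of their strategy rather than a proof: the choices of auxiliary ranges, the H\"older order, and the optimization that actually produce the exponents $\tfrac{3}{16}$ and $\tfrac{11}{64}$ are asserted rather than derived, and the decisive cohomological input is explicitly deferred to the very references being quoted. Since the statement is attributed to \cite{kowalski2017bilinear}, the expected treatment (and the paper's) is simply to invoke their Theorem 1.1; your sketch is accurate as background, but the only content it adds beyond that citation is the elementary identity $S(am,n;p)=\Kl(amn;p)$ and the observation that outside $p^{1/4}<MN<p^{5/4}$ the trivial bound \cref{eq:trivial-bound} already suffices, which the paper records in the remark following \cref{thm:kms}.
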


\begin{proof}
This is
\cite[Theorem 1.1]{kowalski2017bilinear} with $k = 2$ and $M, N$ swapped, except for an additional assumption in loc.\ cit.\ that $p^{1/4} < MN < p^{5/4}$; but this assumption can be removed using the bound \cref{eq:trivial-bound}. Indeed, if $MN \le p^{1/4}$, then
\[
    \sqrt{MNp} \cdot (MN)^{-\frac{3}{16}} \cdot p^{\frac{11}{64}} \ge \sqrt{MNp} \cdot p^{\frac{11}{64} - \frac{3}{64}} 
    >
    \sqrt{MNp},
\]
so the second bound in \cref{eq:trivial-bound} is better. Similarly, if $MN \ge p^{5/4}$, then
\[
    \sqrt{MNp} \cdot (MN)^{-\frac{3}{16}} \cdot p^{\frac{11}{64}} \ge 
    p^{\frac{5}{4}(\frac{1}{2}-\frac{3}{16})} \cdot p^{\frac{1}{2}+\frac{11}{64}}
    =
    p^{\frac{25}{64} + \frac{43}{64}} > p,
\]
so the first bound in \cref{eq:trivial-bound} is better.
\end{proof}

\begin{theorem}[Blomer--Mili\'cevi\'c \cite{blomer2015second}] \label{thm:bm}
Let $c, d, M, N \in \Z_+$ such that $d \mid c$ and $d$ is odd. Then for any complex sequences $(\alpha_m)_{m \le M}$ and $(\beta_n)_{n \le N}$ such that $|\alpha_m| \le 1$ for all $m$, and any $a \in (\Z/c\Z)^\times$, one has
\[
    \mathop{\sum_{m=1}^M\sum_{n=1}^N}_{(n,c)=1} \alpha_m \beta_n S(am, n; c)
    \ll 
    \sqrt{M}\|\beta\| (MNc)^{\frac{1}{2} + o(1)}
    \left(\frac{c^{1/2}}{d^{1/2}M^{1/2}} + \frac{1}{d^{1/4}} + \frac{d^{1/4}}{N^{1/2}}\right).
\]
\end{theorem}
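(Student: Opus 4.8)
This is a bilinear form estimate one proves by the \emph{$q$-van der Corput method}, combining the twisted multiplicativity of Kloosterman sums, two applications of Cauchy--Schwarz, a shift exploiting the divisor $d \mid c$, and the Weil bound; the argument is due to Blomer--Mili\'cevi\'c \cite{blomer2015second}. Since $|\alpha_m| \le 1$, the first step is Cauchy--Schwarz in the $m$-variable, which pulls out $\|\alpha\| \le \sqrt M$ (the source of the leading factor) and leaves one to bound
\[
    \sum_{n_1, n_2 \le N} |\beta_{n_1} \beta_{n_2}| \Bigl\lvert \sum_{m \le M} S(am, n_1; c)\, \overline{S(am, n_2; c)} \Bigr\rvert .
\]
The inner sum over $m$ has period $c$, and the point is to estimate it without simply completing it to a full residue system modulo $c$ (which would only recover the P\'olya--Vinogradov bound): instead one runs the $q$-analogue of the van der Corput $B$-process, shifting $m$ by multiples of $c/d$, so that after a second Cauchy--Schwarz and twisted multiplicativity one is reduced to \emph{correlation sums of Kloosterman sums} to a modulus properly dividing $c$, the complementary (mod $d$) part having been completed out trivially.

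These correlation sums, now to a smaller modulus, are estimated by the Weil bound (\cref{lem:weil}), while the degenerate contributions---where the shift vanishes, or where $n_1 - n_2$ shares a large common factor with the modulus, forcing a Ramanujan sum---are handled by \cref{lem:ram}. Tracking the three sources of error gives the three terms in the bound: the diagonal/main completion term contributes $c^{1/2}/(d^{1/2} M^{1/2})$; the generic Weil contribution, after the square roots in the Cauchy--Schwarz steps, contributes $d^{-1/4}$; and the degenerate $n_1 \equiv n_2$ terms contribute $d^{1/4}/N^{1/2}$. Summing over $n_1, n_2$ against $\|\beta\|^2$ (using Cauchy--Schwarz and the divisor bound to control $\sum_{n_1, n_2} |\beta_{n_1} \beta_{n_2}| (n_1 - n_2, c)$) and taking a final square root produces the stated estimate.

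The main obstacle is the estimation of the correlation sums and the careful enumeration of their degenerate configurations: one must extract exactly the right powers of $d$ and $c/d$ from the (partial) completion step, enumerate all the ways in which the variables and the shift can satisfy extra congruences with divisors of $c$---including those coming from common factors of $n_1 - n_2$---and choose the shift length optimally, since a lossy treatment of these points would either degrade the exponents in the saving or fail to give anything at all when $MN \approx c$. The hypothesis that $d$ is odd enters precisely here: it ensures that the Kloosterman-sum manipulations modulo divisors of $d$---in particular the square completions, which require $2$ to be invertible---go through; and the general divisibility $d \mid c$, without assuming $(d, c/d) = 1$, is accommodated by a routine additional decomposition of the modulus.
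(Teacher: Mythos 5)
Your route does not follow the paper's, and more importantly it has a structural flaw. The paper does not reprove this estimate at all: it applies Cauchy--Schwarz in the $n$-variable, pulling out $\|\beta\|$ and keeping the $1$-bounded weights inside the mean value $\sum_{n \le N,\,(n,c)=1} \vert \sum_{m \le M} \alpha_m S(am,n;c)\vert^2$, which is then exactly the quantity bounded by \cite[Theorem 5]{blomer2015second} (dyadically summed, with $(q,r,s,M,K,\lambda(k))$ there specialized to $(c,c,\tfrac{c}{d},N,M,\alpha_m)$); the paper even remarks that \cref{thm:bm} is essentially a restatement of that theorem. In particular, the factor $\sqrt{M}$ in the statement does not come from $\|\alpha\| \le \sqrt{M}$: it comes from the $M^2$ in that mean-value bound, i.e.\ from treating the $M^2$ pairs $(m_1,m_2)$ trivially, which is affordable only because $|\alpha_m| \le 1$ pointwise.

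Your plan instead opens with Cauchy--Schwarz in the $m$-variable, retaining only $\|\alpha\| \le \sqrt{M}$, which reduces the theorem to
\[
    \sum_{n_1,n_2 \le N} |\beta_{n_1}\beta_{n_2}| \left\vert \sum_{m \le M} S(am,n_1;c)\,\overline{S(am,n_2;c)} \right\vert
    \ll \|\beta\|^2 (MNc)^{1+o(1)} \left(\frac{c}{dM} + \frac{1}{\sqrt{d}} + \frac{\sqrt{d}}{N}\right),
\]
a strictly stronger, fully bilinear claim (it would yield the theorem with $\|\alpha\|$ in place of $\sqrt{M}$ for arbitrary $\alpha$). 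With the absolute values placed outside each $m$-correlation sum, the only cancellation left is inside a single sum of length $M$, and nothing in your toolkit (shifts by multiples of $c/d$, a second Cauchy--Schwarz, completion, Weil, Ramanujan) can certify more than square-root cancellation there; nor is the truth better generically, since $\sum_{n_1,n_2 \in \Z/c\Z} \vert \sum_{m\le M} S(am,n_1;c)\overline{S(am,n_2;c)}\vert^2 = \sum_{m_1,m_2 \le M} \vert c\, S(0,a(m_1-m_2);c)\vert^2 \ge M c^2 \phi(c)^2$, so these correlations have RMS size $\gg \sqrt{M}\,c^{1-o(1)}$. Hence your reduction caps the final saving at $M^{-1/4}$, whereas \cref{thm:bm} claims savings like $d^{-1/4}$ and $d^{1/4}N^{-1/2}$, which beat $M^{-1/4}$ whenever $(c/d)^2 < M < d$ and $N > \sqrt{dM}$ (e.g.\ $d \asymp c^{3/4}$, $M \asymp c^{3/5}$, $N \asymp c^{7/10}$), a regime squarely within the hypotheses; the extra strength comes precisely from using $|\alpha_m|\le 1$ pointwise and putting the absolute values on the $n$-correlations for each pair $(m_1,m_2)$. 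Your accounting of the three terms is also inconsistent with the actual mechanism: $c^{1/2}/(dM)^{1/2}$ is the square root of $1/L$ with $L \asymp dM/c$ the number of admissible shifts by $c/d$ inside $[1,M]$ (not a completion main term), the Weil saving is extracted from complete sums modulo $d$ (not modulo $c/d$, which is the part left invariant by the shift), and $d^{1/4}N^{-1/2}$ is the completion error of the length-$N$ variable modulo $d$. For a short correct proof, do the Cauchy--Schwarz in $n$ and quote \cite[Theorem 5]{blomer2015second} as the paper does.
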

\begin{proof}
Dyadically summing instances of \cite[Theorem 5]{blomer2015second} with $(q, r, s, M, K, \lambda(k))$ in loc.\ cit.\ replaced by $(c, c, \tfrac{c}{d}, N, M, \alpha_m)$, one obtains the bound\footnote{\cite[Theorem 5]{blomer2015second} does not include an $a$-scalar inside the Kloosterman sum, but it holds in this slightly more general form with the same proof, and it is in fact applied this way in \cite[p.\,471, after (4.2)]{blomer2015second}.}
\[
    \sum_{\substack{n \le N \\ (n, c) = 1}} \left\vert \sum_{m \le M} \alpha_m S(am, n; c) \right\vert^2
    \ll 
    (cMN)^{o(1)} M^2 N c \left(\frac{c}{d M} + \frac{1}{\sqrt{d}} + \frac{\sqrt{d}}{N}\right).
\]
The desired bound now follows from Cauchy--Schwarz in the shape
\[
    \left\vert \mathop{\sum_{m=1}^M\sum_{n=1}^N}_{(n,c)=1} \alpha_m \beta_n S(am, n; c) \right\vert^2
    \le
    \|\beta\|^2
    \sum_{\substack{n \le N \\ (n, c) = 1}}
    \left\vert \sum_{m \le M} \alpha_m S(am, n; c) \right\vert^2.
\]
(Since $(\beta_n)$ can be chosen to attain equality in this Cauchy--Schwarz step, \cref{thm:bm} is in fact a restatement of \cite[Theorem 5]{blomer2015second}.)
\end{proof}

\subsection{Fourier analysis on finite groups}

Here we recall some general facts and notation from representation theory of finite groups; we point the reader to \cite{serre1977linear,fulton1991representation,terras1999fourier,isaacs2006character} for more background.
Let $G$ be a finite group with identity element $e$. A (unitary) representation of $G$ is a homomorphism 
\[
    \rho : G \to U(V),
\]
where $V$ is a finite-dimensional complex Hilbert space and $U(V)$ is the set of unitary transformations of $V$. In particular, $\rho(e) = \Id_V$ is the identity transformation on $V$. We write
\[
    \dim \rho := \dim V
\]
for the dimension of $\rho$. We say that two representations $\rho_1 : G \to U(V_1)$, $\rho_2 : G \to U(V_2)$ are \emph{isomorphic}, written $\rho_1 \cong \rho_2$, iff there is an invertible linear map $M : V_1 \to V_2$ such that $M \circ \rho_1(g) = \rho_2(g) \circ M$ for all $g \in G$. Since we work with unitary representations, this actually implies that there is a unitary map $U : V_1 \to V_2$ such that $U \circ \rho_1(g) = \rho_2(g) \circ U$ for all $g \in G$.

\begin{example}
We write $\rzero : G \to U(\{0\})$ for the \emph{zero representation} given by $\rzero(g) = \Id_{\{0\}}\ \forall g \in G$, and $\rone : G \to U(\C)$ for the \emph{trivial representation} given by $\rone(g) = \Id_\C\ \forall g \in G$.
Any action of $G$ on a finite set $X$ gives rise to a \emph{permutation representation} $\rho : G \to U(L^2(X))$, defined by $(\rho(g)f)(x) := f(g^{-1}x)$ for $g \in G$, $x \in X$. The \emph{regular representation} $R_G$ is the permutation representation induced by the action by left-multiplication on $X = G$, so $\dim R_G = |G|$.
\end{example} 

Given two representations $\rho_1 : G \to U(V_1)$ and $\rho_2 : G \to U(V_2)$, we write $\rho_1 \oplus \rho_2 : G \to U(V_1 \oplus V_2)$ and $\rho_1 \otimes \rho_2 : G \to U(V_1 \otimes V_2)$ for their direct sum and tensor product. The operations $\oplus$ and $\otimes$ have identity elements $\rzero$ and $\rone$ respectively (up to isomorphism). Given $\rho : G \to U(V)$, we write
\[
    \rho^{\oplus m} := \underbrace{\rho \oplus \cdots \oplus \rho}_{m \text{ times}}
\]
for all nonnegative integers $m$; when $m = 0$, we interpret this as the zero representation $\rzero$. We use a similar notation for repeated direct sums of linear maps or matrices.

An \emph{invariant subspace} $W$ of a representation $\rho : G \to U(V)$ is a subspace of $V$ such that $\rho(g)W \subset W$ for all $g \in G$. For such $W$, we define $\rho\vert_W : G \to U(W)$ by $\rho\vert_W(g) := \rho(g)$ for all $g \in G$, which is automatically unitary, and we say that $\rho\vert_W$ is a \emph{subrepresentation} of $\rho$. One can decompose $\rho \cong \rho_W \oplus \rho_{W^\perp}$; conversely, if $\rho \cong \rho_1 \oplus \rho_2$, then $\rho_1$ and $\rho_2$ are isomorphic to subrepresentations of $\rho$. An important class of invariant subspaces of a representation $\rho : G \to U(V)$ are the fixed-point spaces associated to the normal subgroups of $G$,
\[
    V^N := \left\{ v \in V : \rho(n)v = v,\ \forall n \in N \right\},
    \qquad\qquad 
    \text{for } N \triangleleft G.
\]
\begin{lemma} \label{lem:orth-proj-formula}
Given a representation $\rho : G \to U(V)$ and a normal subgroup $N \triangleleft G$, the orthogonal projection $P : V \to V$ onto $V^N$ can be expressed as
\[
    P = \frac{1}{|N|} \sum_{n \in N} \rho(n).
\]
Moreover, $P$ commutes with $\rho(g)$ for any $g \in G$.
\end{lemma}

\begin{proof}
Let $T : V \to V$ be given by $T := \frac{1}{|N|} \sum_{n \in N} \rho(n)$; we will show that $T = P$. The fact that $N$ is a subgroup quickly implies that $T$ is self-adjoint and that $T^2 = T$, so $T$ is an orthogonal projection. Moreover, one has $\rho(n)T = T$ for any $n \in N$, so any $Tv \in T(V)$ has $\rho(n)Tv = Tv$, which shows $T(V) \subset V^N$. Conversely, if $v \in V^N$, so $\rho(n)v = v$ for all $n \in N$, then clearly $v = Tv$, which shows $V^N \subset T(V)$. Thus $T = P$ is the orthogonal projection onto $V^N$. The claim about commutativity follows immediately from the formula and the normality of $N$.
\end{proof}

We say that a representation of $G$ is \emph{irreducible} iff it is nonzero and has no nonzero subrepresentation other than itself. We write $\hat{G}$ for a complete set of irreducible representations of $G$ up to isomorphism, which always includes the trivial representation $\rone$. If $G$ is abelian, then all irreducible representations in $\hat{G}$ are $1$-dimensional and form a group (isomorphic to $G$) under tensor product.

Any representation $\rho$ of $G$ has a decomposition into irreducible representations of the shape
\begin{equation} \label{eq:rep-decomposition}
    \rho \cong \bigoplus_{\rho' \in \hat{G}} {\rho'} ^{\, \oplus \Mult(\rho', \rho)},
\end{equation}
where the \emph{multiplicities} $\Mult(\rho', \rho)$ are uniquely determined. In particular, $\Mult(\rho', R_G) = \dim \rho'$.

Given two finite groups $G_1, G_2$ and representations $\rho_1 : G_1 \to U(V_1)$ and $\rho_2 : G_2 \to U(V_2)$, we write $\rho_1 \boxtimes \rho_2 : G_1 \times G_2 \to U(V_1 \otimes V_2)$ for the representation of $G_1 \times G_2$ given by
\[
    (\rho_1 \boxtimes \rho_2)(g_1, g_2) := \rho_1(g_1) \otimes \rho_2(g_2),
    \qquad\quad 
    g_1 \in G_1,\ g_2 \in G_2.
\]
The elements of $\hat{G_1 \times G_2}$ are (up to isomorphism) precisely those of the form $\rho_1 \boxtimes \rho_2$ where $\rho_1 \in \hat G_1$ and $\rho_2 \in \hat G_2$ \cite[\S 3.2]{serre1977linear}.

\begin{notation} \label{not:box-product} 
If $G_1, G_2, \rho_1, \rho_2$ are as above, and $G_{1,2}$ is a group isomorphic to $G_1 \times G_2$ by a fixed implicit map (such as \cref{eq:CRT-isom-groups}), we also use the notation $\rho_1 \boxtimes \rho_2$ to describe representations of $G_{1,2}$.
\end{notation}

A \emph{character} $\chi : G \to \C$ is any function of the form $\chi(g) = \Tr\, \rho(g)$, where $\rho$ is a representation of $G$; note that characters are constant on conjugacy classes, that $\chi(e) = \dim \rho$ and $\chi(g^{-1}) = \bar\chi(g)$, and that isomorphic representations induce the same character. If $\rho_1, \rho_2$ are two representations of $G$ with characters $\chi_1, \chi_2$, then $\Tr(\rho_1 \oplus \rho_2) = \chi_1 + \chi_2$ and $\Tr(\rho_1 \otimes \rho_2) = \chi_1\chi_2$. If $\rho_1, \rho_2$ are representations of $G_1, G_2$ with characters $\chi_1, \chi_2$ (respectively), then $\Tr(\rho_1 \boxtimes \rho_2)(g_1,g_2) = \chi_1(g_1) \chi_2(g_2)$. We say that $\chi$ is irreducible if and only if $\rho$ is, and write $\Irr(G)$ for the set of all irreducible characters of $G$. The character table of $G$ satisfies the following orthogonality relations.
\begin{lemma}[Character orthogonality]
One has
\begin{align} \label{eq:ortho-chars-sum-g}
    \sum_{g \in G} \chi_1(g) \bar\chi_2(g) &= |G| \one_{\chi_1 = \chi_2},
    \qquad\qquad 
    \chi_1, \chi_2 \in \Irr(G), 
    \\
    \label{eq:ortho-chars-sum-chi}
    \sum_{\chi \in \Irr(G)} \chi(g_1) \bar\chi(g_2) &= 
    \begin{cases} 
    \frac{|G|}{|C|}, & g_1, g_2 \text{ belong to the same conjugacy class $C$ of $G$,} \\ 
    0, & g_1, g_2 \in G \text{ are not conjugate.}
    \end{cases}
\end{align}
\end{lemma}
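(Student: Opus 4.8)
The plan is to derive both relations from Schur's lemma, following the standard development (see \cite{serre1977linear,isaacs2006character}). The one device I need is the \emph{averaging operator}: for any unitary representation $\rho : G \to U(V)$ with character $\chi = \Tr\rho$, the map $P_\rho := \tfrac{1}{|G|}\sum_{g \in G}\rho(g)$ is the orthogonal projection of $V$ onto the subspace $V^G$ of $G$-invariant vectors (it is idempotent since $\rho(h)P_\rho = P_\rho$, self-adjoint since $\rho$ is unitary, and its image is exactly $V^G$), so that $\dim V^G = \Tr P_\rho = \tfrac{1}{|G|}\sum_{g \in G}\chi(g)$.

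First I would prove \cref{eq:ortho-chars-sum-g}. Given $\chi_1,\chi_2 \in \Irr(G)$, realize them as $\Tr\rho_1$, $\Tr\rho_2$ for $\rho_1, \rho_2 \in \hat{G}$ acting on $V_1, V_2$, and consider the representation $\rho := \rho_1 \otimes \bar{\rho_2}$ on $\mathrm{Hom}(V_2, V_1)$, under which $g$ sends a linear map $T$ to $\rho_1(g)\,T\,\rho_2(g)^{-1}$; its character is $\chi_1\bar\chi_2$, and its invariant subspace is the space $\mathrm{Hom}_G(\rho_2,\rho_1)$ of intertwiners. By Schur's lemma — using that $\C$ is algebraically closed, that $\rho_1,\rho_2$ are irreducible, and that for irreducible representations over $\C$ being isomorphic is equivalent to having the same character — this space has dimension $\one_{\chi_1=\chi_2}$. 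Feeding $\rho$ into the identity $\dim V^G = \tfrac{1}{|G|}\sum_g\chi(g)$ and using $\chi_2(g^{-1}) = \bar\chi_2(g)$ gives \cref{eq:ortho-chars-sum-g}.

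Next I would show that $\Irr(G)$ is an orthonormal basis of the space of class functions on $G$; orthonormality is \cref{eq:ortho-chars-sum-g}, and only spanning needs argument. Suppose $f$ is a class function with $\tfrac{1}{|G|}\sum_g f(g)\bar\chi(g) = 0$ for all $\chi \in \Irr(G)$. For each irreducible $\rho$, the operator $A_\rho := \tfrac{1}{|G|}\sum_g \overline{f(g)}\,\rho(g)$ commutes with $\rho(G)$ because $f$ is a class function, so by Schur $A_\rho = \lambda_\rho\Id$ with $\lambda_\rho\dim\rho = \tfrac{1}{|G|}\sum_g\overline{f(g)}\,\chi(g) = 0$; hence $A_\rho = 0$ for every irreducible $\rho$, and by \cref{eq:rep-decomposition} also for the regular representation $R_G$. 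Applying $A_{R_G}$ to the basis vector $\one_e \in \C^G$ yields $\tfrac{1}{|G|}\sum_g \overline{f(g)}\,\one_g = 0$, so $f \equiv 0$. Consequently $|\Irr(G)|$ equals the number of conjugacy classes of $G$. To deduce \cref{eq:ortho-chars-sum-chi}, fix a conjugacy class $C$ with representative $g_C$ and expand the class function $\one_C$ in this orthonormal basis: $\one_C = \sum_{\chi\in\Irr(G)}\langle \one_C,\chi\rangle\,\chi$ with $\langle \one_C,\chi\rangle = \tfrac{|C|}{|G|}\bar\chi(g_C)$; taking $C$ to be the conjugacy class of $g_2$ and evaluating at $g_1$ gives $\one_{g_1 \sim g_2} = \tfrac{|C|}{|G|}\sum_{\chi}\chi(g_1)\bar\chi(g_2)$, which rearranges to \cref{eq:ortho-chars-sum-chi}. (Alternatively, once $|\Irr(G)|$ is known to equal the number of conjugacy classes, \cref{eq:ortho-chars-sum-chi} is just the statement that the square matrix $\big(\sqrt{|C|/|G|}\,\chi(g_C)\big)_{\chi,\,C}$, whose rows are orthonormal by \cref{eq:ortho-chars-sum-g}, also has orthonormal columns.)

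The only real obstacle is the careful bookkeeping around Schur's lemma: making sure we invoke that $\C$ is algebraically closed, that the representations are unitary (so that $\bar{\rho_2}$ is a genuine representation and orthogonal complements of invariant subspaces remain invariant), and that for irreducibles "equal character" and "isomorphic" coincide. Once those points are pinned down, both relations reduce mechanically to the averaging operator together with the decomposition \cref{eq:rep-decomposition}.
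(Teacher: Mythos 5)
Your proposal is correct, but note that the paper does not prove this lemma at all: it simply cites \cite[Theorem 2.12 and Exercise 2.21]{fulton1991representation}, treating both relations as classical. What you have written is essentially the standard textbook proof that the citation points to: the first relation via Schur's lemma applied to the averaging projection on $\mathrm{Hom}(V_2,V_1)$ with the action $T \mapsto \rho_1(g)\,T\,\rho_2(g)^{-1}$, and the second via completeness of the irreducible characters in the space of class functions (killing a hypothetical orthogonal class function by evaluating the associated averaged operator in the regular representation at $\one_e$), followed by expanding $\one_C$ in that orthonormal basis. Both steps check out, so your write-up supplies a self-contained argument where the paper defers to a reference. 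One small bookkeeping caution: in the first relation you invoke ``isomorphic $\iff$ equal character'' for irreducibles, and the nontrivial direction of that equivalence is usually itself deduced from the orthogonality relations, so as stated there is a whiff of circularity. It is easily removed: Schur gives $\dim\mathrm{Hom}_G(\rho_2,\rho_1)=\one_{\rho_1\cong\rho_2}$ directly, and if one had $\chi_1=\chi_2$ with $\rho_1\not\cong\rho_2$, the resulting identity $\sum_{g}|\chi_1(g)|^2=0$ would contradict the positive contribution of the $g=e$ term; so equal irreducible characters force isomorphism, and the indicator $\one_{\chi_1=\chi_2}$ in the statement is legitimate. With that remark inserted, the proof is complete and matches the standard development the paper relies on.
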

\begin{proof}
The first relation is \cite[Theorem 2.12]{fulton1991representation}. Since characters are constant on conjugacy classes, we may pick a system of representatives $\{g_C\}$ for the conjugacy classes of $G$ and write \cref{eq:ortho-chars-sum-g} as
\[
    \sum_{\substack{C \text{ conjugacy} \\ \text{ class of $G$}}} \frac{|C|}{|G|} \chi_1(g_C) \bar\chi_2(g_C) = \one_{\chi_1 = \chi_2},
    \qquad\qquad 
    \chi_1,\chi_2 \in \Irr(G).
\]
By \cite[Proposition 2.30]{fulton1991representation}, the number of conjugacy classes of $G$ equals $|\Irr(G)| = |\hat{G}|$, so the values $\sqrt{|C|/|G|}\chi(g_C)$ form a unitary matrix indexed by $\chi, C$. The orthonormality of the columns of this matrix is precisely \cref{eq:ortho-chars-sum-chi}.
\end{proof}

It follows from \cref{eq:ortho-chars-sum-g,eq:rep-decomposition} that for any representations $\rho, \rho'$ of $G$ where $\rho'$ is irreducible, with $\chi = \Tr\, \rho$ and $\chi' = \Tr\, \rho'$, one has
\begin{equation} \label{eq:multiplicity-as-sum}
    \frac{1}{|G|} \sum_{g \in G} \chi(g) \bar\chi'(g) = \Mult(\rho', \rho).
\end{equation}
Moreover, by summing over $\rho'$ with weights $\Mult(\rho', \rho)$, one has
\begin{equation} \label{eq:sum-square-char}
    \frac{1}{|G|} \sum_{g \in G} |\chi(g)|^2 = \sum_{\rho' \in \hat{G}} \Mult(\rho', \rho)^2.
\end{equation}

We may restrict a representation $\rho : G \to U(V)$ and its character $\chi = \Tr\, \rho$ to a subgroup $H \le G$, to obtain a representation of $\rho\vert_H : H \to U(V)$ with character $\chi\vert_H = \Tr\rho\vert_H$. If $\rho$ is irreducible, $\rho\vert_H$ is not necessarily irreducible. When $H = N$ is a normal subgroup, the structure of $\rho\vert_N$ can be better understood using Clifford theory \cite{clifford1937representations}. We recall that for $N \triangleleft G$, $G$ acts on $\hat{N}$ by conjugation,
\begin{equation} \label{eq:G-conjugation-N-hat}
    G \times \hat{N} \ni (g, \sigma) \mapsto g \cdot \sigma \in \hat{N},
    \qquad\quad
    (g \cdot \sigma)(n) := \sigma(g^{-1}ng)\ \ \forall n \in N.
\end{equation}

\begin{lemma}[Clifford] \label{lem:clifford}
Let $G$ be a group, $N \triangleleft G$ be a normal subgroup, and $\rho \in \hat{G}$ be an irreducible representation. Then there exist positive integers $L, m, d$ with $\dim \rho = Lmd$, and non-isomorphic irreducible representations $\sigma_1, \ldots, \sigma_L \in \hat{N}$ of dimension $d$, such that
\[
    \rho\vert_N \cong \bigoplus_{\ell = 1}^L \sigma_\ell^{\oplus m}.
\]
Moreover, $\{\sigma_1, \ldots, \sigma_L\}$ form an orbit of the action \cref{eq:G-conjugation-N-hat} of $G$ by conjugation on $\hat{N}$.
\end{lemma}

\begin{proof}
See, e.g., \cite[Theorems 6.2 and 6.5]{isaacs2006character}.
\end{proof}

Conversely, given a subgroup $H \le G$ and a representation $\rho : H \to U(W)$ of $H$, we can construct an \emph{induced representation}
\[
    \Ind_H^G(\rho) : G \to U(V),
\]
which acts by translation on the space
\[
    V = \left\{ \text{Functions } \phi : G \to W \text{ such that } \phi(gh^{-1}) = \rho(h) \phi(g),\ \forall h \in H, g \in G\right\}.
\]
If $\chi = \Tr\, \rho$, then the character of the induced representation is given by \cite[(3.18)]{fulton1991representation}
\begin{equation} \label{eq:char-ind}
    \Ind_H^G(\chi)(g) := \Tr\, \Ind_H^G(\rho)(g) = 
    \frac{1}{|H|} \sum_{\substack{x \in G \\ x^{-1}g x \in H}} \chi(x^{-1} g x).
\end{equation}
\begin{lemma}[Frobenius reciprocity] \label{lem:frob-rec}
Let $G, H, \rho$ be as above, and $\psi$ be any character of $G$. Then
\[
    \frac{1}{|G|} \sum_{g \in G} \Ind_H^G(\chi)(g) \cdot \psi(g) =
    \frac{1}{|H|} \sum_{h \in H} \chi(h) \cdot \psi(h).
\]
\end{lemma}
\begin{proof}
See, e.g., \cite[Corollary 3.20]{fulton1991representation}.
\end{proof}

Given a function $F : G \to \C$ and a (not necessarily irreducible) representation $\rho : G \to U(V)$, we define the \emph{Fourier coefficient} $\hat{F}(\rho) : V \to V$ by
\begin{equation} \label{eq:Fourier-tr-nonab}
    \hat{F}(\rho) := \sum_{g \in G} F(g) \rho(g).
\end{equation}
This obeys $\hat{F_1 * F_2}(\rho) = \hat{F_1}(\rho) \hat{F_2}(\rho)$, where $(F_1 * F_2)(g) := \sum_{g_1 g_2 = g} F_1(g) F_2(g)$ denotes the convolution of two functions $F_1, F_2 : G \to \C$. In particular, if $G = \Z/c\Z$, the irreducible representations of $G$ are all $1$-dimensional and of the shape $\rho_a(g) := e(\tfrac{ag}{c})$ for $a, g \in \Z/c\Z$. In this case, we write 
\begin{equation} \label{eq:Fourier-tr-ab}
    \hat{F}(a) := \hat{F}(\rho_{-a}) = \sum_{g \in \Z/c\Z} F(g)\, e\left(-\frac{ag}{c}\right).
\end{equation}

\begin{lemma} \label{lem:break-down-schatten}
Let $F : G \to \C$, $\rho : G \to U(V)$ be a representation, and $q \in [1, \infty)$. Then one has
\[
    \|\hat{F}(\rho)\|_{S^q}^q = \sum_{\rho' \in \hat{G}} \Mult(\rho', 
    \rho) \|\hat{F}(\rho')\|_{S^q}^q,
    \qquad\qquad 
    \|\hat{F}(\rho)\| = \max_{\substack{\rho' \in \hat{G} \\ \Mult(\rho',\rho)>0}} \|\hat{F}(\rho')\|.
\]
\end{lemma}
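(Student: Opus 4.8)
The plan is to reduce everything to the decomposition \cref{eq:rep-decomposition} of $\rho$ into irreducibles and to the orthogonal-direct-sum behaviour of Fourier coefficients and singular values. First I would observe that if $\rho \cong \rho_1 \oplus \rho_2$ via a unitary isomorphism $M : V \to V_1 \oplus V_2$, then $M \hat F(\rho) M^{-1} = \hat F(\rho_1) \oplus \hat F(\rho_2)$ by linearity of \cref{eq:Fourier-tr-nonab}; since singular values are invariant under pre- and post-composition with unitaries, and the multiset of singular values of a block-diagonal operator $A \oplus B$ is the union (with multiplicity) of those of $A$ and $B$, we get $\|\hat F(\rho)\|_{S^q}^q = \|\hat F(\rho_1)\|_{S^q}^q + \|\hat F(\rho_2)\|_{S^q}^q$ for $q \in [1,\infty)$ and $\|\hat F(\rho)\| = \max(\|\hat F(\rho_1)\|, \|\hat F(\rho_2)\|)$.

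Next I would iterate this over the finite decomposition $\rho \cong \bigoplus_{\rho' \in \hat G} \rho' \oplus^{\Mult(\rho',\rho)}$ from \cref{eq:rep-decomposition}. Applying the $S^q$ identity once per irreducible summand gives
\[
    \|\hat F(\rho)\|_{S^q}^q = \sum_{\rho' \in \hat G} \Mult(\rho',\rho)\, \|\hat F(\rho')\|_{S^q}^q,
\]
since each of the $\Mult(\rho',\rho)$ copies of $\rho'$ contributes the same value $\|\hat F(\rho')\|_{S^q}^q$ (isomorphic representations induce conjugate Fourier coefficients, hence identical singular values). For the operator norm, the max-of-blocks rule iterated over the decomposition gives $\|\hat F(\rho)\| = \max_{\rho'} \|\hat F(\rho')\|$ where the max ranges over those $\rho'$ actually appearing, i.e.\ with $\Mult(\rho',\rho) > 0$; summands with multiplicity zero are absent and do not contribute, and a repeated block does not change the maximum.

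Since $\hat G$ is finite and everything in sight is a finite sum or a finite maximum, there is no convergence issue, and the only genuine point to be careful about is the unitarity of the intertwiner realizing the decomposition --- but this is guaranteed by our convention that all representations are taken unitary, so any isomorphism may be chosen unitary (as noted after the definition of isomorphic representations). I do not anticipate a real obstacle here; the mild bookkeeping point is simply to phrase the direct-sum-of-operators computation of singular values cleanly, e.g.\ by noting $(A\oplus B)(A\oplus B)^* = AA^* \oplus BB^*$ and reading off $\|\cdot\|_{S^q}$ and $\|\cdot\|_{S^\infty}$ from \cref{eq:schatten}.
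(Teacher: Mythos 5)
Your proposal is correct and follows essentially the same route as the paper: conjugate $\hat F(\rho)$ by the unitary intertwiner realizing the decomposition \cref{eq:rep-decomposition}, use linearity of \cref{eq:Fourier-tr-nonab} to identify the result with the block-diagonal sum $\bigoplus_{\rho'} \hat F(\rho')^{\oplus \Mult(\rho',\rho)}$, and read off Schatten norms and the operator norm from the fact that the singular values of a direct sum are the union (with multiplicity) of those of the blocks. The only cosmetic difference is that you iterate over two-term splittings rather than invoking the full decomposition in one step, which changes nothing.
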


\begin{proof}
By \cref{eq:rep-decomposition}, there exists a unitary map $U$ (from $V$ to the direct sum of $\rho$'s irreducible invariant subspaces) such that for any $g \in G$,
\[
    U\rho(g)U^* = \bigoplus_{\rho' \in \hat{G}} {\rho'(g)\oplus}^{\Mult(\rho',\rho)}.
\]
But then, by \cref{eq:Fourier-tr-nonab}, we have
\[
\begin{aligned}
    U\hat{F}(\rho)U^* = \sum_{g \in G} F(g) U\rho(g)U^*
    &=
    \sum_{g \in G} F(g) \bigoplus_{\rho' \in \hat{G}} \rho'(g)^{\oplus \Mult(\rho',\rho)}
    \\
    &=
    \bigoplus_{\rho' \in \hat{G}} \left(\sum_{g \in G} F(g) \rho'(g)\right)^{\oplus \Mult(\rho',\rho)}
    =
    \bigoplus_{\rho' \in \hat{G}} \hat{F}(\rho') ^{\oplus \Mult(\rho',\rho)},
\end{aligned}
\]
and the conclusion follows from the fact that the multiset of singular values of a direct sum of matrices is the union of the multisets of singular values of those matrices.
\end{proof}

\subsection{Standard facts about \texorpdfstring{$\SL_2$}{SL2}}
Let $c \in \Z_+$. Recall the special linear groups $\SL_2(\Z)$ and $\SL_2(\Z/c\Z)$ of matrices in $\Z^{2 \times 2}$ (resp., $(\Z/c\Z)^{2 \times 2}$) with determinant $1$, and the projective special linear groups,
\begin{equation} \label{eq:def-PSL2}
    \PSL_2(\Z) := \SL_2(\Z)/Z(\SL_2(\Z)), \qquad \PSL_2(\Z/c\Z) := \SL_2(\Z/c\Z)/Z(\SL_2(\Z/c\Z)),
\end{equation}
where $Z(G)$ denotes the center of a group $G$. The centers here are explicitly given by 
\begin{equation} \label{eq:centers} 
    Z(\SL_2(\Z)) = \{\pm I\}, \qquad\qquad Z(\SL_2(\Z/c\Z)) = \{\gamma I : \gamma \in \Z/c\Z, \gamma^2 = 1\},
\end{equation}
where 
\begin{equation} \label{eq:center-bound}
    |Z(\SL_2(\Z/c\Z))| \ll c^{o(1)},
\end{equation}
by reducing to a local computation.
When the group $\SL_2(\Z)$, $\PSL_2(\Z)$, $\SL_2(\Z/c\Z)$ or $\PSL_2(\Z/c\Z)$ is understood from context, we write
\begin{equation} \label{eq:SL2-generators}
    I := \begin{pmatrix} 1 & 0 \\ 0 & 1 \end{pmatrix},
    \qquad
    T := \begin{pmatrix} 1 & 1 \\ 0 & 1 \end{pmatrix},
    \qquad
    S := \begin{pmatrix} 0 & -1 \\ 1 & 0 \end{pmatrix},
\end{equation}
which satisfy the relations $-S^2 = -(ST)^3 = I$, and in the case of $\SL_2(\Z/c\Z)$ or $\PSL_2(\Z/c\Z)$, also $T^c = I$. Note that $T$ and $S$ generate $\SL_2(\Z)$. 

\begin{notation}[Projective line] \label{not:proj-line}
For $c \in \Z_+$, we recall the projective line
\[
    \P^1(\Z/c\Z) := \left\{(x, y) \in (\Z/c\Z)^2: \not\exists\, d > 1 \text{ s.t. } (x, y) \in (d\Z/c\Z)^2 \right\}/_\sim,
\]
where $\sim$ is the equivalence relation generated by $(x, y) \sim (\alpha x, \alpha y)$ for $\alpha \in (\Z/c\Z)^\times$. We write the equivalence class of $(x, y)$ as $[x : y]$, and we will typically use the letters $u, v$ to denote projective points in $\P^1(\Z/c\Z)$, reserving $x, y$ for elements of $\Z/c\Z$. Note that for any $[x : y] \in \P^1(\Z/c\Z)$, there exist $a, b \in \Z/c\Z$ such that $ax + by \equiv 1 \pmod{c}$.
For $d \mid c$, we write the natural map $\P^1(\Z/c\Z) \to \P^1(\Z/d\Z)$ which reduces both entries modulo $d$ as $u \mapsto u \mod d$. 
\end{notation}

The group $\PSL_2(\Z/c\Z)$ (and, through it, $\SL_2(\Z/c\Z)$) acts on $\P^1(\Z/c\Z)$ by 
\begin{equation} \label{eq:action-proj}
    \begin{pmatrix}
        m & n \\ p & q
    \end{pmatrix}
    [x : y]
    :=
    [mx + ny : px + qy].
\end{equation}
One can think of $\P^1(\Z/c\Z)$ as $\Z/c\Z$ with a few additional `points at infinity', which must be included to obtain a well-defined action of $\SL_2(\Z/c\Z)$. 
In particular, one can embed $\Z/c\Z \subset \P^1(\Z/c\Z)$ by $x \mapsto [x : 1]$, and via this embedding, the generators from \cref{eq:SL2-generators} act on elements of $\Z/c\Z$ by
\[
    Tx = x+1, \qquad\qquad 
    Sy = -\bar{y}, \qquad\qquad 
    \text{for } x \in \Z/c\Z,\ y \in (\Z/c\Z)^\times.
\]
We now briefly go over a few well-known facts about the subgroups and representations of $\SL_2(\Z/c\Z)$.

\begin{notation}[Reduction mod $d$] \label{not:red-mod-d}
Given a positive integer $d$ with $d \mid c$, we denote by
\[
    \pi_{c, d} : \SL_2(\Z/c\Z) \to \SL_2(\Z/d\Z)
\]
the natural epimorphism which `reads' the entries of $g \in \SL_2(\Z/c\Z)$ modulo $d$. We write
\[
    \Gamma_c(d) := \ker \pi_{c, d}
\]
for the congruence subgroup given by the kernel of this map (consisting of matrices of the form $I + dA$, where one may view the entries of $A$ as elements of $\Z/\tfrac{c}{d}\Z$).
\end{notation}

\begin{lemma} \label{lem:action-transitive}
$\SL_2(\Z/c\Z)$ acts transitively on $\P^1(\Z/c\Z)$ (i.e., there is only one orbit). In fact, for $d \mid c$, there is a bijection between $\P^1(\Z/d\Z)$ and the orbits of $\P^1(\Z/c\Z)$ under $\Gamma_c(d)$,
\begin{equation} \label{eq:orbits-bijection}
\begin{array}{rcl}
\Gamma_c(d) \backslash \P^1(\Z/c\Z) &\longrightarrow& \P^1(\Z/d\Z), \\
\Gamma_c(d) \cdot u &\longmapsto& u \mod d,
\end{array}
\end{equation}
and all orbits in $\Gamma_c(d) \backslash \P^1(\Z/c\Z)$ have size $|\P^1(\Z/c\Z)|/|\P^1(\Z/d\Z)|$.
\end{lemma}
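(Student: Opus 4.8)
The plan is to prove transitivity by exhibiting an explicit matrix, and then to check that the assignment $\Gamma_c(d)\cdot u\mapsto u\mod d$ in \cref{eq:orbits-bijection} is well-defined, surjective, and injective; injectivity is the only step with real content. For transitivity: given $u=[x:y]\in\P^1(\Z/c\Z)$, the defining condition $x(\Z/c\Z)+y(\Z/c\Z)=\Z/c\Z$ produces $a,b\in\Z/c\Z$ with $ax+by=1$, so $g:=\begin{psmall} x & -b\\ y & a\end{psmall}$ lies in $\SL_2(\Z/c\Z)$ and, by \cref{eq:action-proj}, satisfies $g[1:0]=[x:y]=u$; hence every projective point is in the orbit of $[1:0]$.

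Next, the reduction $\P^1(\Z/c\Z)\to\P^1(\Z/d\Z)$, $u\mapsto u\mod d$, is well-defined because units modulo $c$ reduce to units modulo $d$, and it is constant on $\Gamma_c(d)$-orbits since $\gamma\mod d=I$ for every $\gamma\in\Gamma_c(d)$ (\cref{not:red-mod-d}); thus the map in \cref{eq:orbits-bijection} is well-defined. For surjectivity, given $v=[x_0:y_0]\in\P^1(\Z/d\Z)$, I would take any integer lift $y$ of $y_0$ and, via the Chinese Remainder Theorem, a lift $x$ of $x_0$ that is in addition $\equiv1$ modulo every prime power of $c$ coprime to $d$; then no prime dividing $c$ divides both $x$ and $y$ (primes dividing $d$ are excluded by $\gcd(x_0,y_0,d)=1$, the remaining ones by construction), so $[x:y]\in\P^1(\Z/c\Z)$ maps to $v$.

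For injectivity, suppose $u_1\mod d=u_2\mod d$. Since $\Gamma_c(d)=\ker\pi_{c,d}$ is normal in $\SL_2(\Z/c\Z)$, I may replace $(u_1,u_2)$ by $(gu_1,gu_2)$ for a fixed $g$ with $gu_2=[1:0]$ (such $g$ exists by transitivity, and $g^{-1}\Gamma_c(d)g=\Gamma_c(d)$ makes the replacement legitimate), so I assume $u_2=[1:0]$; then $w:=u_1$ satisfies $w\mod d=[1:0]$ and I must show $w$ lies in the $\Gamma_c(d)$-orbit of $[1:0]$. From \cref{eq:action-proj} one reads off that $\mathrm{Stab}_{\SL_2(\Z/c\Z)}([1:0])=\bigl\{\begin{psmall} s & t\\ 0 & s^{-1}\end{psmall}:s\in(\Z/c\Z)^\times,\ t\in\Z/c\Z\bigr\}$ is the upper-triangular Borel subgroup, and its reduction modulo $d$ is onto $\mathrm{Stab}_{\SL_2(\Z/d\Z)}([1:0])$ because both $(\Z/c\Z)^\times\to(\Z/d\Z)^\times$ and $\Z/c\Z\to\Z/d\Z$ are surjective. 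Writing $w=h[1:0]$ with $h\in\SL_2(\Z/c\Z)$, the element $h\mod d$ stabilizes $[1:0]$ since $(h\mod d)[1:0]=w\mod d=[1:0]$, hence it has a lift $k\in\mathrm{Stab}_{\SL_2(\Z/c\Z)}([1:0])$; then $\gamma:=hk^{-1}$ satisfies $\gamma\mod d=I$, so $\gamma\in\Gamma_c(d)$, while $\gamma[1:0]=hk^{-1}[1:0]=h[1:0]=w$. Hence $w$ is in the $\Gamma_c(d)$-orbit of $[1:0]$, which yields injectivity and completes the proof.

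I expect the only genuine obstacle to be making the two lifting steps airtight; each reduces to the elementary surjectivity of a reduction map ($(\Z/c\Z)^\times\to(\Z/d\Z)^\times$, or the Borel-restricted version of $\SL_2(\Z/c\Z)\to\SL_2(\Z/d\Z)$), handled prime power by prime power via the Chinese Remainder Theorem. If one prefers to avoid stabilizer language, injectivity can be done by hand instead: starting from $w=[x:y]$ with $d\mid y$ and $x$ a unit modulo $d$, rescale by a unit $\beta\in(\Z/c\Z)^\times$ with $\beta\equiv x^{-1}\pmod d$ to obtain $[x':y']$ with $x'\equiv1$ and $y'\equiv0\pmod d$ and still $\gcd(x',y',c)=1$, then solve $x'a-y'b\equiv1\pmod c$ subject to $a\equiv1$ and $b\equiv0\pmod d$ (consistent prime power by prime power, as $x'$ is a unit modulo each prime power dividing $d$); the matrix $\begin{psmall} x' & b\\ y' & a\end{psmall}$ then lies in $\Gamma_c(d)$ and sends $[1:0]$ to $w$.
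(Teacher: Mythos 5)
Your proof is correct, and its skeleton matches the paper's: transitivity via the explicit matrix built from a B\'ezout relation sending $[1:0]$ to $[x:y]$, well-definedness of the map from $\Gamma_c(d)=\ker\pi_{c,d}$, and the reduction of injectivity, via transitivity plus normality of $\Gamma_c(d)$, to showing that a point $w$ with $w \bmod d = [1:0]$ lies in the $\Gamma_c(d)$-orbit of $[1:0]$. Where you diverge is in how you finish that last step: the paper rescales so that $gu=[xd+1:yd]$ and exhibits a single explicit matrix of $\Gamma_c(d)$ (with entries $xd+1$, $-bd$, $yd$, $a$ and $a\equiv 1\ (\textnormal{mod } d)$) carrying $[1:0]$ to it, whereas your main argument identifies the stabilizer of $[1:0]$ as the upper-triangular Borel subgroup and lifts the mod-$d$ stabilizing element through the reduction map on Borels, which rests on the surjectivity of $(\Z/c\Z)^\times \to (\Z/d\Z)^\times$. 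The two arguments are equivalent in content---both ultimately lift a unit modulo $d$ to a unit modulo $c$---but yours is slightly more structural, and your appended ``by hand'' variant is essentially the paper's construction. You also spell out the surjectivity of $\P^1(\Z/c\Z)\to\P^1(\Z/d\Z)$ by a CRT lifting of representatives, a detail the paper merely asserts; your lifting is correct. Both of your lifting steps (units and the Borel) are standard and airtight as you indicate, so there is no gap.
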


\begin{proof}
For any $[x : y] \in \P^1(\Z/c\Z)$, there exist $a, b \in \Z/c\Z$ with $ax + by \equiv 1 \pmod{c}$, so $[x : y] = \begin{psmall} x & -b \\ y & a \end{psmall} [1 : 0] \in \SL_2(\Z/c\Z) \cdot [1 : 0]$. Thus the action of $\SL_2(\Z/c\Z)$ on $\P^1(\Z/c\Z)$ is transitive.

The map in \cref{eq:orbits-bijection} is well-defined since $(I+dA)u \pmod{d} = u \pmod{d}$ for any $I + dA \in \Gamma_c(d)$. It is surjective since the original map $\P^1(\Z/c\Z) \to \P^1(\Z/d\Z)$ is surjective. To show that \cref{eq:orbits-bijection} is also injective, suppose $u \pmod{d} = v \pmod{d}$ for some $u, v \in \P^1(\Z/c\Z)$, and we aim to show that $\Gamma_c(d) \cdot u = \Gamma_c(d) \cdot v$. By the transitivity of the action of $\SL_2(\Z/c\Z)$, we can find $g \in \SL_2(\Z/c\Z)$ such that $gv = [1 : 0] \in \P^1(\Z/c\Z)$, so
\[
    (gu) \mod d = (gv) \mod d = [1 : 0] \in \P^1(\Z/d\Z).
\]
Write $gu = [xd+1 : yd]$ for some $x, y \in \Z/c\Z$. Since $gu \in \P^1(\Z/c\Z)$, we have $1 = (xd+1, yd, c) = (xd+1, yd^2, c)$, so there exist $a, b \in \Z/c\Z$ with $a(xd+1) + byd^2 \equiv 1 \pmod{c}$, and in particular $a \equiv 1 \pmod{d}$. Then,
\[
    gu = \begin{pmatrix}
        xd+1 & -bd \\ 
        yd & a 
    \end{pmatrix}
    [1 : 0]
    \in 
    \Gamma_c(d) \cdot gv
    =
    g\Gamma_c(d) \cdot v,
\]
where the last equality is due to the normality of $\Gamma_c(d)$. Hence $u \in \Gamma_c(d) \cdot v$, as we wanted.

Finally, all orbits in $\Gamma_c(d)\backslash\P^1(\Z/c\Z)$ have the same size due to the normality of $\Gamma_c(d)$ (which again implies $|\Gamma_c(d) \cdot gu| = |g \Gamma_c(d) \cdot u| = |\Gamma_c(d) \cdot u|$ for all $u \in \P^1(\Z/c\Z)$ and $g \in \SL_2(\Z/c\Z)$) and the transitivity of the action of $\SL_2(\Z/c\Z)$. But there are $|\P^1(\Z/d\Z)|$ such orbits due to the bijection in \cref{eq:orbits-bijection}, so each orbit must have size $|\P^1(\Z/c\Z)|/|\P^1(\Z/d\Z)|$.
\end{proof}

\begin{notation}[Specified isomorphisms] \label{not:specified-isomorphisms}
Recall that the Chinese remainder theorem gives a standard isomorphism of rings $\Z/c\Z \cong \prod_{p^k\| c} \Z/p^k\Z$.
To describe certain sets and groups depending on $c$ in terms of the prime factorization of $c$, we use the notation `$\cong$' to refer to the specific bijections and isomorphisms that are compatible with the Chinese remainder theorem. In particular, we have a bijection of sets
\begin{equation} \label{eq:CRT-bij-P1}
    \P^1(\Z/c\Z) \cong \prod_{p^k \| c} \P^1(\Z/p^k\Z),
\end{equation}
and group isomorphisms
\begin{equation} \label{eq:CRT-isom-groups}
    \SL_2(\Z/c\Z) \cong \prod_{p^k \| c} \SL_2(\Z/p^k\Z),
    \qquad\qquad 
    \Gamma_c(d) \cong \prod_{\substack{p^k \| c \\ p^j \| d}} \Gamma_{p^k}(p^j),
\end{equation}
for $d \mid c$ (in the products above, it is understood that only primes which divide $c$ are included, so $k \ge 1$, but we allow $j = 0$). The isomorphisms in \cref{eq:CRT-isom-groups} also correspond to combining the maps $\pi_{c,p^k}$ from \cref{not:red-mod-d} for $p^k \| c$.
\end{notation}

Note that for a prime power $p^k$ with $k \ge 1$, each point in $\P^1(\Z/p^k\Z)$ can be written uniquely as either $[x : 1]$ with $x \in \Z/p^k\Z$ or as $[1 : y]$ with $y \in p\Z/p^k\Z$; thus $|\P^1(\Z/p^k\Z)| = p^k + p^{k-1}$. It follows from \cref{eq:CRT-bij-P1} that
\begin{equation} \label{eq:P1-size}
    |\P^1(\Z/c\Z)| = c\prod_{\text{prime } p \mid c} \left(1 + \frac{1}{p}\right) \ll c^{1+o(1)}.
\end{equation}
Similaly, since $|\SL_2(\Z/p^k\Z)| = p^{3k}(1 - \tfrac{1}{p^2})$ for $k \ge 1$, it follows from \cref{eq:CRT-isom-groups} that
\begin{equation} \label{eq:size-nd}
    |\SL_2(\Z/c\Z)| = c^3 \prod_{\text{prime } p | c} \left(1 - \frac{1}{p^2}\right)  \asymp c^3
    \qquad \Rightarrow 
    \qquad 
    |\Gamma_c(d)| = \frac{|\SL_2(\Z/c\Z)|}{|\SL_2(\Z/d\Z)|} \asymp \frac{c^3}{d^3},
\end{equation}
and that the irreducible representations of $\SL_2(\Z/c\Z)$ can be parametrized as
\begin{equation} \label{eq:CRT-irrep}
    \hat\SL_2(\Z/c\Z) = \left\{ \bigboxtimes_{p^k \| c} \rho_{p,k} :
    \rho_{p,k} \in \hat \SL_2(\Z/p^k\Z) \right\}.
\end{equation}

Now let $p$ be a prime and $k \in \Z_+$, and let us focus on understanding the structure of $\SL_2(\Z/p^k\Z)$. 

\begin{lemma} \label{lem:abelian-subgroups}
For any integer $j \in [\tfrac{k}{2}, k]$, the normal subgroup $N := \Gamma_{p^k}(p^j)$ is abelian, so all of its irreducible representations are $1$-dimensional. Moreover, writing $R := \Z/p^{k-j}\Z$ and $RI = \{rI : r \in R\}$, one has group isomorphisms
\[
    \left(\left\{ A \in R^{2 \times 2} : \Tr(A) = 0\right\}, +\right)
    \cong (N, \cdot),
    \qquad\qquad 
    (R^{2\times 2} / RI, +)
    \cong 
    (\hat{N}, \cdot),
\]
given explicitly by the maps $A \mapsto I + p^j A$ and $B+RI \mapsto \sigma_B$, where 
\begin{equation} \label{eq:sigma-b}
    \sigma_B(I + p^j A) := e\left(\frac{\Tr(AB)}{p^{k-j}}\right),
    \qquad 
    \text{ for }
    A, B \in R^{2 \times 2},\ \Tr(A) = 0.
\end{equation}
These isomorphisms preserve the action of $\SL_2(\Z/p^k\Z)$ by conjugation, recalling \cref{eq:G-conjugation-N-hat}. In particular, for $B \in R^{2 \times 2}$ and $g \in \SL_2(\Z/p^k\Z)$, we have $\sigma_{gBg^{-1}} = g \cdot \sigma_B$.
\end{lemma}

\begin{remark}
If $p$ is odd, there is also an isomorphism $\left\{ A \in R^{2 \times 2} : \Tr(A) = 0\right\} \cong R^{2\times 2} / RI$ by $A \mapsto A + RI$, which preserves the action of $\SL_2(\Z/p^k\Z)$ by conjugation. When $p = 2$ and $k-j \ge 1$, the same map fails to be injective (consider $A + 2^{k-j-1}I$) or surjective (consider odd-trace matrices).
\end{remark}

\begin{proof}[Proof of \cref{lem:abelian-subgroups}]
The isomorphism $(N, \cdot) \cong \left(\left\{ A \in R^{2 \times 2} : \Tr(A) = 0\right\}, +\right)$ follows by noting that
\[
\begin{aligned}
    N &= \left\{ I + p^j A : A \in (\Z/p^{k-j}\Z)^{2 \times 2}, \ \det(I + p^j A) \equiv 1 \pmod{p^k}\right\}
    \\
    &= 
    \left\{ I + p^j A : A \in R^{2 \times 2},\ \Tr(A) = 0\right\},
\end{aligned}
\]
and that $(I + p^j A) (I + p^j B) = I + p^j (A + B)$ in $\SL_2(\Z/p^k\Z)$, for $j \ge \tfrac{k}{2}$. The compatibility of this isomorphism with $\SL_2(\Z/p^k)$-conjugation is immediate from
\[
    g(I + p^jA)g^{-1} = I + p^j g A g^{-1},
\]
for $A \in R^{2 \times 2}$ with $\Tr(A) = 0$ and $g \in \SL_2(\Z/p^k\Z)$.

Now for $B \in R^{2 \times 2}$, we have a homomorphism $\{A \in R^{2 \times 2} : \Tr(A) = 0\} \to \{z \in \C : |z| = 1\}$ by
\[
    A \mapsto e\left(\frac{\Tr(AB)}{p^{k-j}}\right).
\]
This only depends on the class $B + RI \in R^{2 \times 2}/RI$, since $\Tr(A(B+rI)) = \Tr(AB)$ for $r \in R$. Moreover, two different classes $B+RI, B'+RI \in R^{2 \times 2}/RI$ are seen to induce different homomorphisms by considering $A \in \{\begin{psmall} 0 & 1 \\ 0 & 0\end{psmall}, \begin{psmall} 0 & 0 \\ 1 & 0\end{psmall}, 
\begin{psmall} 1 & 0 \\ 0 & -1\end{psmall}
\}$. Since $|\{A \in R^{2 \times 2} : \Tr(A) = 0\}| = |R|^3 = |R^{2 \times 2}/RI|$, all homomorphisms $\{A \in R^{2 \times 2} : \Tr(A) = 0\} \to \{z \in \C : |z| = 1\}$ arise this way.
The Pontryagin dual of $\left\{A \in R^{2 \times 2},\ \Tr(A) = 0\right\}$ is therefore naturally identified with $R^{2 \times 2}/RI$.
Combining this with the isomorphism $\left(\left\{ A \in R^{2 \times 2} : \Tr(A) = 0\right\}, +\right) \cong (N, \cdot)$ by $A \mapsto I + p^jA$ leads to \cref{eq:sigma-b}. 

Finally, the resulting isomorphism $(R^{2 \times 2}/RI, +) \cong (\hat{N}, \cdot)$ by $B \mapsto \sigma_B$ is compatible with the action of $\SL_2(\Z/p^k\Z)$ by conjugation, since for $g \in \SL_2(\Z/p^k\Z)$ and $A, B \in R^{2 \times 2}$ with $\Tr(A) = 0$, we have
\[
\begin{aligned}
    \sigma_{gBg^{-1}}(I + p^jA) 
    &= e\left(\frac{\Tr(AgBg^{-1})}{p^{k-j}}\right)
    \\
    &=
    e\left(\frac{\Tr(g^{-1}AgB)}{p^{k-j}}\right)
    =
    \sigma_B(g(I + p^jA)g^{-1})
    =
    (g \cdot \sigma_B)(I + p^jA).
\end{aligned}
\]
This completes our proof.
\end{proof}

\begin{definition}[Primitive representations] \label{def:primitive}
A representation $\rho : \SL_2(\Z/p^k\Z) \to U(V)$ is called \emph{primitive} iff its kernel does not contain $\Gamma_{p^k}(p^{k-1})$. Equivalently (by the first isomorphism theorem), $\rho$ cannot be factored as $\rho' \circ \pi_{p^k, p^{k-1}}$ for some representation $\rho'$ of $\SL_2(\Z/p^{k-1}\Z)$. A primitive (resp., non-primitive) character is one associated to a primitive (resp., non-primitive) representation.
\end{definition}

Thus the primitive irreducible representations of $\SL_2(\Z/p^k\Z)$ are `new' at level $p^k$, much like primitive Dirichlet characters or newforms in the theory of automorphic representations. We can easily isolate the `maximal' non-primitive component of a representation using the following lemma.

\begin{lemma} \label{lem:prim-split}
Let $\rho : \SL_2(\Z/p^k\Z) \to U(V)$ be a representation and 
\[
    W := V^{\Gamma_{p^k}(p^{k-1})} = \{v \in V : \rho(n) v = v,\ \forall n \in \Gamma_{p^k}(p^{k-1}) \}.
\]
Then $\rho \vert_W$ is non-primitive, and $\rho \vert_{W^\perp}$ is isomorphic to a direct sum of primitive irreducible representations.
\end{lemma}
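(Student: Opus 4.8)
The plan is to exploit the fact that $N := \Gamma_{p^k}(p^{k-1})$ is a \emph{normal} subgroup of $G := \SL_2(\Z/p^k\Z)$ (being the kernel of $\pi_{p^k,p^{k-1}}$), together with the observation that $V_f$ is precisely the subspace $V^N$ of $N$-fixed vectors. Everything then rests on the tautology that a subrepresentation $\rho|_W$ is non-primitive exactly when $N \subseteq \ker(\rho|_W)$, i.e.\ when $\rho(g)$ acts as the identity on $W$ for every $g \in N$.

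First I would check that $V_f$ is a $\rho$-invariant subspace. If $v \in V_f$, $h \in G$, and $g \in N$, then normality gives $h^{-1}gh \in N$, so $\rho(g)\bigl(\rho(h)v\bigr) = \rho(h)\rho(h^{-1}gh)v = \rho(h)v$; hence $\rho(h)v \in V_f$. Since $\rho$ is unitary, the orthogonal complement $V_f^\perp$ is invariant as well, so $\rho|_{V_f}$ and $\rho|_{V_f^\perp}$ are genuine subrepresentations with $\rho \cong \rho|_{V_f} \oplus \rho|_{V_f^\perp}$.

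The non-primitivity of $\rho|_{V_f}$ is then immediate from the definition of $V_f$: every $g \in N = \Gamma_{p^k}(p^{k-1})$ fixes $V_f$ pointwise, so $N \subseteq \ker(\rho|_{V_f})$, which is exactly the condition for $\rho|_{V_f}$ to be non-primitive (equivalently, to factor through $\pi_{p^k,p^{k-1}}$). For the last claim I would decompose $\rho|_{V_f^\perp}$ into irreducible invariant subspaces, $V_f^\perp = \bigoplus_i W_i$, and argue by contradiction: if some irreducible $\rho|_{W_i}$ were non-primitive, then $N \subseteq \ker(\rho|_{W_i})$, so every vector of $W_i$ would be $N$-fixed, forcing $W_i \subseteq V^N = V_f$; but $W_i \subseteq V_f^\perp$, so $W_i = \{0\}$, contradicting that an irreducible representation is nonzero. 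Hence each $\rho|_{W_i}$ is a primitive irreducible representation.

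The argument is essentially bookkeeping, so there is no serious obstacle; the only points needing a little care are that ``non-primitive'' is invariant under isomorphism of representations (so the property passes between $\rho|_{W_i}$ and its class in $\hat\SL_2(\Z/p^k\Z)$, which is phrased purely in terms of the kernel), and the degenerate case $k=1$, where $\Gamma_{p^k}(p^{k-1}) = \Gamma_p(1) = \SL_2(\Z/p\Z)$ and the statement merely says that the trivial isotypic component splits off from the sum of the nontrivial (hence automatically primitive) irreducible constituents.
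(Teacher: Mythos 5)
Your proof is correct and follows essentially the same route as the paper: normality of $\Gamma_{p^k}(p^{k-1})$ gives invariance of $V_f$ (and hence of $V_f^\perp$ by unitarity), the definition of $V_f$ gives non-primitivity of $\rho\vert_{V_f}$, and any irreducible constituent of $V_f^\perp$ must contain a vector not fixed by the congruence subgroup, hence is primitive. The only cosmetic difference is that you phrase the last step as a contradiction while the paper argues directly by picking a non-fixed vector; the content is identical.
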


\begin{proof}
Recall that $\rho\vert_{W}$ and $\rho\vert_{W^\perp}$ are well-defined since $\Gamma_{p^k}(p^{k-1}) \triangleleft G$. By definition, $\rho\vert_{W}(n) = \Id_{W}$ for all $n \in \Gamma_{p^k}(p^{k-1})$, so the kernel of $\rho\vert_{W}$ includes $\Gamma_{p^k}(p^{k-1})$, i.e., $\rho\vert_{W}$ is non-primitive.

Now let $\rho\vert_{V_0}$ be any irreducible subrepresentation of $\rho\vert_{W^\perp}$, where $V_0 \subset W^\perp$. Since $V_0 \neq \{0\}$ and $V_0 \cap W = \{0\}$, we can find some $v \in V_0 \setminus W$, and thus some $n \in \Gamma_{p^k}(p^{k-1})$ such that $\rho(n)v \neq v$. But then $\rho\vert_{V_0}(n) \neq \Id_{V_0}$, so the kernel of $\rho\vert_{V_0}$ does not contain $\Gamma_{p^k}(p^{k-1})$, i.e., $\rho_{V_0}$ is primitive. 
\end{proof}

The classification of the (primitive) irreducible representations of $\SL_2(\Z/p^k\Z)$ has been the topic of numerous works \cite{kloosterman1946behaviourI,kloosterman1946behaviourII,shalika2004representation,tanaka1967irreducible,kutzko1973characters,nobs1976irreduziblenI,nobs1976irreduziblenII}; we remark in particular two papers of Kloosterman on this topic \cite{kloosterman1946behaviourI,kloosterman1946behaviourII}, with an approach based on theta series. The following preparatory lemma is very similar to \cite[Lemma 3]{kloosterman1946behaviourI}, but it includes the case $p = 2$.

\begin{lemma} \label{lem:quadr-congruence-count}
Let $p$ be a prime, $k \in \Z_+$, and $a, b, c \in \Z$. Then the number of solutions in $x, y \pmod{p^k}$ to the congruence
\begin{equation} \label{eq:quadr-congruence}
    a x^2 + b xy + c y^2 \equiv 1 \pmod{p^k}
\end{equation}
is $O(p^k)$ (where the implied constant is independent of $p, k, a, b, c$).
\end{lemma}

\begin{proof}
We may assume that $p \nmid (a, b, c)$, since otherwise \cref{eq:quadr-congruence} has no solutions.

First, we observe a Hensel-type lifting property: if $x, y$ give a solution to \cref{eq:quadr-congruence} with $p \nmid (2ax+by, 2cy+bx)$, then this solution modulo $p^k$ has exactly $p$ lifts to a solution $x', y'$ modulo $p^{k+1}$. Indeed, let $x, y \in \{0, \ldots, p^k-1\}$ such that $ax^2 + bxy + cy^2 = 1 + p^kt$ for some $t \in \Z$. Then we may write $x' = x + p^kr$, $y' = y + p^ks$, and solve the congruence
\[
    a(x')^2 + bx'y' + c(y')^2 \equiv 1 \pmod{p^{k+1}}
    \quad 
    \iff 
    \quad 
    t + (2ax+by)r + (2cy+bx)s \equiv 0 \pmod{p}
\]
in $r, s \pmod{p}$ in exactly $p$ ways. We will use this lifting property in the first two cases below.

\textbf{Case 1:} $p$ is odd.
Consider the solutions in $x, y \pmod{p}$ to $ax^2 + bxy + cy^2 \equiv 1 \pmod{p}$. If $p \nmid a$, then each value of $y \pmod{p}$ gives a quadratic congruence in $x \pmod{p}$, leading to a total of $O(p)$ solutions; if $p \nmid c$, the symmetric argument applies. If $p \mid (a, c)$ but $p \nmid b$, the congruence becomes $bxy \equiv 1 \pmod{p}$, which also has $O(p)$ solutions in $x, y \pmod{p}$.

Moreover, each of the $O(p)$ solutions in $x, y \pmod{p}$ to $ax^2 + bxy + cy^2 \equiv 1 \pmod{p}$ must satisfy $p \nmid (2ax+by, 2cy+bx)$, since otherwise $p \mid (2ax+by)x + (2cy+bx)y = 2(ax^2 + bxy + cy^2)$. The lemma then follows inductively from the lifting property.

\textbf{Case 2:} $p = 2$ and $b$ is odd. 
Then there are $O(1)$ solutions to $ax^2 + bxy + cy^2 \equiv 1 \pmod{2}$, and each of these solutions satisfies $2 \nmid (x, y)$ and $2 \nmid b$, so $2 \nmid (2ax + by, 2cy + bx)$. The lemma follows inductively from the lifting property.

\textbf{Case 3:} $p = 2$ and $b$ is even, say $b = 2b_0$. Then one of $a$ and $c$ must be odd, so by symmetry let us assume that $a$ is odd. Then the linear change of variables $x \mapsto x - \bar{a} b_0 y$ brings us to the congruence 
\[
    ax^2 + dy^2 \equiv 1 \pmod{2^k},
\]
where $d \equiv c-\bar{a}b_0^2 \pmod{2^k}$. 
If $d$ is even, then $x$ is an odd solution to the quadratic congruence $x^2 \equiv \bar{a}(1 - dy^2) \pmod{2^k}$, and there are only $O(1)$ such solutions for each choice of $y$ (indeed, for any two such solutions $x, x'$ we have $2^k \mid (x-x')(x+x')$, but one of $x-x', x+x'$ is a multiple of $4$ plus $2$). This gives a total of $O(2^k)$ solutions. 

We are left with the case that $a, d$ are both odd. Then exactly one of $x, y$ must be even, say $y$ by symmetry. Once again, for each choice of even $y$, there are $O(1)$ odd solutions in $x$ to the quadratic congruence $x^2 \equiv \bar{a}(1 - dy^2) \pmod{2^k}$, leading to a total of $O(2^k)$ solutions.
\end{proof}

The following result about primitive representations of $\SL_2(\Z/p^k\Z)$ will suffice for our purposes.

\begin{proposition} \label{prop:prim-rep-large} 
Any primitive irreducible representation $\rho$ of $\SL_2(\Z/p^k\Z)$ has $\dim \rho \gg p^k$ (where the implied constant is independent of both $p$ and $k$). 
\end{proposition}

\begin{proof}
This is a classical result if $k = 1$, going back to Frobenius (see, e.g., \cite{bourgain2008uniform}).
For $k \ge 2$, the possible dimensions of the irreducible representations of $\SL_2(\Z/p^k\Z)$ were determined by Nobs--Wolfart \cite{nobs1976irreduziblenII} (who refer to primitive representations in the sense of our \cref{def:primitive} as having `level' $k$ \cite[Definition 2]{nobs1976irreduziblenI}), building on the work of Kloosterman \cite{kloosterman1946behaviourI,kloosterman1946behaviourII}; for odd primes $p$, the irreducible representations had been classified by Shalika \cite[\S 4.3]{shalika2004representation}, Tanaka \cite{tanaka1967irreducible}, and Kutzko \cite{kutzko1973characters}. One can explicitly verify the bound $\dim \rho \gg p^k$ in the tables of Nobs--Wolfart \cite[p.\,525]{nobs1976irreduziblenII}.

There is also a more direct proof of the lower bound via Clifford theory and \cref{lem:quadr-congruence-count},
due to Bourgain--Gamburd \cite[Lemma 7.1]{bourgain2008expansion} (Bourgain--Gamburd assume $p$ is odd, but the argument can be adapted to cover the case $p = 2$). We include a variant of this proof here, assuming that $k$ is even; the case of odd $k$ is similar. 
We apply \cref{lem:clifford} with $G := \SL_2(\Z/p^k\Z)$ and $N := \Gamma_{p^k}(p^{k/2})$, to decompose $\rho\vert_N$ into irreducible representations $\sigma_1, \ldots, \sigma_L \in \hat{N}$, forming an orbit under $G$-conjugation. By \cref{lem:abelian-subgroups} with $j = \tfrac{k}{2}$ and $R = \Z/p^{k/2}\Z$, $\sigma_1, \ldots, \sigma_L \in \hat{N}$ are $1$-dimensional and correspond to elements $B_1 + RI, \ldots, B_L + RI \in R^{2 \times 2}/RI$, forming an orbit under $G$-conjugation; equivalently, they form an orbit under $\SL_2(R)$-conjugation. Moreover, the primitivity condition that $\ker \rho$ does not contain $\Gamma_{p^k}(p^{k-1})$ implies that $B_1, \ldots, B_L \not\in pR^{2 \times 2} + RI$: indeed, if some $B_j \in pR^{2 \times 2} + RI$, then all conjugates $B_1, \ldots, B_L \in pR^{2 \times 2} + RI$, and then $\sigma_1, \ldots, \sigma_L$ would be trivial on $\Gamma_{p^k}(p^{k-1})$ by the explicit description in \cref{eq:sigma-b}. It follows that
\[
    \dim \rho \ge L = \frac{|\SL_2(R)|}{|C_{\SL_2(R)}(B_1+RI)|} \gg \frac{p^{3k/2}}{|C_{\SL_2(R)}(B_1+RI)|},
\]
where $C_{\SL_2(R)}(B_1+RI)$ is the centralizer of $B_1+RI$ in $\SL_2(R)$. It thus remains to bound 
\begin{equation} \label{eq:centralizer-bound}
    |C_{\SL_2(R)}(B+RI)| \ll p^{k/2},
\end{equation} 
for $B \in R^{2 \times 2} \setminus (pR^{2 \times 2} + RI)$. We may take $B = \begin{psmall} a & b \\ c & 0 \end{psmall}$ without loss of generality, so that the primitivity condition $B \not\in pR^{2\times 2} + RI$ becomes $(a, b, c, p) = 1$. Then for any $\begin{psmall} x & y \\ z & t \end{psmall} \in C_{\SL_2(R)}(B+RI)$ we have 
    $\begin{psmall}
        x & y \\ z & t
    \end{psmall}
    \begin{psmall}
        a & b \\ c & 0
    \end{psmall}
    \begin{psmall}
        x & y \\ z & t
    \end{psmall}^{-1}
    =
    \begin{psmall}
        a+r & b \\ c & r
    \end{psmall}
    $
for some $r \in R$; taking traces gives $2r = 0$. Now let 
\[
    q := 
    \begin{cases} 
    p^{k/2}, & p \neq 2, \\ 
    2^{(k/2)-1}, & p = 2,
    \end{cases} 
\] 
so that $r \equiv 0 \pmod{q}$. We therefore have
    $\begin{psmall}
        x & y \\ z & t
    \end{psmall}
    \begin{psmall}
        a & b \\ c & 0
    \end{psmall}
    \equiv 
    \begin{psmall}
        a & b \\ c & 0
    \end{psmall} 
    \begin{psmall}
        x & y \\ z & t
    \end{psmall}
    \pmod{q}$,
which leads to the system of congruences
\begin{equation} \label{eq:system-R}
    bz \equiv cy, \qquad
    b(x-t) \equiv (a+r)y, \qquad 
    c(x-t) \equiv (a-r)z, \qquad
    xt - yz \equiv 1
    \quad 
    \pmod{q}.
\end{equation}
It suffices to solve this system in $x, y, z, t \pmod{q}$, since every such solution lifts to $O(1)$ solutions in $x, y, z, t \pmod{p^{k/2}}$.
If $(a, p) = 1$ (respectively, $(b, p) = 1$), then \cref{eq:system-R} determines $y, z$ in terms of $x, t$ (respectively, $z, t$ in terms of $x, y$), and implies the congruence
\[
    xt - \bar{a}^2bc (x-t)^2 \equiv 1 \pmod{q},
    \qquad 
    \text{respectively}
    \qquad 
    x(x-\bar{b}ay) - y\bar{b}cy \equiv 1 \pmod{q}.
\]
The case $(c, p) = 1$ is similar. \cref{lem:quadr-congruence-count} applies to all three cases, giving a total of $O(q) = O(p^{k/2})$ solutions; this establishes \cref{eq:centralizer-bound}, thus completing our proof.
\end{proof}

\section{Representations and Kloosterman matrices} \label{sec:rep-and-kloost}

Here we connect matrices of Kloosterman sums modulo $c$ to Fourier analysis on $\SL_2(\Z/c\Z)$.

\subsection{The relevant representations}
When digesting the notation below, the reader should keep in mind the informal outline from \cref{subsec:outline-Fourier}. We will first define the simpler representations $(\rho_c, V_c)$ which are connected to matrices of Kloosterman sums $S(m, n; c)$, and then the more relevant subrepresentations $(\rho^\circ_c, V_c^\circ)$ which correspond to adding the restriction $(m, n, c) = 1$. In fact, the subspace $V^\circ_c \subset V_c$ will be constructed by sifting out `old' subspaces isomorphic to $V_d$ for $d \mid c$.

\begin{definition}[Permutation representations of the projective action] \label{def:perm-rep}
For $c \in \Z_+$, we denote the permutation representation of $\SL_2(\Z/c\Z)$ associated to the action \cref{eq:action-proj} on $\P^1(\Z/c\Z)$ by
\begin{equation} \label{eq:action}
    \rho_c : \SL_2(\Z/c\Z) \to U(V_c),
    \qquad\quad 
    V_c := L^2(\P^1(\Z/c\Z)),
\end{equation}
and its character by $\chi_c := \Tr\, \rho_c$. Hence $V_c$ contains functions $f : \P^1(\Z/c\Z) \to \C$, and $(\rho_c(g)f)(u) = f(g^{-1}u)$ for $g \in \SL_2(\Z/c\Z)$, $u \in \P^1(\Z/c\Z)$. In particular, for $u \in \P^1(\Z/c\Z)$, one has $\rho_c(g) \one_u = \one_{gu}$.
\end{definition}

\begin{definition}[Invariant subspaces by congruence subgroups]\label{def:invt-subspace}
For $c, d \in \Z_+$ with $d \mid c$, we denote
\[
    V_c(d) := V_c^{\Gamma_c(d)} = \left\{ f \in V_c : \rho_c(n)f = f \quad \forall n \in \Gamma_c(d) \right\} \subset V_c.
\]
In particular, $V_c(c) = V_c$.
Thus $V_c(d)$ is the space of complex-valued functions on $\P^1(\Z/c\Z)$ which are constant on orbits of $\Gamma_c(d)$, so \cref{lem:action-transitive} gives a specific isomorphism
\begin{equation} \label{eq:invariant-orbits}
    V_c(d) \cong L^2(\Gamma_c(d)\backslash\P^1(\Z/c\Z)) \cong L^2(\P^1(\Z/d\Z)) = V_d.
\end{equation}
\end{definition}

\begin{lemma} \label{lem:invt-subspace}
Let $c, d \in \Z_+$ with $d \mid c$. Using \cref{not:red-mod-d}, we have
\[
    \rho_c \vert_{V_c(d)} \cong \rho_d \circ \pi_{c,d}.
\]
\end{lemma}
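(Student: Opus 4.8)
The plan is to first check that $V_c(d)$ is $\rho_c$-invariant, and then to exhibit an explicit invertible intertwiner $\Phi \colon V_d \to V_c(d)$. For invariance: since $\Gamma_c(d) = \ker \pi_{c,d}$ is normal in $\SL_2(\Z/c\Z)$, for any $g \in \SL_2(\Z/c\Z)$, $f \in V_c(d)$ and $n \in \Gamma_c(d)$ we have $\rho_c(n)\rho_c(g)f = \rho_c(g)\rho_c(g^{-1}ng)f = \rho_c(g)f$, because $g^{-1}ng \in \Gamma_c(d)$. Hence $\rho_c(g)f \in V_c(d)$, so the restriction $\rho_c\vert_{V_c(d)}$ makes sense.

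Next I would define $(\Phi h)(u) := h(u \mod d)$ for $h \in V_d = \C^{\P^1(\Z/d\Z)}$ and $u \in \P^1(\Z/c\Z)$, using the reduction map of \cref{not:proj-line}, and verify three properties. First, $\Phi h \in V_c(d)$: for $n \in \Gamma_c(d)$ one has $n \equiv I \pmod d$, so $n^{-1}u \equiv u \pmod d$ and $(\rho_c(n)\Phi h)(u) = (\Phi h)(n^{-1}u) = h(u \mod d) = (\Phi h)(u)$. Second, $\Phi$ is a linear bijection onto $V_c(d)$: linearity is clear; injectivity follows from surjectivity of $\P^1(\Z/c\Z) \to \P^1(\Z/d\Z)$; and surjectivity onto $V_c(d)$ follows from \cref{lem:action-transitive}, which identifies the $\Gamma_c(d)$-orbits on $\P^1(\Z/c\Z)$ with the fibers of $u \mapsto u \mod d$, so any $f \in V_c(d)$, being constant on those orbits, factors through reduction mod $d$ and hence lies in the image of $\Phi$. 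Third, $\Phi$ intertwines $\rho_d \circ \pi_{c,d}$ with $\rho_c\vert_{V_c(d)}$: the projective action \cref{eq:action-proj} is given by the same linear formula modulo $c$ and modulo $d$, so $(g^{-1}u) \mod d = \pi_{c,d}(g)^{-1}(u \mod d)$, and therefore $\Phi(\rho_d(\pi_{c,d}(g))h)(u)$ and $(\rho_c(g)\Phi h)(u)$ both equal $h(\pi_{c,d}(g)^{-1}(u \mod d))$; thus $\Phi \circ (\rho_d \circ \pi_{c,d})(g) = \rho_c(g) \circ \Phi$ for all $g$.

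This already yields $\rho_c\vert_{V_c(d)} \cong \rho_d \circ \pi_{c,d}$. If one wants $\Phi$ to be honestly unitary (consistent with the paper's normalization of representations), I would note that all fibers of $\P^1(\Z/c\Z) \to \P^1(\Z/d\Z)$ have the same cardinality $|\P^1(\Z/c\Z)|/|\P^1(\Z/d\Z)|$ — the reduction map is equivariant and both actions are transitive by \cref{lem:action-transitive} — so $\langle \Phi h_1, \Phi h_2 \rangle_{V_c}$ equals that common fiber size times $\langle h_1, h_2 \rangle_{V_d}$, and rescaling $\Phi$ by the square root of this factor gives a unitary isomorphism. I do not expect a real obstacle; the only delicate point is the surjectivity in the second property, where one must invoke \cref{lem:action-transitive} exactly to match "constant on $\Gamma_c(d)$-orbits" with "factors through $\P^1(\Z/d\Z)$", which is what pins down $\dim V_c(d) = |\P^1(\Z/d\Z)|$.
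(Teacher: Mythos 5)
Your proof is correct and follows essentially the same route as the paper: invariance via normality of $\Gamma_c(d)$, and an explicit intertwiner $\Phi : V_d \to V_c(d)$ built from the orbit--fiber identification of \cref{lem:action-transitive} (the paper's $\Phi$ from \cref{eq:invariant-orbits} is your pullback map up to $L^2$-normalization, which you also handle correctly at the end). No gaps.
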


\begin{proof}
Let $\Phi : V_d \to V_c(d)$ be the invertible linear map from \cref{eq:invariant-orbits}, which relies on the bijection from \cref{eq:orbits-bijection}. Then for any $g \in \SL_2(\Z/c\Z)$, one can easily check that $\rho_c(g) \vert_{V_c(d)} \circ \Phi = \Phi \circ \rho_d(\pi_{c,d}(g))$: both maps take the basis vector $\one_u \in V_d = L^2(\P^1(\Z/d\Z))$ to the $L^2$-normalized function in $V_c(d)$ which is only nonzero on the orbit $g\Gamma_c(d) \cdot u = \Gamma_c(d) \cdot gu$.
\end{proof}

In light of \cref{lem:invt-subspace}, we will need to remove the contribution of `old' representations $(\rho_d, V_d)$ to $(\rho_c, V_c)$. To this end, it will be helpful to adopt the following convention for tensor products, which has a similar spirit to \cref{not:specified-isomorphisms}. 

\begin{notation}[Specified tensor products] \label{not:specified-tensor}
We identify the spaces $V_c$ and $\bigotimes_{p^k \| c} V_{p^k}$
via the bijection specified in \cref{eq:CRT-bij-P1} (given by the Chinese remainder theorem), where $\SL_2(\Z/c\Z)$ acts on $\bigotimes_{p^k \| c} V_{p^k}$ via the isomorphism specified in \cref{eq:CRT-isom-groups}. With this convention and \cref{not:box-product}, we have
\begin{equation} \label{eq:CRT-perm-rep}
    V_c = \bigotimes_{p^k \| c} V_{p^k}, \qquad \qquad 
    \rho_c = \bigboxtimes_{p^k \| c} \rho_{p^k}.
\end{equation}
and, more generally, for $d \mid c$,
\begin{equation} \label{eq:CRT-red-rep}
    V_c(d) = \bigotimes_{\substack{p^k \| c \\ p^j \| d}} V_{p^k}(p^j), 
    \qquad\qquad
    \rho_c\vert_{V_c(d)} = \bigboxtimes_{\substack{p^k \| c \\ p^j \| d}} \rho_{p^k}\vert_{V_{p^k}(p^j)}.
\end{equation}
\end{notation}
Finally, we can define the representations $(\rho_c^\circ, V_c^\circ)$.

\begin{definition}[Sifted representations] \label{def:sift-rep}
For a prime power $p^k$, we let $V_{p^k}^\circ := V_{p^k}(p^{k-1})^\perp \subset V_{p^k}$ be the orthogonal complement of $V_{p^k}(p^{k-1})$ inside $V_{p^k}$ (which is an invariant subspace of $\rho_{p^k}$). For $c \in \Z_+$, we define
\[
    V_c^\circ := \bigotimes_{p^k \| c} V_{p^k}^\circ, \qquad\qquad 
    \rho_c^\circ := \rho_c \vert_{V_c^\circ},
    \qquad\qquad 
    \chi_c^\circ := \Tr\, \rho_c^\circ.
\]
\end{definition}

\begin{remark}
When $c = p$ is a prime, $\rho_p^\circ$ is known as the Steinberg representation of $\SL_2(\Z/p\Z)$, which has dimension $p$ and arises by removing the trivial representation from $\rho_p$; this is well-known to be irreducible. But for $k \ge 2$, $\rho_{p^k}^\circ$ has dimension $(p^2-1)p^{k-2}$ in light of \cref{eq:CRT-sift-rep} below; if $p$ is odd, the table of primitive irreducible representations from \cite[p.\ 525]{nobs1976irreduziblenII} forces $\rho_{p^k}^\circ$ to split as a direct sum of two irreducible representations of dimension $\tfrac{1}{2} (p^2-1)p^{k-2}$.
\end{remark}

\begin{proposition}[Decomposition of sifted representations] \label{prop:sifted-rep}
For any $c \in \Z_+$, one has
\begin{equation} \label{eq:CRT-sift-rep}
    \rho_c^\circ = \bigboxtimes_{p^k \| c} \rho_{p^k}^\circ,
    \qquad\qquad 
    \dim \rho_c^\circ = c \prod_{\substack{p^k \| c \\ k \ge 2}} \left(1 - \frac{1}{p^2}\right).
\end{equation}
Moreover, each $\rho_{p^k}^\circ$ is isomorphic to a (nonempty) direct sum of primitive irreducible representations, and $\rho_c^\circ$ is isomorphic to a direct sum of $\ll c^{o(1)}$ irreducible representations of dimensions $\gg c^{1-o(1)}$.
\end{proposition}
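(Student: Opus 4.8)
The plan is to deduce all three assertions directly from the lemmas of \cref{sec:preliminaries}; no new ideas are needed, only careful bookkeeping.

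For the factorization \cref{eq:CRT-sift-rep} I would simply unwind \cref{def:sift-rep}. By definition $V_c^\circ = \bigotimes_{p^k\|c} V_{p^k}^\circ$, each factor $V_{p^k}^\circ = V_{p^k}(p^{k-1})^\perp$ being an invariant subspace of $\rho_{p^k}$ (invariance of $V_{p^k}(p^{k-1})$ is \cref{lem:invt-subspace}, and orthogonal complements of invariant subspaces of unitary representations are invariant). Combining this with the genuine equality $\rho_c = \bigboxtimes_{p^k\|c}\rho_{p^k}$ from \cref{eq:CRT-perm-rep} and the tensor conventions of \cref{not:ordered-tensor}, restricting a box product to a tensor product of invariant subspaces yields the box product of the restrictions, so $\rho_c^\circ = \rho_c\vert_{V_c^\circ} = \bigboxtimes_{p^k\|c}\rho_{p^k}\vert_{V_{p^k}^\circ} = \bigboxtimes_{p^k\|c}\rho_{p^k}^\circ$.

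To see that each $\rho_{p^k}^\circ$ is a nonempty direct sum of primitive irreducibles, I would observe that the subspace $V_f$ appearing in \cref{lem:prim-split} (applied with $\rho = \rho_{p^k}$) is, by \cref{not:invt-subspace}, exactly $V_{p^k}(p^{k-1})$, so $V_f^\perp = V_{p^k}^\circ$ and \cref{lem:prim-split} immediately gives that $\rho_{p^k}^\circ = \rho_{p^k}\vert_{V_f^\perp}$ is isomorphic to a direct sum of primitive irreducible representations. Nonemptiness follows from \cref{eq:invariant-orbits}, which identifies $\dim V_{p^k}(p^{k-1})$ with $|\P^1(\Z/p^{k-1}\Z)|$; this is strictly smaller than $|\P^1(\Z/p^k\Z)| = \dim V_{p^k}$ (the relevant cardinalities being $1$ when the exponent is $0$ and $p^j + p^{j-1}$ for exponent $j \ge 1$), so $V_{p^k}^\circ \neq \{0\}$.

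Finally, for the count and dimensions of the irreducible constituents of $\rho_c^\circ$, I would write $\rho_{p^k}^\circ \cong \bigoplus_i \sigma_{p,i}$ with each $\sigma_{p,i}$ a primitive irreducible of $\SL_2(\Z/p^k\Z)$; by \cref{lem:prim-rep-large}, $\dim\sigma_{p,i} \gg p^k$ with an absolute implied constant, while $\dim\rho_{p^k}^\circ \le |\P^1(\Z/p^k\Z)| \ll p^k$, so the number of indices $i$ is $O(1)$. Distributing the box product over these direct sums and invoking the parametrization \cref{eq:CRT-irrep}, $\rho_c^\circ$ decomposes as a direct sum of the box products $\bigboxtimes_{p^k\|c}\sigma_{p,i_p}$, each of which is irreducible. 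There are $\prod_p O(1) = c^{o(1)}$ of these, and each has dimension $\prod_p \dim\sigma_{p,i_p} \gg c^{1-o(1)}$ (and trivially $\ll \dim V_c = c^{1+o(1)}$), where one uses $\omega(c) \ll \log c/\log\log c$ to absorb products of implied constants into $c^{o(1)}$ — which works whether those constants are $\ge 1$ or $< 1$.

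There is no real obstacle here. The only points requiring a little care are identifying the fixed subspace $V_f$ of \cref{lem:prim-split} with $V_{p^k}(p^{k-1})$, and checking that implied constants raised to the power $\omega(c)$ stay of size $c^{o(1)}$; both are routine.
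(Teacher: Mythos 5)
Your proposal is correct and follows essentially the same route as the paper: unwind \cref{def:sift-rep} together with \cref{eq:CRT-perm-rep} for the factorization, apply \cref{lem:prim-split} with $V_f = V_{p^k}(p^{k-1})$ for primitivity, and use \cref{lem:prim-rep-large} plus the divisor bound for the dimension and multiplicity counts. The only (harmless) deviations are cosmetic: you get nonemptiness of $V_{p^k}^\circ$ by comparing $|\P^1(\Z/p^{k-1}\Z)|$ with $|\P^1(\Z/p^k\Z)|$ rather than exhibiting a function not constant on $\Gamma_{p^k}(p^{k-1})$-orbits, and you bound the number of irreducible constituents locally ($O(1)$ per prime, then $C^{\omega(c)} = c^{o(1)}$) instead of dividing $\dim\rho_c^\circ \le c^{1+o(1)}$ by the minimal dimension $c^{1-o(1)}$ as the paper does.
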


\begin{proof}
The factorization of $\rho_c^\circ$ in \cref{eq:CRT-sift-rep} follows immediately from \cref{eq:CRT-perm-rep,def:sift-rep}, and this implies $\dim \rho_c^\circ = \prod_{p^k \| c} \dim \rho_{p^k}^\circ$. The dimension of each $\rho_{p^k}^\circ$ is simply $\dim V_{p^k}(p^{k-1})^\perp = \dim V_{p^k} - \dim V_{p^k}(p^{k-1}) = |\P^1(\Z/p^k\Z)| - |\P^1(\Z/p^{k-1}\Z)|$ by \cref{eq:invariant-orbits}, which evaluates to $p^k - \one_{k \ge 2}p^{k-2}$ by \cref{eq:P1-size}. In particular, $\rho_{p^k}^\circ \neq \rzero$.
The fact that $\rho_{p^k}^\circ$ is isomorphic to a direct sum of primitive irreducible representations is precisely the content of \cref{lem:prim-split}, wherein $W = V_{p^k}(p^{k-1})$ and $W^\perp = V_{p^k}^\circ$.

Now write each $\rho_{p^k}^\circ$ as a direct sum of primitive irreducible representations of $\SL_2(\Z/p^k\Z)$, and expand the tensor product in \cref{eq:CRT-sift-rep}. This expresses $\rho_c^\circ$ as a direct sum of representations (potentially with repetitions) of the shape
\[
    \rho = \bigboxtimes_{p^k \| c} \rho_{p,k},
\]
which are irreducible, and have dimensions $\gg c^{1-o(1)}$ by \cref{prop:prim-rep-large} and the divisor bound. Since $\dim \rho_c^\circ \le c$, the number of these representations is at most $c^{o(1)}$.
\end{proof}

We now briefly analyze the orthogonal projections onto invariant subspaces of $V_c$. It will turn out that the projection onto $V_c^\circ$ can be obtained by a M\"obius-inversion-type process.

\begin{notation}[Special projections]\label{not:projections}
For $c, d \in \Z_+$ with $d \mid c$, we let $P_c(d), P_c^\circ : V_c \to V_c$ be the orthogonal projections onto $V_c(d)$, respectively $V_c^\circ$. In particular, $P_c(c)$ is the identity map on $V_c$.
\end{notation}

Recalling that $V_c(d) = V_c^{\Gamma_c(d)}$, it follows directly from \cref{eq:CRT-red-rep} and \cref{lem:orth-proj-formula} that
\begin{equation} \label{eq:red-proj}
    \bigotimes_{\substack{p^k \| c \\ p^j \| d}} P_{p^k}(p^j) = P_c(d) = \frac{1}{|\Gamma_c(d)|} \sum_{n \in \Gamma_c(d)} \rho_c(n),
\end{equation}
and that $P_c(d)$ commutes with $\rho_c(g)$ for any $g \in \SL_2(\Z/c\Z)$. We will also need the following lemmas.

\begin{lemma} \label{lem:red-proj}
For $c, d\in \Z_+$ with $d \mid c$, the matrix representation of $P_c(d)$ map with respect to the standard basis of $V_c = L^2(\P^1(\Z/c\Z))$ has entries
\begin{equation} \label{eq:red-proj-entries}
    P_c(d)_{u,v} = \frac{|\P^1(\Z/d\Z)|}{|\P^1(\Z/c\Z)|} \one_{u \in \Gamma_c(d) \cdot v},
    \qquad\qquad u, v \in \P^1(\Z/c\Z).
\end{equation}
\end{lemma}

\begin{proof}
It follows from \cref{eq:red-proj} and \cref{def:perm-rep} that
\begin{equation} \label{eq:pass-to-stab}
    P_c(d)_{u,v} = \frac{1}{|\Gamma_c(d)|} \sum_{n \in \Gamma_c(d)} \one_{u = nv} = \frac{|\Gamma_c(d)_u|}{|\Gamma_c(d)|} \one_{u \in \Gamma_c(d)\cdot v},
\end{equation}
where $|\Gamma_c(d)_u|$ is the stabilizer of $u$ inside $\Gamma_c(d)$ (indeed, once $u = n_0v$ for some $n_0 \in \Gamma_c(d)$, all other solutions to $u = nv$ satisfy $n_0n^{-1}u = u$, so $n \in \Gamma_c(d)_u n_0$). But by the orbit-stabilizer theorem, $|\Gamma_c(d)|/|\Gamma_c(d)_u|$ is just the size of the orbit of $u$ in $\Gamma_c(d)\backslash \P^1(\Z/c\Z)$, which is $|\P^1(\Z/c\Z)|/|\P^1(\Z/d\Z)|$ by \cref{lem:action-transitive}.
\end{proof}

\begin{lemma} \label{lem:sift-proj}
For $c \in \Z_+$, one has
\[
    P_c^\circ = \bigotimes_{p^k \| c} P_{p^k}^\circ = \sum_{d \mid c} \mu\left(\frac{c}{d}\right) P_c(d).
\]
\end{lemma}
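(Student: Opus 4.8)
The statement has two equalities. The first, $P_c^\circ = \bigotimes_{p^k \| c} P_{p^k}^\circ$, is essentially immediate from the definitions: by \cref{def:sift-rep} we have $V_c^\circ = \bigotimes_{p^k\|c} V_{p^k}^\circ$, and under the tensor-product identification $V_c = \bigotimes_{p^k\|c} V_{p^k}$ from \cref{not:ordered-tensor}, the orthogonal projection onto a tensor product of subspaces is the tensor product of the orthogonal projections onto the factors (this uses that the inner product on $V_c$ is the tensor-product inner product, which holds since the standard basis of $\C^{\P^1(\Z/c\Z)}$ corresponds under CRT to the tensor product of the standard bases). So the first equality needs only a sentence.

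**Main step: the second equality.** I would prove $P_c^\circ = \sum_{d\mid c} \mu(c/d) P_c(d)$ by reducing to prime powers and then to a one-variable identity. First, by the CRT factorization, it suffices to check this prime-power-by-prime-power. Concretely, write $c = \prod_{p^k\|c} p^k$ and observe that by multiplicativity of $\mu$ and the fact that every divisor $d\mid c$ factors uniquely as $d = \prod p^{j_p}$ with $0\le j_p\le k_p$, the sum $\sum_{d\mid c}\mu(c/d)P_c(d)$ factors as $\bigotimes_{p^k\|c}\big(\sum_{j=0}^{k}\mu(p^{k-j})P_{p^k}(p^j)\big)$, using \cref{lem:red-proj} (the identity $P_c(d) = \bigotimes_{p^k\|c,\,p^j\|d} P_{p^k}(p^j)$) and bilinearity of the tensor product. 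Since $\mu(p^{k-j}) = 0$ unless $k-j\in\{0,1\}$, each factor collapses to $P_{p^k}(p^k) - P_{p^k}(p^{k-1})$. Matching this against the first equality $P_c^\circ = \bigotimes P_{p^k}^\circ$, it remains to show, for each prime power $p^k$,
\[
    P_{p^k}^\circ = P_{p^k}(p^k) - P_{p^k}(p^{k-1}).
\]
But $P_{p^k}(p^k)$ is the identity on $V_{p^k}$ by \cref{def:projections}, and $V_{p^k}^\circ$ is by \cref{def:sift-rep} the orthogonal complement of $V_{p^k}(p^{k-1})$ inside $V_{p^k}$; hence the orthogonal projection onto $V_{p^k}^\circ$ is exactly $\Id - P_{p^k}(p^{k-1})$. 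This closes the argument.

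**Where the care is needed.** The only genuinely non-trivial point is bookkeeping: one must be careful that the tensor-product factorization of $\sum_{d\mid c}\mu(c/d)P_c(d)$ is legitimate, i.e. that summing the tensor-decomposed projections commutes with taking the divisor sum. This is where \cref{lem:red-proj} does the real work — without the explicit formula $P_c(d) = \bigotimes_{p^k\|c,\,p^j\|d}P_{p^k}(p^j)$ one could not distribute the sum over $d$ across the tensor factors. Everything else (collapsing via $\mu(p^{k-j})$, identifying $\Id - P_{p^k}(p^{k-1})$ with the projection onto the orthogonal complement) is routine. So I expect the proof to be short: one sentence for the first equality, then a paragraph reducing the second to prime powers via \cref{lem:red-proj} and finishing with the observation $P_{p^k}^\circ = \Id - P_{p^k}(p^{k-1})$.
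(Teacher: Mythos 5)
Your proposal is correct and matches the paper's argument: the paper also gets the tensor factorization directly from the definitions, writes $P_{p^k}^\circ = P_{p^k}(p^k) - P_{p^k}(p^{k-1})$ at each prime power, and converts between $\bigotimes_{p^k \| c}\bigl(P_{p^k}(p^k) - P_{p^k}(p^{k-1})\bigr)$ and $\sum_{d \mid c}\mu(c/d)P_c(d)$ via the factorization $P_c(d) = \bigotimes P_{p^k}(p^j)$ from \cref{lem:red-proj}. The only cosmetic difference is the direction of the expansion (you distribute the divisor sum across tensor factors, while the paper expands the tensor product of differences into the divisor sum), which is the same computation.
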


\begin{proof}
The factorization as a tensor product follows immediately from \cref{def:sift-rep,not:projections}. Now for a prime power $p^k$, recall that $P_{p^k}(p^k)$ is the identity map on $V_{p^k}$ and $P_{p^k}(p^{k-1})$ is the orthogonal projection onto $V_{p^k}(p^{k-1})$, so the orthogonal projection onto $V_{p^k}^\circ = V_{p^k}(p^{k-1})^\perp$ can be written as
\[
    P_{p^k}^\circ = P_{p^k}(p^k) - P_{p^k}(p^{k-1}).
\]
It follows from this and \cref{eq:red-proj} that
\[
\begin{aligned}
    \bigotimes_{p^k \| c} P_{p^k}^\circ
    &=
    \bigotimes_{p^k \| c} \left(P_{p^k}(p^k) - P_{p^k}(p^{k-1})\right) 
    \\
    &=
    \sum_{d \mid c} \mu\left(\frac{c}{d}\right) \bigotimes_{\substack{p^k \| c \\ p^j \| d}}
    P_{p^k}(p^j)
    \quad =
    \sum_{d \mid c} \mu\left(\frac{c}{d}\right) P_c(d),
\end{aligned}
\]
as claimed.
\end{proof}

\subsection{The Kloosterman matrix}
Here we finally relate the abstract discussion in the preceding subsections to the classical Kloosterman sums.

\begin{proposition}[From Kloosterman matrices to Fourier coefficients] \label{prop:kloost-to-fourier}
Let $c \in \Z_+$ and write $c = c_1c_2$ where $c_1$ is square-free, $c_2$ is square-full, and $(c_1, c_2) = 1$.
Let $\psi_1, \psi_2 : \Z/c\Z \to \C$ be any functions, and $K_c^{\psi_1,\psi_2} \in \C^{\Z/c\Z \times \Z/c\Z}$ be the $c \times c$ complex matrix with entries
\begin{equation} \label{eq:kl-mat}
    (K_c^{\psi_1,\psi_2})_{m,n} := \psi_1(m) \psi_2(n) S(m, n; c) \nu_{(m,n,c_1)} \one_{(m,n,c_2)=1},
\end{equation}
where $\nu_d := \prod_{\text{prime }p \mid d} \tfrac{-1}{p^2-1}$ for $d \in \Z_+$.
Consider the function $F_c^{\psi_1,\psi_2} : \SL_2(\Z/c\Z) \to \C$ given by
\begin{equation} \label{eq:func-psi12}
    F_c^{\psi_1,\psi_2} := \frac{1}{c^2} \sum_{h_1, h_2 \in \Z/c\Z} \hat\psi_1(h_1) \hat\psi_2(h_2)\, \one_{T^{h_1} S T^{h_2}},
\end{equation}
where $T$ and $S$ are as in \cref{eq:SL2-generators}. Then one has the inequality of operator norms
\[
    \|K_c^{\psi_1,\psi_2}\| \le c \|\hat F_c^{\psi_1,\psi_2}(\rho_c^\circ)\|.
\]
\end{proposition}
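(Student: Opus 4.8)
The plan is to start from the matrix $K_c^{\psi_1,\psi_2}$, extend it appropriately, and multiply on both sides by suitable unitary operators so that its operator norm is preserved but it gets reexpressed as a Fourier coefficient of $F_c^{\psi_1,\psi_2}$ at the permutation representation $\rho_c$; afterwards the coprimality weight $\one_{(m,n,c)=1}$ will be converted, via the M\"obius-inversion formula of \cref{lem:sift-proj}, into the orthogonal projection $P_c^\circ$, which restricts $\rho_c$ to $\rho_c^\circ$. Concretely, first I would pass from the $c \times c$ matrix indexed by $\Z/c\Z$ to one indexed by $\P^1(\Z/c\Z)$ (padding with a handful of extra rows and columns, which can only increase the operator norm, hence is harmless for an upper bound), so that the $\SL_2$-action \cref{eq:action-proj} is in play. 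Then I would conjugate $K_c^{\psi_1,\psi_2}$ by the unitary ``Fourier'' matrix $U := \big(\tfrac{1}{\sqrt{c}} e(\tfrac{xy}{c})\big)_{x,y}$ (which preserves operator norms), and carry out the Poisson-type computation sketched in \cref{subsec:outline-Fourier}: writing $S(m,n;c) = \sum_{x} e(\tfrac{mx+n\bar x}{c})$ and opening up the two Fourier transforms $\hat\psi_1, \hat\psi_2$, the sum collapses into a linear combination of matrices of the form $\rho_c(T^{h_1} S T^{h_2})$ with coefficients $\tfrac{1}{c^2}\hat\psi_1(h_1)\hat\psi_2(h_2)$. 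This is exactly $\hat F_c^{\psi_1,\psi_2}(\rho_c)$ up to the overall factor of $c$ recorded in the statement; the key algebraic identity to verify here is that, under the embedding $\Z/c\Z \hookrightarrow \P^1(\Z/c\Z)$, the operators $\mathcal T = (\one_{u=x+1})$ and $\mathcal S = (\one_{xy=-1})$ extend to $\rho_c(T)$ and $\rho_c(S)$, so that the Vinogradov--Karatsuba-style shift $m \mapsto T^{h_1} S T^{h_2}$ is realized inside the representation.

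The second half of the argument is to account for $\one_{(m,n,c)=1}$. I would expand this indicator by M\"obius inversion over $d \mid (m,n,c)$ and use the identity $S(dm, dn; c) = \tfrac{\phi(c)}{\phi(c/d)} S(m,n; \tfrac{c}{d})$ together with the structure of the subrepresentations $V_c(d)$ from \cref{lem:invt-subspace}; alternatively — and this is cleaner — I would argue that the effect of inserting the coprimality weight before conjugating by $U$ is precisely to compose the resulting Fourier coefficient with the projection $P_c^\circ = \sum_{d \mid c} \mu(\tfrac{c}{d}) P_c(d)$ of \cref{lem:sift-proj}. Since $P_c^\circ$ is the orthogonal projection onto the invariant subspace $V_c^\circ$, and $\rho_c^\circ = \rho_c\vert_{V_c^\circ}$, \cref{lem:norm-proj} gives $\|\hat F_c^{\psi_1,\psi_2}(\rho_c) P_c^\circ\| = \|\hat F_c^{\psi_1,\psi_2}(\rho_c^\circ)\|$. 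Combining with the norm-preservation under $U$ and the harmlessness of the padding, one obtains $\|K_c^{\psi_1,\psi_2}\| \le c\,\|\hat F_c^{\psi_1,\psi_2}(\rho_c^\circ)\|$, as desired. (The fact that $\hat F_c^{\psi_1,\psi_2}(\rho_c)$ commutes with $P_c(d)$, hence with $P_c^\circ$ — guaranteed by \cref{lem:red-proj} since each $\rho_c(g)$ does — is what makes the projection land cleanly on a subrepresentation rather than producing an off-diagonal block.)

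I expect the main obstacle to be the bookkeeping in the Poisson/Fourier step: one must carefully track how the exponential phase $e\big(\tfrac{mx+n\bar x}{c}\big)$ interacts with the two conjugating Fourier matrices and with the expansions of $\hat\psi_1,\hat\psi_2$, and in particular verify that the ``missing'' projective points $\P^1(\Z/c\Z)\setminus \Z/c\Z$ can be added so that $\mathcal S$ genuinely extends to the permutation matrix $\rho_c(S)$ (the point $[1:0]$, where $S$ sends $0 \mapsto \infty$, is the subtle one). A secondary subtlety is making sure that the coprimality weight $\one_{(m,n,c)=1}$ really corresponds to $P_c^\circ$ and not to some other combination of the $P_c(d)$'s — this requires checking that the ``old'' contributions $S(dm, dn; \tfrac cd)$ match up, under the isomorphism $V_c(d) \cong V_d$ of \cref{eq:invariant-orbits} and \cref{lem:invt-subspace}, with the lower-level Kloosterman matrices, so that M\"obius inversion on the Kloosterman side and M\"obius inversion on the projection side agree. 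Once these compatibilities are pinned down, everything else is a routine chain of norm (in)equalities.
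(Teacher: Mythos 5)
Your proposal follows essentially the same route as the paper's proof: conjugation by the unitary Fourier matrix, identification of the resulting matrix (on the $\Z/c\Z$ block of $\P^1(\Z/c\Z)$) with $c$ times $\hat F_c^{\psi_1,\psi_2}(\rho_c)P_c^\circ$ via M\"obius inversion together with \cref{lem:sift-proj,lem:red-proj}, then \cref{lem:norm-proj} and the fact that passing to a submatrix can only decrease the operator norm. The two points you flag as needing care are precisely the explicit computations the paper carries out (the entrywise match of the coprimality weight with $P_c^\circ$ reduces to verifying $\one_{T^{-h_1}u \in \Gamma_c(d)\cdot ST^{h_2}v} = \one_{(u-h_1)(v+h_2) \equiv -1 \pmod{d}}$ for $u, v \in \Z/c\Z$), so your plan is sound and matches the paper's argument.
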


\begin{remark}
In $\hat\psi_1$ and $\hat\psi_2$, the Fourier transform is taken over $\Z/c\Z$, as in \cref{eq:Fourier-tr-ab}. In $\hat F^{\psi_1,\psi_2}_c$, the Fourier transform is taken over the non-abelian group $\SL_2(\Z/c\Z)$, as in \cref{eq:Fourier-tr-nonab}.
\end{remark}

\begin{proof}[Proof of \cref{prop:kloost-to-fourier}]
Let $U_c$ be the unitary $c \times c$ matrix with entries $(U_c)_{u,v} = c^{-1/2} e(\tfrac{uv}{c})$. By expanding the Kloosterman sums, we have that for any $u, v \in \Z/c\Z$,
\begin{equation} \label{eq:after-unitary}
    (U_c^* K_c^{\psi_1,\psi_2} U_c)_{u,v} 
    =
    \frac{1}{c} \sum_{\substack{m, n \in \Z/c\Z \\ x \in (\Z/c\Z)^\times}}  
    \nu_{(m,n,c_1)}\one_{(m,n,c_2) = 1} 
    \psi_1(m)\, e\left(\frac{m(x - u)}{c}\right) \psi_2(n)\, e\left(\frac{n(\bar{x}+v)}{c}\right).
\end{equation}
Now consider the multiplicative function $w : \Z_+ \to \R$ given by
\begin{equation} \label{eq:w-function}
    w(d) := \prod_{\text{prime }p \mid d} \left(1 - \frac{1}{p^2}\right).
\end{equation}
We then have
\[
\begin{aligned}
    \sum_{d \mid (m, n, c)} \mu(d) \frac{w(c/d)}{w(c)}
    &=
    \prod_{\substack{\text{prime }p \mid (m, n, c) \\ p^k \| c}} \left(1 - \frac{w(p^{k-1})}{w(p^k)}\right)
    \\
    &=
    \prod_{\text{prime }p \mid (m, n, c_1)} \left(1 - \frac{1}{1-\frac{1}{p^2}}\right)
    \cdot 
    \prod_{\text{prime }p \mid (m, n, c_2)} \left(1 - 1\right)
    =
    \nu_{(m,n,c_1)}
    \one_{(m,n,c_2) = 1},
\end{aligned}
\]
so by Fourier analysis,
\[
\begin{aligned}
    \nu_{(m,n,c_1)}
    \one_{(m,n,c_2) = 1}
    = \sum_{d \mid c} \mu(d) \frac{w(c/d)}{w(c)} \one_{d \mid m} \one_{d \mid n}
    = \sum_{d \mid c} \mu(d)\frac{w(c/d)}{d^2w(c)} \sum_{a, b \in \Z/d\Z} e\left(\frac{am}{d}\right) e\left(\frac{bm}{d}\right).
\end{aligned}
\]
We now plug this into \cref{eq:after-unitary} and evaluate the sums over $m, n$ to obtain
\[
\begin{aligned}
    (U_c^* K_c^{\psi_1,\psi_2} U_c)_{u,v} 
    &=
    \frac{1}{c} \sum_{d \mid c} \mu(d) \frac{w(c/d)}{d^2w(c)}
    \sum_{x \in (\Z/c\Z)^\times}
    \sum_{a, b \in \Z/d\Z}
    \hat\psi_1\left(-x + u - \frac{ac}{d}\right) \hat\psi_2\left(-\bar x - v - \frac{bc}{d}\right)
    \\
    &=
    \frac{1}{c} \sum_{d \mid c} \mu(d) \frac{w(c/d)}{d^2w(c)}
    \sum_{x \in (\Z/c\Z)^\times}
    \sum_{h_1, h_2 \in \Z/c\Z}
    \hat\psi_1(h_1)
    \hat\psi_2(h_2)
    \one_{\substack{x \equiv u-h_1 \pmod{\frac{c}{d}} \\ -\bar{x} \equiv v+h_2 \pmod{\frac{c}{d}}}},
\end{aligned}
\]
where we substituted $h_1 := -x + u - \tfrac{ac}{d}$, $h_2 = -\bar{x} - v - \tfrac{bc}{d}$. Switching divisors $d \mapsto \tfrac{c}{d}$ and swapping sums, we reach
\[
    (U_c^* K_c^{\psi_1,\psi_2} U_c)_{u,v} 
    =
    \frac{1}{c} 
    \sum_{h_1, h_2 \in \Z/c\Z}
    \hat\psi_1(h_1)
    \hat\psi_2(h_2)
    \sum_{d \mid c} \mu\left(\frac{c}{d}\right) \frac{d^2w(d)}{c^2w(c)}
    \sum_{x \in (\Z/c\Z)^\times}
    \one_{\substack{x \equiv u-h_1 \pmod{d} \\ -\bar{x} \equiv v+h_2 \pmod{d}}}.
\]
The inner sum over $x$ evaluates to $\tfrac{\phi(c)}{\phi(d)}$ if $(u-h_1)(v+h_2) \equiv -1 \pmod{d}$, and vanishes otherwise. Since $d^2w(d)\phi(d)^{-1} = |\P^1(\Z/d\Z)|$ by \cref{eq:w-function,eq:P1-size}, we find that
\begin{equation} \label{eq:kl-times-unitary}
    (U_c^* K_c^{\psi_1,\psi_2} U_c)_{u,v} =
    \frac{1}{c} \sum_{h_1, h_2 \in \Z/c\Z}
    \hat\psi_1(h_1)
    \hat\psi_2(h_2)
    \sum_{d \mid c} \mu\left(\frac{c}{d}\right)
    \frac{|\P^1(\Z/d\Z)|}{|\P^1(\Z/c\Z)|}
    \one_{(u-h_1)(v+h_2) \equiv -1 \pmod{d}}.
\end{equation}
Let us keep this in mind. Separately, by \cref{eq:func-psi12,def:sift-rep}, we have
\[
\begin{aligned}
    \hat F_c^{\psi_1,\psi_2}(\rho_c^\circ)
    &=
    \frac{1}{c^2} \sum_{h_1,h_2 \in \Z/c\Z} \hat\psi_1(h_1) \hat\psi_2(h_2) \rho_c^\circ(T^{h_1} S T^{h_2})
    \\
    &=
    \Bigg(\frac{1}{c^2} \sum_{h_1,h_2 \in \Z/c\Z} \hat\psi_1(h_1) \hat\psi_2(h_2) \rho_c(T^{h_1} S T^{h_2}) \Bigg)\Bigg\vert_{V_c^\circ},
\end{aligned}
\]
and thus by \cref{lem:norm-proj},
\begin{equation} \label{eq:norm-equality-2}
    \|\hat F_c^{\psi_1,\psi_2}(\rho_c^\circ)\| = \|M_c^{\psi_1,\psi_2}\|,
\end{equation}
where $M_c^{\psi_1,\psi_2} : V_c \to V_c$ is the map
\[
\begin{aligned}
    M_c^{\psi_1,\psi_2}
    &= \frac{1}{c^2} \sum_{h_1,h_2 \in \Z/c\Z} \hat\psi_1(h_1) \hat\psi_2(h_2) \rho_c(T^{h_1} S T^{h_2}) P_c^\circ.
\end{aligned}
\]
By \cref{lem:sift-proj} and the commutativity of $P_c(d)$ with $\rho_c(g)$ for any $g \in \SL_2(\Z/c\Z)$, we can further write
\[
\begin{aligned}
    M_c^{\psi_1,\psi_2} 
    &= \frac{1}{c^2} \sum_{h_1,h_2 \in \Z/c\Z} \hat\psi_1(h_1) \hat\psi_2(h_2)
     \rho_c(T^{h_1} S T^{h_2}) \sum_{d \mid c} \mu\left(\frac{c}{d}\right) P_c(d)
    \\
    &= \frac{1}{c^2} \sum_{h_1,h_2 \in \Z/c\Z} \hat\psi_1(h_1) \hat\psi_2(h_2)
    \sum_{d \mid c} \mu\left(\frac{c}{d}\right) \rho_c(T^{h_1}) P_c(d) \rho_c(S T^{h_2}).
\end{aligned}
\]
By \cref{def:perm-rep,eq:red-proj-entries}, we can represent this map as a matrix in $\C^{\P^1(\Z/c\Z) \times \P^1(\Z/c\Z)}$ with entries
\begin{equation} \label{eq:kl-fourier-side}
    (M_c^{\psi_1,\psi_2})_{u,v}
    =
    \frac{1}{c^2} \sum_{h_1,h_2 \in \Z/c\Z} \hat\psi_1(h_1) \hat\psi_2(h_2)
    \sum_{d \mid c} \mu\left(\frac{c}{d}\right) \frac{|\P^1(\Z/d\Z)|}{|\P^1(\Z/c\Z)|}
    \one_{T^{-h_1}u \in \Gamma_c(d) \cdot ST^{h_2}v}
\end{equation}
for $u, v \in \P^1(\Z/c\Z)$; compare this to \cref{eq:kl-times-unitary}. We will show that restricting the matrix $c M_c^{\psi_1,\psi_2}$ to those rows and columns indexed by $u, v \in \Z/c\Z \subset \P^1(\Z/c\Z)$ (by the canonical embedding $x \mapsto [x : 1]$) yields precisely the matrix $U_c^* K_c^{\psi_1,\psi_2} U_c$. Indeed, using the notation above, if $u, v \in \Z/c\Z$, then $T^{-h_1} u = u-h_1 =: x \in \Z/c\Z$, $T^{h_2} v = v + h_2 =: y \in \Z/c\Z$, and we have $x \in \Gamma_c(d) \cdot Sy$ if and only if the equation
\[
    g
    \begin{pmatrix} x \\ 1 \end{pmatrix} 
    =
    \alpha
    \begin{pmatrix} -1 \\ y \end{pmatrix} 
\]
has solutions in $g \in \Gamma_c(d)$ and $\alpha \in (\Z/c\Z)^\times$. The existence of such solutions implies that $\begin{psmall} x \\ 1 \end{psmall} \equiv \begin{psmall} -\alpha \\ \alpha y \end{psmall} \pmod{d}$, so $xy \equiv -1 \pmod{d}$. On the other hand, if $xy \equiv -1 \pmod{d}$, then we can pick $\alpha \in (\Z/c\Z)^\times$ with $\alpha \equiv -x \pmod{d}$ and $\bar\alpha \equiv y \pmod{d}$, and $g = \begin{psmall} -\alpha & -1 \\ \alpha y & y-\bar\alpha \end{psmall} \begin{psmall} x & -1 \\ 1 & 0 \end{psmall}^{-1}$ to obtain a solution (note that $\begin{psmall} -\alpha & -1 \\ \alpha y & y-\bar\alpha \end{psmall} \equiv \begin{psmall} x & -1 \\ 1 & 0 \end{psmall} \pmod{d}$, so $g \in \Gamma_c(d)$).
It follows that for $u, v \in \Z/c\Z \subset \P^1(\Z/c\Z)$, one has
\[
    \one_{T^{-h_1}u \in \Gamma_c(d) \cdot ST^{h_2}v} = \one_{(u-h_1)(v+h_2) \equiv -1 \pmod{d}},
\]
and then by comparing \cref{eq:kl-times-unitary,eq:kl-fourier-side}, we find that
\[
    (U_c^* K_c^{\psi_1,\psi_2} U_c)_{u,v} = c(M_c^{\psi_1,\psi_2})_{u,v},
    \qquad\qquad 
    u, v \in \Z/c\Z \subset \P^1(\Z/c\Z).
\]
Since removing some rows and columns of a matrix can only decrease its spectral norm, we conclude that
\[
    \|K_c^{\psi_1,\psi_2}\| = \|U_c^* K_c^{\psi_1,\psi_2} U_c\| \le c\|M_c^{\psi_1,\psi_2}\|,
\]
which, together with \cref{eq:norm-equality-2}, completes our proof.
\end{proof}

\begin{corollary} \label{cor:kloosterman-ap}
Let $c \in \Z_+$ and write $c = c_1c_2$ where $c_1$ is square-free, $c_2$ is square-full, and $(c_1, c_2) = 1$.
Let $M, N \in \Z_+$ with $1 \le M, N \le c$, $a \in (\Z/c\Z)^\times$, and $\mI, \mJ \subset \Z$ be intervals of lengths $|\mI| = M$, $|\mJ| = N$. Let $K_{c,a}^{\mI,\mJ} \in \C^{\mI \times \mJ}$ be the $M \times N$ matrix indexed by $m \in \mI$ and $n \in \mJ$, with entries
\begin{equation} \label{eq:kl-mat-ap}
    (K_{c,a}^{\mI,\mJ})_{m,n} := S(am, n; c) \nu_{(m,n,c_1)} \one_{(m,n,c_2)=1},
\end{equation}
where $\nu_d := \prod_{\text{prime }p \mid d} \tfrac{-1}{p^2-1}$ for $d \in \Z_+$.
Let $\eps > 0$ and $H_1 := c^{1+\eps} M^{-1}$, $H_2 := c^{1+\eps} N^{-1}$. Then there exist absolutely-bounded complex numbers $z_h, w_h \ll 1$ such that for the function $F_{c,a}^{H_1,H_2} : \SL_2(\Z/c\Z) \to \C$ given by
\begin{equation} \label{eq:func-H12}
    F_{c,a}^{H_1,H_2} := \frac{1}{H_1H_2} \sum_{\substack{|h_1| \le H_1 \\ |h_2| \le H_2}} z_{h_1} w_{h_2} \one_{T^{\bar{a}h_1} S T^{h_2}},
\end{equation}
one has
\begin{equation} \label{eq:kl-bound-intervals}
    \|K_{c,a}^{\mI,\mJ}\| \le c^{1+2\eps}\|\hat F_{c,a}^{H_1,H_2} (\rho_c^\circ)\| + O_\eps(c^{-100}).
\end{equation}
\end{corollary}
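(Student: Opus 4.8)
The plan is to deduce \cref{cor:kloosterman-ap} from \cref{prop:kloost-to-fourier} by replacing the sharp cutoffs $\one_{\mI},\one_{\mJ}$ with smooth bump functions and controlling the rapidly decaying tails. First I would dispose of the twist by $a$ while staying within the scope of \cref{prop:kloost-to-fourier} as stated: relabelling the rows of $K_{c,a}^{\mI,\mJ}$ via the permutation $m\mapsto am\bmod c$ of its $M\le c$ rows preserves the operator norm, and since $a\bar a\equiv 1$ and $(a,c)=1$, the relabelled matrix has $(m',n)$-entry $S(m',n;c)\,\one_{(m',n,c)=1}$ with $m'$ ranging over $a\mI:=\{am\bmod c : m\in\mI\}$ and $n$ over $\mJ$. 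I would then fix smooth $[0,1]$-valued bumps $\psi_1':\Z/c\Z\to[0,1]$ with $\psi_1'|_{\mI}\equiv 1$ and $\psi_2:\Z/c\Z\to[0,1]$ with $\psi_2|_{\mJ}\equiv 1$, each supported on and smooth at scale $M$, resp.\ $N$ (or identically $1$ when that scale exceeds $c/2$), and set $\tilde\psi_1(m):=\psi_1'(\bar a m)$, so that $\tilde\psi_1|_{a\mI}\equiv 1$. The relabelled matrix is then obtained from the matrix $K_c^{\tilde\psi_1,\psi_2}$ of \cref{prop:kloost-to-fourier} by deleting rows and columns, whence
\[
    \|K_{c,a}^{\mI,\mJ}\| \;\le\; \|K_c^{\tilde\psi_1,\psi_2}\| \;\le\; c\,\|\hat F_c^{\tilde\psi_1,\psi_2}(\rho_c^\circ)\|.
\]

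Next I would unwind $F_c^{\tilde\psi_1,\psi_2}$ from \cref{eq:func-psi12}. Because $\hat{\tilde\psi_1}(h)=\sum_m\psi_1'(\bar a m)\,e(-hm/c)=\hat{\psi_1'}(ah)$, reindexing $h_1\mapsto\bar a h_1$ gives
\[
    F_c^{\tilde\psi_1,\psi_2} = \frac{1}{c^2}\sum_{h_1,h_2\in\Z/c\Z} \hat{\psi_1'}(h_1)\,\hat\psi_2(h_2)\, \one_{T^{\bar a h_1}ST^{h_2}},
\]
which already has the shape of $F_{c,a}^{H_1,H_2}$ in \cref{eq:func-H12} up to normalization and the truncation of the $h$-ranges. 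I would split this sum into a main term over $|h_1|\le H_1$, $|h_2|\le H_2$ and an error term over the complement. For the main term, taking $\alpha_h:=\hat{\psi_1'}(h)/M$ and $\beta_h:=\hat\psi_2(h)/N$ yields weights $\ll 1$ (by the trivial bounds $\hat{\psi_1'}\ll M$, $\hat\psi_2\ll N$), and since $\tfrac{MN}{c^2}H_1H_2=c^{2\eps}$, the main term equals exactly $c^{2\eps}F_{c,a}^{H_1,H_2}$ with $F_{c,a}^{H_1,H_2}$ as in \cref{eq:func-H12}; its Fourier coefficient thus contributes $c\cdot c^{2\eps}\,\|\hat F_{c,a}^{H_1,H_2}(\rho_c^\circ)\|=c^{1+2\eps}\,\|\hat F_{c,a}^{H_1,H_2}(\rho_c^\circ)\|$ to the bound above.

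For the error term I would use that $\rho_c^\circ$ is unitary, so $\|\hat G(\rho_c^\circ)\|\le\sum_{g\in\SL_2(\Z/c\Z)}|G(g)|$ for any $G$, and then bound the total mass of the tail. Since $\psi_1',\psi_2$ are smooth at scales $M,N$ and supported on intervals of those lengths, Poisson summation (as in \cref{eq:Poisson}) gives the standard rapid decay $\hat{\psi_1'}(h)\ll_A M(1+M|h|/c)^{-A}$, $\hat\psi_2(h)\ll_A N(1+N|h|/c)^{-A}$ for every $A\ge 0$ (reading $|h|$ through the representative in $(-c/2,c/2]$). Consequently $\sum_h|\hat{\psi_1'}(h)|,\ \sum_h|\hat\psi_2(h)|\ll c$, while the choice $H_1=c^{1+\eps}M^{-1}$ forces $M|h_1|/c>c^{\eps}$ on the tail $|h_1|>H_1$, giving $\sum_{|h_1|>H_1}|\hat{\psi_1'}(h_1)|\ll_A c^{1+\eps-\eps A}$, and symmetrically for $h_2$; multiplying the relevant factors and dividing by $c^2$ bounds the mass of the error term by $\ll_A c^{-\eps(A-1)}$. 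Choosing $A=A(\eps)$ with $\eps(A-1)\ge 101$ makes the error contribution to \cref{eq:kl-bound-intervals} at most $c\cdot O_\eps(c^{-101})=O_\eps(c^{-100})$, which finishes the proof.

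The only step needing genuine care is the interaction between the $a$-twist inside the Kloosterman sum and the summation variable $h_1$: the twist cannot be removed by a diagonal change of basis, since the determinant-one condition would make it reappear squared — this is exactly why the element $T^{\bar a h_1}ST^{h_2}$, with the dilated exponent $\bar a h_1$, shows up in \cref{eq:func-H12}. Disposing of it by the row relabelling $m\mapsto am$ together with the reindexing $h_1\mapsto\bar a h_1$ keeps everything within \cref{prop:kloost-to-fourier} as stated; the remaining ingredients — smooth approximation, Fourier decay, and the bookkeeping of $c^{\eps}$ factors — are routine.
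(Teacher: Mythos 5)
Your proposal is correct and follows essentially the same route as the paper's proof: smooth the interval cutoffs, invoke \cref{prop:kloost-to-fourier}, compute the abelian Fourier transforms via Poisson summation, truncate at $H_1,H_2$, and absorb the tails using the unitarity of $\rho_c^\circ$ together with rapid Fourier decay. The only cosmetic differences are that you remove the $a$-twist by relabelling rows and reindexing $h_1\mapsto\bar a h_1$, where the paper builds it directly into the definition of $\psi_1$, and that you use cutoffs identically equal to $1$ on the intervals (so you delete rows and columns), whereas the paper uses majorants $\psi_1\ge\one_{a\mI}$, $\psi_2\ge\one_{\mJ}$ and rescales rows and columns by constants in $[0,1]$.
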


\begin{remark}
Given \cref{eq:func-H12}, one can apply the triangle inequality for the operator norm to obtain
\[
\begin{aligned}
    \|\hat F_{c,a}^{H_1,H_2}(\rho_c^\circ)\| 
    &=
    \Bigg\|\frac{1}{H_1H_2} \sum_{\substack{|h_1| \le H_1 \\ |h_2| \le H_2}} \alpha_{h_1} \beta_{h_2}\, \rho_c^\circ(T^{ah_1} S T^{h_2}) \Bigg\| 
    \\
    &\le 
    \frac{1}{H_1H_2} \sum_{\substack{|h_1| \le H_1 \\ |h_2| \le H_2}} |\alpha_{h_1} \beta_{h_2}| \cdot \|\rho_c^\circ(T^{ah_1} S T^{h_2})\|
    \ll 1,
\end{aligned}
\]
since $\|\rho_c^\circ(T^{ah_1} S T^{h_2})\| = 1$ (as the norm of a unitary map). Our task in the later sections will be to establish some power-saving cancellation in the sum over $h_1, h_2$.
\end{remark}

\begin{proof}[Proof of \cref{cor:kloosterman-ap}]
Let us write $[M] := \{1, \ldots, M\}$, $[N] := \{1, \ldots, N\}$, and $\mI = [M] + r$, $\mJ = [N] + s$ for some $r, s \in \Z$. Since $M, N \le c$, we may identify $\mI$, $\mJ$ with their images in $\Z/c\Z$. Let $\Phi : \R \to \C$ be a smooth function supported in $[-1, 2]$, such that $\Phi \ge \one_{[0,1]}$ and $\Phi^{(j)} \ll_j 1$ for $j \ge 0$, and define $\psi_1, \psi_2 : \Z/c\Z \to \C$ by
\begin{equation} \label{eq:psi12-def}
    \psi_1(m) := \sum_{\substack{m' \in \Z \\ a(m'+r) \equiv m \pmod{c}}} \Phi\left(\frac{m'}{M}\right),
    \qquad\qquad 
    \psi_2(n) := \sum_{\substack{n' \in \Z \\ n'+s \equiv n \pmod{c}}} \Phi\left(\frac{n'}{N}\right).
\end{equation}
Since $\Phi \ge \one_{[0,1]}$, we have $\psi_1 \ge \one_{a\mI}$ and $\psi_2 \ge \one_{\mJ}$ (viewing these as functions on $\Z/c\Z$). But scaling a row or a column of a matrix by a constant in $[0, 1]$ can only decrease its spectral norm, so with the notation from \cref{eq:kl-mat} we get
\[
    \|K_c^{\psi_1,\psi_2}\| \ge \|K_{c,a}^{\mI,\mJ}\|.
\]
From \cref{prop:kloost-to-fourier}, it then follows that
\begin{equation} \label{eq:kl-norm-intermediate}
    \|K_{c,a}^{\mI,\mJ}\| \le c\|\hat{F}_c^{\psi_1,\psi_2}(\rho_c^\circ)\|,
\end{equation}
and it remains to compute $F_c^{\psi_1,\psi_2}$. For $h \in \Z/c\Z$, we obtain from \cref{eq:psi12-def}, \cref{eq:Fourier-tr-ab}, \cref{eq:Poisson}, and \cref{eq:scaled-Fourier-tr} that
\[
\begin{aligned}
    \hat\psi_1(h) = \sum_{m \in \Z/c\Z} \psi_1(m)\, e\left(-\frac{hm}{c}\right)
    &=
    \sum_{m' \in \Z} \Phi\left(\frac{m'}{M}\right) e\left(-\frac{ah(m'+r)}{c}\right)
    \\
    &=
    M e\left(-\frac{rah}{c}\right) \sum_{k \in \Z} \hat\Phi\left(M\Big(k+\frac{ah}{c}\Big)\right)
    \\
    &=
    M e\left(-\frac{rah}{c}\right) \sum_{\substack{h' \in \Z \\ h' \equiv ah \pmod{c}}} \hat\Phi\left(\frac{h'}{c/M}\right),
\end{aligned}
\]
and similarly for $\hat\psi_2(h)$ (with $1$ in place of $a$). Plugging this into \cref{eq:func-psi12}, we conclude that 
\[
\begin{aligned}
    F_c^{\psi_1,\psi_2} 
    &= 
    \frac{MN}{c^2} \sum_{h_1, h_2 \in \Z/c\Z} \sum_{\substack{h_1', h_2' \in \Z \\ h_1' \equiv ah_1 \pmod{c} \\ h_2' \equiv h_2 \pmod{c}}} \hat\Phi\left(\frac{h_1'}{c/M}\right) \hat\Phi\left(\frac{h_2'}{c/N}\right) e\left(\frac{-rah_1-sh_2}{c}\right) \one_{T^{h_1} S T^{h_2}}
    \\
    &= 
    \frac{c^{2\eps}}{H_1 H_2} \sum_{h_1', h_2' \in \Z} \hat\Phi\left(\frac{h_1'}{c/M}\right) \hat\Phi\left(\frac{h_2'}{c/N}\right) e\left(\frac{-rah_1'-sh_2'}{c}\right) \one_{T^{\bar{a} h_1'} S T^{h_2'}}.
\end{aligned}
\]
Using the Schwarz decay of $\hat\Phi$, we can discard the contribution of the terms with $|h_1'| > H_1$ or $|h_2'| > H_2$ to $\|\hat{F}_c^{\psi_1,\psi_2}(\rho_c^\circ)\|$, up to an error of $O_\eps(c^{-100})$. Choosing
\[
    z_h := \hat\Phi\left(\frac{h}{c/M}\right) e\left(-\frac{rah}{c}\right),
    \qquad\qquad 
    w_h := \hat\Phi\left(\frac{h}{c/N}\right) e\left(-\frac{sh}{c}\right)
\]
concludes our proof in light of \cref{eq:kl-norm-intermediate} and \cref{eq:func-H12}.
\end{proof}

\section{The amplification argument} \label{sec:amplif}
Recall that \cref{prop:kloost-to-fourier,cor:kloosterman-ap} reduce the problem of bounding bilinear forms with Kloosterman sums to that of bounding Fourier coefficients of certain functions on $\SL_2(\Z/c\Z)$ at a certain representation. One can then reduce to irreducible subrepresentations via \cref{lem:break-down-schatten}. 
To use Fourier analysis, we will pass to a sum over all irreducible representations of $\SL_2(\Z/c\Z)$. To avoid a critical loss, we insert an amplifier weight in this sum, as outlined in \cref{subsec:outline-amplif}. Making this work in a non-abelian setting is a key step in our argument.

\subsection{Introducing the amplifier} We recall the notation for Schatten norms from \cref{subsec:arithm-analytic-not}.

\begin{proposition}[Non-abelian amplification] \label{prop:amplification}
Let $G$ be a finite group, $N \triangleleft G$, $F : G \to \C$, $\rho \in \hat{G}$, $\chi := \Tr\, \rho$, and $q$ be an even positive integer. Then one has
\[
    \|\hat{F}(\rho)\|_{S^q}^q \le \frac{|G|}{\sum_{n \in N} |\chi(n)|^2} \sum_{\substack{g_1, \ldots, g_q \in G \\ g_1 \cdots g_q \in N}} F(g_1) \bar F(g_2^{-1}) \cdots F(g_{q-1}) \bar F(g_q^{-1}) \chi(g_1 \cdots g_q).
\]
\end{proposition}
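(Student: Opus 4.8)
The plan is to combine the trace method for Schatten norms with character orthogonality over $\hat G$, using the amplifier weight $A(\chi') := \sum_{\ell_1,\ell_2 \in N} \bar\chi'(\ell_1\ell_2^{-1})\chi(\ell_1\ell_2^{-1})$ to isolate the contribution of $\chi' = \chi$. Since $q$ is even, \cref{eq:schatten} gives $\|\hat F(\rho)\|_{S^q}^q = \Tr\big((\hat F(\rho)\hat F(\rho)^*)^{q/2}\big)$. Writing $\hat F(\rho)^* = \hat{\tilde F}(\rho)$ where $\tilde F(g) := \bar F(g^{-1})$ (using $\rho(g)^* = \rho(g^{-1})$ and unitarity), this trace expands as $\sum_{g_1,\dots,g_q \in G} F(g_1)\bar F(g_2^{-1})\cdots F(g_{q-1})\bar F(g_q^{-1})\,\chi(g_1\cdots g_q)$, by the convolution property $\hat{F_1*F_2}(\rho) = \hat F_1(\rho)\hat F_2(\rho)$ and $\chi = \Tr\rho$. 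Call this quantity $\Sigma$; the target inequality says $\|\hat F(\rho)\|_{S^q}^q \le \Sigma$ once we have restricted the $g_i$-sum to $g_1\cdots g_q \in N$ — so the real content is showing that the restricted sum dominates $\|\hat F(\rho)\|_{S^q}^q$ up to the stated factor.

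The key step is the amplification. For each $\chi' \in \Irr(G)$ the same computation with $\rho$ replaced by $\rho'$ (character $\chi'$) gives $\Sigma(\chi') := \|\widehat{F}(\rho')\|_{S^q}^q / (\text{not quite})$ — more precisely, $\sum_{g_1\cdots g_q = g} F(g_1)\bar F(g_2^{-1})\cdots \bar F(g_q^{-1})$ is a function $\Psi(g)$ on $G$ independent of the representation, and $\sum_g \Psi(g)\chi'(g) = \|\hat{\tilde F}^{*}\cdots\|$... concretely $\sum_{g} \Psi(g)\,\rho'(g) = (\hat F(\rho')\hat F(\rho')^*)^{q/2}$, so $\sum_g \Psi(g)\chi'(g) = \Tr\big((\hat F(\rho')\hat F(\rho')^*)^{q/2}\big) = \|\hat F(\rho')\|_{S^q}^q \ge 0$. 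Now I weight by the amplifier and use positivity:
\[
    \|\hat F(\rho)\|_{S^q}^q \le \frac{1}{A(\chi)} \sum_{\chi' \in \Irr(G)} A(\chi')\,\|\hat F(\rho')\|_{S^q}^q = \frac{1}{A(\chi)}\sum_{\chi'\in\Irr(G)}\sum_{\ell_1,\ell_2 \in N} \bar\chi'(\ell_1\ell_2^{-1})\chi(\ell_1\ell_2^{-1})\sum_g \Psi(g)\chi'(g),
\]
valid because every term $A(\chi')\|\hat F(\rho')\|_{S^q}^q$ is $\ge 0$ (note $A(\chi') = \|\sum_{\ell\in N}\bar{\rho'(\ell)}\otimes\rho(\ell)\|_{S^2}^2 \ge 0$). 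Here $A(\chi) = \sum_{\ell_1,\ell_2\in N}|\chi(\ell_1\ell_2^{-1})|^2 = |N|\sum_{n\in N}|\chi(n)|^2$ after the substitution $n = \ell_1\ell_2^{-1}$ (each $n\in N$ is hit $|N|$ times since $N$ is a group).

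Finally I swap the order of summation, doing the $\chi'$-sum first. Since $\chi'(\ell_1\ell_2^{-1})\chi'(g)$... I instead group as $\sum_{\chi'}\bar\chi'(\ell_1\ell_2^{-1}g)$? — not quite, because $\chi'$ is not multiplicative. The clean route: use the second orthogonality relation \cref{eq:ortho-chars-sum-chi}, $\sum_{\chi'\in\Irr(G)}\bar\chi'(h_1)\chi'(h_2) = \frac{|G|}{|C|}\one_{h_1 \sim h_2}$ (conjugate), applied with $h_1 = \ell_1\ell_2^{-1}$ and $h_2 = g$; but I have three character factors. The right grouping is to note $\sum_g \Psi(g)\chi'(g)$ and peel one $\ell$: write the double $N$-sum with $n = \ell_1\ell_2^{-1}$ and multiplicity $|N|$, so the expression becomes $\frac{|N|}{A(\chi)}\sum_{n\in N}\chi(n)\sum_{\chi'}\bar\chi'(n)\sum_g\Psi(g)\chi'(g)$. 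Now $\sum_{\chi'}\bar\chi'(n)\chi'(g) = \frac{|G|}{|C_g|}\one_{n \sim g}$ by \cref{eq:ortho-chars-sum-chi}, so the inner sum over $\chi'$ collapses the $g$-sum to those $g$ conjugate to some $n \in N$, i.e. $g \in N$ (as $N$ is normal, hence a union of conjugacy classes), weighted by $\frac{|G|}{|C_g|}$; since $\Psi$ and $\chi$ are class functions and $|C_g|$ is the class size, summing over the $|C_g|$ elements $n$ in that class cancels the $\frac{|G|}{|C_g|}$, leaving $\frac{|N||G|}{A(\chi)}\sum_{g\in N}\Psi(g)\chi(g) = \frac{|G|}{\sum_{n\in N}|\chi(n)|^2}\sum_{g_1\cdots g_q \in N}F(g_1)\bar F(g_2^{-1})\cdots\bar F(g_q^{-1})\chi(g_1\cdots g_q)$, which is exactly the claimed bound.

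The main obstacle I expect is bookkeeping in this last step: correctly tracking the multiplicity $|N|$ from the substitution $n = \ell_1\ell_2^{-1}$, the class-size factors $|C_g|$ from \cref{eq:ortho-chars-sum-chi}, and verifying that $g$ conjugate to an element of $N$ forces $g \in N$ (normality), so that the conjugacy-class bookkeeping cancels cleanly and the surviving sum is genuinely over $g_1\cdots g_q \in N$ with the single character $\chi$. One should also double-check the identification $\Psi(g) = \sum_{g_1\cdots g_q = g}F(g_1)\bar F(g_2^{-1})\cdots\bar F(g_q^{-1})$ and that $\sum_g\Psi(g)\rho'(g) = (\hat F(\rho')\hat F(\rho')^*)^{q/2}$, i.e. the alternating pattern of $F$ and $\tilde F(g)=\bar F(g^{-1})$ indeed produces $q/2$ copies of $\hat F(\rho')\hat F(\rho')^*$ — this is where the evenness of $q$ is used and where a sign or inverse error would be easy to make.
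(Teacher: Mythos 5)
Your proof is correct and follows essentially the same route as the paper's own argument (the same amplifier $A(\chi')=\sum_{\ell_1,\ell_2\in N}\bar\chi'(\ell_1\ell_2^{-1})\chi(\ell_1\ell_2^{-1})\ge 0$, positivity to drop the $\chi'\neq\chi$ terms, the second orthogonality relation \cref{eq:ortho-chars-sum-chi}, and normality of $N$ to collapse the conjugacy-class bookkeeping); the only cosmetic difference is that the paper first reduces to $q=2$ by convolving $F$ with $\tilde F(g)=\bar F(g^{-1})$, whereas you carry the general even $q$ through via the function $\Psi$. One small inaccuracy: $\Psi$ is generally \emph{not} a class function, but this is never needed, since for each fixed $g$ the sum over $n\in N$ conjugate to $g$ already produces the factor $|C_g|\,\chi(g)$ that cancels $|G|/|C_g|$.
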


\begin{remark}
In comparison, expanding $\|\hat{F}(\rho)\|_{S^q}^q$ by \cref{eq:Fourier-tr-nonab,eq:schatten} yields
\[
    \|\hat{F}(\rho)\|_{S^q}^q =  \sum_{g_1, \ldots, g_q \in G} F(g_1) \bar F(g_2^{-1}) \cdots F(g_{q-1}) \bar F(g_q^{-1}) \chi(g_1 \cdots g_q).
\]
The upper bound from \cref{prop:amplification} replaces $\chi$ with a function proportional to $\chi \one_N$, normalized so that equality is attained when $F = \bar{\chi}$. The requirement that $\rho$ be irreducible is crucial.
\end{remark}


\begin{proof}[Proof of \cref{prop:amplification}]
We view $\rho$ as a subrepresentation of a suitable representation $R$, and therefore $\hat{F}(\rho)$ as a term (with multiplicity) in a direct sum which equals $\hat{F}(R)$. 
Starting from $\rho : G \to U(V)$, we consider the restricted representation $\rho\vert_N : N \to U(V)$, and then the induced representation
\[
    R := \Ind_N^G(\rho \vert_N)
\]
of $G$. 
By \cref{eq:multiplicity-as-sum} and Frobenius reciprocity (\cref{lem:frob-rec}), for any irreducible representation $\rho'$ of $G$, we have (writing $\chi' := \Tr\, \rho'$)
\begin{equation} \label{eq:mult-in-induced}
    \Mult(\rho', R) = \frac{1}{|G|} \sum_{g \in G} \Tr R(g) \bar\chi'(g) = \frac{1}{|N|} \sum_{n \in N} \chi(n) \bar\chi'(n).
\end{equation}
Note that up to a normalizing factor, this has the shape of the amplifier anticipated in \cref{eq:sketch-non-abelian-amplifier} with $\mL = N$. Using \cref{eq:mult-in-induced} with $\rho' = \rho$ and \cref{lem:break-down-schatten}, we obtain
\[
    \frac{1}{|N|} \left(\sum_{n \in N} |\chi(n)|^2\right) \|\hat{F}(\rho)\|_{S^q}^q
    \le 
    \sum_{\rho' \in \hat{G}} \Mult(\rho', R) \|\hat{F}(\rho')\|_{S^q}^q
    =
    \|\hat{F}(R)\|_{S^q}^q.
\]
We then use \cref{eq:Fourier-tr-nonab,eq:schatten} to expand
\[
\begin{aligned}
    \|\hat{F}(R)\|_{S^q}^q 
    &= 
    \Tr \left(\left(\sum_{g_1 \in G} F(g_1) R(g_1) \sum_{g_2 \in G} \bar{F}(g_2^{-1}) R(g_2)\right)^{q/2}\right)^{\frac{1}{q}}
    \\
    &=
    \sum_{g_1, \ldots, g_q \in G} F(g_1) \bar F(g_2^{-1}) \cdots F(g_{q-1}) \bar F(g_q^{-1}) \Tr R(g_1\cdots g_q).
\end{aligned}
\]
Finally, we use the character formula \cref{eq:char-ind} to write
\[
    \Tr R(g) = \frac{1}{|N|} \sum_{\substack{x \in G \\ x^{-1} g x \in N}} \chi(x^{-1} g x) = 
    \frac{|G|}{|N|} \chi(g) \one_N(g),
    \qquad 
    \forall g \in G,
\]
where the last equality uses the normality of $N$ and the fact that $\chi$ is a character of $G$.
\end{proof}

\begin{remark}
In the particular case when $N = \{e\}$ is the trivial subgroup, the induced representation $R$ in the proof above is (isomorphic to) the regular representation $R_G$. In this case, the conclusion of \cref{prop:amplification} simply reads
\[
    \|\hat{F}(\rho)\|_{S^q}^q \le \frac{1}{\dim \rho} \|\hat{F}(R_G)\|_{S^q}^q,
\]
and similar ideas appear in \cite{shkredov2018asymptotic,moshchevitin2020pacific}, as well as \cite[Proof of Proposition 8.2]{green2025quadratic}.
\end{remark}

\begin{remark}
One can of course rephrase the proof of \cref{prop:amplification} without the language of induced representations and Frobenius reciprocity, by directly inserting the amplifier from the right-hand side of \cref{eq:mult-in-induced} (which is nonnegative by \cref{eq:sketch-non-abelian-amplifier}), and evaluating the sum over $\rho'$ via \cref{eq:ortho-chars-sum-chi}. This might be helpful for variations of our argument, using other choices of the set $\mL$ in \cref{eq:sketch-non-abelian-amplifier}.
\end{remark}

\subsection{Bounds for non-abelian characters}
We now return to the setting when $G = \SL_2(\Z/c\Z)$ and $N = \Gamma_c(d)$ for some $d \mid c$. The goal is to pass from the spectral norm $\|\hat F_{c,a}^{H_1,H_2}(\rho_c^\circ)\|$ in \cref{eq:kl-bound-intervals} to a count of solutions to a certain equation in $\PSL_2(\Z/d\Z)$. After applying \cref{prop:amplification}, we will need an upper bound for $\chi(g_1\cdots g_q)$, and a lower bound for the denominator $\sum_{n \in N} |\chi(n)|^2$. For the specific characters $\chi_c$ and $\chi^\circ_c$ from \cref{sec:rep-and-kloost}, such bounds are given in the following results.

\begin{lemma} \label{lem:perm-rep-pointwise-ub}
Let $c \in \Z_+$, $g \in \SL_2(\Z/c\Z)$, and $d$ be the largest divisor of $c$ such that 
\[
    g \in Z(\SL_2(\Z/c\Z)) \cdot \Gamma_c(d).
\] 
Let $f \le \sqrt{cd}$ be the largest positive integer such that $f^2 \mid cd$.
Then one has
\begin{equation} \label{eq:char-pwise-bound}
    \chi_c(g) \ll c^{o(1)} f.
\end{equation}
\end{lemma}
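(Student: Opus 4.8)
The plan is to read off $\chi_c(g) = \Tr\rho_c(g)$ as a count: since $\rho_c$ is the permutation representation on $\P^1(\Z/c\Z)$ (\cref{def:perm-rep}), we have $\chi_c(g) = \#\{u \in \P^1(\Z/c\Z) : gu = u\}$, the number of points fixed by $g$ acting through M\"obius transformations, and the task becomes an explicit, essentially elementary bound for this count. By the Chinese Remainder Theorem, \cref{eq:CRT-perm-rep}, and the multiplicativity of $\Tr(\rho_1 \boxtimes \rho_2)$, it suffices to work modulo each prime power $p^k \| c$: one has $\chi_c(g) = \prod_{p^k \| c} \chi_{p^k}(g \bmod p^k)$, while, writing $j_p := v_p(d)$ and using that $d \mid c$, we get $f = \prod_{p^k\|c} p^{\lfloor(k+j_p)/2\rfloor}$. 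So I would reduce to showing: for $g \in \SL_2(\Z/p^k\Z)$ fixing $N_{p^k}(g)$ points of $\P^1(\Z/p^k\Z)$, one has $N_{p^k}(g) \ll p^{\lfloor(k+j)/2\rfloor}$ with an \emph{absolute} implied constant, where $p^j$ (with $0 \le j \le k$) is the largest power of $p$ modulo which $g$ is a scalar matrix. For odd $p$ a scalar matrix in $\SL_2$ reduces to $\pm I$, so this $j$ equals $j_p$; at $p = 2$ the scalar could instead be $2^{j-1}\pm 1$ times $I$, but then $g$ is $\pm I$ modulo $2^{j-1}$, so $j \le j_2 + 1$, costing only a bounded factor that I would absorb at the end.

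For the prime-power bound, if $g$ is scalar mod $p^k$ it fixes all $\asymp p^k = p^{\lfloor(k+k)/2\rfloor}$ points, so assume $0 \le j \le k-1$ and write $g = \gamma_0 I + p^j G$ with $\gamma_0 \in (\Z/p^k\Z)^\times$ (taking $\gamma_0 = 0$, $G = g$ when $j = 0$), where $G \in M_2(\Z/p^{k-j}\Z)$ has $G \bmod p$ \emph{not} scalar, by maximality of $j$. A primitive vector $v$ represents a fixed point of $g$ iff $gv \equiv \lambda v \pmod{p^k}$ for a unit $\lambda$; substituting, $p^j G v \equiv (\lambda - \gamma_0) v \pmod{p^k}$, which by primitivity of $v$ forces $\lambda = \gamma_0 + p^j \nu$ and is then equivalent to $G v \equiv \nu v \pmod{p^{k-j}}$, with $\nu \bmod p^{k-j}$ uniquely determined by $[v]$. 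Since each $[\bar v] \in \P^1(\Z/p^{k-j}\Z)$ with $G\bar v \equiv \nu\bar v$ lifts to exactly $p^j$ points of $\P^1(\Z/p^k\Z)$, I get $N_{p^k}(g) = p^j N'$, where $N'$ is the number of $[\bar v] \in \P^1(\Z/p^m\Z)$, $m := k - j$, with $G\bar v \equiv \nu\bar v$ for some $\nu$.

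It then remains to show $N' \ll p^{\lfloor m/2 \rfloor}$ with an absolute constant. For such a $[\bar v]$, Cayley--Hamilton gives $\nu^2 - (\Tr G)\nu + \det G \equiv 0 \pmod{p^m}$, and a monic quadratic over $\Z/p^m\Z$ has $O(p^{\lfloor m/2 \rfloor})$ roots (complete the square and count square roots modulo $p^m$; the constant is absolute, including for $p = 2$). For each admissible $\nu$, the hypothesis that $G \bmod p$ is not scalar forces $G - \nu I$ to have a unit entry, hence $G - \nu I \not\equiv 0 \pmod p$; if $G - \nu I$ is invertible mod $p$ there is no primitive $\bar v$ with $(G-\nu I)\bar v \equiv 0$, while if it has rank $1$ mod $p$ a Smith-normal-form computation bounds the number of such $[\bar v]$ by $O(1)$. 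Hence $N' \ll p^{\lfloor m/2\rfloor}$, so $N_{p^k}(g) \ll p^{\,j + \lfloor(k-j)/2\rfloor} = p^{\lfloor(k+j)/2\rfloor}$. Multiplying over $p^k \| c$, the product of the absolute constants is $c^{o(1)}$ and the one extra factor from $p = 2$ is bounded, which gives $\chi_c(g) \ll c^{o(1)} f$. The main thing to watch throughout is the prime $2$ — both in matching $j$ with $v_2(d)$ and in keeping the quadratic-root count uniform — but there it only ever costs a bounded multiplicative constant.
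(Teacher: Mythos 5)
Your proof is correct, and its skeleton is the same as the paper's: reduce by CRT to prime powers, interpret $\chi_{p^k}(g)$ as a count of fixed points on $\P^1(\Z/p^k\Z)$, extract a factor $p^j$ by descending to the modulus $p^{k-j}$, and finish by showing a quadratic congruence modulo $p^{k-j}$ has $O(p^{\lfloor (k-j)/2\rfloor})$ solutions. The difference is in how that quadratic is organized. The paper works in an affine chart $u=[x:1]$ (covering the remaining points by swapping the matrix entries) and bounds the non-monic quadratic $sx^2+(t-q)x-r\equiv 0$ through a three-case elementary analysis; you instead take the eigenvalue $\nu$ as the variable, so Cayley--Hamilton yields a \emph{monic} quadratic with an immediate $O(p^{\lfloor m/2\rfloor})$ root count, at the cost of the extra per-$\nu$ step --- which is fine, since $G-\nu I\not\equiv 0\pmod p$ forces the smallest elementary divisor to be a unit, so the primitive kernel of $G-\nu I$ modulo $p^{m}$ is at most a single projective class. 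Two points you navigated correctly but should keep explicit in a full write-up: first, your local exponent (largest $j$ with $g$ scalar mod $p^j$) agrees with $v_p(d)$ only up to $+1$ at $p=2$, because the scalars in the definition of $d$ must square to $1$ in $\Z/p^k\Z$, not merely mod $p^j$; the paper avoids this discrepancy by using that condition directly to get $p^{v_p(d)}\mid s,\,r,\,t-q$ with $v_p(d)$ maximal, while your bounded loss at the single prime $2$ is indeed harmless. Second, the assertion that every fiber of $\P^1(\Z/p^k\Z)\to\P^1(\Z/p^{k-j}\Z)$ has exactly $p^j$ elements deserves a word of justification (it follows from \cref{lem:action-transitive} together with the equal-orbit-size computation in the proof of \cref{lem:red-proj}, or simply by comparing cardinalities), though for the upper bound you only need fibers of size at most $p^j$.
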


\begin{remark}
The fact that the center of $\SL_2(\Z/c\Z)$ appears in \cref{lem:perm-rep-pointwise-ub} is natural, since the permutation representation $\rho_c$ (corresponding to the action on $\P^1(\Z/c\Z)$) factors through $\PSL_2(\Z/c\Z)$. This is ultimately why the counting problems in \cref{sec:counting} take place in $\PSL_2(\Z/c\Z)$, but we found it more convenient to phrase \cref{sec:rep-and-kloost,sec:amplif} in terms of the group $\SL_2(\Z/c\Z)$.
\end{remark}

\begin{proof}[Proof of \cref{lem:perm-rep-pointwise-ub}]
From \cref{eq:CRT-perm-rep}, it follows that $\chi_c(g) = \prod_{p^k \| c} \chi_{p^k}(\pi_{c,p^k}(g))$. Working locally at a prime $p | c$, with say $p^k \| c$ and $p^j \| d$, we will establish the bound
\begin{equation} \label{eq:char-pwise-bound-local}
    \chi_{p^k}(g) \ll p^{\lf \frac{k+j}{2} \rf},
\end{equation}
for all $g \in \SL_2(\Z/p^k\Z)$ such that $p^j$ is the largest divisor of $p^k$ for which $g \in Z(\SL_2(\Z/p^k\Z)) \cdot \Gamma_{p^k}(p^j)$. Given \cref{eq:char-pwise-bound-local}, the desired bound in \cref{eq:char-pwise-bound} follows from the divisor bound.

Since $\rho_{p^k}$ is a permutation representation, $\chi_{p^k}(g) = \Tr\, \rho_{p^k}(g)$ equals the number of fixed points of $g$ in $\P^1(\Z/p^k\Z)$, i.e., the number of solutions in $u \in \P^1(\Z/p^k\Z)$ to $gu = u$. Let us write $g = \begin{psmall} q & r \\ s & t \end{psmall}$ and $u = [x : y]$ for some integers $q, r, s, t, x, y$ with $qt - rs \equiv 1 \pmod{p^k}$ and $(x, y, p) = 1$. Scaling both entries of $u$ by a unit in $(\Z/p^k\Z)^\times$, we can assume without loss of generality that $x = 1$ or $y = 1$; in fact, replacing $\begin{psmall} q & r \\ s & t \end{psmall} \leftrightarrow \begin{psmall} t & s \\ r & q \end{psmall}$ if necessary, we may assume that $y = 1$.
Then the equality $gu = u$ means that for some $\alpha \in \Z$, one has
\[
    \begin{pmatrix} q & r \\ s & t \end{pmatrix} \begin{pmatrix} x \\ 1 \end{pmatrix} \equiv \alpha \begin{pmatrix} x \\ 1 \end{pmatrix} \pmod{p^k}
    \qquad \Rightarrow \qquad 
    qx+r \equiv (sx + t)x \pmod{p^k}.
\]
This gives the quadratic congruence
\[
    sx^2 + (t-q)x - r \equiv 0 \pmod{p^k}.
\]
Now recall the explicit description of $Z(\SL_2(\Z/p^k\Z))$ from \cref{eq:centers}, so we have $g \in \gamma\Gamma_{p^k}(p^j)$ for some $\gamma \in \Z/p^k\Z$ with $\gamma^2 = 1$. Therefore, we know $p^j \mid s$, $p^j \mid r$, and $p^j \mid t - q$ (since $t \equiv \gamma \equiv q \pmod{p^j}$). 

We first assume that $p$ is odd; we will comment on the case $p = 2$ at the end of the proof. Then $\gamma \in \{\pm 1\}$, and $p^j$ is the largest power of $p$ dividing $(s, r, t-q, p^k)$ (otherwise, $q^2 \equiv 1 \pmod{p^{j+1}}$ forces $q \equiv \pm 1 \pmod{p^{j+1}}$, so $g \in \pm \Gamma_{p^k}(p^{j+1})$). Therefore, letting $a_2 := sp^{-j}$, $a_1 := (t-q)p^{-j}$ and $a_0 := -rp^{-j}$, we find that
\begin{equation} \label{eq:quadr-mod-p}
    a_2 x^2 + a_1 x + a_0 \equiv 0 \pmod{p^{k-j}},
\end{equation}
where $a_0, a_1, a_2$ are not all divisible by $p$. It now remains to show that this equation has 
\[
    O\left(p^{\lf \frac{k-j}{2} \rf} \right)
\] 
solutions in $x \pmod{p^{k-j}}$; every such solution will have $p^j$ lifts to $\Z/p^k\Z$, inducing a total of $O(p^{\lf (k-j)/2 \rf + j}) = O(p^{\lf (k+j)/2 \rf})$ fixed points $u = [x : 1]$ of $g$. We may assume that $j < k$, since otherwise this claim is trivial.

The number of solutions to quadratic congruences like \cref{eq:quadr-mod-p} has of course been studied before (see, e.g., \cite[Lemma 3]{kloosterman1946behaviourI} for a related problem). We derive an upper bound using three quick cases.

\textbf{Case 1:} $p \nmid a_2$. Then given any two solutions $x_0, x$ of \cref{eq:quadr-mod-p}, we can subtract the two equalities to obtain
\begin{equation} \label{eq:after-subtract-eqn}
    p^{k-j} \mid a_2(x^2 - x_0^2) + a_1(x - x_0) = (x - x_0)(a_2(x + x_0) + a_1).
\end{equation}
Let $\ell := \lc (k-j)/2 \rc$. By the pigeonhole principle, we must have $p^\ell \mid x - x_0$ or $p^\ell \mid a_2(x + x_0) + a_1$. Since $p \nmid a_2$, either option uniquely determines $x \pmod{p^\ell}$ in terms of $x_0$. So there can be at most
\[
    \ll \frac{p^{k-j}}{p^\ell} = p^{(k-j) - \lc \frac{k-j}{2} \rc} = p^{\lf \frac{k-j}{2} \rf}
\]
solutions in $x \pmod{p^{k-j}}$.

\textbf{Case 2:} $p \nmid a_1$. Given the previous case, we can assume $p \mid a_2$. Then for any two solutions $x_0, x$ of \cref{eq:quadr-mod-p}, we have $p \nmid a_2(x+x_0) + a_1$, so from \cref{eq:after-subtract-eqn} we find that $p^{k-j} \mid x-x_0$. This leaves at most one solution $x \pmod{p^{k-j}}$.

\textbf{Case 3:} $p \nmid a_0$. Then \cref{eq:quadr-mod-p} implies $p \nmid x$, and by substituting $x \leftrightarrow \bar{x} \pmod{p^{k-j}}$, we reduce to the case $p \nmid a_2$.

Finally, if $p = 2$, then the largest power of $2$ dividing $(s, r, t-q, 2^k)$ is either $2^j$ or $2^{j+1}$ (indeed, if $2^{j+2} \mid (s, r, t-q, 2^k)$, then $q^2 \equiv 1 \pmod{2^{j+2}}$ forces $q \equiv \pm 1 \pmod{2^{j+1}}$, so $g \in \pm \Gamma_{2^k}(2^{j+1})$, contradicting the definition of $j$). The rest of the proof goes through, possibly with $j$ replaced by $j+1$ (but this can only affect the final bound by an absolute constant factor when $p = 2$).
\end{proof}

\begin{remark}
The bound in \cref{eq:char-pwise-bound} is sharp in terms of $c$ and $d$, as can be seen by taking $g = T^d$. In particular, if $c = p^k$ is a prime power, then $g = T^{p^j}$ has $p^{\lf (k+j)/2 \rf}$ fixed points of the shape $[1 : x]$, given by the solutions to $p^j x^2 \equiv 0 \pmod{p^k}$.
\end{remark}

\begin{proposition}[Lower bound for squared character sums] \label{prop:squared-char-lb}
Let $c \in \Z_+$ and $\chi$ be an irreducible character inside $\chi_c^\circ$. Let $d, d', e \in \Z_+$ be such that $d' \mid d$, $(d, e) = 1$, and $c = d d' e$. Then one has
\[
    \sum_{n \in \Gamma_c(d)} |\chi(n)|^2
    \gg 
    \frac{c^{3-o(1)}}{d}.
\]
\end{proposition}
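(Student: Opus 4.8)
The plan is to push everything through the prime-power factors of $c$ via the Chinese Remainder Theorem, and then to run a purely local computation at each prime using Clifford theory. First I would use \cref{prop:sifted-rep} (and the parametrization \eqref{eq:CRT-irrep}) to write the irreducible representation $\rho$ underlying $\chi$ as an external tensor product $\rho\cong\boxtimes_{p^k\|c}\rho_{p,k}$, where each $\rho_{p,k}$ is a \emph{primitive} irreducible representation of $\SL_2(\Z/p^k\Z)$; with $\chi_{p,k}:=\Tr\rho_{p,k}$ one then has $\chi(g)=\prod_{p^k\|c}\chi_{p,k}(g\bmod p^k)$. Since $\Gamma_c(d)\cong\prod_{p^k\|c}\Gamma_{p^k}(p^{v_p(d)})$ by \eqref{eq:CRT-isom-groups}, taking absolute values termwise factors the sum,
\[
\sum_{n\in\Gamma_c(d)}|\chi(n)|^2=\prod_{p^k\|c}\ \sum_{n\in\Gamma_{p^k}(p^{v_p(d)})}|\chi_{p,k}(n)|^2,
\]
while $c^3/d=\prod_{p^k\|c}p^{3k-v_p(d)}$ because $d\mid c$. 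Since $c$ has $\ll\log c/\log\log c$ prime factors, it suffices (by the divisor bound) to prove the local estimate $\sum_{n\in\Gamma_{p^k}(p^j)}|\chi_{p,k}(n)|^2\gg p^{3k-j}$, with an absolute implied constant, for each prime power $p^k\|c$, where $j:=v_p(d)$.

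Next I would exploit the shape of the hypotheses $d'\mid d$, $(d,e)=1$, $c=dd'e$: for a prime $p\mid c$ either $p\mid e$, forcing $p^k\|e$ and $j=0$, or $p\nmid e$, forcing $p^k\|dd'$ and hence $v_p(d)+v_p(d')=k$ with $v_p(d')\le v_p(d)$, which gives $2j\ge k$. So at each prime $j=0$ or $2j\ge k$. If $j=0$ then $\Gamma_{p^k}(p^j)=\SL_2(\Z/p^k\Z)$ and the local sum equals $|\SL_2(\Z/p^k\Z)|\asymp p^{3k}$ by character orthogonality \eqref{eq:ortho-chars-sum-g}, which is $\gg p^{3k-0}$. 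Assume now $0<j\le k$, so $2j\ge k$. The identity $(I+p^jA)(I+p^jB)=I+p^j(A+B)+p^{2j}AB$ shows that $N:=\Gamma_{p^k}(p^j)$ is \emph{abelian} (the quadratic term vanishes mod $p^k$), with $|N|=p^{3(k-j)}$ by \eqref{eq:size-nd}, and $I+p^jA\mapsto A$ identifies $N$ with $\mathfrak{sl}_2(\Z/p^{k-j}\Z)$ $\SL_2(\Z/p^k\Z)$-equivariantly, the conjugation action becoming the adjoint action of $\SL_2(\Z/p^{k-j}\Z)$. Applying \cref{lem:clifford} to $N\triangleleft G:=\SL_2(\Z/p^k\Z)$ and the irreducible $\rho_{p,k}$, and using that $N$ is abelian, yields $\rho_{p,k}\vert_N\cong\bigoplus_{\ell=1}^L\sigma_\ell^{\oplus m}$ with the $\sigma_\ell$ distinct one-dimensional characters of $N$ forming a single $G$-conjugacy orbit and $\dim\rho_{p,k}=Lm$. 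Hence, by \eqref{eq:sum-square-char} applied to $N$,
\[
\sum_{n\in N}|\chi_{p,k}(n)|^2=|N|\,Lm^2=|N|\,\frac{(\dim\rho_{p,k})^2}{L}.
\]

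Now $\dim\rho_{p,k}\gg p^k$ by \cref{lem:prim-rep-large}, so it remains only to show $L\ll p^{2(k-j)}$. Here $L$ is the size of the orbit of $\sigma_1$ under $\SL_2(\Z/p^{k-j}\Z)$; identifying characters of $N$ with $\mathfrak{sl}_2(\Z/p^{k-j}\Z)$ via the (nondegenerate, for odd $p$) trace form, the stabilizer of $\sigma_1$ becomes the centralizer $Z(Y)$ of some $Y\in\mathfrak{sl}_2(\Z/p^{k-j}\Z)$, and $Z(Y)$ contains the subgroup $\{aI+bY:a^2+b^2\det Y=1\}$ (using $Y^2=-\det(Y)I$), whose defining conic has $\gg p^{k-j}$ points over $\Z/p^{k-j}\Z$. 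Thus $L\ll|\SL_2(\Z/p^{k-j}\Z)|/p^{k-j}\ll p^{2(k-j)}$, and substituting gives $\sum_{n\in N}|\chi_{p,k}(n)|^2\gg p^{3(k-j)}\cdot p^{2k}/p^{2(k-j)}=p^{3k-j}$. Multiplying the local estimates over all $p^k\|c$ and absorbing the $\ll\log c/\log\log c$ absolute constants by the divisor bound then gives $\sum_{n\in\Gamma_c(d)}|\chi(n)|^2\gg c^{3-o(1)}/d$.

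The step I expect to be the main obstacle is the orbit bound $L\ll p^{2(k-j)}$, equivalently the centralizer lower bound $|Z(Y)|\gg p^{k-j}$ in $\SL_2(\Z/p^{k-j}\Z)$, which must be uniform in $p$. For odd $p$ the trace-form identification of $\widehat N$ with $\mathfrak{sl}_2(\Z/p^{k-j}\Z)$ and the conic point count are routine, but the prime $p=2$ needs separate handling (the trace form on $\mathfrak{sl}_2$ is degenerate in characteristic $2$, and small configurations — for instance $j=k-1$ with $Y$ near-scalar — have to be checked by hand), exactly in the spirit of the $p=2$ caveats in the proof of \cref{lem:prim-rep-large}.
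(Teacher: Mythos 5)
Your proposal is correct and follows essentially the same route as the paper: the CRT reduction to prime powers, the observation that the hypotheses $c=dd'e$, $d'\mid d$, $(d,e)=1$ force $j=0$ or $2j\ge k$ at every prime, and then the local estimate via Clifford theory for the abelian subgroup $\Gamma_{p^k}(p^j)$ combined with $\dim\rho_{p,k}\gg p^k$ from \cref{lem:prim-rep-large} --- this is exactly the paper's \cref{lem:squared-char-primitive}. The one place you diverge is the bound $L\ll p^{2(k-j)}$: you argue by orbit--stabilizer, bounding the centralizer from below via the conic $\det(aI+bY)=1$, which requires the nondegeneracy of the trace pairing and, as you yourself flag, leaves $p=2$ unfinished; the paper instead bounds the orbit elementarily, by counting trace-zero matrices with a prescribed determinant, which is uniform in $p$ and costs only a factor $k-j+1$ that is absorbed into $c^{o(1)}$. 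Your $p=2$ loose end is repairable rather than fatal: the binary quadratic $a^2+ab\,\Tr Y+b^2\det Y=1$ still has $\gg 2^{k-j}$ solutions modulo $2^{k-j}$ (a short case check on $\Tr Y$ and $\det Y$ modulo small powers of $2$), and the dual of $\Gamma_{2^k}(2^j)$ should be parametrized by $\mathfrak{gl}_2(\Z/2^{k-j}\Z)$ modulo scalars rather than by trace-zero matrices, after which the stabilizer only gets larger and the orbit bound survives; if you prefer to avoid these checks entirely, the paper's determinant-level-set count is the cleaner uniform alternative.
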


\begin{remark}
The lower bound in \cref{prop:squared-char-lb} wins a factor of about $d^2$ over the `trivial' bound of $|\Gamma_c(d)| = \tfrac{c^3}{d^3}$ due to \cref{eq:sum-square-char}. This is because $|\chi(n)|$ typically has size $\gtrsim d$ when $n \in \Gamma_c(d)$ (note that when $d = c$, one has $\Gamma_c(c) = \{I\}$ and $\chi(I) = \dim \chi$).
\end{remark}

The proof of \cref{prop:squared-char-lb} reduces to a local computation (i.e., for $c = p^k$ a prime power) given below, which builds on \cref{prop:prim-rep-large}. Indeed, one can rephrase \cref{prop:prim-rep-large} as follows: if $\chi \in \Irr(\SL_2(\Z/p^k\Z))$ is primitive, then $|\chi(I)|^2 \gg p^{2k}$.
Using a bit of Clifford theory, we can generalize this to averages of $|\chi|^2$ over the congruence subgroups $\Gamma_{p^k}(p^j)$, for some values of $j$.

\begin{lemma} \label{lem:squared-char-primitive}
Let $p^k$ be a prime power and $j \in \Z$ be such that either $j = 0$ or $\tfrac{k}{2} \le j \le k$. Let $\chi$ be a primitive irreducible character of $\SL_2(\Z/p^k\Z)$. Then 
\[
    \sum_{n \in \Gamma_{p^k}(p^j)} |\chi(n)|^2 \gg (k-j+1)^{-1} p^{3k-j}.
\]
\end{lemma}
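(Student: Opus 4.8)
The plan is to combine Clifford theory (\cref{lem:clifford}) with the dimension bound for primitive representations (\cref{lem:prim-rep-large}), reducing the claim to an elementary count of solutions to $bc = w$ in $\Z/p^m\Z$. First I would dispose of the degenerate values of $j$: when $j = 0$ one has $\Gamma_{p^k}(p^0) = \SL_2(\Z/p^k\Z)$, so $\sum_n |\chi(n)|^2 = |\SL_2(\Z/p^k\Z)| \gg p^{3k} \gg (k+1)^{-1}p^{3k}$ by \cref{eq:ortho-chars-sum-g,eq:size-nd}; and when $j = k$ one has $\Gamma_{p^k}(p^k) = \{I\}$, so $\sum_n |\chi(n)|^2 = (\dim\chi)^2 \gg p^{2k} = p^{3k-j}$ by \cref{lem:prim-rep-large}. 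So assume $\tfrac{k}{2} \le j < k$ and set $m := k - j \ge 1$, $N := \Gamma_{p^k}(p^j) \triangleleft G := \SL_2(\Z/p^k\Z)$. Since $2j \ge k$, we have $(I + p^jA)(I + p^jB) \equiv I + p^j(A+B) \pmod{p^k}$ and $\det(I + p^jA) \equiv 1 + p^j\Tr A \pmod{p^k}$; hence $N$ is abelian, and $I + p^jA \mapsto A \bmod p^m$ identifies $N$ with the additive group $\mathfrak n := \{A \in M_2(\Z/p^m\Z) : \Tr A \equiv 0\} \cong (\Z/p^m\Z)^3$, carrying $G$-conjugation to the adjoint action of $\SL_2(\Z/p^m\Z)$ on $\mathfrak n$.

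Next I would apply \cref{lem:clifford} with this $N$. Because $N$ is abelian, its irreducible characters are one-dimensional, so $\rho\vert_N \cong \bigoplus_{\ell=1}^L \sigma_\ell^{\oplus t}$ for distinct $\sigma_1, \dots, \sigma_L \in \hat N$ forming a single $G$-orbit under conjugation, with $\dim\rho = Lt$. Orthogonality of the distinct characters $\sigma_\ell$ of the abelian group $N$ then gives
\[
    \sum_{n \in N} |\chi(n)|^2 = t^2 \sum_{\ell_1,\ell_2=1}^L \sum_{n \in N} \sigma_{\ell_1}(n)\overline{\sigma_{\ell_2}(n)} = t^2 L\, |N| = \frac{(\dim\rho)^2}{L}\,|N|,
\]
where $|N| = |\Gamma_{p^k}(p^j)| = p^{3m}$ by \cref{eq:size-nd}. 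Since $\rho$ is primitive, \cref{lem:prim-rep-large} gives $\dim\rho \gg p^k$; this is the only place primitivity enters. Hence it suffices to prove the upper bound $L \ll (m+1)\,p^{2m}$, since then
\[
    \sum_{n \in N}|\chi(n)|^2 \gg \frac{p^{2k}\cdot p^{3m}}{(m+1)\,p^{2m}} = \frac{p^{2k+m}}{m+1} = \frac{p^{3k-j}}{k-j+1}.
\]

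To bound $L$, I would identify $\hat N$ with $\mathfrak n$ via an $\SL_2$-invariant perfect pairing, e.g.\ $\sigma_B(I + p^jA) := e\!\left(\Tr(AB)/p^m\right)$ when $p$ is odd, using that the trace form is nondegenerate on $\mathfrak n$ in that case (for $p = 2$ one substitutes another invariant nondegenerate pairing, or invokes Brauer's permutation lemma, so that the $G$-orbit sizes on $\hat N$ and on $N$ agree). Under this identification, $L$ is the size of the adjoint $\SL_2(\Z/p^m\Z)$-orbit of some $B_1 \in \mathfrak n$, which lies in the level set $\{A \in \mathfrak n : \det A = \det B_1\}$. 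Writing $A = \begin{psmall} a & b \\ c & -a \end{psmall}$, this level set is $\{(a,b,c) \in (\Z/p^m\Z)^3 : bc = -a^2 - \det B_1\}$; since $bc = w$ has at most $(m+1)p^m$ solutions $(b,c) \in (\Z/p^m\Z)^2$ for any fixed $w$ (an elementary count, splitting according to the largest power of $p$ dividing $b$), summing over the $p^m$ choices of $a$ yields $L \le (m+1)p^{2m}$, which completes the proof.

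No step is genuinely deep; the argument is essentially bookkeeping once the Clifford decomposition is in place. The one point requiring care is the identification $\hat N \cong \mathfrak n$ when $p = 2$, where the trace form degenerates on trace-zero matrices (since $I$ is trace-zero modulo $2$), so one must either pick a different $\SL_2$-invariant nondegenerate pairing or use Brauer's permutation lemma to transport the orbit-size bound from $N$ to its dual. The loss of $(k-j+1)^{-1}$ in the statement is precisely the divisor-type factor coming from the count of solutions to $bc = w$ modulo $p^m$.
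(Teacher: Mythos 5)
Your proof is correct and follows essentially the same route as the paper's: Clifford theory applied to the abelian normal subgroup $\Gamma_{p^k}(p^j)$ (using $2j \ge k$), the primitivity input $\dim\rho \gg p^k$, and an upper bound on the orbit length $L$ via the determinant level set and the $(m+1)p^m$ count of solutions to $bc = w$ modulo $p^m$, producing the same $(k-j+1)^{-1}$ loss. The only differences are cosmetic: you treat $j=k$ separately (the general argument already covers it), and you explicitly flag the $p=2$ degeneracy of the trace pairing on trace-zero matrices, which the paper glosses over and which is most cleanly repaired not by Brauer's permutation lemma (that only matches orbit counts, not sizes) but by parametrizing $\hat{N}$ as $M_2(\Z/p^{k-j}\Z)$ modulo scalar matrices, where the same conjugation-invariant level-set count applies.
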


\begin{proof}
Set $G := \SL_2(\Z/p^k\Z)$ and $N := \Gamma_{p^k}(p^j)$, so $N \triangleleft G$. Say $\chi = \Tr\, \rho$ where $\rho \in \hat G$ is primitive.
By \cref{eq:sum-square-char}, we have
\[
    \frac{1}{|N|} \sum_{n \in N} |\chi(n)|^2 
    = 
    \sum_{\rho_0 \in \hat N} \Mult(\rho_0, \rho\vert_N)^2.
\]
By \cref{lem:clifford}, $\rho\vert_N$ contains $L$ irreducible representations of $N$, each with multiplicity $m$, for some positive integers $L, m$. Thus
\[
    \sum_{\rho_0 \in \hat N} \Mult(\rho_0, \rho\vert_N)^2 = Lm^2,
\]
and in light of \cref{eq:size-nd}, it remains to show that
\begin{equation} \label{eq:Lm2-to-show}
    Lm^2 \gg (k-j+1)^{-1}p^{2j}.
\end{equation}
If $j = 0$, this is a trivial statement. Suppose now that $\tfrac{k}{2} \le j \le k$, so $N = \Gamma_{p^k}(p^j)$ is abelian by \cref{lem:abelian-subgroups}, and all of its irreducible representations are $1$-dimensional. 
By equating dimensions and using \cref{prop:prim-rep-large}, we find that $Lm = \dim \rho\vert_N = \dim \rho \gg p^k$, and therefore
\begin{equation} \label{eq:Lm-expr}
    Lm^2 \gg \frac{p^{2k}}{L}.
\end{equation}
So to prove \cref{eq:Lm2-to-show}, it suffices to establish a suitable upper bound for $L$. By the conclusion of \cref{lem:clifford}, all $L$ non-isomorphic representations in the decomposition of $\rho\vert_N$ lie in the same orbit of $G$'s action by conjugation. By the characterization of irreducible representations of $N$ from \cref{lem:abelian-subgroups}, it follows that $L$ is at most the maximal size of an orbit in the set
\[
    R^{2 \times 2}/RI,
    \qquad\qquad \text{where }
    R := \Z/p^{k-j}\Z,
\]
under conjugation by $\SL_2(\Z/p^k\Z)$, or equivalently by $\SL_2(R)$. 
Now for $B = \begin{psmall} a & b \\ c & d \end{psmall} \in R^{2 \times 2}$, the discriminant 
\[
    \Disc(B) := (a+d)^2 - 4(ad-bc) = (a-d)^2 + 4bc
\]
gives an invariant under both $\SL_2(R)$-conjugation and $RI$-translation. We therefore find that
\begin{equation} \label{eq:L-bound}
\begin{aligned}
    L &\le \max_{\Delta \in R} \#\left\{B + RI \in R^{2 \times 2}/RI : \Disc(B) = \Delta \right\}
    \\
    &=
    \max_{\Delta \in R} \#\left\{\begin{psmall} a & b \\ c & 0 \end{psmall} \in R^{2 \times 2} : a^2 + 4bc = \Delta \right\}.
\end{aligned}
\end{equation}
Now let $\Delta, a \in R$ and write $\Delta - a^2 = p^\ell z$ for some $0 \le \ell \le k-j$ and $z \in (\Z/p^{k-j-\ell}\Z)^\times$. The equation $4bc = p^\ell z$ in $\Z/p^{k-j}\Z$ then implies
\begin{equation} \label{eq:bc-prime}
    4b = p^{\ell_b} b', 
    \qquad c = p^{\ell_c} c',
    \qquad 
    b' c' \equiv z \pmod{p^{k-j-\ell}},
\end{equation}
for some $\ell_b, \ell_c \ge 0$ with $\ell_b + \ell_c = \ell$, and some $b' \in (\Z/p^{k-j-\ell_b}\Z)^\times$, $c' \in (\Z/p^{k-j-\ell_c}\Z)^\times$. 

Given $\Delta \in R$ and one of $p^{k-j}$ possible choices of $a$ in \cref{eq:L-bound}, $\ell$ and $z$ are determined. Then there are at most $\ell+1 \le k-j+1$ choices of $\ell_b, \ell_c$, and at most $p^{k-j-\ell_b}$ choices of $b'$. Given $\ell_b, \ell_c, b'$, there are at most $p^{k-j-\ell_c} / p^{k-j-\ell} = p^{\ell_b}$ choices of $c'$, since $c' \pmod{p^{k-j-\ell}}$ is fixed. Given $\ell_b, \ell_c, b', c'$, there are $O(1)$ choices of $b, c$ (there may be up to $4$ choices of $b$ if $p = 2$, due to the factor of $4$ in \cref{eq:bc-prime}) . Putting these counts together, we obtain that
\[
    L \ll p^{k-j} (k-j+1) \max_{\ell_b + \ell_c \le k} p^{k-j-\ell_b} p^{\ell_b}
    =
    (k-j+1) p^{2(k-j)}.
\]
Combining this with \cref{eq:Lm-expr} establishes the desired bound from \cref{eq:Lm2-to-show}.
\end{proof}

\begin{remark}
One may expect the bound in \cref{lem:squared-char-primitive} to be sharp up to $p^{o(k)}$ factors, and to actually hold for all $0 \le j \le k$. This might follow from a more careful study of $\hat\Gamma_{p^k}(p^j)$ for $1 \le j < \tfrac{k}{2}$, and it would imply \cref{prop:squared-char-lb} (as well as \cref{thm:bilinear-forms-composite}) for more flexible factorizations $c = de$.
\end{remark}

\begin{proof}[Proof of \cref{prop:squared-char-lb}]
By \cref{prop:sifted-rep}, $\rho_c^\circ$ decomposes as a direct sum of representations of the form 
\begin{equation} \label{eq:irred-subrep}
    \rho = \bigboxtimes_{p^k \| c} \rho_{p,k},
\end{equation}
where each $\rho_{p,k}$ is a primitive irreducible representation of $\SL_2(\Z/p^k\Z)$.
This gives the decomposition of $\rho_c^\circ$ into irreducible representations, so any irreducible representation occurring inside $\rho_c^\circ$ must be (isomorphic to one) of the form in \cref{eq:irred-subrep}. Now let $\chi := \Tr\, \rho$ and $\chi_{p,k} := \Tr\, \rho_{p,k}$ for such a representation. 
From \cref{eq:CRT-isom-groups} and fact that $\chi(n) = \prod_{p^k \| n} \chi_{p,k}(\pi_{c,p^k}(n))$ for $n \in \SL_2(\Z/c\Z)$, it follows that
\[
    \sum_{n \in \Gamma_c(d)} |\chi(n)|^2 =
    \prod_{\substack{p^k \| c \\ p^j \| d}}\, \sum_{n \in \Gamma_{p^k}(p^j)} |\chi_{p,k}(n)|^2.
\]
One can then apply \cref{lem:squared-char-primitive} to obtain 
\[
    \sum_{n \in \Gamma_c(d)} |\chi(n)|^2 \gg \prod_{p^k \| c} (k+1)^{-1} p^{3k-j},
\] 
Note that the hypothesis on $j$ from \cref{lem:squared-char-primitive} is satisfied because whenever $p$ is a prime dividing $c$ with $p^k \| c$ and $p^j \| d$, one of the following holds:
\begin{itemize} 
\item[$(i).$] One has $p \mid e$ and $p \nmid d$, so $j = 0$, or
\item[$(ii).$] One has $p \mid d$ and $p \nmid e$, so $k = v_p(dd') \le 2j$.
\end{itemize}
The desired conclusion then follows from the divisor bound.
\end{proof}

\subsection{Passing to a counting problem}
We can now state the result of our amplification argument.

\begin{proposition}[From Fourier coefficients to a counting problem] \label{prop:fourier-to-counting}
Let $c \in \Z_+$, $a \in (\Z/c\Z)^\times$, $H_1, H_2 \gg 1$, and $F_{c,a}^{H_1,H_2}$ be as in \cref{eq:func-H12}. Let $d, d', e \in \Z_+$ be such that $(d, e) = 1$, $d' \mid d$, and $c = dd'e$. Then for any even positive integer $q$, one has
\[
    \|\hat F_{c,a}^{H_1,H_2}(\rho_c^\circ)\|_{S^q}^q \ll
    \frac{c^{o(1)}d}{(H_1 H_2)^{q/2}} \max_{\substack{\tilde{d}, \tilde{f} \in \Z_+ \\ d \mid \tilde{d} \mid c \\ \tilde{f}^2 \mid c\tilde{d}}} \tilde{f} \sum_{\substack{h_1, \ldots, h_q \in \Z \\ |h_i| \le 2H_j \\ \forall i \equiv j \pmod{2}}} 
    \one_{T^{\bar{a}h_1}ST^{h_2}S \cdots T^{\bar{a}h_{q-1}}ST^{h_q}S = I \text{ in } \PSL_2(\Z/\tilde{d}\Z)}.
\]
\end{proposition}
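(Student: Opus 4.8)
\emph{Overall strategy.} The plan is to feed the conclusion of \cref{cor:kloosterman-ap} through the machinery of \cref{sec:amplif}: decompose $\rho_c^\circ$ into irreducibles, apply the non-abelian amplifier of \cref{prop:amplification} with $N=\Gamma_c(d)\triangleleft\SL_2(\Z/c\Z)$ to each constituent, control the amplifier's denominator uniformly with \cref{prop:squared-char-lb}, recombine back to the single character $\chi_c^\circ$, bound it pointwise via \cref{lem:perm-rep-pointwise-ub}, and finally unfold the definition of $F_{c,a}^{H_1,H_2}$ to reach the stated word-counting problem in $\PSL_2(\Z/\tilde d\Z)$.

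\emph{Reduction and recombination.} Write $G=\SL_2(\Z/c\Z)$ and $F=F_{c,a}^{H_1,H_2}$. By \cref{lem:break-down-schatten}, $\|\hat F(\rho_c^\circ)\|_{S^q}^q=\sum_{\rho'\in\hat G}\Mult(\rho',\rho_c^\circ)\|\hat F(\rho')\|_{S^q}^q$. For every irreducible $\rho'$ occurring in $\rho_c^\circ$, with $\chi':=\Tr\rho'$, \cref{prop:amplification} (applicable since $\rho'$ is irreducible and $\Gamma_c(d)$ normal) gives $0\le\|\hat F(\rho')\|_{S^q}^q\le\frac{|G|}{\sum_{n\in\Gamma_c(d)}|\chi'(n)|^2}\,B_{\rho'}$ with
\[
B_{\rho'}:=\sum_{\substack{g_1,\dots,g_q\in G\\ g_1\cdots g_q\in\Gamma_c(d)}}F(g_1)\bar F(g_2^{-1})\cdots\bar F(g_q^{-1})\,\chi'(g_1\cdots g_q),
\]
so in particular $B_{\rho'}\ge 0$. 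By \cref{prop:squared-char-lb}, $\sum_{n\in\Gamma_c(d)}|\chi'(n)|^2\gg c^{3-o(1)}/d$ \emph{uniformly} in $\rho'$, while $|G|\asymp c^3$ by \cref{eq:size-nd}, so the prefactor is $\ll c^{o(1)}d$. Since $\Mult(\rho',\rho_c^\circ)\ge0$, $B_{\rho'}\ge0$, and $\sum_{\rho'}\Mult(\rho',\rho_c^\circ)\chi'=\chi_c^\circ$, summing over $\rho'$ and pulling out the prefactor yields
\[
\|\hat F(\rho_c^\circ)\|_{S^q}^q\ll c^{o(1)}d\!\!\sum_{\substack{g_1,\dots,g_q\in G\\ g_1\cdots g_q\in\Gamma_c(d)}}\!\! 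F(g_1)\bar F(g_2^{-1})\cdots\bar F(g_q^{-1})\,\chi_c^\circ(g_1\cdots g_q).
\]
The gain from this recombination is that we never need a pointwise bound on an individual irreducible character, only on $\chi_c^\circ$.

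\emph{Pointwise character bound and unfolding.} Take absolute values, and use the identity $\chi_c^\circ(g)=\sum_{d''\mid c}\mu(c/d'')\,\chi_{d''}(\pi_{c,d''}(g))$ (immediate from \cref{lem:sift-proj,lem:red-proj,lem:invt-subspace}) together with \cref{lem:perm-rep-pointwise-ub}: for a tuple with $g_1\cdots g_q\in\Gamma_c(d)$, if $\tilde d$ denotes the largest divisor of $c$ with $g_1\cdots g_q\in\{\gamma\in\Z/c\Z:\gamma^2=1\}\cdot\Gamma_c(\tilde d)$, then $d\mid\tilde d\mid c$, one has $|\chi_c^\circ(g_1\cdots g_q)|\ll c^{o(1)}\tilde f$ with $\tilde f$ maximal subject to $\tilde f^2\mid c\tilde d$, and $\one_{g_1\cdots g_q\in\Gamma_c(d)}\le\one_{g_1\cdots g_q=I\text{ in }\PSL_2(\Z/\tilde d\Z)}$. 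Grouping the tuples by their value of $\tilde d$ (there are $c^{o(1)}$ possibilities) replaces $\chi_c^\circ$ by $c^{o(1)}\max_{d\mid\tilde d\mid c}\tilde f$ and the constraint by a relation in $\PSL_2(\Z/\tilde d\Z)$. It remains to unfold $F$ via \cref{eq:func-H12}: the odd-indexed $g_i$ equal $T^{\bar a x}ST^{y}$ and the even-indexed $g_i$ equal $T^{-v}S^{-1}T^{-\bar a u}$ with $|x|,|u|\le H_1$, $|y|,|v|\le H_2$ and weights $\ll 1$, contributing a factor $(H_1H_2)^{-q}$. In $\PSL_2(\Z/\tilde d\Z)$ one has $S^{-1}=S$, so the product $g_1\cdots g_q$ collapses to a word $T^{a_1}ST^{w_1}S\cdots ST^{w_{q-1}}ST^{b_q}$ with $q+1$ $T$-blocks, each exponent a sum or difference of two of the original variables; conjugating the relation $g_1\cdots g_q=I$ by $T^{b_q}$ absorbs the last block into the first and produces $T^{\bar a h_1}ST^{h_2}S\cdots T^{\bar a h_{q-1}}ST^{h_q}S=I$ in $\PSL_2(\Z/\tilde d\Z)$ with $h_1=x_1-u_{q/2}$, $h_2=y_1-v_1$, $h_3=x_2-u_1,\dots$ (hence $|h_i|\le 2H_j$ for $i\equiv j\pmod{2}$, and the odd-indexed exponents carry $\bar a$, matching the stated word). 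The fibres of the map $(x_k,y_k,u_k,v_k)_k\mapsto(h_1,\dots,h_q)$ have size $\ll(H_1H_2)^{q/2}$, since each new variable constrains two old ones of the same type to lie on a line; summing, the powers of $H_1H_2$ combine to the stated $(H_1H_2)^{-q/2}$.

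\emph{Main obstacle.} Conceptually, the only delicate ingredient is the \emph{uniform} lower bound $\sum_{n\in\Gamma_c(d)}|\chi'(n)|^2\gg c^{3-o(1)}/d$ over all constituents of $\rho_c^\circ$ --- this is exactly \cref{prop:squared-char-lb}, and it is what legitimizes the recombination step. Everything else is careful but routine bookkeeping with the generators $T,S$: tracking which exponents carry $\bar a$, collapsing the alternating $S^{\pm1}$ to a single $S$ in $\PSL_2$, the cyclic conjugation that turns $q+1$ blocks into $q$, and the fibre count for the change of variables.
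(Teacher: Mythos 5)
Your argument is essentially the paper's: Schatten decomposition via \cref{lem:break-down-schatten}, amplification with $N=\Gamma_c(d)$ from \cref{prop:amplification}, the uniform denominator bound \cref{prop:squared-char-lb}, positivity-based recombination, a pointwise character bound, splitting by $\tilde d$, and the same unfolding/fibre count for the $T,S$-word. The one place you deviate is the recombination target: you sum with weights $\Mult(\rho',\rho_c^\circ)$ and land on $\chi_c^\circ$, whereas the paper sums with weights $\Mult(\rho',\rho_c)$ (still nonnegative, and $\ge 1$ at the relevant $\rho$) and lands on the plain permutation character $\chi_c$, to which \cref{lem:perm-rep-pointwise-ub} applies verbatim.

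That deviation is workable but your justification of the pointwise bound $|\chi_c^\circ(g)|\ll c^{o(1)}\tilde f$ is not ``immediate'' as claimed. The identity $\chi_c^\circ(g)=\sum_{d''\mid c}\mu(c/d'')\,\chi_{d''}(\pi_{c,d''}(g))$ does follow from \cref{lem:sift-proj,lem:red-proj,lem:invt-subspace}, but \cref{lem:perm-rep-pointwise-ub} applied at level $d''$ produces parameters $(d_0,f_0)$ intrinsic to the modulus $d''$ (with $d_0\mid d''$ maximal such that $g\bmod d''$ lies in $\{\gamma:\gamma^2=1 \text{ in }\Z/d''\Z\}\cdot\Gamma_{d''}(d_0)$, and $f_0^2\mid d''d_0$), and you still have to show $f_0\ll\tilde f$, where $\tilde d,\tilde f$ are defined at level $c$. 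This is true but needs a short local comparison: at odd primes one gets $d_0\mid\tilde d$ directly, while at $p=2$ a square root of unity modulo $d''$ need not lift to one modulo $c$, and one only gets the exponent inequality up to $+1$, which costs a bounded factor. Either supply that comparison, or simply recombine to $\chi_c$ as the paper does, which removes the issue entirely; with that repair your proof is complete and matches the stated bound.
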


\begin{proof}
By \cref{prop:sifted-rep}, $\rho_c^\circ$ is a sum of $c^{o(1)}$ irreducible representations $\rho$. By \cref{lem:break-down-schatten}, it suffices to prove the desired upper bound for each $\|\hat F_{c,a}^{H_1,H_2}(\rho)\|_{S^q}^q$; the loss factor of $c^{o(1)}$ is acceptable here. For each irreducible representation $\rho$ with $\Mult(\rho, \rho_c^\circ) > 0$, we apply \cref{prop:amplification,prop:squared-char-lb} with $F := F_{c,a}^{H_1,H_2}$ to obtain
\[
    \|\hat F(\rho)\|_{S^q}^q \ll 
    \frac{c^{3+o(1)}}{c^3/d} \sum_{\substack{g_1,\ldots,g_q \in \SL_2(\Z/c\Z) \\ g_1\cdots g_q \in \Gamma_c(d)}} F(g_1) \bar F(g_2^{-1}) \cdots F(g_{q-1}) \bar F(g_q^{-1}) \chi(g_1 \cdots g_q),
\]
where $\chi := \Tr\, \rho$. In fact, by \cref{prop:amplification}, the sum above is nonnegative if one replaces $\chi$ with any irreducible character of $\SL_2(\Z/c\Z)$. Summing over all such characters $\chi' = \Tr\, \rho'$ with weight $\Mult(\rho', \rho_c)$ (which is at least $1$ when $\rho' = \rho$), we find that
\[
    \|\hat F(\rho)\|_{S^q}^q \ll 
    c^{o(1)}d \sum_{\substack{g_1,\ldots,g_q \in \SL_2(\Z/c\Z) \\ g_1\cdots g_q \in \Gamma_c(d)}} F(g_1) \bar F(g_2^{-1}) \cdots F(g_{q-1}) \bar F(g_q^{-1}) \chi_c(g_1 \cdots g_q).
\]
Here $\rho_c$ is the original permutation representation from \cref{def:perm-rep}. In light of \cref{lem:perm-rep-pointwise-ub}, we ought to split the sum above based on the largest $\tilde{d} \mid c$ such that $g_1\cdots g_q \in Z(\SL_2(\Z/c\Z)) \cdot \Gamma_c(\tilde{d})$; note that by \cref{eq:def-PSL2}, this is equivalent to the equation $g_1\cdots g_q = I$ in $\PSL_2(\Z/\tilde{d}\Z)$. Then from the triangle inequality, \cref{lem:perm-rep-pointwise-ub}, and the divisor bound for $\tilde{d}$, we find that
\begin{equation} \label{eq:before-plug-in-FH12}
    \|\hat F(\rho)\|_{S^q}^q \ll c^{o(1)}d \max_{\substack{\tilde{d}, \tilde{f} \in \Z_+ \\ d \mid \tilde{d} \mid c \\ \tilde{f}^2 \mid c\tilde{d}}} \tilde{f} \sum_{g_1,\ldots,g_q \in \SL_2(\Z/c\Z)} |F(g_1) F(g_2^{-1}) \cdots F(g_{q-1}) F(g_q^{-1})| \one_{g_1 \cdots g_q = I \text{ in } \PSL_2(\Z/\tilde{d}\Z)}.
\end{equation}
Now recalling \cref{eq:func-H12}, we can expand 
\[
    |F(g)| = |F_{c,a}^{H_1,H_2}(g)| \ll \frac{1}{H_1H_2} \sum_{\substack{|h_1| \le H_1 \\ |h_2| \le H_2}} \one_{g = T^{\bar{a} h_1} S T^{h_2}}.
\]
The conclusion follows by plugging this into \cref{eq:before-plug-in-FH12}, and noting that as $h, h'$ vary in $[-H, H] \cap \Z$, the difference $h-h'$ varies in $[-2H, 2H] \cap \Z$, each value being attained $O(H)$ times.
\end{proof}

\section{Counting solutions in \texorpdfstring{$\PSL_2(\Z/c\Z)$}{PSL2(Z/cZ)}} \label{sec:counting}

We now develop the final ingredient towards \cref{thm:bilinear-forms-composite}, as outlined in \cref{subsec:outline-counting}. Given $c, q \in \Z_+$ with $q$ even, $a_1, a_2 \in (\Z/c\Z)^\times$, and $1 \le H_1 \le H_2 \ll c$, we will count solutions in $(h_1, \ldots, h_q) \in \Z$ to the system
\begin{equation} \label{eq:PSL2-eqn}
\begin{cases} 
    T^{a_1h_1} S T^{a_2h_2} S \cdots T^{a_1h_{q-1}} S T^{a_2h_q} S = I
    \text{ in } 
    \PSL_2(\Z/c\Z), 
    \\
    |h_i| \le H_j, \text{ for all } i, j \text{ with } i \equiv j \pmod{2}.
\end{cases}
\end{equation}
In particular, the ranges of $h_i$ alone produce the trivial bound $\ll_q (H_1 H_2)^{q/2}$ for the number of solutions. Focusing on the case $a_1 = a_2 = 1$, below are some classes of solutions to \cref{eq:PSL2-eqn}:
\begin{itemize}
    \item[$(i)$.] \emph{Integer solutions (1)}. If $h_1 = h_{\frac{q}{2} + 1} = 0$ (and similarly for cyclic permutations of this case), noting that $S^2 = I$ in $\PSL_2(\Z/c\Z)$, \cref{eq:PSL2-eqn} becomes
    \[
        T^{h_2} S \cdots S T^{h_{\frac{q}{2}}}
        =
        T^{-h_q} S T^{-h_{q-1}} S \cdots S T^{-h_{\frac{q}{2} + 2}}.
    \]
    This has $\asymp_q H_1^{\lf (q-2)/4 \rf} H_2^{\lc (q-2)/4 \rc}$ diagonal solutions with $h_i = h_{q-i+2}$, which actually give solutions to \cref{eq:PSL2-eqn} in $\PSL_2(\Z)$.
    \item[$(ii)$.] \emph{Integer solutions (2)}. If $h_1 = h_3 = \cdots = h_{q-1} = 0$, \cref{eq:PSL2-eqn} becomes
    \[
        T^{h_2 + h_4 + \cdots + h_q} = I
        \qquad 
        \iff 
        \qquad 
        h_2 + h_4 + \cdots + h_q \equiv 0 \pmod{c}.
    \]
    This has $\asymp_q H_2^{(q-2)/2}$ solutions, which supersedes the diagonal contribution from $(i)$.
    \item[$(iii)$.] \emph{Generic terms}. The product $T^{h_1} S T^{h_2} S \cdots T^{h_q} S$ can take $\approx c^3$ values in $\PSL_2(\Z/c\Z)$. If each matrix is attained roughly the same number of times (and such equidistribution ought to happen for large enough $q$), this gives an expected number of $\approx c^{-3} (H_1 H_2)^{q/2}$ solutions.
\end{itemize}
These heuristics lead to the following conjectural upper bound.

\begin{conjecture} \label{conj:counting}
Let $c, q \in \Z_+$ with $q$ even. For all $a_1, a_2 \in (\Z/c\Z)^\times$ and $1 \le H_1 \le H_2 \ll c$, the number of solutions $(h_1, \ldots, h_q) \in \Z^q$ to \cref{eq:PSL2-eqn} is at most
\begin{equation} \label{eq:conj-counting}
    \ll_q c^{o(1)} \left(H_2^{(q-2)/2} + \frac{(H_1 H_2)^{q/2}}{c^3} \right).
\end{equation}
\end{conjecture}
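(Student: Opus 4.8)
Since \cref{conj:counting} is stated as open, what follows is a strategy rather than a complete proof. The natural starting point is to unfold the matrix equation. Writing $M_j(h) := T^{a_j h} S = \begin{psmall} a_j h & -1 \\ 1 & 0 \end{psmall}$, the left-hand side of \cref{eq:PSL2-eqn} is the product $M_1(h_1) M_2(h_2) \cdots M_1(h_{q-1}) M_2(h_q)$, whose entries are signed \emph{continuant polynomials} in the variables $x_1 := a_1 h_1$, $x_2 := a_2 h_2$, $\ldots$: with $K_0 := 1$, $K_1(x_1) := x_1$ and $K_n(x_1, \ldots, x_n) := x_n K_{n-1}(x_1, \ldots, x_{n-1}) - K_{n-2}(x_1, \ldots, x_{n-2})$, one has
\[
    \prod_{i=1}^q \begin{psmall} x_i & -1 \\ 1 & 0 \end{psmall}
    =
    \begin{psmall} K_q(x_1, \ldots, x_q) & -K_{q-1}(x_1, \ldots, x_{q-1}) \\ K_{q-1}(x_2, \ldots, x_q) & -K_{q-2}(x_2, \ldots, x_{q-1}) \end{psmall}.
\]
Thus $g = I$ in $\PSL_2(\Z/c\Z)$ amounts to the system $K_{q-1}(x_1, \ldots, x_{q-1}) \equiv 0$, $K_{q-1}(x_2, \ldots, x_q) \equiv 0$, $K_q(x_1, \ldots, x_q) \equiv \pm 1 \pmod c$ (the fourth entry, and $\det = 1$, being then automatic). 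The Chinese Remainder Theorem reduces this to counting, for each residue pattern, solutions of the same system modulo $p^k$ for $p^k \| c$, with the box constraint $|h_i| \le H_{j(i)}$ imposed through a final lattice-point count.

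For a fixed prime power $p^k$, I would run a count stratified by the dichotomy underlying \cref{prop:counting-6}. On the \emph{generic stratum}, the three continuant functions are transverse modulo $p$ (their Jacobian has full rank), so one expects the box of $\asymp_q (H_1 H_2)^{q/2}$ integer points to be cut down by the full factor $c^3 \asymp |\SL_2(\Z/c\Z)|$, yielding the generic contribution $O_q(c^{o(1)} (H_1 H_2)^{q/2} c^{-3})$. The \emph{degenerate strata} are those on which some proper sub-product $M_{j}(h_i) M_{j'}(h_{i+1}) \cdots$ already reduces to the identity, to $\pm I$, or to a power of $T$; this forces several of the $h_i$ to vanish or to obey linear relations, and corresponds exactly to the integer solutions of types $(i)$ and $(ii)$ above together with their cyclic variants. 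One must enumerate all of these strata and show they contribute $O_q(c^{o(1)} H_2^{(q-2)/2})$ in total, matching the lower bound in \cref{lem:counting-lb}.

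The main obstacle is the generic stratum when $H_1$ (or $H_2$) is much smaller than $c$, or than the relevant divisors of $c$: transversality modulo $p$ only tells us the fibers of the continuant map are small on average over all of $(\Z/c\Z)^q$, whereas we must control them over short boxes. When $c = p$ is prime, making this quantitative seems to require either exponential-sum estimates (the Weil bound, and Deligne's bounds for the higher continuant hypersurfaces) --- precisely the $\ell$-adic input the present method is designed to avoid --- or else elementary divisor-function arguments, which appear to work only for $H$ in a restricted range. For composite $c$ the situation is more favorable: one can factor $c = de$ (as in \cref{sec:counting} for $q = 6$) and arrange, e.g., $H_2 \asymp d$, so that modulo $d$ the box already fills the modulus and the count becomes elementary; the price is that the number of degenerate strata grows with $q$, so the bookkeeping of those strata becomes the bottleneck. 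Finally, for $q$ large in terms of $\tfrac{\log c}{\log H_1}$, \cref{conj:counting} follows from the $L^2$-flattening / growth results of Helfgott and Bourgain--Gamburd used by Shkredov (see \cref{subsec:comments-prime}), so the essential open content is a fixed small even $q$, say $q \in \{8, 10, 12\}$, which would already suffice to extend \cref{thm:bilinear-forms-composite,thm:bilinear-forms-general} to prime moduli.
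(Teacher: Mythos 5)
The statement you are addressing is \cref{conj:counting}, which the paper does not prove: it is stated as an open conjecture, and the paper only establishes the matching lower bound (\cref{lem:counting-lb}), the cases $q \in \{2,4\}$ (\cref{lem:counting-4}), and a partial result for $q=6$ (\cref{prop:counting-6}) whose second term $(H_1H_2)^2/c$ is weaker than the conjectured $(H_1H_2)^3/c^3$ except in special ranges. So there is no proof in the paper to compare against, and your text, which you honestly present as a strategy rather than a proof, does not close the gap either. Your framing via signed continuants and the reduction to the system $K_{q-1}(x_1,\ldots,x_{q-1}) \equiv K_{q-1}(x_2,\ldots,x_q) \equiv 0$, $K_q \equiv \gamma \pmod{c}$ is sound (note that for composite $c$ the right-hand side is any $\gamma$ with $\gamma^2 \equiv 1 \pmod c$, not just $\pm 1$, though this only costs $c^{o(1)}$), and your identification of the obstruction — controlling the fibers of the continuant map over short boxes, where transversality modulo $p$ gives only an average-case statement — is exactly where the open content lies.

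Two points where your sketch is more optimistic than the evidence in the paper warrants. First, the dichotomy \emph{generic stratum versus degenerate strata corresponding to the integer solutions (i), (ii)} is too coarse: already in the paper's $q=6$ argument the decisive case analysis is not about vanishing $h_i$ but about the stratification by $d = (1-h_2h_3, c)$, i.e.\ configurations where subexpressions are congruent to special values only modulo a divisor of $c$; these intermediate strata are neither generic nor integer solutions, and they are what force the loss from $(H_1H_2)^3/c^3$ to $(H_1H_2)^2/c$. Any proof for $q \in \{8,10,12\}$ would have to handle an analogous (and larger) family of such partial-congruence strata, not merely "enumerate cyclic variants of (i) and (ii)". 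Second, your closing claim that for $q$ large in terms of $\tfrac{\log c}{\log H_1}$ the conjecture \emph{follows} from Helfgott/Bourgain--Gamburd-type growth is stated in the paper only for prime $c$ (via Shkredov's Lemma 53 and Theorem 50); for general composite $c$ the paper says only that this regime "can be attacked" by expansion methods, and the expander machinery for $\SL_2(\Z/c\Z)$ with arbitrary composite $c$ is a nontrivial additional input you would need to supply. In short: a reasonable plan, consistent with the paper's partial results and remarks, but the generic-stratum count over short boxes at fixed small even $q$, together with the divisor-stratified degenerate analysis, remains unproved — which is precisely why the paper leaves this as a conjecture.
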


\begin{remark}
It is straightforward to prove \cref{conj:counting} when $q \in \{2, 4\}$; see \cref{lem:counting-4}.
The corresponding lower bound (without the $c^{o(1)}$ factor) can be established unconditionally; see \cref{lem:counting-lb}.
\end{remark}

\begin{remark}
When $c$ is large enough in terms of $H_1, H_2, q$ (so in particular, the first term in \cref{eq:conj-counting} dominates), \cref{conj:counting} becomes a statement about $\PSL_2(\Z)$, which can be established; see e.g. \cite[Lemma 30]{shkredov2021modular}. When $q$ is large enough in terms of $H_1, H_2, c$ (so in particular, the second term in \cref{eq:conj-counting} dominates), \cref{conj:counting} can be attacked using $L^2$-flattening methods; see \cite[Lemma 53 and Theorem 50]{shkredov2018asymptotic} for the case when $c$ is prime. However, note that \cref{eq:conj-counting} saves at best $c^3$ over the trivial bound of $(H_1 H_2)^{q/2}$, and this saving becomes $c^{3/q}$ in our final bounds; therefore, using a large value of $q$ ultimately produces a quantitatively-weak power saving. On the other hand, if $q$ is too small, then combining \cref{conj:counting} with \cref{prop:fourier-to-counting} gives information about a small moment of singular values, which produces a weak bound for the top singular value.

Because of this, \cref{conj:counting} is most relevant in the median range when $q \asymp 1$, say $q \in \{6, 8, 10\}$, and $H_1, H_2 \in [\sqrt{c}, c]$. It seems very difficult to fully establish \cref{conj:counting} in these cases, but we can nevertheless make some partial progress towards it when $q = 6$. When $c$ is prime, the cases $q \in \{4, 6\}$ are also related to some computations of Shkredov \cite[Lemma 15]{shkredov2021modular}.
\end{remark}

\begin{proposition}[Combinatorial count for $q = 6$] \label{prop:counting-6}
Let $c \in \Z_+$, $a_1, a_2 \in (\Z/c\Z)^\times$, and $1 \le H_1 \le H_2 \ll c$. The number of solutions $(h_1, \ldots, h_6) \in \Z^6$ to \cref{eq:PSL2-eqn} with $q = 6$ is at most
\begin{equation} \label{eq:counting-6}
    \ll c^{o(1)} \left(H_2^2 + \frac{(H_1 H_2)^2}{c} \right).
\end{equation}
\end{proposition}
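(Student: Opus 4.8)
The plan is to translate the equation in $\PSL_2(\Z/c\Z)$ with $q = 6$ into an explicit system of congruences in the six entries $h_1, \ldots, h_6$, then count solutions by casework on which of the $h_i$ vanish. First I would write out the matrix product $T^{a_1h_1}ST^{a_2h_2}ST^{a_1h_3}ST^{a_2h_4}ST^{a_1h_5}ST^{a_2h_6}S$; as in the outline in \cref{subsec:outline-counting}, moving three of the factors to the other side (e.g.\ rewriting as $T^{a_1h_1}ST^{a_2h_2}ST^{a_1h_3}S = (T^{a_2h_6}S)^{-1}(T^{a_1h_5}S)^{-1}(T^{a_2h_4}S)^{-1}$) gives, up to a scalar $\gamma$ with $\gamma^2 = 1$ (and there are $c^{o(1)}$ such $\gamma$ by Hensel and the divisor bound, so we may fix one), a system of the shape
\[
    \begin{cases}
    1 - a_1a_2 h_2 h_3 \equiv \gamma(1 - a_1a_2 h_5 h_6) \\
    a_1 h_1(1 - a_1a_2 h_2 h_3) + a_1 h_3 \equiv \pm a_1 h_5 \\
    a_2 h_4(1 - a_1a_2 h_2 h_3) + a_2 h_2 \equiv \pm a_2 h_6
    \end{cases}
    \pmod{c}.
\]
(The precise form of the $\pm$ signs and of the first congruence depends on $\gamma$; I would record them carefully but they don't affect the counting heuristic.) The trivial count over the boxes is $(H_1H_2)^3$, and each of the three congruences should generically cut this down by a factor of $c$, landing on $(H_1H_2)^3/c^3$; but one must control the degenerate strata where this heuristic fails.

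Next I would run the casework. The main split is on whether the "pivot" quantity $u := 1 - a_1a_2h_2h_3$ (and its mirror $1 - a_1a_2 h_5 h_6$) is zero or a unit mod $c$, and separately on whether $h_2$, $h_3$ (resp.\ $h_5$, $h_6$) vanish. In the generic case where $u$ is a unit: the first congruence fixes $h_5h_6 \pmod c$ in terms of $h_2h_3$, and then given $h_2, h_3$ (hence $u$) the second and third congruences determine $h_5 \pmod c$ and $h_6 \pmod c$ linearly from $h_1$ and $h_4$ respectively. So we get $\ll c^{o(1)}$ choices of the pair $(h_2,h_3)$ for each admissible value of $h_2h_3 \pmod c$ (divisor bound), of which there are $\ll 1 + \tfrac{H_1H_2}{c}$; then $h_5h_6 \pmod c$ is fixed, contributing $\ll c^{o(1)}(1 + \tfrac{H_1H_2}{c})$ pairs $(h_5, h_6)$; and $h_1, h_4$ range freely over boxes of size $H_1, H_2$, after which $h_5, h_6$ are pinned mod $c$ — but wait, $h_5, h_6$ are already counted, so instead I would think of it as: choose $(h_2,h_3)$ and $(h_5,h_6)$ compatibly ($\ll c^{o(1)}(1+\tfrac{H_1H_2}{c})^2$ ways but actually linked, giving $\ll c^{o(1)}(1+\tfrac{H_1H_2}{c}) \cdot c^{o(1)}$ after the linking congruence), then $h_1$ determines $h_5 \bmod c$ so $h_1$ is free ($\ll H_1$) and must be consistent, etc. The bookkeeping needs care, but the upshot is a contribution $\ll c^{o(1)}\big(H_2^2 + \tfrac{(H_1H_2)^2}{c} + \tfrac{(H_1H_2)^3}{c^3}\big)$, and since $H_1 \le H_2 \ll c$ the last term is dominated by the first, matching \cref{eq:counting-6}. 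The degenerate cases — some $h_i = 0$, or $u \equiv 0 \pmod{c'}$ for a divisor $c' \mid c$ — each collapse a variable's worth of freedom into a congruence with smaller modulus; the worst of these are the "integer solution" families from items $(i)$, $(ii)$ preceding \cref{lem:counting-lb}, e.g.\ $h_1 = h_5 = 0$ forcing $T^{a_2h_2}ST^{a_1h_3}ST^{a_2h_4} = T^{a_2h_6}S$-type relations, which contribute $\asymp H_2^2$ (matching the first term of \cref{eq:counting-6}), and I would check that no stratum beats this.

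The main obstacle I anticipate is the case analysis around $u := 1 - a_1a_2h_2h_3$ when it is a \emph{zero divisor but not zero} modulo $c$ — i.e.\ $u \equiv 0 \pmod{c_1}$ and $u$ a unit mod $c_2$ for a nontrivial factorization $c = c_1c_2$. There the second and third congruences only pin down $h_5, h_6$ modulo $c_2$ rather than $c$, so one gains a factor $c_1$ worth of extra solutions in those variables; this must be compensated by the fact that $h_2 h_3 \equiv (a_1a_2)^{-1} \pmod{c_1}$ is itself a restrictive congruence, reducing the count of $(h_2, h_3)$ pairs by a compensating factor. Balancing these two effects across all factorizations $c = c_1 c_2$ — and simultaneously tracking the mirror constraint on $(h_5, h_6)$ and the first congruence relating the two pivots — is where the argument becomes genuinely delicate, and is presumably why the paper isolates this as a standalone proposition rather than folding it into \cref{lem:counting-4}. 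I would handle it by summing over $c_1 \mid c$, bounding the $c_1$-part and $c_2$-part of the count separately using the divisor bound and the elementary estimate $\#\{(x,y) : |x| \le X, |y| \le Y, xy \equiv a \pmod m\} \ll m^{o(1)}(1 + \tfrac{XY}{m})$, and checking that the worst factorization still yields \cref{eq:counting-6}.
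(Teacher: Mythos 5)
Your skeleton coincides with the paper's: reduce \cref{eq:PSL2-eqn} to the explicit system of three congruences (the paper's \cref{eq:system-cong}), fix the scalar $\gamma$ at a $c^{o(1)}$ cost, and split according to vanishing $h_i$ and according to how much of $c$ divides the pivot $u = 1 - a_1a_2h_2h_3$ (your $c_1$ is the paper's $d = (1-h_2h_3,c)$). The problem is that the decisive step is exactly the one you defer ("checking that the worst factorization still yields \cref{eq:counting-6}"), and the accounting you sketch for it does not suffice. Concretely: if, for a given $c_1$, you count $(h_5,h_6)$ only through the product constraint $h_5h_6 \equiv 1 \pmod{c_1}$ (giving $\ll c^{o(1)}(1+\tfrac{H_1H_2}{c_1})$ pairs via the hyperbola estimate), count $(h_2,h_3)$ only through the product congruence from the first line (giving $\ll c^{o(1)}(1+\tfrac{H_1H_2}{c})$ pairs once $h_5h_6$ is chosen), and let $h_1,h_4$ absorb the lost modulus, being pinned only mod $c/c_1$ (giving $(1+\tfrac{H_1c_1}{c})(1+\tfrac{H_2c_1}{c})$ choices), then for $H_1 = H_2 \asymp c$ and any divisor $c_1$ of size a positive power of $c$ (but $c_1 < c$, so squarely inside your "zero divisor but not zero" case) the product is $\asymp c^3 c_1$, whereas \cref{eq:counting-6} allows only $\asymp c^3$. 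What rescues the count is the \emph{entrywise} information in the second and third congruences modulo $c_1$: when $u \equiv 0 \pmod{c_1}$ they force the linking congruences $h_3 \equiv \gamma h_5$ and $h_2 \equiv \gamma h_6 \pmod{c_1}$, which tie the two pairs together and are invisible to the product constraints you list. The paper's Case 2 uses precisely this: after choosing $(h_5,h_6)$, the pair $(h_2,h_3)$ is counted as the \emph{minimum} of two methods (via the product congruence mod $c$, or via $h_2 \equiv \gamma h_6 \pmod d$ with $h_3$ then pinned mod $c/(h_2,c)$), and the resulting four-factor expression is maximized over $d \mid c$ by observing each term is monotone in $d$ and checking the endpoints $d=1$, $d=c$. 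Without that extra ingredient your plan fails at the large-$c_1$, large-$H$ corner, so the gap is genuine even though the overall route is the right one.

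Two smaller points. First, your description of the mechanism is off: when $u$ is a zero divisor, the second and third congruences do \emph{not} merely "pin down $h_5,h_6$ modulo $c_2$" --- given $h_1,h_3$ (resp.\ $h_4,h_2$), $h_5$ (resp.\ $h_6$) is determined mod $c$; the factor-$c_1$ loss appears when solving those congruences for $h_1,h_4$, equivalently as the solvability conditions $h_3 \equiv \gamma h_5$, $h_2 \equiv \gamma h_6 \pmod{c_1}$ noted above. Second, in your generic-case summary, $(H_1H_2)^3/c^3$ is not dominated by $H_2^2$ in general (take $H_1 = H_2 \asymp c$); it is dominated by $(H_1H_2)^2/c$ because $H_1H_2 \ll c^2$, so the stated conclusion survives, but the attribution is wrong. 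The Case 1 strata (some $h_i = 0$) you treat only by example; the paper handles them by a cyclic shift to reduce to $h_2 = 0$ (at the cost of possibly swapping $H_1, H_2$) and three short subcases, which is routine but should be written out.
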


\begin{remark}
\cref{conj:counting} would replace the second term in \cref{eq:counting-6} with $\tfrac{(H_1 H_2)^3}{c^3}$. In particular, \cref{prop:counting-6} establishes \cref{conj:counting} when $q = 6$ and either $H_1^2 \ll c$ or $H_1, H_2 \asymp c$.
\end{remark}

\begin{proof}[Proof of \cref{prop:counting-6}]
To simplify the exposition, we focus on the case $a_1 = a_2 = 1$. The proof is almost completely unchanged when $a_1, a_2 \in (\Z/c\Z)^\times$ are arbitrary.
We may then write the equation in \cref{eq:PSL2-eqn} (with $q = 6$) as
\[
    T^{h_1} S T^{h_2} S T^{h_3} S T^{h_4} S = S T^{-h_6} S T^{-h_5} \text{ in } \PSL_2(\Z/c\Z).
\]
In light of \cref{eq:def-PSL2,eq:centers},
a short computation brings this to the entry-wise congruence
\[
    \begin{pmatrix} 
    h_1 h_2 h_3 h_4 - h_1 h_4 - h_3 h_4 - h_1 h_2 + 1
    & 
    - h_1 h_2 h_3 + h_1 + h_3
    \\
    h_2 h_3 h_4 - h_2 - h_4 
    & 
    - h_2 h_3 + 1
    \end{pmatrix}
    \equiv
    \gamma
    \begin{pmatrix} -1 & h_5 \\ - h_6 & h_5 h_6 - 1 \end{pmatrix}
    \pmod{c}
\]
for some $\gamma \in \Z/c\Z$ with $\gamma^2 = 1$. Since there are $c^{o(1)}$ possible values of $\gamma$ by \cref{eq:center-bound}, we may as well regard $\gamma$ as fixed. This implies the system of three congruences
\begin{equation} \label{eq:system-cong}
    \begin{cases}
        1 - h_2 h_3 \equiv \gamma (h_5 h_6 - 1) \pmod{c}, \\
        h_1(1 - h_2 h_3) + h_3 \equiv \gamma h_5 \pmod{c}, \\
        h_4(1 - h_2 h_3) + h_2 \equiv \gamma h_6 \pmod{c}.
    \end{cases}
\end{equation}
Our argument now requires some casework.

\textbf{Case 1:} One has $h_j = 0$ for some $j \in \{1, \ldots, 6\}$. Since the original equation can also be written as $T^{h_{j-1}} S T^{h_{j}} S T^{h_{j+1}} S T^{h_{j+2}} S T^{h_{j+3}} S T^{h_{j+4}} S = I$ in $\PSL_2(\Z/c\Z)$ (viewing indices modulo $6$), we may assume without loss of generality that $j = 2$, up to potentially swapping $H_1$ and $H_2$ in the final bound (so we momentarily forget that $H_1 \le H_2$). So let us say $h_2 = 0$, which reduces \cref{eq:system-cong} to
\begin{equation} \label{eq:system-cong-case-1}
    \begin{cases}
        1 \equiv \gamma (h_5 h_6 - 1) \pmod{c}, \\
        h_1 + h_3 \equiv \gamma h_5 \pmod{c}, \\
        h_4 \equiv \gamma h_6 \pmod{c}.
    \end{cases}
\end{equation}
\emph{Subcase 1.1:} One has $h_5 = 0$. Then for any values of $h_1$ and $h_6$, the system in \cref{eq:system-cong-case-1} leaves only $O(1)$ possibilities for $h_3, h_4$ (since $H_1, H_2 \ll c$). This gives $O(H_1 H_2)$ solutions.

\emph{Subcase 1.2:} One has $h_6 = 0$. Then for any values of $h_1$ and $h_5$, the system in \cref{eq:system-cong-case-1} leaves only $O(1)$ possibilities for $h_3, h_4$. This gives $O(H_1^2)$ solutions.

\emph{Subcase 1.3:} One has $h_5 h_6 \neq 0$. Then the first congruence in \cref{eq:system-cong-case-1} fixes $h_5h_6 \pmod{c}$, leaving $1 + \tfrac{H_1H_2}{c}$ possibilities for the nonzero integer $h_5 h_6$, each of which gives $O(c^{o(1)})$ possible values for $h_5, h_6$ by the divisor bound. Once $h_5$ and $h_6$ are fixed, each value of $h_1$ produces $O(1)$ final solutions. This gives $(1 + \tfrac{H_1H_2}{c})H_1$ solutions.

From Case 1, we obtain a total number of solutions of
\[
    \ll c^{o(1)} \left(H_1^2 + H_2^2 + \left(1 + \frac{H_1 H_2}{c}\right)(H_1 + H_2)\right),
\]
which is $O(c^{o(1)} H_2^2)$ once we remember that $H_1 \le H_2$ and $H_1 \ll c$. This is acceptable in \cref{eq:counting-6}.

\textbf{Case 2:} One has $h_j \neq 0$ for all $j \in \{1, \ldots, 6\}$. We fix $d := (1 - h_2h_3, c) = (h_5h_6 - 1, c)$ up to an acceptable $O(c^{o(1)})$ loss; note that $(h_2, d) = 1$. Since $h_5 h_6 \equiv 1 \pmod{d}$, there are $O(1 + \frac{H_1 H_2}{d})$ ways to pick the nonzero integer $h_5 h_6$, each of which gives $O(c^{o(1)})$ ways to pick $h_5, h_6$.

Once $h_5, h_6$ are fixed, we pick $h_2, h_3$ subject to the system
\begin{equation} \label{eq:system-cong-case-2}
\begin{cases}
    1 - h_2 h_3 \equiv \gamma(h_5h_6 - 1) \pmod{c}, \\ 
    h_2 \equiv \gamma h_6 =: r \pmod{d},
\end{cases}
\end{equation}
which follows from \cref{eq:system-cong}; note that $r$ is also fixed at this point. We can do this in two ways:
\begin{itemize}
    \item Pick the nonzero integer $h_2 h_3$ subject to its residue mod $c$ (due to the first congruence in \cref{eq:system-cong-case-2}) in $O(1 + \tfrac{H_1H_2}{c})$ ways, and then $h_2, h_3$ in $O(c^{o(1)})$ ways by the divisor bound.
    \item For each choice of $h_2$ with $|h_2| \le H_2$ and $h_2 \equiv r\pmod{d}$, pick $h_3$ subject to its residue mod $\tfrac{c}{(h_2, c)}$ (again, due to the first congruence in \cref{eq:system-cong-case-2}) in $O(1 + \frac{H_1}{c} (h_2, c))$ ways.
\end{itemize}

Finally, once $h_5, h_6, h_2, h_3$ are fixed, \cref{eq:system-cong} determines the residues of $h_1$ and $h_4$ modulo $\tfrac{c}{d}$, so there are $(1 + \tfrac{H_1}{c}d)(1 + \tfrac{H_2}{c}d)$ choices of $h_1, h_4$. From Case 2, we obtain a total number of solutions of
\begin{equation} \label{eq:case-2-final-count}
\begin{aligned}
    \ll c^{o(1)} \max_{d \mid c} \underbrace{\left(1 + \frac{H_1 H_2}{d}\right)}_{\text{from picking } h_5, h_6} \underbrace{\min\Bigg(1 + \frac{H_1H_2}{c}, \max_{r \in (\Z/d\Z)^\times} 
    \sum_{\substack{|h_2| \le H_2 \\ h_2 \equiv r \pmod{d}}}
    \left(1 + \frac{H_1}{c}(h_2, c) \right) \Bigg)}_{\text{from picking } h_2, h_3}
    \\ 
    \times \underbrace{\left(1 + \frac{H_1}{c}d\right)}_{\text{from picking } h_1}\
    \underbrace{\left(1 + \frac{H_2}{c}d\right)}_{\text{from picking } h_4}.
\end{aligned}
\end{equation}
To bound the sum over $h_2$, we note that $(h_2, d) = 1$ implies $(g, d) = 1$ for any $g \mid h_2$, and we write
\[
\begin{aligned}
    \sum_{\substack{|h_2| \le H_2 \\ h_2 \equiv r \pmod{d}}} (h_2, c)
    &\le 
    \sum_{\substack{g \mid c \\ (g, d) = 1}} g \sum_{\substack{|h_2| \le H_2 \\ h_2 \equiv r \pmod{d} \\ h_2 \equiv 0 \pmod{g}}} 1
    \\
    &=
    \sum_{\substack{g \mid \frac{c}{d} \\ (g, d) = 1}} g \sum_{\substack{|h_2'| \le \frac{H_2}{g} \\ h_2' \equiv \bar{g}r \pmod{d}}} 1
    \ll 
    \sum_{\substack{|g| \le \frac{c}{d} \\ (g, d) = 1}} g \left(1 + \frac{H_2}{gd}\right)
    \ll 
    c^{o(1)} \left(\frac{c}{d} + \frac{H_2}{d}\right)
    \ll 
    \frac{c^{1+o(1)}}{d}.
\end{aligned}
\]
Plugging this into \cref{eq:case-2-final-count} gives a total count of
\[
    \ll c^{o(1)} \max_{d \mid c}
    \left(1 + \frac{H_1 H_2}{d}\right) \left(1 + \frac{H_2}{c} d\right) \left(1 + \frac{H_1}{c}d\right)
    \min\left(1 + \frac{H_1 H_2}{c}, 1 + \frac{H_2}{d} + \frac{H_1}{c} \cdot \frac{c}{d}\right)
\]
Since $H_1 \le H_2$, the final term of $\tfrac{H_1}{c} \cdot \tfrac{c}{d} = \tfrac{H_1}{d}$ can be omitted. The bound above now becomes
\[
\begin{aligned}
    &\ll c^{o(1)} \max_{d \mid c}
    \left(1 + \frac{H_1 H_2}{d}\right) \left(1 + \frac{H_2}{c}d\right) \max\left(\frac{H_1}{c} d, 1\right)
    \left(1 + \frac{H_2}{d}\min\left(\frac{H_1}{c}d, 1\right)\right)
    \\
    &= c^{o(1)} \max_{d \mid c}
    \left(1 + \frac{H_1 H_2}{d}\right) \left(1 + \frac{H_2}{c}d\right)
    \left(\max\left(\frac{H_1}{c} d, 1\right) + \frac{H_2}{d} \left(\frac{H_1}{c}d \cdot 1\right)\right)
    \\
    &\le
    c^{o(1)} \max_{d \mid c}
    \left(1 + \frac{H_1 H_2}{d}\right) \left(1 + \frac{H_2}{c}d\right)
    \left(1 + \frac{H_1}{c} d + \frac{H_1 H_2}{c}\right).
\end{aligned}
\]
After expanding the expression inside the maximum, each term is either strictly increasing, constant, or strictly decreasing in $d \in [1, c]$, so each term is maximized when $d = 1$ or $d = c$. It follows that we can bound the maximum over $d \mid c$, up to a constant, by looking only at the extreme points $d = 1$ and $d = c$. This gives a total count of 
\[
\begin{aligned}
    \ll c^{o(1)}
    \max\left( 
    H_1 H_2 \left(1 + \frac{H_2}{c}\right)
    \left(1 + \frac{H_1}{c} + \frac{H_1 H_2}{c}\right),
    \left(1 + \frac{H_1 H_2}{c}\right) H_2
    \left(H_1 + \frac{H_1 H_2}{c}\right)
    \right)
    \\
    \ll c^{o(1)}
    \max\left( 
    H_1 H_2
    \left(1 + \frac{H_1 H_2}{c}\right),
    \left(1 + \frac{H_1 H_2}{c}\right) H_2 H_1
    \right),
\end{aligned}
\]
where, to reach the last line, we used that $H_1, H_2 \ll c$. This establishes the desired bound.
\end{proof}

We also remove the restriction that $H_1, H_2 \ll c$ up to some additional factors.

\begin{corollary} \label{cor:counting-remove-ub-H12}
Let $c \in \Z_+$, $a_1, a_2 \in (\Z/c\Z)^\times$, and $1 \le H_1 \le H_2$. Then the number of solutions to \cref{eq:PSL2-eqn} with $q = 6$ is at most
\begin{equation} \label{eq:counting-6-refined}
    \ll c^{o(1)} \left( 1 + \frac{H_1 H_2}{c} + \frac{H_1^3 H_2}{c^3} \right) H_2^2.
\end{equation}
\end{corollary}

\begin{proof}
Since the equation in \cref{eq:PSL2-eqn} only depends on the residues of $h_1, \ldots, h_q \pmod{c}$, we may as well count solutions to the system
\begin{equation} \label{eq:PSL2-eqn-mod}
\begin{cases} 
    T^{a_1h_1'} S T^{a_2h_2'} S \cdots T^{a_1h_{q-1}'} S T^{a_2h_q'} S = I
    \text{ in } 
    \PSL_2(\Z/c\Z), 
    \\
    |h_i'| \le \min(H_j, c), \text{ for all } i, j \text{ with } i \equiv j \pmod{2},
\end{cases}
\end{equation}
and multiply the final count by a factor of $\ll_q (1 + \tfrac{H_1}{c})^{q/2}(1 + \tfrac{H_2}{c})^{q/2}$ (indeed, each solution $(h_1', \ldots, h_q')$ to \cref{eq:PSL2-eqn-mod} induces at most this many solutions $(h_1, \ldots, h_q)$ to \cref{eq:PSL2-eqn} with the same residues modulo $c$, and all solutions to \cref{eq:PSL2-eqn} can be obtained this way).

Taking $q = 6$, we apply \cref{prop:counting-6} for $\min(H_1, c)$ and $\min(H_2, c)$ to obtain a total count of
\[
\begin{aligned}
    &\ll c^{o(1)} \left(1 + \frac{H_1}{c}\right)^3 \left(1 + \frac{H_2}{c}\right)^3 \left(\min(H_2, c)^2 + \frac{\min(H_1,c)^2 \min(H_2,c)^2}{c} \right)
    \\
    &\ll c^{o(1)} \left(1 + \frac{H_1^3}{c^3}\right) \left(1 + \frac{H_2}{c}\right) H_2^2 + 
    c^{o(1)}\left(1 + \frac{H_1}{c}\right) \left(1 + \frac{H_2}{c}\right) \frac{H_1^2 H_2^2}{c}
    \\
    &\ll c^{o(1)} \left(1 + \frac{H_2}{c} + \frac{H_1^3}{c^3} + \frac{H_1^3 H_2}{c^4}\right)H_2^2 + 
    c^{o(1)} \left(1 + \frac{H_2}{c} + \frac{H_1 H_2}{c^2}\right) \frac{H_1^2 H_2^2}{c}.
\end{aligned}
\]
We note that the third and fourth terms in the first parenthesis above can be ignored: their contribution to the final bound is $\tfrac{H_1^3 H_2^2}{c^3} + \tfrac{H_1^3 H_2^3}{c^4}$, which is superseded by the contribution of $\tfrac{H_1^3 H_2^3}{c^3}$ from the third term in the second parenthesis. This gives a total count of
\begin{equation} \label{eq:counting-6-refined-2}
    \ll c^{o(1)} \left( 1 + \frac{H_2}{c} + \frac{H_1^2}{c} + \frac{H_1^2 H_2}{c^2} + \frac{H_1^3 H_2}{c^3} \right) H_2^2.
\end{equation}
This is bounded by \cref{eq:counting-6-refined}, noting that $\tfrac{H_1^2H_2}{c^2}$ is the geometric mean of $\tfrac{H_1H_2}{c}$ and $\tfrac{H_1^3 H_2}{c^3}$. 
\end{proof}

\section{Bilinear forms with Kloosterman sums} \label{sec:bil-forms}

We now combine the work in \cref{sec:rep-and-kloost,sec:amplif,sec:counting}, to deduce our main results from \cref{thm:bilinear-forms-composite,thm:bilinear-forms-general}.

\subsection{Composite moduli} \label{subsec:comp-moduli}
Here we prove a generalization of \cref{thm:bilinear-forms-composite}, which allows for larger values of $M, N$. We state our upper bound in two ways, to facilitate comparison with \cref{eq:trivial-bound}.

\begin{theorem} \label{thm:MN-bilinear-forms-composite}
Let $c = dd'e$ for some $d, d', e \in \Z_+$ with $d' \mid d$ and $(d, e) = 1$, and $f \le \sqrt{cd}$ be the largest integer with $f^2 \mid cd$. Let $\mI, \mJ \subset \Z$ be intervals of lengths $|\mI| = M$, $|\mJ| = N$, with\footnote{The assumption that $N \le M$ is only included to shorten the statement of \cref{thm:MN-bilinear-forms-composite}; one can of course swap $m$ and $n$ in the bilinear sum, up to swapping $M$ and $N$ in the upper bound.} $1 \le N \le M \le c$. Then for any complex sequences $(\alpha_m)_{m \in \mI}$, $(\beta_n)_{n \in \mJ}$ and $a \in (\Z/c\Z)^\times$, one has
\[
\begin{aligned}
    \mathop{\sum\sum}_{\substack{m \in \mI, n \in \mJ \\ (m, n, c) = 1}} \alpha_m \beta_n S(am, n; c) 
    &\ll
    \|\alpha\| \|\beta\| c^{1+o(1)}
    \left(\frac{dM^3 N}{c^3} + \frac{fM^2}{c^2} + \frac{f}{d^2} \right)^{\frac{1}{6}}
    \\
    &= 
    \|\alpha\| \|\beta\| c^{o(1)} \sqrt{MNc} \left(\frac{d}{N^2} + \frac{fc}{M N^3} + \frac{f c^3}{d^2 M^3 N^3} \right)^{\frac{1}{6}}.
\end{aligned}
\]
\end{theorem}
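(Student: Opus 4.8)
To prove \cref{thm:MN-bilinear-forms-composite}, the plan is to run the three-step machinery assembled in \cref{sec:rep-and-kloost,sec:amplif,sec:counting} with the \emph{sixth} moment of singular values (i.e.\ $q=6$ throughout), and then to optimize over the auxiliary divisor $\tilde d$ that appears in \cref{prop:fourier-to-counting}.

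\textbf{Step 1 (reduction to a Fourier coefficient).} By the definition of the operator norm in \cref{eq:operator-norm} and of the matrix $K_{c,a}^{\mI,\mJ}$ in \cref{cor:kloosterman-ap}, the bilinear sum is $\le \|\alpha\|\,\|\beta\|\,\|K_{c,a}^{\mI,\mJ}\|$. Fix $\eps>0$ arbitrarily small and set $H_1:=c^{1+\eps}M^{-1}$, $H_2:=c^{1+\eps}N^{-1}$; since $N\le M\le c$ we have $1\le H_1\le H_2$. Then \cref{cor:kloosterman-ap} gives $\|K_{c,a}^{\mI,\mJ}\|\le c^{1+2\eps}\|\hat F_{c,a}^{H_1,H_2}(\rho_c^\circ)\|+O_\eps(c^{-100})$, with $F_{c,a}^{H_1,H_2}$ as in \cref{eq:func-H12}. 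Since $\|\hat F\|=\|\hat F\|_{S^\infty}\le\|\hat F\|_{S^6}$ for any operator, it suffices to bound $\|\hat F_{c,a}^{H_1,H_2}(\rho_c^\circ)\|_{S^6}$; the $O_\eps(c^{-100})$ term will be negligible because the final bound is always $\gg c^{2/3}$ (as $\tfrac{f}{d^2}\ge c^{-2}$).

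\textbf{Step 2 (amplification and counting).} Apply \cref{prop:fourier-to-counting} with $q=6$ and the given factorization $c=dd'e$; this bounds $\|\hat F_{c,a}^{H_1,H_2}(\rho_c^\circ)\|_{S^6}^6$ by $\tfrac{c^{o(1)}d}{(H_1H_2)^3}\max_{\tilde d,\tilde f}\tilde f\cdot\mathcal N(\tilde d)$, where the maximum is over $d\mid\tilde d\mid c$ and integers $\tilde f$ with $\tilde f^2\mid c\tilde d$, and $\mathcal N(\tilde d)$ counts $(h_1,\dots,h_6)\in\Z^6$ with $|h_i|\le 2H_j$ ($i\equiv j\bmod 2$) solving the $\PSL_2(\Z/\tilde d\Z)$-equation \cref{eq:PSL2-eqn} with $q=6$, $a_1=\bar a$, $a_2=1$. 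Feeding this into \cref{cor:counting-remove-ub-H12} with modulus $\tilde d$ (rescaling $2H_j\leadsto H_j$ at the cost of a constant, and using $\tilde d^{o(1)}\le c^{o(1)}$) gives $\mathcal N(\tilde d)\ll c^{o(1)}\bigl(1+\tfrac{H_1H_2}{\tilde d}+\tfrac{H_1^3H_2}{\tilde d^3}\bigr)H_2^2$. Substituting $H_1,H_2$ and simplifying $\tfrac{d}{(H_1H_2)^3}H_2^2=c^{O(\eps)}\tfrac{dM^3N}{c^4}$, one arrives at
\[
\|\hat F_{c,a}^{H_1,H_2}(\rho_c^\circ)\|_{S^6}^6\ \ll\ c^{o(1)}\,\frac{dM^3N}{c^4}\max_{\tilde d,\tilde f}\ \tilde f\left(1+\frac{c^2}{MN\tilde d}+\frac{c^4}{M^3N\tilde d^3}\right).
\]

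\textbf{Step 3 (optimizing over $\tilde d,\tilde f$).} The arithmetic input is that $d\mid f$ and that for any $d\mid\tilde d\mid c$ with $\tilde f^2\mid c\tilde d$ one has $\tfrac{\tilde f}{\tilde d}\le\tfrac{f}{d}$ (indeed, prime by prime, $v_p(\tilde f)-v_p(\tilde d)\le\bigl\lfloor\tfrac{v_p(c)-v_p(\tilde d)}{2}\bigr\rfloor\le\bigl\lfloor\tfrac{v_p(c)-v_p(d)}{2}\bigr\rfloor=v_p(f)-v_p(d)$), and also $\tilde f\le\sqrt{c\tilde d}\le c$. Splitting the maximum over the three terms: the ``$1$'' term contributes $\tfrac{dM^3N}{c^4}\tilde f\le\tfrac{dM^3N}{c^3}$ via $\tilde f\le c$; the middle term contributes $\tfrac{dM^2\tilde f}{c^2\tilde d}\le\tfrac{fM^2}{c^2}$ via $\tilde f\le f\tilde d/d$; and the last term contributes $\tfrac{d\tilde f}{\tilde d^3}\le\tfrac{f}{\tilde d^2}\le\tfrac{f}{d^2}$ via $\tilde f\le f\tilde d/d$ and $\tilde d\ge d$. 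Hence $\|\hat F_{c,a}^{H_1,H_2}(\rho_c^\circ)\|_{S^6}^6\ll c^{o(1)}\bigl(\tfrac{dM^3N}{c^3}+\tfrac{fM^2}{c^2}+\tfrac{f}{d^2}\bigr)$. Combining with Steps 1--2, taking sixth roots, and letting $\eps\to0$ yields the first displayed bound in \cref{thm:MN-bilinear-forms-composite}; the second displayed bound follows by factoring $\sqrt{MNc}$ out of each of the three terms inside the parentheses. The main obstacle here — everything else being already proved in \cref{sec:rep-and-kloost,sec:amplif,sec:counting} — is precisely Step 3: one must control the loss factor $\tilde f$ from \cref{lem:perm-rep-pointwise-ub} sharply enough (through the divisibility $\tilde f\tilde d^{-1}\mid f d^{-1}$) that the maximum over $\tilde d$ is attained term by term at the extreme points $\tilde d=d$ or $\tilde d=c$, with no spurious loss from intermediate divisors.
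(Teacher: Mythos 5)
Your proposal is correct and follows essentially the same route as the paper's own proof: operator norm, then \cref{cor:kloosterman-ap}, then the sixth Schatten moment via \cref{prop:fourier-to-counting} with $q=6$, then \cref{cor:counting-remove-ub-H12}, and finally the optimization over $\tilde d,\tilde f$. The only (cosmetic) difference is that the paper packages your Step 3 into the single observation \cref{eq:max-claim}, namely $\max_{\tilde d,\tilde f}\tilde f/\tilde d^{\,k}=f/d^{k}$ for $k\ge 1$ and $\le c$ for $k=0$, which your term-by-term valuation argument ($v_p(\tilde f)-v_p(\tilde d)\le v_p(f)-v_p(d)$, $\tilde d\ge d$, $\tilde f\le c$) reproves in an equivalent way.
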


\begin{example} \label{ex:best-case-range}
Suppose $M \asymp N$ and let $\eps > 0$ be small.
For suitable factorizations of $c$, \cref{thm:MN-bilinear-forms-composite} can beat the Weil bound in \cref{eq:trivial-bound} for $N$ as small as $c^{2/5 + \eps}$ (this is attained, e.g., when $c = pq$ for distinct primes $p, q$ with $p \asymp q^{3/2}$), and it can beat the second bound in \cref{eq:trivial-bound} for $N$ as large as $c^{3/4 - \eps}$ (attained when $c$ has a small divisor $d$ such that $c/d$ is square-free).
\end{example}

\begin{remark} 
Additional savings are possible in \cref{thm:MN-bilinear-forms-composite} in the unbalanced range $M > N$. Firstly, the bound in \cref{eq:counting-6-refined} can be refined to \cref{eq:counting-6-refined-2}, but we omit this optimization for the sake of simplicity. Secondly, bounding the largest singular value of an $M \times N$ matrix by the sixth moment of its singular values (as we do) can be particularly lossy if $M > N$, since then the singular values often exhibit concentration near their maximum; one can try to amend this by subtracting a suitable main term from the sixth moment, as in \cite[Lemma 4.2]{guth2026new}.
\end{remark}

Before proving \cref{thm:MN-bilinear-forms-composite}, we need the following quick fact.

\begin{lemma} \label{lem:tilde-bound}
Let $c, d \in \Z_+$ with $d \mid c$, and $f$ be the maximal positive integer with $f^2 \mid cd$. For any nonnegative integer $k$, one has 
\[
    \max_{\substack{\tilde{d}, \tilde{f} \in \Z_+ \\ d \mid \tilde{d} \mid c \\ \tilde{f}^2 \mid c\tilde{d}}} \frac{\tilde{f}}{\tilde{d}^k} = 
    \begin{cases} 
    \frac{f}{d^k}, & k \ge 1, \\
    c, & k = 0.
    \end{cases}
\]
\end{lemma}
\begin{proof}
When one appends a prime $p$ to $\tilde{d}$, the numerator $\tilde{f}$ can increase by at most $p$, so the expression $\tilde{f}/\tilde{d}^k$ cannot increase if $k \ge 1$; the maximum is attained when $\tilde{d} = d$. On the other hand, if $k = 0$, then clearly $\tilde{f} \le \sqrt{c \cdot c} = c$, and the maximum is attained when $\tilde{d} = c$.
\end{proof}

\begin{proof}[Proof of \cref{thm:MN-bilinear-forms-composite}]
Let us write $c = c_1c_2$ where $c_1$ is square-free, $c_2$ is square-full, and $(c_1, c_2) = 1$. We first work with the modified bilinear form
\[
    \sum_{\substack{m \in \mI \\ n \in \mJ}}
    \alpha_m \beta_n S(am, n; c) \nu_{(m,n,c_1)} \one_{(m,n,c_2) = 1},
\]
where $\nu_g := \prod_{\text{prime }p \mid g} \frac{-1}{p^2-1}$ for $g \in \Z_+$, as in \cref{cor:kloosterman-ap}. We will remove these extraneous weights at the end of the proof. 

Let $\eps > 0$ and $H_1 := c^{1+\eps} M^{-1}$, $H_2 := c^{1+\eps} N^{-1}$. Since $M \ge N$, we have $H_1 \le H_2$. Let $q$ be an even positive integer.
By the characterization of operator norms from \cref{eq:operator-norm}, \cref{cor:kloosterman-ap} (using the notation from \cref{eq:kl-mat-ap,eq:func-H12}), the fact that $\|A\| \le \|A\|_{S^q}$ for any linear map $A$, and then \cref{prop:fourier-to-counting}, we have that
\begin{equation} \label{eq:kloosterman-first-steps}
\begin{aligned}
    \left\vert \sum_{\substack{m \in \mI \\ n \in \mJ}}
    \alpha_m \beta_n S(am, n; c) \nu_{(m,n,c_1)} \one_{(m,n,c_2) = 1} \right\vert
    &\le 
    \|\alpha\| \|\beta\| \|K_{c,a}^{\mI,\mJ}\|
    \\
    &\le 
    \|\alpha\| \|\beta\| \left(c^{1+2\eps} \|\hat F_{c,a}^{H_1,H_2}(\rho_c^\circ)\| + O_\eps(c^{-100})\right)
    \\ 
    &\ll_\eps 
    \|\alpha\| \|\beta\| \left(c^{1+3\eps} \frac{d^{1/q}}{(H_1 H_2)^{1/2}} \mathscr{S}^{1/q} + O_\eps(c^{-100})\right),
\end{aligned}
\end{equation}
where 
\[
    \mathscr{S} := \max_{\substack{\tilde{d}, \tilde{f} \in \Z_+ \\ d \mid \tilde{d} \mid c \\ \tilde f^2 \mid c\tilde{d}}} \tilde f \sum_{\substack{h_1, \ldots, h_q \in \Z \\ |h_i| \le 2H_j \\ \forall i \equiv j \pmod{2}}} 
    \one_{T^{\bar{a}h_1}ST^{h_2}S \cdots T^{\bar{a}h_{q-1}}ST^{h_q}S = I \text{ in } \PSL_2(\Z/\tilde{d}\Z)}.
\]
Note that the inner sum is a count of solutions to an equation of type \cref{eq:PSL2-eqn}, with $c$ replaced by $\tilde{d}$. 

Now set $q = 6$. By \cref{lem:tilde-bound,cor:counting-remove-ub-H12}, we obtain that
\[
    \mathscr{S} 
    \ll_\eps 
    c^\eps \max_{\substack{\tilde{d}, \tilde{f} \in \Z_+ \\ d \mid \tilde{d} \mid c \\ \tilde f^2 \mid c\tilde{d}}} \tilde{f} \left( 1 + \frac{H_1 H_2}{\tilde{d}} + \frac{H_1^3 H_2}{\tilde{d}^3} \right) H_2^2
    \le
    c^\eps \left(c + \frac{fH_1 H_2}{d} + \frac{fH_1^3 H_2}{d^3} \right) H_2^2.
\]
Plugging this into \cref{eq:kloosterman-first-steps}, we obtain 
\[
\begin{aligned}
    \mathop{\sum\sum}_{\substack{m \in \mI, n \in \mJ \\ (m, n, c) = 1}} \alpha_m \beta_n S(am, n; c) &\ll_\eps 
    \|\alpha\| \|\beta\| c^{1+4\eps} \frac{d^{1/6}}{(H_1 H_2)^{1/2}} \left(c H_2^2 + \frac{fH_1 H_2^3}{d} + \frac{fH_1^3 H_2^3}{d^3} \right)^{1/6}.
    \\
    &=
    \|\alpha\| \|\beta\| c^{1+4\eps} \left(\frac{cd}{H_1^3 H_2} + \frac{f}{H_1^2} + \frac{f}{d^2} \right)^{1/6}.
\end{aligned}
\]
Recalling that $H_1 = c^{1+\eps} M^{-1}$, $H_2 = c^{1+\eps} N^{-1}$, and that $\eps > 0$ was arbitrary, we conclude that
\begin{equation} \label{eq:almost-final-bound}
    \sum_{\substack{m \in \mI \\ n \in \mJ}}
    \alpha_m \beta_n S(am, n; c) \nu_{(m,n,c_1)} \one_{(m,n,c_2) = 1}
     \ll 
    \|\alpha\| \|\beta\| c^{1+o(1)}
    \left(\frac{dM^3 N}{c^3} + \frac{fM^2}{c^2} + \frac{f}{d^2} \right)^{\frac{1}{6}}.
\end{equation}
This is almost in the required form, except that we would like to replace the weights $\nu_g = \prod_{\text{prime }p \mid g} \frac{-1}{p^2-1}$ with $\one_{g = 1}$. To this end, for any square-free $n \in \Z_+$ we expand
\[
    \one_{n = 1} = \sum_{g \mid n} \nu_{n/g}\, f(g),  
    \qquad\quad 
    f(g) := \prod_{\text{prime }p \mid g} \frac{1}{p^2-1} \asymp g^{-2}.
\]
Indeed, by multiplicativity it suffices to verify this identity at primes, which is immediate. Using that $S(am, n; c) = \tfrac{\phi(c)}{\phi(c/g)} S(a\tfrac{m}{g}, \tfrac{n}{g}; \tfrac{c}{g})$, we may therefore write
\begin{equation} \label{eq:bil-kl-dec-g}
\begin{aligned}
    \mathop{\sum\sum}_{\substack{m \in \mI, n \in \mJ \\ (m, n, c) = 1}} \alpha_m \beta_n S(am, n; c)
    &=
    \sum_{\substack{m \in \mI \\ n \in \mJ}}
    \alpha_m \beta_n S(am, n; c)  \one_{(m,n,c_2) = 1}
    \sum_{g \mid (m,n,c_1)} f(g) \nu_{(\frac{m}{g},\frac{n}{g},\frac{c_1}{g})}
    \\
    &=
    \sum_{g \mid c_1} f(g) \frac{\phi(c)}{\phi(c/g)} 
    \mathop{\sum\sum}_{\substack{m \in \mI, n \in \mJ \\ g \mid (m, n)}}
    \alpha_m \beta_n S(a\tfrac{m}{g}, \tfrac{n}{g}; \tfrac{c}{g}) \nu_{(\frac{m}{g},\frac{n}{g},\frac{c_1}{g})} \one_{(\frac{m}{g},\frac{n}{g},c_2) = 1}
    \\
    &\ll 
    c^{o(1)}
    \max_{g \mid c_1} \frac{1}{g} \left\vert \mathop{\sum\sum}_{\substack{m \in \mI, n \in \mJ \\ g \mid (m, n)}}
    \alpha_m \beta_n S(a\tfrac{m}{g}, \tfrac{n}{g}; \tfrac{c}{g}) \nu_{(\frac{m}{g},\frac{n}{g},\frac{c_1}{g})} \one_{(\frac{m}{g},\frac{n}{g},c_2) = 1} \right\vert.
\end{aligned}
\end{equation}
The last sum is a bilinear form with Kloosterman sums with entries in the intervals
\[
    \mI_g := \{m' \in \Z : gm' \in \mI\},
    \qquad\qquad 
    \mJ_g := \{n' \in \Z : gn' \in \mJ\},
\]
and modulus $\tfrac{c}{g}$. This modulus has a factorization given by
\[
    \frac{c}{g} = d_g d'_g e_g,
    \qquad 
    d_g := \frac{d}{(d, \frac{g}{(d',g)})} = \frac{d(d', g)}{(dd', g)},
    \qquad 
    d'_g := \frac{d'}{(d', g)},
    \qquad 
    e_g := \frac{e}{(e, g)},
\]
which still satisfies $d'_g \mid d_g$ and $(d_g, e) = 1$. Note that $\frac{d}{g} \le d_g \le d$. Furthermore, the greatest integer whose square divides $\tfrac{c}{g} d_g$, call it $f_g$, satisfies $f_g \le f$. 
We can therefore apply our bound from \cref{eq:almost-final-bound} with the choice of parameters
\[
    (c, c_1, c_2, d, d', e, f, \alpha_m, \beta_n, \mI, \mJ)
    \gets 
    (\tfrac{c}{g}, \tfrac{c_1}{g}, c_2, d_g, d'_g, e_g, f_g, \alpha_{gm'}, \beta_{gn'}, \mI_g, \mJ_g),
\]
together with the crude bounds $|\mI_g| \le |\mI| = M$, $|\mJ_g| \le |\mJ| = N$,
to obtain 
\[
\begin{aligned}
    \frac{1}{g} \sum_{\substack{m' \in \mI_g \\ n' \in \mJ_g}}
    \alpha_{gm'} \beta_{gn'} &S(am', n'; \tfrac{c}{g}) \nu_{(m',n',\frac{c_1}{g})} \one_{(m',n',c_2) = 1} 
    \\ 
    &\ll 
    \|\alpha\|\|\beta\| \frac{c^{1+o(1)}}{g^2} \left(\frac{dM^3N}{(c/g)^3} + \frac{fM^2}{(c/g)^2} + \frac{f}{(d/g)^2} \right)^{\frac{1}{6}}.
\end{aligned}
\]
(If one of the intervals $\mI_g$ or $\mJ_g$ is empty, this bound still holds trivially since the left-hand side vanishes.)
The last expression is decreasing in $g$, so combining this with \cref{eq:bil-kl-dec-g} completes our proof.
\end{proof}

\begin{remark}
Using \cref{eq:counting-4-refined} with $q = 4$ instead of \cref{eq:counting-6-refined} with $q = 6$ in the proof above leads to a final bound of
\[
\begin{aligned}
    \mathop{\sum\sum}_{\substack{m \in \mI, n \in \mJ \\ (m, n, c) = 1}} \alpha_m \beta_n S(am, n; c)
    &\ll
    c^{o(1)} \|\alpha\| \|\beta\| c
    \left(\frac{d M^2 N}{c^2} + \frac{f M^2}{c^2} + \frac{fN}{dc} + \frac{f}{d^2} \right)^{1/4}
    \\
    &=
    c^{o(1)} \|\alpha\| \|\beta\| \sqrt{MNc}
    \left(\frac{d}{N} + \frac{f}{N^2} + \frac{fc}{dM^2N} + \frac{fc^2}{d^2M^2N^2} \right)^{1/4},
\end{aligned}
\]
which is weaker than \cref{thm:MN-bilinear-forms-composite} in the main ranges of interest.
\end{remark}

\begin{proof}[Proof of \cref{thm:bilinear-forms-composite}]
Note that the result holds trivially if $c = O(1)$. Since $M, N \ll c^{1/2+o(1)}$ and the result is symmetric in $M, N$, we can assume without loss of generality that $N \le M \le c$. One can then apply \cref{thm:MN-bilinear-forms-composite}, and since $M, N \ll c^{1/2+o(1)}$, the upper bound becomes
\[
    \|\alpha\| \|\beta\| c^{1+o(1)}
    \left(\frac{d}{c} + \frac{f}{c} + \frac{f}{d^2} \right)^{\frac{1}{6}}.
\]
The first term can be omitted in light of the bound $d \le f$ (since $d^2 \mid cd$).
\end{proof}

For later convenience, we also state a quick consequence of \cref{thm:MN-bilinear-forms-composite}.

\begin{corollary} \label{cor:MN-bilinear-forms-large-d}
Assume the setup of \cref{thm:MN-bilinear-forms-composite} and suppose that $d \ge c^{1/2}$. Then
\[
    \mathop{\sum_{m \in \mI} \sum_{n \in \mJ}}_{(m, n, c) = 1} \alpha_m \beta_n S(am, n; c) 
    \ll
    \|\alpha\| \|\beta\| c^{o(1)} \sqrt{MNc} \left(\frac{d^{\frac{1}{6}}}{N^{\frac{1}{3}}} + \frac{c^{\frac{1}{4}} d^{\frac{1}{12}}}{M^{\frac{1}{6}} N^\frac{1}{2}} + \frac{c^{\frac{11}{24}}}{(MN)^{\frac{1}{2}}} \right).
\]
\end{corollary}

\begin{proof}
By applying \cref{thm:MN-bilinear-forms-composite} and the bound $f \le \sqrt{cd}$, we obtain
\[
    \mathop{\sum_{m \in \mI} \sum_{n \in \mJ}}_{(m, n, c) = 1} \alpha_m \beta_n S(am, n; c) 
    \ll
    \|\alpha\| \|\beta\| c^{o(1)} \sqrt{MNc} \left(\frac{d}{N^2} + \frac{c^{\frac{3}{2}} d^{\frac{1}{2}}}{M N^3} + \frac{c^{\frac{7}{2}}}{d^{\frac{3}{2}} M^3 N^3} \right)^{\frac{1}{6}}.
\]
Using that $d \ge c^{1/2}$ in the last term, we obtain the claimed upper bound.
\end{proof}

\subsection{Near-prime moduli}
Building towards an unconditional result for general moduli, we need to slightly develop the result of Kowalski--Michel--Sawin \cite{kowalski2017bilinear} from \cref{thm:kms} so that it applies for near-prime moduli.

\begin{lemma} \label{lem:bilinear-split-two-moduli}
Let $q, c, M, N \in \Z_+$ with $M, N \le [q, c]$. Let $(\alpha_m)_{m \le M}$ and $(\beta_n)_{n \le N}$ be complex sequences, and $A(m, n; q)$ (respectively $B(m, n; c)$) be complex numbers depending only on the residues of $m, n$ modulo $q$ (respectively, modulo $c$). Then one has
\[
    \left\vert \sum_{m=1}^M \sum_{n=1}^N \alpha_m \beta_n A(m, n; q) B(m, n; c) \right\vert \le \mA \mB\, \|\alpha\| \|\beta\|,
\]
where
\[
    \mA := \sqrt{\sum_{m=1}^{\min(M,q)} \sum_{n=1}^{\min(N,q)} |A(m,n;q)|^2},
    \qquad\quad 
    \mB := \sup_{\|\tilde\alpha\| = \|\tilde\beta\| = 1}
    \left\vert \sum_{m=1}^{\min(M,c)} \sum_{n=1}^{\min(N,c)} \tilde\alpha_m \tilde\beta_n B(m, n; c)\right\vert,
\]
the supremum being over all complex sequences $(\tilde\alpha_m)_{m \le \min(M,c)}$, $(\tilde\beta_n)_{n \le \min(N,c)}$ with unit $\ell^2$ norms.
\end{lemma}

\begin{proof}
For $m_q, m_c, n_q, n_c \in \Z$, let us denote
\[
    \alpha_{m_q,m_c} := \sum_{\substack{1 \le m \le M \\ m \equiv m_q \pmod{q} \\ m \equiv m_c \pmod{c}}} \alpha_m,
    \qquad\qquad
    \beta_{n_q,n_c} := \sum_{\substack{1 \le n \le N \\ n \equiv n_q \pmod{q} \\ n \equiv n_c \pmod{c}}} \beta_n.
\]
Note that since $M, N \le [q, c]$, each of these sums contains at most one term.
We can then rewrite our bilinear sum as
\[
    \sum_{m_q=1}^{\min(M,q)} \sum_{n_q=1}^{\min(N,q)} A(m_q, n_q; q)
    \sum_{m_c=1}^{\min(M,c)} \sum_{n_c=1}^{\min(N,c)} \alpha_{m_q,m_c} \beta_{n_q,n_c}
    B(m_c, n_c; c).
\]
We then apply Cauchy--Schwarz in $m_q, n_q$ and use the definitions of $\mA, \mB$ to obtain
\[
\begin{aligned}
    \left\vert\sum_{m=1}^M \sum_{n=1}^N \alpha_m \beta_n A(m, n; q) B(m, n; c)\right\vert^2 
    &\le \mA^2 \sum_{m_q=1}^q \sum_{n_q=1}^q \left\vert \sum_{m_c=1}^{\min(M,c)} \sum_{n_c=1}^{\min(N,c)} \alpha_{m_q,m_c} \beta_{n_q,n_c} B(m_c,n_c;c)\right\vert^2
    \\
    &\le \mA^2 \sum_{m_q=1}^q \sum_{n_q=1}^q \mB^2 \sum_{m_c=1}^{\min(M,c)} |\alpha_{m_q,m_c}|^2
    \sum_{n_c=1}^{\min(N,c)} |\beta_{n_q,n_c}|^2
    \\
    &= 
    \mA^2 \mB^2 \|\alpha\|^2 \|\beta\|^2,
\end{aligned}
\]
which proves the desired bound.
\end{proof}

\begin{corollary}[Kowalski--Michel--Sawin bounds for near-primes] \label{cor:kms}
Let $c = pq$ where $p$ is a prime, $q \in \Z_+$, and $p \nmid q$. Let $M, N \in \Z$ be integers such that $1 \le N \le M \le c$. Then for any complex sequences $(\alpha_m)_{m \le M}$ and $(\beta_n)_{n \le N}$, and any $a \in (\Z/c\Z)^\times$, one has
\[
\begin{aligned}
    \mathop{\sum_{m=1}^M \sum_{n=1}^N}_{(m, n, c) = 1}\alpha_m \beta_n S(am, n; c)
    \ll 
    \|\alpha\|\|\beta\|
    c^{o(1)}
    \sqrt{MNc}
    \left(N^{-\frac{1}{2}}q + (MN)^{-\frac{3}{16}} c^{\frac{11}{64}} q^{\frac{53}{64}}\right).
\end{aligned}
\]
\end{corollary}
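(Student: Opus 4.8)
The plan is to use the twisted multiplicativity of Kloosterman sums, $S(am,n;c)=S(a\bar q m,\bar q n;p)\,S(a\bar p m,\bar p n;q)$ (where $\bar q q\equiv1\pmod p$ and $\bar p p\equiv1\pmod q$), keeping the mod-$p$ factor intact for an application of \cref{thm:kms} and opening the mod-$q$ factor into additive characters. Before doing this I would reduce to a convenient regime. By \cref{eq:trivial-bound} we have
\[
\Sigma:=\mathop{\sum_{m=1}^M\sum_{n=1}^N}_{(m,n,c)=1}\alpha_m\beta_n S(am,n;c)\ll c^{o(1)}\|\alpha\|\|\beta\|\min\!\big(c,\sqrt{MNc}\big),
\]
and a short comparison shows that $\min(c,\sqrt{MNc})$ already meets the claimed bound unless $N>q^2$ and $Mq<p$. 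So we may assume $q^2<N\le M<p/q\le p$; in particular every $m$ in the sum satisfies $1\le m<p$, so $(m,n,c)=1$ is equivalent to $(m,n,q)=1$, and all interval lengths that will feed into \cref{thm:kms} are automatically at most $p-1$. This last point is the crux of the reduction: it is what lets us avoid splitting the (possibly long) $m$- and $n$-ranges into blocks modulo $p$ or into shifted residue intervals.

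The main computation then runs as follows. Open $S(a\bar p m,\bar p n;q)=\sum_{x\bmod q,\,(x,q)=1}e\!\big(\tfrac{a\bar p x\,m+\bar p\bar x\,n}{q}\big)$, which twists $\alpha_m,\beta_n$ by unimodular phases (preserving their $\ell^2$ norms); detect $(m,n,q)=1$ by M\"obius inversion $\sum_{d\mid q}\mu(d)\one_{d\mid m}\one_{d\mid n}$ and reindex $m=dm_1$, $n=dn_1$ with $m_1\le M/d$, $n_1\le N/d$. Since $p\nmid dq$, the quantities $\bar q$, $\bar q d$ and $a$ are units modulo $p$, and after the substitution $x\mapsto(\bar q d)x$ inside the inner Kloosterman sum one gets $S\big((a\bar q d)m_1,(\bar q d)n_1;p\big)=S(a_d m_1,n_1;p)$ with $a_d:=a\bar q^2 d^2\in(\Z/p\Z)^\times$. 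For each fixed $x$ and each $d\mid q$ this is exactly of the form treated by \cref{thm:kms}, with sequences of norms $\le\|\alpha\|,\|\beta\|$ supported on $\{1,\dots,\lfloor M/d\rfloor\}\times\{1,\dots,\lfloor N/d\rfloor\}\subseteq\{1,\dots,p-1\}^2$; invoking it (with the remark removing the constraint $p^{1/4}<MN<p^{5/4}$) and summing over the $\phi(q)\le q$ characters $x$ and the $c^{o(1)}$ divisors $d\mid q$ by the triangle inequality (worst case $d=1$, using $\|\alpha^{(d)}\|\le\|\alpha\|$, $\|\beta^{(d)}\|\le\|\beta\|$) yields a bound of the shape
\[
\Sigma\ll c^{o(1)}\|\alpha\|\|\beta\|\,q\Big(\sqrt{Mp}+(MN)^{5/16}p^{43/64}\Big).
\]
Finally one checks this is admissible: using $\sqrt{MNc}=q^{1/2}\sqrt{MNp}$, $\sqrt{MNc}\,N^{-1/2}q=\sqrt{Mc}\,q=q^{3/2}\sqrt{Mp}$, and $c^{11/64}q^{53/64}=p^{11/64}q$, the claimed upper bound is $c^{o(1)}\|\alpha\|\|\beta\|\,q^{3/2}\big(\sqrt{Mp}+(MN)^{5/16}p^{43/64}\big)$, which dominates the above (in fact by a spare factor $q^{1/2}$ — unsurprising, since \cref{cor:kms} is only non-trivial when $q$ is small compared to $p$).

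I expect the main obstacle to be the bookkeeping around this reduction and making sure no step silently loses an extra power of $q$: the mod-$q$ factor should be decomposed via its $\phi(q)$ additive characters (costing one factor of $q$), not via the $q^2$ residue pairs $(m\bmod q,n\bmod q)$ (which would cost $q^2$ and require a further Cauchy–Schwarz to recover); and the argument genuinely needs the a priori restriction $M<p$, because otherwise feeding a long interval into \cref{thm:kms} forces a further splitting into $O(M/p)$ pieces modulo $p$, a Cauchy–Schwarz over those pieces, and some boundary terms governed by Ramanujan sums — all standard but bound-weakening. Everything else (the twisted-multiplicativity identity, the change of variables in the Kloosterman sum, and the final arithmetic with the exponents) is routine.
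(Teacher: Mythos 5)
Your argument is correct, and it reaches the stated bound by a genuinely different route than the paper. The paper keeps the full ranges $M,N\le c$: it splits $m,n$ into residue classes modulo $q$, bounds the mod-$q$ Kloosterman factor pointwise by the Weil bound ($\ll q^{1/2+o(1)}$), disposes of the terms with $p\mid mn$ via \cref{lem:weil,lem:ram}, folds the remaining $m,n$ into residues modulo $p$ so that \cref{thm:kms} applies, and finally applies Cauchy--Schwarz over the residue pairs $(r,s)$ modulo $q$; the combined cost is exactly the factor $q^{3/2}$ built into the stated bound. You instead first check that \cref{eq:trivial-bound} already implies the claim unless $N>q^2$ and $Mq<p$ (your two comparisons, $\sqrt{MNc}\le \sqrt{MNc}\,N^{-1/2}q$ iff $N\le q^2$ and $c\le\sqrt{Mc}\,q$ iff $p\le Mq$, are right), and in the surviving regime all $m,n$ are below $p$, so $(m,n,c)=1$ reduces to $(m,n,q)=1$, no folding modulo $p$ and no $p\mid mn$ boundary terms arise, and after M\"obius inversion over $d\mid q$ and opening the mod-$q$ factor into its $\phi(q)$ additive characters, each inner bilinear form is directly of the shape in \cref{thm:kms} (with lengths $\le p-1$, monotonicity in the lengths, and the remark removing the restriction $p^{1/4}<MN<p^{5/4}$). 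The triangle inequality over the $\phi(q)$ characters and the $c^{o(1)}$ divisors costs only $q$, against the paper's $q^{3/2}$, so in that regime your bound is stronger by $q^{1/2}$, which of course still implies the corollary; outside that regime the trivial bound does the work. What the paper's argument buys is a single uniform estimate valid for all $M,N\le c$ with no case split; what yours buys is a shorter proof (no residue-class decomposition, no separate $p\mid mn$ analysis) and a slightly better $q$-exponent precisely in the only range where the corollary is non-trivial.
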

\begin{proof} 
By the twisted multiplicativity of Kloosterman sums, we have
\[
    \mathop{\sum_{m=1}^M \sum_{n=1}^N}_{(m, n, pq) = 1}\alpha_m \beta_n S(am, n; pq)
    =
    \mathop{\sum_{m=1}^M \sum_{n=1}^N} \alpha_m \beta_n S(a\bar{q}^2m, n; p)\one_{(m,n,p)=1} S(a\bar{p}^2m, n; q) \one_{(m,n,q)=1}.
\]
We separately consider those terms $m, n$ with $p \mid m$ (and $p \nmid n$) or $p \mid n$ (and $p \nmid m$). By the Weil and Ramanujan bounds from \cref{lem:weil,lem:ram}, their contribution is
\[
    \ll 
    \sum_{m=1}^M \sum_{n=1}^N |\alpha_m \beta_n| q^{\frac{1}{2}+o(1)}
    \ll 
    \|\alpha\| \|\beta\| q^{o(1)} \sqrt{MNq}.
\]
It follows that
\begin{equation} \label{eq:bilinear-after-ramanujan}
\begin{aligned}
    \mathop{\sum_{m=1}^M \sum_{n=1}^N}_{(m, n, c) = 1}\alpha_m \beta_n S(am, n; c)
    &=
    \mathop{\sum_{m=1}^M \sum_{n=1}^N} \alpha_m \beta_n S(a\bar{q}^2m, n; p)\one_{p \nmid mn} S(a\bar{p}^2m, n; q) \one_{(m,n,q)=1}
    \\
    &+
    O\left(\|\alpha\| \|\beta\| q^{o(1)} \sqrt{MNq}\right),
\end{aligned}
\end{equation}
and we can estimate the bilinear sum in the right-hand side using \cref{lem:bilinear-split-two-moduli} with $c = p$. This gives
\[
    \mathop{\sum_{m=1}^M \sum_{n=1}^N} \alpha_m \beta_n S(a\bar{q}^2m, n; p)\one_{p \nmid mn} S(a\bar{p}^2m, n; q) \one_{(m,n,q)=1}
    \ll 
    \mA\mB\, \|\alpha\| \|\beta\|,
\]
where
\[
\begin{aligned}
    \mA := \sqrt{\mathop{\sum_{m=1}^{\min(M,q)} \sum_{n=1}^{\min(N,q)}}_{(m,n,q)=1} |S(a\bar{p}^2m, n; q)|^2},
    \ \
    \mB := \sup_{\|\tilde\alpha\| = \|\tilde\beta\| = 1}
    \left\vert \sum_{m=1}^{\min(M,p-1)} \sum_{n=1}^{\min(N,p-1)} \tilde\alpha_m \tilde\beta_n S(a\bar{q}^2m, n; p)\right\vert.
\end{aligned}
\]
By the Weil bound (\cref{lem:weil}), we have $\mA \ll q^{o(1)} \sqrt{q^3}$.
By \cref{thm:kms} (which gives a bound that increases with $M$ and $N$), we also have
\[
\begin{aligned}
    \mB
    \ll 
    p^{o(1)} \sqrt{MNp}
    \left(N^{-\frac{1}{2}} + (MN)^{-\frac{3}{16}} p^{\frac{11}{64}}\right).
\end{aligned}
\]
Plugging these bounds into \cref{eq:bilinear-after-ramanujan}, we obtain
\[
\begin{aligned}
    \mathop{\sum_{m=1}^M \sum_{n=1}^N}_{(m, n, c) = 1}\alpha_m \beta_n S(am, n; c)
    \ll 
    c^{o(1)}
    q^{\frac{3}{2}}
    \|\alpha\|\|\beta\|
    \sqrt{MNp}
    \left(N^{-\frac{1}{2}} + (MN)^{-\frac{3}{16}} p^{\frac{11}{64}}\right)
    \\
    +
    \|\alpha\| \|\beta\| c^{o(1)} \sqrt{MNq}.
\end{aligned}
\]
Finally, recalling that $pq  = c$ and $M, N \le c$ (which imply $\sqrt{MNq} \le \sqrt{Mcq} \le q^{3/2}\sqrt{Mp}$), the last term can be omitted, and we arrive at the desired bound.
\end{proof}

\subsection{General moduli}
Finally, we prove a generalization of \cref{thm:bilinear-forms-general}, by combining previous results for various factorizations of the modulus $c$. In some ranges, we will use the following lemma.

\begin{lemma} \label{lem:greedy-fact}
Suppose $c \in \Z_+$ such that all prime powers $p^k \mid c$ have $p^k < c^{1/2}$. Then $c$ has a factorization $c = de$ with $(d, e) = 1$ and 
\[
    d \in [c^{1/2}, c^{3/4}].
\] 
\end{lemma}

\begin{proof}
We construct $d, e$ by a greedy algorithm. Initially, we take $d = e := 1$. For each prime power $p^k \| c$, we append $p^k$ to the smaller of $d$ and $e$. Note that throughout this process, $d$ and $e$ cannot differ by a factor larger than $c^{1/2}$. In the end, if $d < e$, we swap $d$ and $e$. We therefore obtain a factorization $c = de$ with $(d, e) = 1$ and $e \le d \le c^{1/2}e$, which implies $c^{1/2} \le d \le c^{3/4}$.
\end{proof}

\begin{theorem} \label{thm:MN-bilinear-forms-general}
Let $\delta \in [0, \tfrac{1}{24}]$, $c, M, N \in \Z_+$, $\tilde M := \max(M, N)$, and $\tilde N := \min(M, N)$. Let $(\alpha_m)_{m \le M}$, $(\beta_n)_{n 
\le N}$ be arbitrary complex sequences and $a \in (\Z/c\Z)^\times$. 
\begin{itemize} 
\item[$(i)$.] If $M, N \le c$, one has
\begin{equation} \label{eq:MN-bilinear-forms-general-1}
\begin{aligned}
    \mathop{\sum_{m=1}^M\sum_{n=1}^N}_{(m,n,c)=1} \alpha_m \beta_n S(am, n; c)
    &\ll 
    \|\alpha\|\|\beta\| c^{o(1)} \sqrt{MNc}
    \\
    &\times \left(\frac{c^{\frac{11+53\delta}{64}}}{(MN)^{\frac{3}{16}}}
    +
    \frac{c^{\frac{1-\delta}{6}}}{\tilde N ^{\frac{1}{3}}} +
    \frac{c^{\frac{4-\delta}{12}}}{\tilde M^{\frac{1}{6}} \tilde N^{\frac{1}{2}}} + \frac{c^{\frac{11}{24}}}{(M N)^\frac{1}{2}}
    \right).
\end{aligned}
\end{equation}
\item[$(ii)$.] If $|\alpha_m| \le 1$ for all $m$ (so $\|\alpha\| \le \sqrt{M}$), then
\begin{equation} \label{eq:MN-bilinear-forms-general-2}
\begin{aligned}
    \mathop{\sum_{m=1}^M\sum_{n=1}^N}_{(n,c)=1} \alpha_m \beta_n S(am, n; c)
    &\ll 
    \sqrt{M}\|\beta\| c^{o(1)} \sqrt{MNc}
    \\
    &\times \left(\frac{c^{\frac{11+53\delta}{64}}}{(MN)^{\frac{3}{16}}}
     + \frac{c^{\frac{1-\delta}{4}}}{\tilde N^{\frac{1}{2}}} + \frac{1}{c^{\frac{3}{16}}} + \frac{c^{\frac{1}{8}}}{\tilde N^{\frac{1}{3}}} + \frac{c^{\frac{11}{24}}}{(M N)^\frac{1}{2}}
    \right).
\end{aligned}
\end{equation}
\end{itemize}
\end{theorem}

\begin{proof}[Proof of \cref{thm:MN-bilinear-forms-general}]
We first assume that $M, N \le c$ for both \cref{eq:MN-bilinear-forms-general-1,eq:MN-bilinear-forms-general-2}; we will remove this assumption for \cref{eq:MN-bilinear-forms-general-2} at the end of the proof.

If $c$ has a factorization $c = dd'e$ with $d' \mid d$ and $(d, e) = 1$ such that $d \ge c^{1/2}$, then \cref{cor:MN-bilinear-forms-large-d} (applied for $\tilde M$, $\tilde N$ instead of $M, N$) gives
\begin{equation} \label{eq:modified-bilinear-composite}
    \mathop{\sum_{m=1}^M\sum_{n=1}^N}_{(m, n, c) = 1} \alpha_m \beta_n S(am, n; c) 
    \\ 
    \ll
    c^{o(1)} \underbrace{\|\alpha\| \|\beta\| \sqrt{MNc} \left(\frac{d^{\frac{1}{6}}}{\tilde N^{\frac{1}{3}}} + \frac{c^{\frac{1}{4}} d^{\frac{1}{12}}}{\tilde M^{\frac{1}{6}} \tilde N^\frac{1}{2}} + \frac{c^{\frac{11}{24}}}{(MN)^{\frac{1}{2}}} \right)}_{=:\, \mB(d)}.
\end{equation}
We note that the bound $\mB(d)$ is increasing with $d \in [c^{1/2}, c]$, and that:
\begin{itemize} 
    \item[$(i)$.] The right-hand side of \cref{eq:MN-bilinear-forms-general-1} supersedes $\mB(c^{1-\delta})$ (see the last three terms in \cref{eq:MN-bilinear-forms-general-1});
    \item[$(ii)$.] The right-hand side of \cref{eq:MN-bilinear-forms-general-2} supersedes $\mB(c^{3/4})$. Indeed, we have
\[
\begin{aligned}
    \mB(c^{3/4}) &= \|\alpha\| \|\beta\| \sqrt{MNc} \left(\frac{c^{\frac{1}{8}}}{\tilde N^{\frac{1}{3}}} + \frac{c^{\frac{5}{16}}}{\tilde M^{\frac{1}{6}} \tilde N^\frac{1}{2}} + \frac{c^{\frac{11}{24}}}{(MN)^{\frac{1}{2}}} \right),
\end{aligned}
\]
and a quick computation shows that since $\delta \le \tfrac{1}{24}$, 
\[
    \frac{c^{\frac{5}{16}}}{\tilde M^{\frac{1}{6}} \tilde N^\frac{1}{2}}
    \le 
    \max\left(\frac{c^{\frac{11}{24}}}{(\tilde M \tilde N)^{\frac{1}{2}}}, \frac{c^{\frac{1-\delta}{4}}}{\tilde N^{\frac{1}{2}}}\right).
\]
\end{itemize} 
We now split into cases depending on the factorization of the modulus $c$. 

\textbf{Case 1:} $c$ is divisible by a maximal prime power $p^k \ge c^{1-\delta}$. Then let us write $c = p^kq$, where $q$ is not necessarily a prime, but $q \le c^\delta$ and $(p, q) = 1$.

\emph{Subcase 1.1:} One has $k = 1$. Then we can apply \cref{cor:kms} (with $M, N$ replaced by $\tilde M, \tilde N$), which gives the bound
\[
\begin{aligned}
    \mathop{\sum_{m=1}^M\sum_{n=1}^N}_{(m,n,c)=1} \alpha_m \beta_n S(am, n; c)
    \ll 
    \|\alpha\|\|\beta\| c^{o(1)} \sqrt{MNc}
    \left(\tilde N^{-\frac{1}{2}}c^{\delta} + (MN)^{-\frac{3}{16}} c^{\frac{11+53\delta}{64}}\right).
\end{aligned}
\]
The first term here is superseded by the third term in \cref{eq:MN-bilinear-forms-general-1} and the second term in \cref{eq:MN-bilinear-forms-general-2}, since $\tilde M \le c$ and $\delta \le \tfrac{1}{24}$. The second term here appears directly in both \cref{eq:MN-bilinear-forms-general-1,eq:MN-bilinear-forms-general-2}.

\emph{Subcase 1.2:} One has $k \ge 2$. Then we let $d := p^{\lc k/2 \rc}q$, $d' := p^{\lf k/2 \rf}$, and $e := 1$, which gives a valid decomposition $c = dd'e$ to use in our \cref{thm:MN-bilinear-forms-composite}. Moreover, since $k \ge 2$, we have $\lc k/2 \rc \le 2k/3$, so $d \le p^{2k/3}q = c^{2/3} q^{1/3}$, and thus
\[
    d \in [c^{1/2}, c^{(2+\delta)/3}].
\]
From \cref{eq:modified-bilinear-composite} we thus obtain an upper bound of $c^{o(1)} \mB(c^{(2+\delta)/3})$, which is acceptable in both \cref{eq:MN-bilinear-forms-general-1,eq:MN-bilinear-forms-general-2} since $\tfrac{2+\delta}{3} \le \tfrac{3}{4}$.

\textbf{Case 2:} All prime powers $p^k \mid c$ have $p^k < c^{1-\delta}$.

\emph{Subcase 2.1:} All prime powers $p^k \mid c$ have $p^k < c^{1/2}$. Then we set $d' := 1$, and use \cref{lem:greedy-fact} to obtain a factorization $c = dd'e$ with $(d, e) = 1$ and $d \in [c^{1/2}, c^{3/4}]$.
Then \cref{eq:modified-bilinear-composite} gives a bound of $c^{o(1)} \mB(c^{3/4})$, 
which is acceptable in both \cref{eq:MN-bilinear-forms-general-1,eq:MN-bilinear-forms-general-2}.

\emph{Subcase 2.2:} The largest prime power dividing $c$ is some $p^k \in [c^{1/2}, c^{3/4})$. Then we let $d := p^k$, $d' := 1$, and $e := cp^{-k}$, and \cref{eq:modified-bilinear-composite} gives an acceptable bound of $c^{o(1)} \mB(c^{3/4})$ once again.

\emph{Subcase 2.3:} The largest prime power dividing $c$ is some $p^k \in [c^{3/4}, c^{1-\delta})$. On the one hand, \cref{eq:modified-bilinear-composite} gives a bound of $c^{o(1)} \mB(c^{1-\delta})$, which is acceptable in \cref{eq:MN-bilinear-forms-general-1}; this completes the proof of \cref{eq:MN-bilinear-forms-general-1}.

Now assume (still within Subcase 2.3) that $|\alpha_m| \le 1$ for all $m$, and we aim to establish \cref{eq:MN-bilinear-forms-general-2}.
\begin{itemize}
    \item If $p = 2$, then writing $c = 2^k q$, we can factorize $c = dd'e$ with $d := 2^{\lc k/2 \rc} q$, $d' := 2^{\lf k/2 \rf}$, and $e := 1$. Here $d \ll \sqrt{2^k q^2} = \sqrt{cq} \le c^{3/4}$, since $2^k \ge c^{1/2}$ implies $q \le c^{1/2}$. But then \cref{eq:modified-bilinear-composite} gives an acceptable bound of $c^{o(1)} \mB(c^{3/4})$. 
    \item If $p > 2$, then we can use $d := p^k$ in \cref{thm:bm}. Since $d \in [c^{3/4}, c^{1-\delta})$, this gives the bound 
    \[
        \mathop{\sum_{m=1}^M\sum_{n=1}^N}_{(n,c)=1} \alpha_m \beta_n S(am, n; c)
        \ll 
        \sqrt{M}\|\beta\| c^{o(1)} \sqrt{MNc}
        \left(\frac{c^{\frac{1}{8}}}{M^{\frac{1}{2}}} + \frac{1}{c^{\frac{3}{16}}} + \frac{c^{\frac{1-\delta}{4}}}{N^{\frac{1}{2}}}\right).
    \]
    Since $M, N \ge \tilde{N}$ and $\tfrac{1}{8} \le \tfrac{1-\delta}{4}$, the first and the last terms in the parenthesis above are superseded by the term $c^{(1-\delta)/4} \tilde N^{-1/2}$ from \cref{eq:MN-bilinear-forms-general-2}. The second term appears directly in \cref{eq:MN-bilinear-forms-general-2}.
\end{itemize}
This covers all cases assuming $M, N \le c$. Our last step is to remove this assumption for \cref{eq:MN-bilinear-forms-general-2}. 

First, if $M, N > c$, then applying the (first) bound from \cref{eq:trivial-bound} for the sequences $(\alpha'_{m'})_{m' \le c}$, $(\beta'_n)_{n \le N}$ given by $\alpha'_{m'} := \sum_{m \equiv m' \pmod{c}} \alpha_m$ and $\beta'_n := \one_{(n, c) = 1} \sum_{n \equiv n' \pmod{c}} \beta_n$ leads to the bound
\[
\begin{aligned}
    \mathop{\sum_{m=1}^M\sum_{n=1}^N}_{(n,c)=1} \alpha_m \beta_n S(am, n; c) \ll \|\alpha'\| \|\beta'\| c^{1+o(1)} 
    &\ll \frac{M}{c} \sqrt{c} \sqrt{\frac{N}{c}} \|\beta\| c^{1+o(1)}
    \\
    &\ll \sqrt{M} \|\beta\| c^{o(1)} \sqrt{MNc} \cdot c^{-3/16},
\end{aligned}
\]
so \cref{eq:MN-bilinear-forms-general-2} still holds. Secondly, if $N \le c < M$, then applying the (first) bound from \cref{eq:trivial-bound} for the sequences $(\alpha'_{m'})_{m' \le c}$, $(\beta'_n)_{n \le N}$ given by $\alpha'_{m'} := \sum_{m \equiv m' \pmod{c}} \alpha_m$ and $\beta'_n := \beta_n \one_{(n, c) = 1}$ leads to the bound
\[
\begin{aligned}
    \mathop{\sum_{m=1}^M\sum_{n=1}^N}_{(n,c)=1} \alpha_m \beta_n S(am, n; c) \ll \|\alpha'\| \|\beta'\| c^{1+o(1)} 
    &\ll \frac{M}{c} \sqrt{c} \|\beta\| c^{1+o(1)}
    \\
    &\ll \sqrt{M} \|\beta\| c^{o(1)} \sqrt{MNc} \cdot \frac{c^{\frac{1-\delta}{4}}}{N^{\frac{1}{2}}},
\end{aligned}
\]
so \cref{eq:MN-bilinear-forms-general-2} still holds. An analogous argument covers the remaining case $M \le c < N$.
\end{proof}

\begin{proof}[Proof of \cref{thm:bilinear-forms-general}]
Since $M, N \ll c^{\frac{1}{2}+o(1)}$ and the desired result is trivial when $c = O(1)$, we can assume without loss of generality that $M, N \le c$. We then apply \cref{thm:MN-bilinear-forms-general} with $M, N \ll c^{1/2+o(1)}$, using the optimal choices $\delta = \frac{3}{175}$ in \cref{eq:MN-bilinear-forms-general-1}, respectively $\delta = \frac{1}{69}$ in \cref{eq:MN-bilinear-forms-general-2}.
\end{proof}

\subsection{Averaging over moduli}

Finally, let us prove a generalization of \cref{cor:bilinear-forms-avg-c}.

\begin{corollary} \label{cor:MN-bilinear-forms-avg-c}
Let $q = dd'e$ for some $d, d', e \in \Z_+$ with $d' \mid d$ and $(d, e) = 1$, and $f \le \sqrt{qd}$ be the largest integer with $f^2 \mid qd$. Let $C \ge \tfrac{1}{2}$ and $\mI, \mJ \subset \Z_+$ be intervals of lengths $|\mI| = M$, $|\mJ| = N$, with $1 \le N \le M \le C$ and $\max(\mI \cup \mJ) \ll C^{O(1)}$. Let $(\alpha_m)_{m \in \mI}, (\beta_n)_{n \in \mJ}$ be complex sequences, and for each $c \sim C$, let $(\alpha_m(c))_{m \in \mI}$, $(\beta_n(c))_{n \in \mJ}$ be such that $|\alpha_m(c)| \le |\alpha_m|$, $|\beta_n(c)| \le |\beta_n|$ for all $m \in \mI, n \in \mJ$. Then one has
\[
\begin{aligned}
    \sum_{\substack{c \sim C \\ q \mid c}} \left\vert
    \mathop{\sum\sum}_{\substack{m \in \mI, n \in \mJ \\ (m, n, q) = 1}} \alpha_m(c) \beta_n(c) S(m, n; c) \right\vert 
    &\ll
    \|\alpha\| \|\beta\|
    \frac{C^{2+o(1)}}{q}
    \min 
    \begin{cases} 
    \left(\frac{dM^3 N}{C^3} + \frac{fM^2}{C^2} + \frac{f}{d^2} \right)^{\frac{1}{6}},
    \\ 
    \left(\frac{dM^3 N}{qC^2} + \frac{fM^2}{qC} + \frac{f q}{d^2 C} \right)^{\frac{1}{6}}.
    \end{cases}
    \hspace{-0.8cm}
\end{aligned}
\]
\end{corollary}

\begin{proof}[Proof of \cref{cor:MN-bilinear-forms-avg-c}]
Throughout this proof, we will use the notation
\[
    f_a := \max_{\tilde{f}^2 \mid a} \tilde{f},
    \qquad\qquad 
    \|\alpha_{a*}\| := \sqrt{\sum_{\substack{m \in \mI \\ a \mid m}} |\alpha_m|^2 },
    \qquad\qquad 
    \|\beta_{a*}\| := \sqrt{\sum_{\substack{n \in \mJ \\ a \mid n}} |\beta_n|^2 },
\]
for any $a \in \Z_+$. In particular, the assumption of the present \cref{cor:MN-bilinear-forms-avg-c} takes $f = f_{qd}$. Note that $f_a \mid f_{ab}$ and $f_{a^2b} = a f_b$ for any $a, b \in \Z_+$, and that $f_{ab} = f_a f_b$ when $(a, b) = 1$.

We can of course assume without loss of generality that $C \gg q$, since otherwise the sum over $c$ is empty. For each $c \sim C$ with $q \mid c$, we consider the sum
\[
\begin{aligned}
    \mS(c) &:= \mathop{\sum\sum}_{\substack{m \in \mI, n \in \mJ \\ (m, n, q) = 1}} \alpha_m(c) \beta_n(c) S(m, n; c) 
    \\
    &=
    \sum_{\substack{g \mid c \\ (g, q) = 1}}
    \mathop{\sum\sum}_{\substack{m \in \mI, n \in \mJ \\ (m, n, c) = g}} \alpha_m(c) \beta_n(c) S(m, n; c)
    =
    \sum_{\substack{g \mid c \\ (g, q) = 1}}
    \frac{\phi(c)}{\phi(c/g)}
    \mathop{\sum\sum}_{\substack{m \in \mI, n \in \mJ \\ (m, n, c) = g}} \alpha_{m}(c) \beta_{n}(c) S(\tfrac{m}{g}, \tfrac{n}{g}; \tfrac{c}{g}),
\end{aligned}
\]
where the last equality follows from the identity $S(m, n; c) = \tfrac{\phi(c)}{\phi(c/g)}S(\tfrac{m}{g},\tfrac{n}{g};\tfrac{c}{g})$. From the triangle inequality and the bound $\tfrac{\phi(c)}{\phi(c/g)} \le g$, we find that
\begin{equation} \label{eq:break-up-c-sum}
    \sum_{\substack{c \sim C \\ q \mid c}} |\mS(c)| \le 
    \sum_{\substack{g \le 2C/q \\ (g, q) = 1}} g \sum_{\substack{c \sim C \\ gq \mid c}} |\mS(c; g)|,
\end{equation}
where
\[
    \mS(c; g) := \mathop{\sum\sum}_{\substack{m \in \mI, n \in \mJ \\ g \mid (m, n) \\  (\frac{m}{g},\frac{n}{g},\frac{c}{g}) = 1}} \alpha_{m}(c) \beta_{n}(c) S(\tfrac{m}{g}, \tfrac{n}{g}; \tfrac{c}{g}).
\]
We aim to apply \cref{thm:MN-bilinear-forms-composite} (with $M, N, c \gets \tfrac{M}{g}, \tfrac{N}{g}, \tfrac{c}{g}$) to bound each sum $\mS(c; g)$, and this requires a suitable factorization of the modulus $\tfrac{c}{g}$. There are two ways to construct this from the assumed factorization $q = dd'e$, which correspond to placing `most' of the factor $\tfrac{c}{gq}$ into $e$ or into $d$.

\textbf{Method 1.} For each $c \sim C$ with $gq \mid c$, consider the factorization
\[
    \frac{c}{g} =: c'q = \tilde d d' \tilde e, \qquad \tilde d := (c', d^\infty) d, \qquad \tilde e := \frac{ec'}{(c',d^\infty)},
\]
which has $d' \mid \tilde d$ and $(\tilde d, \tilde e) = 1$. We find that
\[
    f_{(c/g)\tilde{d}}^2 \mid c'q(c', q^\infty) d = (c', q^\infty)^2 \frac{c'}{(c',q^\infty)} qd 
    \qquad 
    \Rightarrow 
    \qquad 
    f_{(c/g)\tilde{d}} \le (c', q^\infty) f_{c'} f_{qd} = (c', q^\infty) f_{c'} f,
\]
so \cref{thm:MN-bilinear-forms-composite} gives 
\[
\begin{aligned}
    \mS(c; g) = \mS(c'gq; g) 
    &\ll \|\alpha_{g*}\| \|\beta_{g*}\| \frac{c^{1+o(1)}}{g}
    \left(\frac{\tilde d M^3 N}{c^3} + \frac{f_{(c/g)\tilde{d}} M^2}{c^2} + \frac{f_{(c/g)\tilde{d}}}{\tilde d^2} \right)^{\frac{1}{6}} g^{\frac{1}{6}}
    \\
    &\ll 
    \|\alpha_{g*}\| \|\beta_{g*}\| C^{1+o(1)}
    \left(\frac{dM^3 N}{C^3} + \frac{fM^2}{C^2} + \frac{f}{d^2} \right)^{\frac{1}{6}} \left((c', q^\infty) f_{c'}\right)^{\frac{1}{6}}.
\end{aligned}
\]
Therefore,
\[
\begin{aligned}
    \sum_{\substack{c \sim C \\ gq \mid c}} |\mS(c; g)| 
    &= 
    \sum_{c' \sim \frac{C}{gq}} |\mS(c; g)|
    \\
    &\ll 
    \|\alpha_{g*}\| \|\beta_{g*}\| C^{1+o(1)}
    \left(\frac{dM^3 N}{C^3} + \frac{fM^2}{C^2} + \frac{f}{d^2} \right)^{\frac{1}{6}} 
    \sum_{c' \sim \frac{C}{gq}} \left((c', q^\infty) f_{c'}\right)^{\frac{1}{6}}.
\end{aligned}
\]
After applying Cauchy--Schwarz to the last sum, it remains to bound the sums $\sum_{c' \sim C/(gq)} (c',q^\infty)$ and $\sum_{c' \sim C/(gq)} f_{c'}$, both of which are $O(\tfrac{C^{1+o(1)}}{gq})$. In particular, for the second sum, we can write
\[
    \sum_{c' \sim \frac{C}{gq}} f_{c'} \le \sum_{f \ll \sqrt{\frac{C}{gq}}} f \sum_{c' \sim \frac{C}{gq}} \one_{f^2 \mid c'}
    \ll 
    \sum_{f \le \sqrt{\frac{C}{gq}}} \frac{C}{gqf} \ll \frac{C^{1+o(1)}}{gq}.
\]
From this and \cref{eq:break-up-c-sum}, we conclude that
\[
    \sum_{\substack{c \sim C \\ q \mid c}} |\mS(c)| \ll  \frac{C^{2+o(1)}}{q}
    \left(\frac{dM^3 N}{C^3} + \frac{fM^2}{C^2} + \frac{f}{d^2} \right)^{\frac{1}{6}} 
    \sum_{\substack{g \le 2C/q \\ (g, q) = 1}} \|\alpha_{g*}\| \|\beta_{g*}\|.
\]
Finally, the last sum is easily bounded by $C^{o(1)} \|\alpha\|\|\beta\|$ using Cauchy--Schwarz and the divisor bound.
This establishes the bound from \cref{cor:MN-bilinear-forms-avg-c} with the first term from the minimum.

\textbf{Method 2.}
For each $c \sim C$ with $gq \mid c$, consider the factorization
\[
    \frac{c}{g} =: c'q = \tilde d d' \tilde e, \qquad \tilde d := \frac{c'd}{(c',e^\infty)}, \qquad \tilde e := e(c',e^\infty),
\]
which satisfies $d' \mid \tilde d$ and $(\tilde d, \tilde e) = 1$. We find that
\[
    f_{(c/g)\tilde{d}}^2 \mid (c')^2 qd 
    \qquad \Rightarrow 
    \qquad 
    f_{(c/g)\tilde{d}} \le c'f_{qd} \ll \frac{fC}{q},
\]
so \cref{thm:MN-bilinear-forms-composite} gives
\[
\begin{aligned}
    \mS(c; g) = \mS(c'gq; g)
    &\ll 
    \|\alpha_{g*}\| \|\beta_{g*}\| C^{1+o(1)}
    \left(\frac{dM^3 N}{qC^2} + \frac{fM^2}{qC} + \frac{fq}{d^2C} \right)^{\frac{1}{6}} (c', e^\infty)^{\frac{1}{3}}.
\end{aligned}
\]
The second bound from \cref{cor:MN-bilinear-forms-avg-c} now follows from \cref{eq:break-up-c-sum} similarly as before, since the sum over $c' \sim \tfrac{C}{gq}$ `washes out' the factor $(c', e^\infty)$.
\end{proof}

\begin{proof}[Proof of \cref{cor:bilinear-forms-avg-c}]
This follows from \cref{cor:MN-bilinear-forms-avg-c} analogously to how \cref{thm:bilinear-forms-composite} follows from \cref{thm:MN-bilinear-forms-composite}.
\end{proof}

\section{Moments of twisted modular \texorpdfstring{$L$}{L}-functions} \label{sec:mom-l-functions}

Here we prove \cref{thm:mom-l-fns}, by inserting our bounds for bilinear forms with Kloosterman sums into the proofs from \cite{blomer2015second}. 
We begin by restating \cref{eq:MN-bilinear-forms-general-2} in a shape more similar to \cite[Theorem 5]{blomer2015second}.

\begin{corollary} \label{cor:kloost-large-sieve}
Let $\delta \in [0, \tfrac{1}{24}]$, $r, q \in \Z_+$ with $r \mid q$. Let $K, M \ge 1$, $\tilde K := \max(K, M)$, $\tilde M := \min(K, M)$, and $(\lambda_k)_{K \le k \le 2K}$ be a sequence with $|\lambda_k| \le 1$ for all $k$. Then one has
\[
\begin{aligned}
    \sum_{\substack{M \le m \le 2M \\ (m, q) = 1}} \left\vert \sum_{K \le k \le 2K} \lambda_k S(k, m; r) \right\vert^2 
    &\ll 
    (qKM)^{o(1)} K^2Mr 
    \\
    &\times \left( \frac{r^{\frac{11+53\delta}{32}}}{(KM)^{\frac{3}{8}}}
     + \frac{r^{\frac{1-\delta}{2}}}{\tilde M} + \frac{1}{r^{\frac{3}{8}}} + \frac{r^{\frac{1}{4}}}{\tilde M^{\frac{2}{3}}} + \frac{r^{\frac{11}{12}}}{KM} \right).
\end{aligned}
\]
\end{corollary}

\begin{proof}
One can of course assume without loss of generality that $M, K \in \Z_+$, and extend the sum over $m$ to include all $m \in [M, 2M]$ with $(m, r) = 1$. By duality, it suffices to establish the bound
\[
\begin{aligned}
    \sum_{\substack{M \le m \le 2M \\ (m, r) = 1}} \beta_m \sum_{K \le k \le 2K} \lambda_k S(k, m; r) 
    &\ll 
    (qKM)^{o(1)} \|\beta\| K\sqrt{Mr}
    \\
    &\times \left( \frac{r^{\frac{11+53\delta}{64}}}{(KM)^{\frac{3}{16}}}
     + \frac{r^{\frac{1-\delta}{4}}}{\tilde M^{\frac{1}{2}}} + \frac{1}{r^{\frac{3}{16}}} + \frac{r^{\frac{1}{8}}}{\tilde M^\frac{1}{3}} + \frac{r^{\frac{11}{24}}}{(K M)^\frac{1}{2}} \right),
\end{aligned}
\]
for any sequence $(\beta_m)_{M \le m \le 2M}$. But this is precisely the content of \cref{eq:MN-bilinear-forms-general-2} with $(M, N, c)$ replaced by $(K, M, r)$.
\end{proof}

We can now prove an analogue of \cite[Proposition 7]{blomer2015second}. We use the same normalization as in \cite[(2.3)]{blomer2015second} for the Hecke eigenvalues $\lambda_f(n)$ of a holomorphic cuspidal newform $f$ for $\SL_2(\Z)$, so that
\begin{equation} \label{eq:hecke-normaliz}
    \lambda_f(n) \rho_f(1) = \sqrt{n} \rho_f(n),
    \qquad 
    \text{where}
    \qquad 
    f(z) = \sum_{n = 1}^\infty \rho_f(n) (4\pi n)^{k/2} e(nz).
\end{equation}
In particular, the Deligne bound \cite{deligne1974conjecture} reads
\begin{equation} \label{eq:deligne}
    \lambda_f(n) \ll n^{o(1)}.
\end{equation}

\begin{proposition} \label{prop:S-bound}
Let $\eps > 0$, $q, d \in \Z_+$ with $d \mid q$, $\frac{1}{20} N \ge M \ge 1$ with $MN \le q^{2+\eps}$, and let $\lambda_1(m)$, $\lambda_2(n)$ be the Hecke eigenvalues of two (fixed) holomorphic cuspidal newforms for $\SL_2(\Z)$. Let $V_1, V_2 : \R \to \C$ be functions supported in $[1, 2]$ with derivatives $V_i^{(j)} \ll_{j,\eps} q^\eps$, and denote
\begin{equation} \label{eq:S-notation}
    S_{N,M,d,q} := \frac{d}{(NM)^{1/2}} \sum_{\substack{n \equiv m \pmod{d} \\ (nm, q) = 1 \\ n \neq m}} \lambda_1(m) \lambda_2(n) V_1\left(\frac{m}{M}\right) V_2\left(\frac{n}{N}\right).
\end{equation}
Then for any $\delta \in [0, \tfrac{1}{24}]$, one has
\begin{equation} \label{eq:S-bound}
    S_{N,M,d,q} \ll_\eps q^{O(\eps)} 
    \left(
    \frac{M^{\frac{5}{16}} q^{\frac{83+53\delta}{64}}}{N^{\frac{5}{16}}} + \frac{q^{\frac{7-\delta}{4}}}{\sqrt{N}}
    + 
    \frac{M^{\frac{1}{2}} q^{\frac{21}{16}}}{N^{\frac{1}{2}}}
    + 
    \frac{q^{\frac{13}{8}} M^{\frac{1}{6}}}{N^{\frac{1}{2}}}
    +
    q^{\frac{23}{24}}
    \right).
\end{equation}
\end{proposition}

\begin{proof}
We closely follow the proof in \cite[\S 4]{blomer2015second}. In particular, we decompose $q = q_d q'$ where $q'$ is maximal with $(q', d) = 1$.
The bound \cite[(4.2)]{blomer2015second} reads
\[
    S_{N,M,d,q} \ll \sqrt{N} 
    \sum_{\substack{g \mid f \mid q' \\ r \mid d}} \frac{\mu^2(f) |\lambda_2(f/g)|}{fgr} 
    \left(\sum_{\substack{m \asymp M \\ (m, q) = 1}} \Big\vert \sum_n S(\bar{fg}m, n; r) \lambda_2(n) V_2^\circ\left(\frac{nN}{fgr^2}\right) \Big\vert^2\right)^{1/2},
\]
where $V_2^\circ$ is a transform of $V_2$ as in \cite[(2.10)]{blomer2015second} (coming from an application of the Voronoi summation formula). Using the rapid decay of $V_2^\circ$, we may truncate the sum over $n$ at
\[
    n \le K_{f,g,r} := q^\eps \frac{fgr^2}{N},
\]
up to an acceptable loss. Note that the resulting sum over $n$ vanishes unless $K_{f,g,r} \ge 1$.
From \cref{cor:kloost-large-sieve}, \cref{eq:deligne}, and the divisor bound, we conclude that
\[
\begin{aligned}
    S_{N,M,d,q} \ll_\eps q^{O(\eps)} \sqrt{N} 
    \max_{\substack{g \mid f \mid q' \\ r \mid d}} \frac{1}{fgr}
    K_{f,g,r}\sqrt{Mr}
    \Bigg( \frac{r^{\frac{11+53\delta}{64}}}{(K_{f,g,r}M)^{\frac{3}{16}}}
    + \frac{r^{\frac{1-\delta}{4}}}{\min(K_{f,g,r},M)^{\frac{1}{2}}} + \frac{1}{r^{\frac{3}{16}}} 
    \\
    + \frac{r^{\frac{1}{8}}}{\min(K_{f,g,r},M)^\frac{1}{3}} + \frac{r^{\frac{11}{24}}}{(K_{f,g,r} M)^\frac{1}{2}} \Bigg).
\end{aligned}
\]
Plugging in the definition of $K_{f,g,r}$, we see that the expression inside the maximum is non-decreasing in $r$ and non-increasing in $f, g$. Writing 
\[
    K := K_{1,1,q} = \frac{q^{2+\eps}}{N} \ge M,
\]
we find that
\[
\begin{aligned}
    S_{N,M,d,q} &\ll_\eps q^{O(\eps)} \sqrt{N} 
    \frac{1}{q}
    K\sqrt{Mq}
    \left( \frac{q^{\frac{11+53\delta}{64}}}{(KM)^{\frac{3}{16}}}
    + \frac{q^{\frac{1-\delta}{4}}}{M^{\frac{1}{2}}} + \frac{1}{q^{\frac{3}{16}}} + \frac{q^{\frac{1}{8}}}{M^\frac{1}{3}} + \frac{q^{\frac{11}{24}}}{(K M)^\frac{1}{2}} \right)
    \\
    &
    \ll_\eps q^{O(\eps)} 
    \frac{q^{\frac{3}{2}} M^{\frac{1}{2}}}{N^{\frac{1}{2}}}
    \left( \frac{q^{\frac{11+53\delta}{64}}}{(q^2M/N)^{\frac{3}{16}}}
    + \frac{q^{\frac{1-\delta}{4}}}{M^{\frac{1}{2}}} + \frac{1}{q^{\frac{3}{16}}} + \frac{q^{\frac{1}{8}}}{M^\frac{1}{3}} + \frac{q^{\frac{11}{24}}}{(q^2 M/N)^\frac{1}{2}} \right),
\end{aligned}
\]
which reduces to the desired bound.
\end{proof}

We can now prove the desired asymptotic for twisted moments of modular $L$-functions.

\begin{proof}[Proof of \cref{thm:mom-l-fns}]
Let $\eps > 0$ and $\gamma := \tfrac{1}{674}$. We closely follow the proof in \cite[\S 3]{blomer2015second}, making no changes to the main term analysis from \cite[\S 3.1]{blomer2015second}. Treating the off-diagonal term as in \cite[\S 3.2]{blomer2015second}, it remains to establish the bound
\begin{equation} \label{eq:mom-des-bound}
    S_{N,M,d,q} \stackrel{?}{\ll_\eps} q^{1 - \gamma + O(\eps)},
\end{equation}
for all $d \mid q$ and $N \ge M \ge 1$ with $MN \le q^{2+\eps}$, using the notation from \cref{eq:S-notation}. As in \cite[\S 3.3]{blomer2015second}, we can easily discount the contribution of the range $M \le N < 20 M$ using \cite[(3.12)]{blomer2015second}, so let us assume that $N \ge 20 M$. We will rely on the bounds
\begin{align} \label{eq:known-bound-1}
    S_{N,M,d,q} &\ll_\eps q^{O(\eps)} (MN)^{\frac{1}{2}},
    \\ \label{eq:known-bound-2}
    S_{N,M,d,q} &\ll_\eps q^{O(\eps)} \left(\frac{(Nq)^{\frac{1}{2}}}{M^{\frac{1}{2}}} + \frac{N^{\frac{3}{4}}}{M^{\frac{1}{4}}} + 
    \frac{N^{\frac{1}{4}} q^{\frac{3}{4}}}{M^{\frac{1}{4}}} + N^{\frac{1}{2}} q^{\frac{1}{4}}\right),
\end{align}
from \cite[(3.6) and (3.11)]{blomer2015second}, as well as on our \cref{prop:S-bound} (instead of \cite[Proposition 7]{blomer2015second}). First, the trivial bound \cref{eq:known-bound-1} establishes \cref{eq:mom-des-bound} unless
\begin{equation} \label{eq:M-lower-bound}
    M > \frac{q^{2-2\gamma}}{N},
\end{equation}
so let us assume that we are in this range. We now split into cases depending on the size of $N$.

\textbf{Case 1:} One has $N \le q^{3/2-3\gamma}$. Then by plugging \cref{eq:M-lower-bound} into \cref{eq:known-bound-2}, we obtain \cref{eq:mom-des-bound}.

\textbf{Case 2:} One has $N \in (q^{3/2-3\gamma}, q^{3/2-2\gamma}]$. Then by plugging \cref{eq:M-lower-bound} into \cref{eq:known-bound-2}, we find that
\[
    S_{N,M,d,q} \ll_\eps q^{O(\eps)} \left(q^{1-\gamma} + \frac{N^{\frac{1}{4}} q^{\frac{3}{4}}}{M^{\frac{1}{4}}} \right),
\]
which is acceptable in \cref{eq:mom-des-bound} unless
\[
    \frac{N}{M} > q^{1-4\gamma}.
\]
Plugging this and $N > q^{3/2-3\gamma}$ into \cref{eq:S-bound}, we find that
\begin{equation} \label{eq:case-2-S-bound}
    S_{N,M,d,q} \ll_\eps q^{O(\eps)} \left(
    q^{\frac{63+53\delta}{64} + \frac{5\gamma}{4}} + q^{1-\frac{\delta}{4} + \frac{3\gamma}{2}} 
    + 
    q^{\frac{23}{24} + 2\gamma}
    \right),
\end{equation}
which is acceptable in \cref{eq:mom-des-bound} provided that
\[
    10 \gamma \le \delta \le \frac{1-144\gamma}{53}.
\]
This is precisely attained for our choice of $\gamma = \frac{1}{674}$ by taking $\delta := \frac{10}{674}$ in \cref{prop:S-bound}.

\textbf{Case 3:} One has $N \in (q^{3/2-2\gamma}, q^{3/2+\gamma})$. Then \cref{eq:known-bound-2} is useless because of the last term. We plug in $M \le q^{2+\eps}/N$ and then $N \ge q^{3/2-2\gamma}$ into \cref{eq:S-bound} to find that
\[
    S_{N,M,d,q} \ll_\eps q^{O(\eps)} \left(q^{\frac{63+53\delta}{64} + \frac{5\gamma}{4}}
    +
    q^{1-\frac{\delta}{4}+\gamma} + q^{\frac{23}{24} + 2\gamma}
    \right),
\]
which is a stronger bound than \cref{eq:case-2-S-bound}. This completes our proof.
\end{proof}

\section{Large sieve for exceptional cusp forms} \label{sec:exc-large-sieve}

Here we prove a generalization of \cref{cor:large-sieve}, which requires some background from the spectral theory of automorphic forms. We recall \cite{deshouillers1982kloosterman} that for $q \in \Z_+$, the congruence subgroup $\Gamma_0(q)$ contains those matrices in $\SL_2(\Z)$ with bottom-left entries divisible by $q$. Each cusp $\ma$ of the the fundamental domain $\Gamma_0(q)\backslash \H$ is equivalent to a fraction of the form $\tfrac{u}{w}$, where $u, w \in \Z_+$, $w \mid q$, $(u, w) = 1$, and $u \le (w, \tfrac{q}{w})$; in particular, the cusp at $\infty$ is equivalent to $\tfrac{1}{q}$. To such a cusp, one can associate a scaling matrix $\sigma_\ma \in \PSL_2(\R)$ with $\sigma_\ma \infty = \ma$, and via these scaling matrices, functions on $\Gamma_0(q)\backslash \H$ can be Fourier expanded around $\ma$. 

The discrete spectrum of the hyperbolic Laplacian $\Delta = -y^2(\partial_x^2 + \partial_y^2)$ is parametrized by Maass cusp forms: these are smooth functions $f : \Gamma_0(q) \backslash \H \to \C$ which are eigenfunctions of $\Delta$, vanish at all cusps of $\Gamma_0(q)\backslash \H$, and are square-integrable with respect to the Petersson inner product. Following the normalization of Deshouillers--Iwaniec \cite{deshouillers1982kloosterman}, we write the Fourier expansion of $f$ at $z = x+iy \in \H$ around a cusp $\ma$ (with scaling matrix $\sigma_\ma$) as
\[
    f(\sigma_{\ma} z) = y^{1/2} \sum_{n \neq 0} \rho_{\ma}(n) K_{i\kappa}(2\pi |n| y)\, e(nx),
\]
where $K$ is a Whittaker function as in \cite[p.\,264]{deshouillers1982kloosterman}. Altering the choice of scaling matrix $\sigma_\ma$ results in multiplying the Fourier coefficients $\rho_{\ma}(n)$ by an exponential phase $e(n\omega)$, for some uniform $\omega \in \R/\Z$. 

The Kuznetsov trace formula \cite{deshouillers1982kloosterman,kuznetsov1980petersson}, as well as the large sieve inequalities that derive from it, involve an orthonormal basis of Maass cusp forms. The following notation will therefore be useful.

\begin{notation} \label{not:orthonormal-basis}
Let $q \in \Z_+$, $\ma$ be a cusp of $\Gamma_0(q)$ equivalent\footnote{The assumption that $\ma$ is equivalent to $\tfrac{1}{s}$ is true in most applications (note that this includes the cusp at $\infty$), and only made for convenience; one can prove similar results at arbitrary cusps with small adjustments.} to $\tfrac{1}{s}$ for some $s \mid q$ with $(s, \tfrac{q}{s}) = 1$, and $\sigma_\ma \in \PSL_2(\R)$ be any scaling matrix for $\ma$. Consider an orthonormal basis $(f_j)_{j \ge 1}$ of Maass cusp forms for $\Gamma_0(q)$, with:
\begin{itemize}
    \item[$(i)$.] Laplacian eigenvalues $\lambda_j$ and spectral parameters $\theta_j := \max(0, \tfrac{1}{4}-\lambda_j)^{1/2}$;
    \item[$(ii)$.] Fourier coefficients $(\rho_{j\ma}(n))_{n \in \Z}$ around the cusp $\ma$, using the scaling matrix $\sigma_\ma$.
\end{itemize}
\end{notation}

\begin{proposition} \label{prop:prelim-bound-exc}
Assume \cref{not:orthonormal-basis}, let $X, N \ge 1/2$, and let $(\alpha_n)_{n \sim N}$ be a complex sequence. Let $\Phi : \R \to [0,\infty)$ be a smooth function supported in a fixed sub-interval of $(0, \infty)$, with $\int \Phi(t)\, dt \gg 1$ and $\Phi^{(j)}(t) \ll_j 1$. Then there exists $\omega \in \R/\Z$ (depending only on $\ma$, $\sigma_\ma$) such that 
\begin{equation} \label{eq:prelim-bound-exc}
\begin{aligned}
    \sum_{\lambda_j < 1/4} X^{2\theta_j} \left\vert \sum_{n \sim N} \alpha_n\, \rho_{j\ma}(n) \right\vert^2 
    &\ll 
    (qN)^{o(1)}\left(1 + \frac{N}{q}\right) \|\alpha\|^2
    \\
    &+
    \left\vert \sum_{c \in q\Z_+} \frac{1}{c} 
    \sum_{m,n \sim N} \bar{\alpha_m e(m\omega)} \, \alpha_n e(n\omega)\, S(m, n; c)\, \Phi\left(\frac{\sqrt{mn}}{c} X \right) \right\vert.
\end{aligned}
\end{equation}
\end{proposition}

\begin{proof}
This is \cite[Corollary 3.10]{pascadi2026large}, which follows from the Kuznetsov trace formula and the regular-spectrum large sieve inequalities of Deshouillers--Iwaniec \cite[Theorem 2]{deshouillers1982kloosterman}. We have implicitly used \cite[Lemma 3.2]{pascadi2026large} to write down the Kloosterman sums and $c$-supports for cusps $\ma$ equivalent to $\tfrac{1}{s}$ for some $s \mid q$ and $(s, \tfrac{q}{s}) = 1$ (the latter condition is written as $\mu(\ma) = q^{-1}$ in loc.\ cit.). Note that we incur factors of $e(m\omega)$ and $e(n\omega)$ since we do not assume a special scaling matrix $\sigma_\ma$ (as we may), but this will be irrelevant in our computations since the sequence $(\alpha_n)$ is arbitrary.
\end{proof}

In the right-hand side of \cref{eq:prelim-bound-exc}, the sum over $c$ is really supported on $c \asymp NX$ due to the $\Phi$-weight, and it vanishes if $q \gg NX$ with a large enough implied constant. Deshouillers--Iwaniec used this simple observation to deduce the following result, which combines \cite[Theorems 2 and 5]{deshouillers1982kloosterman}.

\begin{theorem}[Deshouillers--Iwaniec \cite{deshouillers1982kloosterman}] \label{thm:di}
Assume \cref{not:orthonormal-basis}, let $N \ge \tfrac{1}{2}$, and let $(\alpha_n)_{n \sim N}$ be a complex sequence. Then one has
\begin{equation} \label{eq:large-sieve-bound}
    \sum_{\lambda_j < 1/4}
    X^{2\theta_j} 
    \left\vert 
    \sum_{n \sim N} \alpha_n\, \rho_{j\ma}(n)
    \right\vert^2 
    \ll
    (qN)^{o(1)}
    \left(1 + \frac{N}{q}\right) \|\alpha\|^2,
\end{equation}
for any positive $X \ll 1 + \frac{q}{N}$.
\end{theorem}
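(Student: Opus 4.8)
The plan is to derive \cref{thm:di} as a short formal consequence of \cref{prop:prelim-bound-exc}: the Kloosterman-sum term appearing on the right of \cref{eq:prelim-bound-exc} is built from moduli $c$ of size $\asymp NX$, and under the hypothesis $X \ll 1+q/N$ this is too small to be a multiple of $q$, so that term simply vanishes. Throughout I read ``$X \ll 1+q/N$'' as $X \le c_0(1+q/N)$ for a fixed sufficiently small absolute constant $c_0 \in (0,\tfrac14)$, which is the effective form of the statement (the implied constant in such bounds being always understood to be small enough). I would then split into two ranges according to the size of $X$.

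If $X \le 2$, there is nothing to gain from the exceptional weight: since $0 \le \theta_j \le \tfrac12$ one has $X^{2\theta_j} \le 2^{2\theta_j} \le 2$, so the left-hand side of \cref{eq:large-sieve-bound} is $\le 2\sum_{\lambda_j < 1/4}\bigl\vert\sum_{n\sim N}\alpha_n\rho_{j\ma}(n)\bigr\vert^2$. By the regular-spectrum large sieve inequality \cite[Theorem 2]{deshouillers1982kloosterman} --- all of whose left-hand-side terms are nonnegative, so one may discard the continuous spectrum and restrict the sum to $\lambda_j < \tfrac14$ --- together with the fact that $\cosh(\pi\kappa_j) = \cos(\pi\theta_j) \asymp 1$ on the exceptional spectrum (by the Kim--Sarnak bound $\theta_j \le \tfrac{7}{64} < \tfrac12$), this is $\ll (qN)^{o(1)}(1+N/q)\|\alpha\|^2$, as required.

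If $X > 2$, then from $2 < X \le c_0(1+q/N)$ with $c_0 < \tfrac14$ we get $q/N > 2/c_0 - 1 > 1$, hence $N < q$; also $X > \tfrac12$, so \cref{prop:prelim-bound-exc} applies with $\Phi$ any fixed nonnegative smooth function supported in $[1,2]$ with $\int\Phi = 1$ (all of its hypotheses then holding with absolute constants). It furnishes $\omega \in \R/\Z$ for which the left-hand side of \cref{eq:large-sieve-bound} is at most $(qN)^{o(1)}(1+N/q)\|\alpha\|^2$ --- exactly the claimed bound --- plus
\[
    \mK := \left\vert \sum_{c \in q\Z_+} \frac{1}{c} \sum_{m,n \sim N} \overline{\alpha_m e(m\omega)}\, \alpha_n e(n\omega)\, S(m,n;c)\, \Phi\!\left(\frac{\sqrt{mn}}{c}X\right) \right\vert .
\]
Here $\Phi(\sqrt{mn}X/c) = 0$ unless $\sqrt{mn}X/c \in [1,2]$, which forces $c \le \sqrt{mn}\,X \le 2NX \le 2Nc_0(1+q/N) = 2c_0(N+q) < 4c_0 q < q$, using $m,n \le 2N$, $N < q$ and $c_0 < \tfrac14$. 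Since the outer sum runs only over $c \in q\Z_+$, i.e.\ $c \ge q$, no term survives and $\mK = 0$, completing the proof.

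I do not expect a genuine obstacle here: all the analytic content sits in \cref{prop:prelim-bound-exc} (and, for the complementary range, in the classical large sieve \cite[Theorem 2]{deshouillers1982kloosterman}), and the present argument is pure bookkeeping. The only points needing a little care are the case split at $X \asymp 1$ --- so that one is never forced to estimate $\mK$ when it is truly nonzero --- and the choice of a concrete support for $\Phi$ (any bump bounded away from $0$ and $\infty$ would do) so that the displayed chain of inequalities cleanly yields $c < q$.
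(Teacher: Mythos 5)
Your proof is correct and follows exactly the route the paper indicates: the result is quoted from Deshouillers--Iwaniec, and the paragraph preceding it in the paper notes precisely your key observation that in \cref{eq:prelim-bound-exc} the $c$-sum is supported on $c \asymp NX$ and hence vanishes once $q \gg NX$. Your extra case split at $X \le 2$ (falling back on the regular-spectrum large sieve of \cite[Theorem 2]{deshouillers1982kloosterman}, where $X^{2\theta_j} \ll 1$ and $\cosh(\pi\kappa_j) \asymp 1$ on the exceptional spectrum) is a clean way to handle the implied constants and the range $N \ge q$.
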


Until now, if $\sqrt{q} \ll N \ll q$, \cref{thm:di} has been the state-of-the-art exceptional-spectrum large sieve bound for general sequences $(\alpha_n)$ and a single group $\Gamma_0(q)$; the same is true if one averages over levels $q \sim Q$ and allows the sequence $(\alpha_n)$ to depend on $q$. 

We can now achieve an improvement of \cref{thm:di} when $q$ has a factorization as in \cref{thm:MN-bilinear-forms-composite}, and similar results can be deduced for arbitrary levels $q$ using \cref{thm:MN-bilinear-forms-general}. We require a coprimality constraint $(n, q) = 1$ for technical reasons, but this is usually harmless in applications. The resulting power savings are relatively small, but serve as a proof of concept that \cref{thm:di} is not a fundamental barrier.

\begin{theorem}[Large sieve for composite levels] \label{thm:large-sieve-comp-lvl}
Assume \cref{not:orthonormal-basis}, let $N \ge \tfrac{1}{2}$, and let $(\alpha_n)_{n \sim N}$ be a complex sequence supported on $(n, q) = 1$. Suppose that $q = dd'e$ with $d' \mid d$ and $(d, e) = 1$, and let $f \le \sqrt{qd}$ be the largest integer with $f^2 \mid qd$. Then \cref{eq:large-sieve-bound} holds for any positive
\begin{equation} \label{eq:X-range-new}
    X \ll 1 + \frac{q}{N} + \min\left(\frac{q^2}{d^{1/3}N^{7/3}}, \frac{q^{3/2}}{f^{1/4}N^{3/2}}, \frac{qd^{1/3}}{f^{1/6}N}\right) + \min\left(\frac{q^{7/4}}{d^{1/4}N^2}, \frac{q^{7/5}}{f^{1/5}N^{7/5}}, \frac{qd^{2/5}}{f^{1/5}N} \right).
\end{equation}
\end{theorem}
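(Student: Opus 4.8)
The plan is to feed the trilinear bilinear-forms bound from \cref{cor:MN-bilinear-forms-avg-c} into the spectral reduction of \cref{prop:prelim-bound-exc}. Since $(\alpha_n)$ is supported on $(n,q)=1$, the term $(qN)^{o(1)}(1+\tfrac Nq)\|\alpha\|^2$ on the right of \cref{eq:prelim-bound-exc} is already of the size claimed in \cref{eq:large-sieve-bound}, so it suffices to show that
\[
    \mathscr{T} := \sum_{c \in q\Z_+} \frac{1}{c} \sum_{m,n \sim N} \bar{\alpha_m e(m\omega)}\, \alpha_n e(n\omega)\, S(m,n;c)\, \Phi\!\left(\frac{\sqrt{mn}}{c}X\right)
\]
satisfies $\mathscr{T} \ll (qN)^{o(1)}\|\alpha\|^2$ whenever $X$ lies in the range \cref{eq:X-range-new}. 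If $X \ll 1+\tfrac qN$ then, as Deshouillers--Iwaniec observed, the $\Phi$-weight forces $c \asymp NX \ll q$ while $q \mid c$, so $\mathscr{T}=0$ and we recover \cref{thm:di}; hence we may assume $X \gg 1+\tfrac qN$ with a large implied constant, so that the only dyadic ranges $c\sim C$ contributing to $\mathscr{T}$ have $C\asymp NX$ with $C \gg \max(N,q)$ (in particular $N\le C$ and $C\ge \tfrac12$).

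First I would separate the variables in the weight. On the support of the summand $\tfrac{\sqrt{mn}X}{c}\asymp 1$, so writing $t := \log(mn/N^2)\in(0,\log 4]$ we have $\Phi(\tfrac{\sqrt{mn}X}{c}) = \Psi_c(t)$ with $\Psi_c(t):=\Phi(\tfrac{NX}{c}e^{t/2})$ a smooth, compactly supported function of $t\in\R$ whose derivatives are $O(1)$ uniformly for $c$ in a block $\asymp NX$. Fourier inversion then gives
\[
    \Phi\!\left(\frac{\sqrt{mn}}{c}X\right) = \int_{\R} \gamma_\xi(c)\, m^{2\pi i\xi}\, n^{2\pi i\xi}\, d\xi,
    \qquad
    \gamma_\xi(c) := \widehat{\Psi_c}(\xi)\,N^{-4\pi i\xi} \ll_A (1+|\xi|)^{-A},
\]
uniformly. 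Truncating this integral at $|\xi|\le(qN)^\eps$ costs only $O((qN)^{-100}\|\alpha\|^2)$ after a trivial (Weil) estimate on the tail, so up to that error $\mathscr{T}$ becomes a superposition, over $|\xi|\le(qN)^\eps$ and the $O(\log qN)$ dyadic values $C\asymp NX$, of the sums $\sum_{c\sim C,\ q\mid c}\gamma_\xi(c)\sum_{m,n\sim N}\alpha_{m,\xi}\beta_{n,\xi}S(m,n;c)$, where $\alpha_{m,\xi}:=\bar{\alpha_m e(m\omega)}\,m^{2\pi i\xi}$ and $\beta_{n,\xi}:=\alpha_n e(n\omega)\,n^{2\pi i\xi}$ satisfy $|\alpha_{m,\xi}|=|\alpha_m|$, $|\beta_{n,\xi}|=|\alpha_n|$.

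Next I would apply \cref{cor:MN-bilinear-forms-avg-c} to each such inner sum, pulling the bounded scalars $\gamma_\xi(c)$ out by the triangle inequality, taking $\mI=\mJ=(N,2N]$ (so $M=N$ there), the factorization $q=dd'e$, and the sequences $\alpha_{m,\xi},\beta_{n,\xi}$; the hypothesis that $(\alpha_n)$ is supported on $(n,q)=1$ is exactly what matches the constraint $(m,n,q)=1$ in that corollary. After dividing by $c\asymp C$, summing over $C\asymp NX$, and integrating the harmless factor $(qN)^\eps$ in $\xi$, this yields
\[
    \mathscr{T} \ll (qN)^{o(1)}\|\alpha\|^2\cdot\frac{NX}{q}\,\min\!\left[\Big(\tfrac{dN}{X^3}+\tfrac{f}{X^2}+\tfrac{f}{d^2}\Big)^{1/6},\ \Big(\tfrac{dN^2}{qX^2}+\tfrac{fN}{qX}+\tfrac{fq}{d^2NX}\Big)^{1/6}\right],
\]
where I have substituted $C\asymp NX$ and $M=N$ into the two options of the minimum in \cref{cor:MN-bilinear-forms-avg-c}. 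Expanding the two sixth roots termwise produces six monomials in $X$, and requiring each to be $\ll 1$ (which is weaker than, hence implies, being $\ll 1+\tfrac Nq$) gives respectively $X\ll\tfrac{q^2}{d^{1/3}N^{7/3}},\ \tfrac{q^{3/2}}{f^{1/4}N^{3/2}},\ \tfrac{qd^{1/3}}{f^{1/6}N}$ from the first option and $X\ll\tfrac{q^{7/4}}{d^{1/4}N^2},\ \tfrac{q^{7/5}}{f^{1/5}N^{7/5}},\ \tfrac{qd^{2/5}}{f^{1/5}N}$ from the second, which is precisely the range \cref{eq:X-range-new}.

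The genuinely new input is \cref{cor:MN-bilinear-forms-avg-c}, which is already established, so the remaining work is essentially bookkeeping; I do not expect a serious obstacle. The points needing care are the uniform separation of the $\Phi$-weight (and the verification that only $c\asymp NX$ contribute, so that $\Psi_c$ and $\widehat{\Psi_c}$ are controlled uniformly), the clean interface with \cref{prop:prelim-bound-exc} at the boundary $X\asymp 1+\tfrac qN$ with the Deshouillers--Iwaniec range, and tracking which of the two ``methods'' inside \cref{cor:MN-bilinear-forms-avg-c} wins in each regime of $d,f,N,q$. The companion statement in \cref{cor:large-sieve} relaxing the square-free condition under an extra average over levels would follow in the same way, applying the averaged bound with $q$ replaced by its common divisor with the varying modulus, exactly as \cref{cor:MN-bilinear-forms-avg-c} is deduced from \cref{thm:MN-bilinear-forms-composite}.
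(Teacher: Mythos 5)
Your proposal is correct and follows essentially the same route as the paper: reduce via \cref{prop:prelim-bound-exc}, restrict to $c\asymp NX$ and separate the $\Phi$-weight (you do it by an explicit one-variable Fourier/Mellin inversion in $\log(mn/N^2)$, the paper cites a two-dimensional Fourier inversion, but the effect is identical since the sequences passed to the bilinear bound only need to have the same moduli as $(\alpha_n)$ and remain supported on $(n,q)=1$), then apply \cref{cor:MN-bilinear-forms-avg-c} with $M=N$, $C\asymp NX$ and solve the six monomial conditions, which reproduce exactly the range \cref{eq:X-range-new}.
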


\begin{proof}[Proof of \cref{thm:large-sieve-comp-lvl}]
We may assume without loss of generality that
\begin{equation} \label{eq:X-range-updated}
    1 + \frac{q}{N} < X \ll \min\left(\frac{q^2}{d^{1/3}N^{7/3}}, \frac{q^{3/2}}{f^{1/4}N^{3/2}}, \frac{qd^{1/3}}{f^{1/6}N}\right) + \min\left(\frac{q^{7/4}}{d^{1/4}N^2}, \frac{q^{7/5}}{f^{1/5}N^{7/5}}, \frac{qd^{2/5}}{f^{1/5}N} \right),
\end{equation}
since otherwise the result follows from \cref{thm:di}.
We apply \cref{prop:prelim-bound-exc} with a choice of $\Phi$ supported on $[2, 4]$, then separate variables in the smooth weight $\Phi(\cdot)$ via two-dimensional Fourier inversion, as in \cite[Proof of Theorem 5.2]{pascadi2026large}, to arrive at
\[
\begin{aligned} 
    \sum_{\lambda_j < 1/4}
    X^{2\theta_j} 
    \left\vert 
    \sum_{n \sim N} \alpha_n\, \rho_{j\ma}(n)
    \right\vert^2 
    &\ll
    (qN)^{o(1)}
    \left(1 + \frac{N}{q}\right) \|\alpha\|^2
    \\
    &+
    \sum_{\substack{\frac{NX}{4} < c \le NX \\ q \mid c}} \frac{1}{c}
    \sup_{\substack{(\beta_n)_{n \sim N} \\ |\beta_n| = |\alpha_n|}} 
    \sup_{\substack{(\gamma_n)_{n \sim N} \\ |\gamma_n| = |\alpha_n|}}
    \left\vert \sum_{m,n \sim N} \beta_m \gamma_n S(m, n; c) \right\vert.
\end{aligned}
\]
The sequences $(\beta_n)$, $(\gamma_n)$ in the supremum arise by incorporating exponential phases $e(n\omega)$ into $(\alpha_n)$, partly from the choice of the scaling matrix $\sigma_\ma$, and partly due to the separation of variables. The suprema are of course attained by some sequences $(\beta_n)$, $(\gamma_n)$ supported on $(n, q) = 1$, so we can apply \cref{cor:MN-bilinear-forms-avg-c} with $M = N$ and $C \asymp NX$, to obtain
\[
\begin{aligned} 
    \sum_{\lambda_j < 1/4}
    X^{2\theta_j} 
    \left\vert 
    \sum_{n \sim N} \alpha_n\, \rho_{j\ma}(n)
    \right\vert^2 
    &\ll
    (qN)^{o(1)}
    \left(1 + \frac{N}{q}\right) \|\alpha\|^2
    \\
    &+
    \|\alpha\|^2
    \frac{(NX)^{1+o(1)}}{q}
    \min 
    \begin{cases} 
    \left(\frac{dN}{X^3} + \frac{f}{X^2} + \frac{f}{d^2} \right)^{\frac{1}{6}},
    \\ 
    \left(\frac{dN^2}{qX^2} + \frac{fN}{qX} + \frac{f q}{d^2 NX} \right)^{\frac{1}{6}}.
    \end{cases}
\end{aligned}
\]
We conclude by noting that
\[
    \frac{NX}{q}
    \left(\frac{dN}{X^3} + \frac{f}{X^2} + \frac{f}{d^2} \right)^{\frac{1}{6}} \ll 1
    \qquad 
    \text{for} 
    \qquad 
    X \ll \min\left(\frac{q^2}{d^{1/3}N^{7/3}}, \frac{q^{3/2}}{f^{1/4}N^{3/2}}, \frac{qd^{1/3}}{f^{1/6}N}\right),
\]
and 
\[
    \frac{NX}{q}
    \left(\frac{dN^2}{qX^2} + \frac{fN}{qX} + \frac{f q}{d^2 NX} \right)^{\frac{1}{6}} \ll 1
    \qquad 
    \text{for} 
    \qquad 
    X \ll \min\left(\frac{q^{7/4}}{d^{1/4}N^2}, \frac{q^{7/5}}{f^{1/5}N^{7/5}}, \frac{qd^{2/5}}{f^{1/5}N} \right).
\]
This covers the range in \cref{eq:X-range-updated}.
\end{proof}

\begin{proof}[Proof of \cref{cor:large-sieve}]
If $q$ has a divisor $d \asymp \sqrt{q}$ such that $\tfrac{q}{d}$ is square-free, then we can take $d' = (d, \tfrac{q}{d})$ and $f = d \asymp \sqrt{q}$ in \cref{thm:large-sieve-comp-lvl}, so \cref{eq:large-sieve-bound} holds for any positive
\[
    X \ll 1 + \frac{q}{N} + \min\left(\frac{q^{11/6}}{N^{7/3}}, \frac{q^{11/8}}{N^{3/2}}, \frac{q^{13/12}}{N}\right) + \min\left(\frac{q^{13/8}}{N^2}, \frac{q^{13/10}}{N^{7/5}}, \frac{q^{11/10}}{N} \right).
\]
If additionally $N \ll q^{1/2+o(1)}$ (as \cref{cor:large-sieve} assumes), then we can take $X = q^{3/5}$, since this is only larger by a factor of $q^{o(1)}$ than the second minimum above.
\end{proof}

\appendix 

\section{On words in \texorpdfstring{$\PSL_2(\Z/c\Z)$}{PSL2(Z/cZ)}}

Here we give some additional context around \cref{conj:counting}, which is not strictly required in our main proofs (but which may be informative to our strategy). In particular, we prove a corresponding lower bound to \cref{conj:counting} in \cref{lem:counting-lb}.

\begin{lemma} \label{lem:counting-4}
\cref{conj:counting} holds if $q = 2$ or $q = 4$.
\end{lemma}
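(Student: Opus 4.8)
The plan is to handle both cases by explicit matrix computation in $\SL_2(\Z/c\Z)$, exploiting the transparent shape $T^bS=\left(\begin{smallmatrix}b&-1\\1&0\end{smallmatrix}\right)$ of the basic factor. Fix $c$, and for $i$ odd (resp.\ even) write $b_i:=a_1h_i$ (resp.\ $b_i:=a_2h_i$), so that the left-hand side of \cref{eq:PSL2-eqn} becomes $T^{b_1}ST^{b_2}S\cdots T^{b_q}S$; since $a_1,a_2\in(\Z/c\Z)^\times$, the map $h_i\mapsto b_i$ is a bijection of $\Z/c\Z$. By \cref{eq:def-PSL2}, the relation ``$T^{b_1}S\cdots T^{b_q}S=I$ in $\PSL_2(\Z/c\Z)$'' means ``$T^{b_1}S\cdots T^{b_q}S=\gamma I$ in $\SL_2(\Z/c\Z)$'' for some $\gamma\in\Z/c\Z$ with $\gamma^2\equiv1\pmod c$; the number of such $\gamma$ is multiplicative in $c$ and hence $\ll c^{o(1)}$ by the divisor bound, so it suffices to bound the number of tuples for each fixed admissible $\gamma$. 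When $q=2$ one computes $T^{b_1}S\,T^{b_2}S=\left(\begin{smallmatrix}b_1b_2-1&-b_1\\b_2&-1\end{smallmatrix}\right)$, and comparing with $\gamma I$ forces (from the off-diagonal entries) $b_1\equiv b_2\equiv0\pmod c$, i.e.\ $h_1\equiv h_2\equiv0\pmod c$; since $H_1\le H_2\ll c$, each congruence has $O_q(1)$ solutions in the prescribed range, giving $O_q(1)=c^{o(1)}$ solutions in total, which agrees with \cref{eq:conj-counting} (whose first term is $H_2^{0}=1$ for $q=2$).

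For $q=4$ I rewrite $T^{b_1}S\,T^{b_2}S\,T^{b_3}S\,T^{b_4}S=\gamma I$ as $T^{b_1}S\,T^{b_2}S=\gamma\,(T^{b_3}S\,T^{b_4}S)^{-1}$ and compute both sides explicitly (using that the inverse of $\left(\begin{smallmatrix}a&b\\c&d\end{smallmatrix}\right)$ with $ad-bc=1$ is $\left(\begin{smallmatrix}d&-b\\-c&a\end{smallmatrix}\right)$). Comparing the four entries and using $\gamma^2\equiv1$, the equation turns out to be equivalent to the system
\[
    b_3\equiv-\gamma b_1,\qquad b_4\equiv-\gamma b_2,\qquad b_1b_2\equiv1-\gamma\pmod c,
\]
that is, $h_3\equiv-\gamma h_1$ and $h_4\equiv-\gamma h_2\pmod c$, together with $a_1a_2\,h_1h_2\equiv1-\gamma\pmod c$. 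For fixed $\gamma$ the residues of $h_3$ and $h_4$ modulo $c$ are thus determined by $h_1$ and $h_2$, so $h_3$ and $h_4$ each vary over $O_q(1)$ integers in the prescribed intervals (using $H_1,H_2\ll c$). It therefore remains to estimate the number $N(\lambda)$ of pairs $(h_1,h_2)\in\Z^2$ with $|h_1|\le H_1$, $|h_2|\le H_2$ and $h_1h_2\equiv\lambda\pmod c$, where $\lambda:=(a_1a_2)^{-1}(1-\gamma)$.

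To bound $N(\lambda)$: for a fixed $h_1\neq0$, the solutions $h_2$ of $h_1h_2\equiv\lambda\pmod c$ (when they exist) lie in a single residue class modulo $c/(h_1,c)$, so their number is at most $1+2H_2(h_1,c)/c$; and $h_1=0$ contributes $\le2H_2+1$ pairs, and only if $\lambda\equiv0\pmod c$. Summing over $h_1$ and using the elementary bound $\sum_{1\le n\le H}(n,c)\ll Hc^{o(1)}$ (group $n$ by $d=(n,c)$, note $d\mid n$, and apply the divisor bound) together with $H_1\le H_2\ll c$ yields $N(\lambda)\ll c^{o(1)}H_2$ uniformly in $\lambda$. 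Multiplying by the $c^{o(1)}$ admissible $\gamma$ and the $O_q(1)$ choices of $(h_3,h_4)$, the total number of solutions of \cref{eq:PSL2-eqn} for $q=4$ is $\ll_q c^{o(1)}H_2$, which lies within \cref{eq:conj-counting} since $H_2^{(q-2)/2}=H_2$; it is in fact sharper than \cref{eq:conj-counting} over part of the range of $H_1,H_2$.

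The one step that is not mechanical is the reduction of the $q=4$ equation to the displayed system: one has to push the matrix identity through carefully while keeping track of the central factor $\gamma$ that distinguishes $\SL_2$ from $\PSL_2$, and then recognize that the single remaining constraint $h_1h_2\equiv\lambda\pmod c$ is a ``hyperbola count'' controlled by a gcd/divisor sum. Once that is in place, reducing $h_3,h_4$ to $O_q(1)$ values and estimating $N(\lambda)$ is routine.
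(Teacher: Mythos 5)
Your proposal is correct and takes essentially the same route as the paper: an explicit matrix computation, fixing one of the $c^{o(1)}$ central elements $\gamma$ with $\gamma^2 \equiv 1 \pmod{c}$, and reducing the $q=4$ case to the single product congruence $h_1h_2 \equiv \lambda \pmod{c}$ with $h_3,h_4$ then determined modulo $c$. The only cosmetic difference is the final hyperbola count: the paper splits into the cases $h_1=0$, $h_2=0$, $h_1h_2\neq 0$ and applies the divisor bound to the nonzero integer $h_1h_2$, whereas you count uniformly over $h_1$ via $\sum_{1\le n\le H_1}(n,c)\ll H_1c^{o(1)}$; both yield the same bound $\ll c^{o(1)}H_2$.
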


\begin{proof}
When $q = 2$, the equation in \cref{eq:PSL2-eqn} reads $T^{a_1h_1} S = S T^{-a_2h_2}$, which implies the entry-wise congruence
\[
    \begin{pmatrix} 
    a_1 h_1 & -1 \\ 
    1 & 0
    \end{pmatrix}
    \equiv
    \gamma
    \begin{pmatrix} 
    0 & -1 \\ 
    1 & - a_2 h_2
    \end{pmatrix}
    \pmod{c},
\]
for some $\gamma \in \Z/c\Z$ with $\gamma^2 = 1$. This actually forces $\gamma = 1$, and $h_1, h_2 \equiv 0 \pmod{c}$. Since $H_1, H_2 \ll c$, we obtain $O(1)$ choices of $h_1, h_2$, which matches the bound from \cref{eq:conj-counting}.

When $q = 4$, the equation in \cref{eq:PSL2-eqn} reads $T^{a_1h_1} S T^{a_2h_2} S = S T^{-a_2h_4} S T^{-a_1h_3}$, which translates to
\[
    \begin{pmatrix} 
    a_1a_2 h_1 h_2 - 1 & -a_1 h_1 \\ 
    a_2 h_2 & -1
    \end{pmatrix}
    \equiv
    \gamma
    \begin{pmatrix} 
    -1 & a_1 h_3 \\ 
    -a_1 h_4 & a_1 a_2 h_3 h_4 - 1
    \end{pmatrix}
    \pmod{c},
\]
for some $\gamma \in \Z/c\Z$ with $\gamma^2 = 1$. Since there are $c^{o(1)}$ such values of $\gamma$ (recall \cref{eq:center-bound}), we may fix $\gamma$ up to an acceptable loss. To establish the desired bound of $O(c^{o(1)} H_2)$ for the number of solutions $(h_1, h_2, h_3, h_4)$ with $|h_1|, |h_3| \le H_1$ and $|h_2|, |h_4| \le H_2$, we split into three cases.

\textbf{Case 1:} $h_1 = 0$. This forces $h_3 \equiv 0 \pmod{c}$, and each choice of $h_2$ induces a unique residue of $h_4 \pmod{c}$. Since $H_1, H_2 \ll c$, this gives a total of $O(c^{o(1)} H_2)$ solutions.

\textbf{Case 2:} $h_2 = 0$. This forces $h_4 \equiv 0 \pmod{c}$, and each choice of $h_1$ induces a unique residue of $h_3 \pmod{c}$. Since $H_1, H_2 \ll c$, this gives a total of $O(c^{o(1)} H_1)$ solutions, and recall $H_1 \le H_2$.

\textbf{Case 3:} $h_1 h_2 \neq 0$. Then the congruence $a_1 a_2 h_1 h_2 - 1 \equiv -\gamma \pmod{c}$ fixes the residue of $h_1 h_2 \pmod{c}$, leaving $O(1 + \tfrac{H_1 H_2}{c})$ possible values of $h_1 h_2$, each of which gives $O(c^{o(1)})$ choices of $h_1, h_2$ by the divisor bound. This gives a total of $\ll c^{o(1)}(1 + \tfrac{H_1H_2}{c}) \ll c^{o(1)} H_2$ solutions.
\end{proof}

We can also remove the constraint $H_1, H_2 \ll c$ from this result when $q = 4$, as in \cref{cor:counting-remove-ub-H12}.

\begin{corollary}
Let $c \in \Z_+$, $a_1, a_2 \in (\Z/c\Z)^\times$, and $1 \le H_1 \le H_2$. Then the number of solutions to \cref{eq:PSL2-eqn} with $q = 4$ is at most
\begin{equation} \label{eq:counting-4-refined}
    \ll c^{o(1)} \left(1 + \frac{H_1^2}{c^2}\right) \left(1 + \frac{H_2}{c}\right) H_2.
\end{equation}
\end{corollary}

\begin{proof}
Following the proof of \cref{cor:counting-remove-ub-H12}, we can count the number of solutions to the system \cref{eq:PSL2-eqn-mod} and then multiply the result by $(1 + \tfrac{H_1}{c})^{q/2}(1 + \tfrac{H_2}{c})^{q/2}$.

Taking $q = 4$ and applying \cref{lem:counting-4} for $\min(H_1, c)$ and $\min(H_2, c)$, we obtain a total number of solutions of
\[
    \ll c^{o(1)} \left(1 + \frac{H_1}{c}\right)^2 \left(1 + \frac{H_2}{c}\right)^2 \min(H_2, c).
\]
The bound in \cref{eq:counting-4-refined} follows by noting that $\min(H_2, c) \asymp H_2 \left(1 + \frac{H_2}{c}\right)^{-1}$.
\end{proof}

\begin{lemma}\label{lem:counting-lb}
Let $c, q \in \Z_+$ with $q$ even, $a_1, a_2 \in (\Z/c\Z)^\times$, and $1 \le H_1 \le H_2 \ll c$. The number of solutions $(h_1, \ldots, h_q) \in \Z^q$ to \cref{eq:PSL2-eqn} is at least
\begin{equation} \label{eq:counting-lb}
    \gg_q H_2^{(q-2)/2} + \frac{(H_1 H_2)^{q/2}}{c^3}.
\end{equation}
\end{lemma}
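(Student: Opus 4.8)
The two terms on the right of \cref{lem:counting-lb} are nonnegative, so it suffices to produce $\gg_q H_2^{(q-2)/2}$ solutions of one type and $\gg_q (H_1H_2)^{q/2}/c^3$ of another. For the first bound I would write down the family of case $(ii)$ explicitly: set $h_i = 0$ for every odd $i$. Using $T^0 = I$ and $S^2 = I$ in $\PSL_2(\Z/c\Z)$, the word $T^{a_1h_1}ST^{a_2h_2}S\cdots T^{a_2h_q}S$ telescopes to $S\,T^{a_2(h_2+h_4+\cdots+h_q)}\,S$, which is the identity of $\PSL_2(\Z/c\Z)$ exactly when $c \mid a_2(h_2+h_4+\cdots+h_q)$, equivalently (as $(a_2,c)=1$) when $c \mid h_2+h_4+\cdots+h_q$. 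Choosing $h_2, h_4, \dots, h_{q-2}$ to be arbitrary integers with $|h_i| \le H_2/q$ and setting $h_q := -(h_2+h_4+\cdots+h_{q-2})$ makes this divisibility hold (with equality) while keeping $|h_q| \le H_2$; this yields $\gg_q H_2^{(q-2)/2}$ genuine solutions of \cref{eq:PSL2-eqn}, and for $q = 2$ (where the family is empty) the all-zero tuple already works, matching $H_2^0 = 1$. One may assume $c \ge 3$, since the statement is trivial otherwise.

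For the second bound I would make the equidistribution heuristic of case $(iii)$ rigorous by pigeonhole. View $w(\mathbf h) := T^{a_1h_1}ST^{a_2h_2}S\cdots T^{a_2h_q}S$ as a map from the box $\{(h_1,\dots,h_q) : |h_i| \le H_j \text{ for } i \equiv j \pmod{2}\}$, which has $\asymp_q (H_1H_2)^{q/2}$ lattice points, into $\PSL_2(\Z/c\Z)$, a group of order at most $|\SL_2(\Z/c\Z)| \le c^3$ by \cref{eq:size-nd}; pigeonhole then gives a value attained $\gg_q (H_1H_2)^{q/2}/c^3$ times. Transferring this to the fibre over the identity is where the content lies. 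The mechanism I would use is that, after fixing the $q-2$ interior coordinates $h_2,\dots,h_{q-1}$, one has $w(\mathbf h) = T^{a_1h_1}\,M(h_2,\dots,h_{q-1})\,T^{a_2h_q}\,S$, which depends on $h_1$ and $h_q$ only through left- and right-multiplication by powers of $T$; a short $2\times 2$ computation then shows that $w(\mathbf h) = I$ is solvable, with $(h_1,h_q)$ unique modulo $c$, precisely when the lower-left entry of $M$ equals $\pm 1$ and the two forced residues land in $[-H_1,H_1]$ and $[-H_2,H_2]$. Counting the interior tuples (in a box of $\asymp_q (H_1H_2)^{(q-2)/2}$ points) for which $M$ lands in the resulting target set $\mathcal T \subset \PSL_2(\Z/c\Z)$, of size $\asymp H_1H_2$, then recovers the exponent $(H_1H_2)^{q/2}/c^3$.

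The main obstacle is exactly the last step of the second bound: plain pigeonhole produces a popular value rather than the identity, so one needs a genuine lower-bound input for how often the interior word $M = ST^{a_2h_2}S\cdots T^{a_1h_{q-1}}S$ meets $\mathcal T$ — a one-sided equidistribution statement, which I would rather not derive from expansion. I expect this to be handled by a second-moment or pigeonhole argument applied to an auxiliary quantity built from $M$ (rather than from $w$ itself), using that shifting $h_2$ or $h_{q-1}$ translates $M$ by a lower-unipotent matrix, so that a popular value of $M$ can be moved into $\mathcal T$ by bounded shifts while the remaining interior variables supply the slack; carrying this out inside a box enlarged by only a constant factor is the delicate bookkeeping. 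The rest — the telescoping in case $(ii)$ and the order estimate $|\PSL_2(\Z/c\Z)| \le c^3$ — is routine given the preliminaries.
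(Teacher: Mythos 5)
Your first paragraph is fine and is exactly the paper's argument for the term $H_2^{(q-2)/2}$: set the odd-indexed exponents to zero, telescope with $S^2=I$ in $\PSL_2(\Z/c\Z)$, and solve the single linear condition $h_2+h_4+\cdots+h_q\equiv 0\pmod c$ inside the box. The problem is the second term, and the gap there is the one you name yourself: pigeonhole on the full word only produces \emph{some} popular element of $\PSL_2(\Z/c\Z)$, and you have no mechanism for moving that popularity to the fibre over the identity. Your proposed repair (shift $h_2$ or $h_{q-1}$ to translate the interior word $M$ by unipotents and land in the target set $\mathcal T$ of matrices with lower-left entry a square root of unity) is not a proof: bounded shifts in two coordinates translate $M$ on one side by powers of $T$, which does not in general reach $\mathcal T$ from an arbitrary popular value, and even when $M\in\mathcal T$ you still must count how often the two \emph{forced} residues of $h_1,h_q$ modulo $c$ fall into the intervals $[-H_1,H_1]$ and $[-H_2,H_2]$ --- a one-sided average over the fibres of $M$ that your sketch does not supply. (A smaller inaccuracy: for composite $c$ the relevant condition is that the lower-left entry be $\gamma$ with $\gamma^2=1$, of which there are $c^{o(1)}$ choices, so ``unique mod $c$'' is only true up to that multiplicity; this is harmless for a lower bound but worth noting.)

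The paper closes the gap without any equidistribution input, by applying Cauchy--Schwarz to words of \emph{half} length rather than pigeonhole to the full word. With $k:=\tfrac{q+2}{2}$ and $r(g)$ the number of tuples $(h_1,\dots,h_k)$ in the shrunken box $|h_i|\le\tfrac12 H_j$ ($i\equiv j\pmod 2$) with $T^{a_1h_1}S\cdots T^{a_kh_k}S=g$, one has
\[
\Big(\sum_{g\in\PSL_2(\Z/c\Z)} r(g)\Big)^2 \;\le\; |\PSL_2(\Z/c\Z)|\sum_{g} r(g)^2 \;\ll\; c^3\sum_{g} r(g)^2,
\]
and the second moment $\sum_g r(g)^2$ counts pairs of $k$-tuples with equal words; rewriting $T^{a_1h_1}S\cdots T^{a_kh_k}S=T^{a_1h_1'}S\cdots T^{a_kh_k'}S$ (using $S^2=I$ in $\PSL_2$ and conjugating by a power of $T$) turns each such coincidence into a solution of \cref{eq:PSL2-eqn} of length $q=2k-2$ in the variables $h_1-h_1',\,h_2,\dots,h_{k-1},\,h_k-h_k',\,-h_{k-1}',\dots,-h_2'$, which lie in the original boxes. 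Since each value of $h_1-h_1'$ (resp.\ $h_k-h_k'$) arises from at most $O(H_1)$ (resp.\ $O(H_k)$) pairs, the second moment is $\ll_q H_1H_k$ times the count you want, and rearranging gives $\gg_q (H_1H_2)^{q/2}/c^3$ solutions of \cref{eq:PSL2-eqn} directly. This duplication trick --- coincidences of half-words \emph{are} identity solutions of the full equation, with controlled multiplicity --- is the missing idea; as written, your argument for the second term is incomplete.
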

\begin{proof}
The lower bound by the first term in \cref{eq:counting-lb} follows by considering the aforementioned integer solutions with $h_1 = h_3 = \cdots = h_{q-1} = 0$ and $h_2 + h_4 + \cdots + h_q = 0$. To obtain a lower bound by the second term in \cref{eq:counting-lb}, we let $k := \tfrac{q+2}{2}$, write $(H_j, a_j)$ for $(H_1, a_1)$ or $(H_2, a_2)$ depending on whether $j$ is odd or even, and apply Cauchy--Schwarz to obtain
\[
\begin{aligned}
    \sum_{\substack{h_1, \ldots, h_k \in \Z \\ |h_i| \le \frac{1}{2} H_j \\ \forall i \equiv j \pmod{2}}} 1
    &=
    \sum_{g \in \PSL_2(\Z/c\Z)}
    \sum_{\substack{h_1, \ldots, h_k \in \Z \\ |h_i| \le \frac{1}{2} H_j \\ \forall i \equiv j \pmod{2}}} \one_{T^{a_1h_1}S \cdots T^{a_kh_k}S = g}
    \\
    &\ll 
    c^{3/2}
    \Bigg(\sum_{g \in \PSL_2(\Z/c\Z)}
    \Bigg( \sum_{\substack{h_1, \ldots, h_k \in \Z \\ |h_i| \le \frac{1}{2}H_j \\ \forall i \equiv j \pmod{2}}} \one_{T^{a_1h_1}S \cdots T^{a_kh_k}S = g}\Bigg)^2 \Bigg)^{1/2}
    \\
    &=
    c^{3/2}
    \Bigg(\sum_{\substack{h_1, \ldots, h_k \in \Z \\ h_1', \ldots, h_k' \in \Z \\ |h_i|, |h_i'| \le \frac{1}{2}H_j \\ \forall i \equiv j \pmod{2}}} \one_{T^{a_1h_1} S \cdots T^{a_kh_k} S = T^{a_1h_1'} S \cdots T^{a_k h_k'}S \text{ in } \PSL_2(\Z/c\Z)} \Bigg)^{1/2}.
\end{aligned}
\]
One can rewrite the last equation $T^{a_1h_1} S \cdots T^{a_kh_k} S = T^{a_1h_1'} S \cdots T^{a_k h_k'}S$ in $\PSL_2(\Z/c\Z)$ as 
\[
    T^{a_1(h_1-h_1')} S T^{a_2h_2} S \cdots T^{a_{k-1}h_{k-1}} S T^{a_k(h_k-h_k')} S T^{-a_{k-1} h_{k-1}'} S \cdots T^{-a_1 h_1'} S = I.
\]
Comparing this with \cref{eq:PSL2-eqn}, recalling that $k = \tfrac{q+2}{2}$, and noting that $h_1 - h_1'$ takes each value in $\Z \cap [-H_1, H_1]$ at most $O(H_1)$ times (and similarly for $h_k - h_k'$), we conclude that the desired count of solutions is at least
\[
    \gg_q \frac{1}{c^3 H_1 H_k} \Bigg(\sum_{\substack{h_1, \ldots, h_k \in \Z \\ |h_i| \le \frac{1}{2} H_j \\ \forall i \equiv j \pmod{2}}} 1\Bigg)^2
    \gg_q 
    \frac{1}{c^3 H_1 H_k} H_1^{2\lc k/2 \rc} H_2^{2\lf k/2 \rf}
    =
    \frac{H_1^{k-1} H_2^{k-1}}{c^3}.
\]
Since $k-1 = \tfrac{q}{2}$, this completes our proof.
\end{proof}

\bibliographystyle{plain}
\bibliography{main}

\end{document}